\newtheorem{theorem}{Theorem}[section]
\newtheorem{lemma}[theorem]{Lemma}
\newtheorem{prop}[theorem]{Proposition}
\newtheorem{corollary}[theorem]{Corollary}
\theoremstyle{definition}
\newtheorem{definition}[theorem]{Definition}
\newtheorem{rem}[theorem]{Remark}
\newcommand{\mltimes}{\mathop{\raisebox{0.2ex}{\makebox[0.92em][l]{${\scriptstyle\vartriangleright\mathrel{\mkern-4mu}<}$}}}}
\newcommand{\cmdblackltimes}{\mathop{\raisebox{0.2ex}{\makebox[0.92em][l]{${\scriptstyle\blacktriangleright\mathrel{\mkern-4mu}<}$}}}}
\renewcommand{\k}{\Bbbk}
\newcommand{\tens}{\otimes}
\renewcommand{\H}{\mathbf{H}}
\newcommand{\A}{\mathbf{A}}
\newcommand{\T}{\mathbf{T}}
\newcommand{\U}{\mathbf{U}}
\newcommand{\G}{\mathbf{G}}
\newcommand{\hy}{\operatorname{hy}}
\renewcommand{\O}{\mathscr{O}}
\newcommand{\isomto}{\overset{\simeq}{\longrightarrow}}
\numberwithin{equation}{section}
\begin{document} 

\title[Algebraic supergroups]{Algebraic supergroups and Harish-Chandra pairs over a commutative ring}

\author[A.~Masuoka]{Akira Masuoka}
\address{Akira Masuoka: 
Institute of Mathematics, 
University of Tsukuba, 
Ibaraki 305-8571, Japan}
\email{akira@math.tsukuba.ac.jp}

\author[T.~Shibata]{Taiki Shibata}
\address{Taiki Shibata: 
Graduate School of Pure and Applied Sciences, 
University of Tsukuba, 
Ibaraki 305-8571, Japan/Current address: 
Department of Mathematical and Statistical Sciences,
University of Alberta,
Edmonton~AB~T6G~2G1, 
Canada}
\email{tshibata@math.tsukuba.ac.jp/shibata@ualberta.ca}

% \pagewiselinenumbers

\begin{abstract}
We prove a category equivalence between algebraic supergroups and Harish-Chandra pairs
over a commutative ring which is $2$-torsion free.
The result is applied to re-construct
the Chevalley $\mathbb{Z}$-supergroups constructed by Fioresi and Gavarini \cite{FGmemo} and by
Gavarini \cite{G21, G1}. For a wide class of algebraic supergroups 
we describe their representations by using their super-hyperalgebras.  
\end{abstract}

\maketitle

\noindent
{\sc Key Words:}
Algebraic supergroup, Hopf superalgebra, Harish-Chandra pair, super-hyperalgebra, Chevalley supergroup. 

\medskip
\noindent
{\sc Mathematics Subject Classification (2010):}
14M30,
16T05, 
16W55.

\section{Introduction}\label{sec:introduction}

Let $\k$ be a non-zero commutative ring over which we work. 

The word $``$super" is used as a synonym of 
$``$graded by $\mathbb{Z}_2=\{ 0,1 \}$". Ordinary objects, such as Lie/Hopf algebras, which are defined in 
the tensor category of $\k$-modules, given the trivial symmetry $v\otimes w \mapsto w \otimes v$,
are generalized by their super-analogues, such as Lie/Hopf superalgebras, which are defined in the
tensor category of $\mathbb{Z}_2$-graded $\k$-modules, given the super-symmetry \eqref{eq:super-symmetry}.
Our main concern are the super-analogues of affine/algebraic groups. By saying 
\emph{affine groups} (resp., \emph{algebraic groups}), we mean, following 
Jantzen \cite{J}, what 
are formally called affine group schemes (resp., affine algebraic group schemes), and 
we will use analogous simpler names for their super analogues. 
 
An \emph{algebraic supergroup} (over $\k$) is thus a representable group-valued functor $\G$ 
defined on the category of commutative superalgebras over $\k$,
such that the commutative Hopf superalgebra $\O(\G)$ representing $\G$ is finitely generated; see
\cite[Chapter 11]{CCF}, for example. Associated with such $\G$ are a Lie superalgebra,
$\mathrm{Lie}(\G)$, and an algebraic group, $\G_{ev}$. The latter is the (necessarily, representable) 
group-valued functor obtained from $\G$ by restricting the domain to the category of commutative algebras. 

Important examples of algebraic supergroups over the complex number field $\mathbb{C}$ 
are \emph{Chevalley} $\mathbb{C}$-\emph{supergroups}; 
they are the algebraic supergroups $\G$ over $\mathbb{C}$ such that $\mathrm{Lie}(\G)$ 
is one of the complex simple Lie superalgebras, which were classified by Kac \cite{Kac}. 
Just as Kostant \cite{Kostant1} once did in the classical, non-super situation, Fioresi and Gavarini constructed 
natural $\mathbb{Z}$-forms of the Chevalley $\mathbb{C}$-supergroups; see \cite{FGmemo, G21, G1}. 
Those $\mathbb{Z}$-forms, called \emph{Chevalley} $\mathbb{Z}$-\emph{supergroups}, are important,
and would be useful especially to study Chevalley supergroups in positive characteristic. 
A motivation of this paper is to make part of Fioresi and Gavarini's construction 
simpler and more rigorous, and we realize it by using \emph{Harish-Chandra pairs}, as will be explained
below. 
Their construction is parallel to the classical
one; it starts with (1)~proving the existence of $``$Chevalley basis" for each complex simple Lie 
superalgebra $\mathfrak{g}$, and then turns to (2)~constructing from the basis a natural 
$\mathbb{Z}$-form, called a \emph{Kostant superalgebra}, of $\mathbf{U}(\mathfrak{g})$.  
Our construction, which will be given in Section \ref{sec:Chevalley}, 
uses results from these (1) and (2), but dispenses with the following procedures, which include
to choose a faithful representation of $\mathfrak{g}$ on a finite-dimensional complex super-vector space
including an appropriate $\mathbb{Z}$-lattice; see Remarks \ref{rem:FGconstruction} and \ref{rem:Gconstruction}.

In this and the following paragraphs, 
let us suppose that $\k$ is a field of characteristic $\ne 2$. Even in this case, algebraic supergroups
have not been studied so long as Lie supergroups. Indeed, the latter has a longer history of study founded
by Kostant \cite{Kostant2}, Koszul \cite{Koszul} and others in the 1970's. An important result from the study
is the equivalence, shown by Kostant, between the category of Lie supergroups and the category of
Harish-Chandra pairs; see \cite[Section 7.4]{CCF}, \cite{V}.  
The corresponding result for algebraic supergroups, that is, the equivalence
\begin{equation}\label{eq:equivalence}
\mathsf{ASG}\approx \mathsf{HCP}
\end{equation}
between the category $\mathsf{ASG}$ of algebraic supergroups and the category $\mathsf{HCP}$ of Harish-Chandra
pairs, was only recently proved by Carmeli and Fioresi \cite{CF} when $\k = \mathbb{C}$, and then by the 
first-named author \cite{M2} for an arbitrary field of characteristic $\ne 2$; see \cite{M2, GZ} for applications of the result.  
As was done for Lie supergroups, Carmeli and Fioresi define a \emph{Harish-Chandra pair} to be a pair 
$(G, \mathfrak{g})$
of an algebraic group $G$ and a finite-dimensional Lie superalgebra $\mathfrak{g}$ which satisfy 
some conditions (see Definition \ref{def:HCP}), and proved that the equivalence \eqref{eq:equivalence}
is given by $\G \mapsto (\G_{ev}, \mathrm{Lie}(\G))$ (see the third paragraph above). 
In \cite{M2}, the definition of 
Harish-Chandra pairs and the category equivalence are given by purely Hopf algebraic terms, but
they will be easily seen to be essentially the same as those in \cite{CF} and in this paper;
see Remarks \ref{rem:compare_definitions}~(1) and \ref{rem:compare_equivalences}. 

To prove the category equivalence, the articles \cite{CF} and \cite{M2} both use the following 
property of $\O(\G)$, which was proved in \cite{M1} and will be re-produced
as Theorem \ref{thm:tensor_product_decomposition} below: given $\G \in \mathsf{ASG}$, the
Hopf superalgebra $\O(\G)$ is \emph{split} in the sense that there exists 
a counit-preserving isomorphism 
\begin{equation}\label{eq:splitting_property}
\O(\G) \simeq \O(\G_{ev}) \otimes \wedge(W)
\end{equation}
of left $\O(\G_{ev})$-comodule superalgebras, where $W$ is the
odd component of the cotangent super-vector space of $\G$ at $1$, and 
$\wedge(W)$ is the exterior algebra on it. This basic property played a role in \cite{MZ} as well; see
also \cite{M3}. As another application of the property we will prove a representation-theoretic result, 
Corollary \ref{cor:category_isom}, which generalizes results which were proved 
in \cite{BrKl, BrKuj, ShuWang} for some special algebraic supergroups.  

Throughout in the text of this paper we assume that $\k$ is a non-zero commutative ring 
which is $2$-\emph{torsion free}, or namely, is such that 
an element $a \in \Bbbk$ must be zero whenever $2a=0$. We chose this assumption 
because it seems natural, in order to keep the super-symmetry \eqref{eq:super-symmetry} non-trivial. 
Our main result, Theorem \ref{thm:equivalence}, proves the category equivalence \eqref{eq:equivalence}
over such $\k$ as above. We pose some assumptions to objects in the relevant categories,
which are necessarily satisfied if $\k$ is a field. 
Indeed, an algebraic supergroup $\G$ in $\mathsf{ASG}$
is required to satisfy, in particular, the condition that $\O(\G)$ is split, while an object $(G,\mathfrak{g})$
in $\mathsf{HCP}$ is required to satisfy, in particular, the condition that $\mathfrak{g}$ is \emph{admissible}
(see Definition \ref{def:admissible_Lie}), and so, 
given an odd element $v \in \mathfrak{g}_1$, 
the even component $\mathfrak{g}_0$ of $\mathfrak{g}$ must contain a unique element, 
$\frac{1}{2}[v,v]$, whose double equals $[v,v]$; see Section \ref{subsec:P} and Definition \ref{def:HCP} for 
the precise definitions of $\mathsf{ASG}$ and $\mathsf{HCP}$, respectively.   
A novelty of our proof of the result is to construct a functor 
$\G : \mathsf{HCP} \to \mathsf{ASG}$, which will be proved an equivalence, as follows; 
given $(G, \mathfrak{g}) \in \mathsf{HCP}$, we realize the Hopf superalgebra $\O(\G)$
corresponding to $\G = \G(G, \mathfrak{g})$ 
as a discrete Hopf super-subalgebra of some
complete topological Hopf superalgebra, $\widehat{\mathscr{A}}$, that is simply constructed from the given pair.
Indeed, this Hopf algebraic idea was used in \cite{M2}, but our construction has been modified as to be applicable
when $\k$ is a commutative ring.
Based on the proved equivalence 
we will re-construct the Chevalley $\mathbb{Z}$-supergroups, by giving the corresponding Harish-Chandra pairs.

The category equivalence theorem, Theorem \ref{thm:equivalence}, is proved in 
Section \ref{sec:category-equivalence_theorem}, while the Chevalley $\mathbb{Z}$-supergroups
are re-constructed in Section \ref{sec:Chevalley}.
The contents of the remaining three sections are as follows. Section \ref{sec:preliminaries} is
devoted to preliminaries on Hopf superalgebras and affine/algebraic supergroups. In Section \ref{sec:Admissible_LSA},
admissible Lie superalgebras are discussed. Especially, we prove in Corollary \ref{cor:co-split}
that the universal envelope $\U(\mathfrak{g})$
of such a Lie superalgebra $\mathfrak{g}$ has the property which is dual to the splitting property \eqref{eq:splitting_property};
the corollary plays a role in the proof of our main result. In Section \ref{sec:G-supermodules}, we discuss supermodules
over an algebraic supergroup $\G$ and over the super-hyperalgebra $\hy(\G)$ of $\G$, when $\G_{ev}$
is a split reductive algebraic group. Let $T$ be a split
maximal torus of such $\G_{ev}$. Theorem \ref{thm:bijection_super_case}
shows, roughly speaking, equivalence of $\G$-supermodules with $\hy(\G)$-$T$-supermodules. 
When $\k$ is a field, the theorem gives Corollary \ref{cor:category_isom} cited before.

After an earlier version of this paper was submitted, the article \cite{G2} by Gavarini was in circulation.
Theorem 4.3.14 of \cite{G2} essentially proves our category equivalence theorem in the generalized situation that 
$\Bbbk$ is an arbitrary commutative ring. A point is to use the additional structure, called $2$-\emph{operations},
on Lie superalgebras $\mathfrak{g}$, which generalizes the map 
$v\mapsto \frac{1}{2}[v,v]$, $\mathfrak{g}_1 \to \mathfrak{g}_0$ given on an admissible Lie superalgebra in our
situation. 
Given a Harish-Chandra pair, Gavarini constructs an affine supergroup in a quite different
method from ours, realizing it as a group sheaf in the Zariski topology. 
In the appendix of this paper we will refine his category
equivalence, using our construction and giving detailed arguments on $2$-operations, in particular.
This would not be meaningless because such detailed arguments are not be given in \cite{G2}; see 
Remark \ref{rema:compare_functors}.

\section{Preliminaries}\label{sec:preliminaries}

\subsection{}\label{subsec:base_ring}
We work over a non-zero commutative ring $\Bbbk$. 
Throughout in what follows except in the appendix,
we assume that $\Bbbk$ is $2$-\emph{torsion free};
this means that an element $a \in \Bbbk$ must be zero whenever $2a=0$. 
It follows that any flat $\Bbbk$-module
is 2-torsion free.

A $\k$-module is said to be $\k$-\emph{finite} (resp., $\k$-\emph{finite free}/\emph{projective}) 
if it is finitely generated (resp., finitely generated and free/projective). 

The unadorned $\otimes$ denotes the tensor product over $\k$. We let $\mathrm{Hom}$ denote the $\k$-module
consisting of $\k$-linear maps. Given a $\k$-module $V$, we let $V^*$ denote the dual $\k$-module
$\mathrm{Hom}(V,\k)$ of $V$. 

\subsection{}\label{subsec:super-objects}
A \emph{supermodule} (over $\k$) is precisely a $\k$-module $V = V_0 \oplus V_1$ graded by the group 
$\mathbb{Z}_2=\{ 0, 1 \}$ of order $2$. The degree of a homogeneous element $v \in V$ is denoted by $|v|$.
Such an element is said to be \emph{even} (resp., \emph{odd}) if $|v|=0$ (resp., if $|v|=1$).  
We say that $V$ is \emph{purely even} (resp., \emph{purely odd}) if $V = V_0$ (resp., if $V=V_1)$. 
The supermodules $V$, $W, \dots$ and the $\mathbb{Z}_2$-graded (or super-)linear maps
naturally form a tensor category $\mathsf{SMod}$; the 
tensor product is the $\k$-module $V \otimes W$ graded so that 
$(V \otimes W)_i = \bigoplus_{j+k=i} V_j\oplus W_k$, $i=0,1$, 
and the unit object is $\k$, which is supposed to be purely even. The tensor category is symmetric 
with respect to the so-called \emph{super-symmetry} 
$c_{V,W} : V \otimes W \overset{\simeq}{\longrightarrow} W \otimes V$ defined by 
\begin{equation}\label{eq:super-symmetry}
c_{V,W}(v\tens w) = (-1)^{|v||w|}w \otimes v = \begin{cases}
-w\tens v & \text{if $v$, $w$ are odd,}\\
\phantom{-}w\tens v & \text{otherwise.}
\end{cases}
\end{equation}

The dual $\k$-module $V^*$ of a supermodule $V$ is a supermodule graded so that 
$(V^*)_i = (V_i)^*$, $i=0,1$. 

Ordinary objects, such as Lie algebras or Hopf algebras, defined in the symmetric 
category of $\k$-modules are generalized by super-objects,
such as Lie superalgebras or Hopf superalgebras, defined in $\mathsf{SMod}$. The ordinary objects are
regarded as purely even super-objects. 

A superalgebra (resp., super-coalgebra) is said to be \emph{commutative} (resp., \emph{cocommutative}) 
if the product (resp., coproduct) is invariant, composed with the super-symmetry. 

\subsection{}\label{subsec:Hopf_pairing}
Given a Hopf superalgebra $\A$, we denote the coproduct, the counit and the antipode by
\[ 
\Delta : \A \to \A \otimes \A,\ \Delta(a) = a_{(1)}\otimes a_{(2)},\quad \varepsilon : \A \to \k,\quad
S : \A \to \A, 
\]
respectively. The antipode $S$ preserves the unit and the counit, and satisfies
\[ 
m \circ (S \otimes S) = S \circ m \circ c_{\A,\A},
\quad (S \otimes S)\circ \Delta=c_{\A,\A}\circ \Delta \circ S,
\]
where $m : \A \otimes \A \to \A$ denotes the product. 
We let $\A^+$ denote the augmentation super-ideal $\operatorname{Ker}\varepsilon$ of $\A$. 

Let $\H$, $\A$ be Hopf superalgebras. A bilinear map $\langle \ , \ \rangle : \H \times \A \to \k $ 
is called a \emph{Hopf pairing} \cite[Section 2.2]{M2}, 
if 
$\langle \H_i,\A_j \rangle = 0$ whenever $i \ne j$,
and if we have
\begin{align}\label{eq:Hopf_pairing_conditions}
 &\langle xy, a \rangle = \langle x, a_{(1)}\rangle\, \langle y, a_{(2)}\rangle, \notag \\
 &\langle x, ab \rangle = \langle x_{(1)}, a\rangle\, \langle x_{(2)}, b\rangle, \\
 &\langle 1, a \rangle = \varepsilon (a), \quad\langle x, 1 \rangle = \varepsilon (x), \notag
\end{align}
where $x, y \in \H$, $a, b \in \A$. The last conditions imply
\begin{equation}\label{eq:Hopf_pairing_antipode}
\langle S(x), a \rangle  = \langle x, S(a) \rangle, \quad x\in \H,\ a \in \A. 
\end{equation}

Let $V$ be a $\k$-module, and regard it as a purely odd supermodule. The tensor algebra $\mathbf{T}(V)$ 
on $V$ uniquely turns into a Hopf superalgebra in which every element of $V$ is an odd primitive. 
The \emph{exterior algebra} $\wedge(V)$ on $V$ is the quotient 
Hopf superalgebra of $\mathbf{T}(V)$ by the Hopf super-ideal generated by the even primitives $v^2$, where
$v \in V$. Note that $\mathbf{T}(V)$ is cocommutative, while $\wedge(V)$ is commutative and cocommutative.
Suppose that $V$ is $\k$-finite free. Then $\wedge(V)$ is $\k$-finite free, so that the dual supermodule
$\wedge(V)^*$, given the ordinary, dual-algebra and dual-coalgebra structures, is a Hopf superalgebra;
see \cite[Remark~1]{M2}. The canonical pairing 
$\langle \hspace{2mm}, \ \rangle : V \times V^* \to \Bbbk$ 
uniquely extends to a Hopf pairing 
$\langle \hspace{2mm}, \ \rangle : \wedge(V) \times \wedge(V^*) \to \Bbbk$;  
it is determined by the property that $\langle \wedge^m(V), \wedge^n(V^*) \rangle = 0$ unless $m=n$, and
by the formula
\begin{equation}\label{cano-pairing}
\langle v_1 \wedge \dots \wedge v_n, \, w_1 \wedge \dots \wedge w_n \rangle = 
\sum_{ \sigma \in \mathfrak{S}_n }\operatorname{sgn} \sigma \, \langle v_1, w_{ \sigma (1) } 
\rangle \dots \langle v_n, w_{ \sigma (n) } \rangle, 
\end{equation}
where $v_i \in V,\ w_i \in V^*,\ n > 0$. Since this Hopf pairing is non-degenerate, it induces the
isomorphism $\wedge(V^*)\overset{\simeq}{\longrightarrow} \wedge(V)^*$, $a \mapsto \langle \ , a\rangle$\
of Hopf superalgebras, through which we will identify as 
\[
\wedge(V^*)=\wedge(V)^*.
\]

\subsection{}\label{subsec:HSA_basics}
Let $\A$ be a commutative Hopf superalgebra. Define 
\[
\overline{\A} := \A/(\A_1),\quad W^{\A}:= \A_1/\A_0^+\A_1, 
\]
where $(\A_1)$ denotes the (Hopf super-)ideal of $\A$ generated by the odd component $\A_1$,
and $\A_0^+ = \A_0 \cap \A^+$; see \cite[Section 4]{M1}. Note that $\overline{\A}=\A_0/\A_1^2$, and
this is the largest purely even quotient Hopf superalgebra of $\A$. We denote the quotient map by
\begin{equation}\label{eq:quotient_map}
\A \to \overline{\A},\quad a \mapsto \overline{a}.  
\end{equation}
We regard $\A$ as a left $\overline{\A}$-comodule superalgebra, naturally, by 
$\A \to \overline{\A}\otimes \A$,
$a \mapsto \overline{a}_{(1)}\otimes a _{(2)}$. Similarly, $\A$ is regarded as a right $\overline{\A}$-comodule superalgebra.

\begin{definition}\label{def:split}
$\A$ is said to be \emph{split}, if $W^{\A}$ is $\k$-free, and if
there exists an isomorphism 
$\psi : \A \overset{\simeq}{\longrightarrow}
\overline{\A}\otimes \wedge(W^{\A})$ of left $\overline{\A}$-comodule superalgebras. 
\end{definition}
\begin{rem}\label{rem:split}
(1)\
If the second condition above is satisfied, then $\psi$ can be re-chosen as \emph{counit-preserving} 
in the sense that $ (\varepsilon \otimes \varepsilon) \circ \psi = \varepsilon $.  Indeed,
if we set $\gamma := (\varepsilon \otimes \varepsilon) \circ \psi$, then 
$a \mapsto \psi(a_{(1)})\, \gamma\circ S(a_{(2)})$ is seen to be a counit-preserving isomorphism. 

(2)\ 
The same condition as above is equivalent to the condition with the sides switched, that is, the condition
that there exists a (counit-preserving) isomorphism 
$\A \overset{\simeq}{\longrightarrow}
\wedge(W^{\A})\otimes \overline{\A}$ of right $\overline{\A}$-comodule superalgebras. Indeed, if $\psi$
is a left or right-sided isomorphism, then the composite 
$c \circ \psi \circ S$, where $c = c_{\overline{\A},\wedge(W^{\A})}$ or
$c_{\wedge(W^{\A}),\overline{\A}}$, 
gives an opposite-sided one.
\end{rem}

\begin{theorem}[\text{\cite[Theorem 4.5]{M1}}]\label{thm:tensor_product_decomposition}
If $\k$ is a field of characteristic $\ne 2$, then every commutative Hopf superalgebra is split.
\end{theorem}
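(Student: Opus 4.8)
The first condition is immediate: over a field $W^{\A}=\A_1/\A_0^+\A_1$ is a quotient of $\k$-vector spaces, hence $\k$-free. The structural fact I would record at the outset is that super-commutativity together with $\mathrm{char}\,\k\ne 2$ forces $x^2=0$ for every odd $x\in\A_1$ (from $x^2=-x^2$), so any family of odd elements generates a homomorphic image of an exterior algebra; in particular $\A_1^2\subseteq\A_0$ and odd elements anticommute. My plan is then to reduce the whole statement to the construction of a single superalgebra homomorphism $\phi:\A\to\wedge(W^{\A})$ with $\phi(1)=1$ whose restriction to $\A_1$ is the canonical projection $q:\A_1\to W^{\A}\hookrightarrow\wedge(W^{\A})$. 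Given such a $\phi$, I set
\[
\psi:\A\longrightarrow\oA\otimes\wedge(W^{\A}),\qquad \psi(a)=\overline{a_{(1)}}\otimes\phi(a_{(2)}).
\]

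A direct Sweedler computation using coassociativity shows that $\psi$ is automatically a morphism of left $\oA$-comodules into the cofree comodule $\oA\otimes\wedge(W^{\A})$. For multiplicativity, one compares $\psi(ab)$ with $\psi(a)\psi(b)$: both the braiding sign coming from $\Delta(ab)$ and the braiding sign in the tensor-product superalgebra $\oA\otimes\wedge(W^{\A})$ involve a factor $\overline{b_{(1)}}$, which vanishes unless $b_{(1)}$ is even (the quotient map $\A\to\oA$ kills odd elements), so in every surviving term these signs are trivial. Hence $\psi(ab)=\psi(a)\psi(b)$ holds precisely because $\phi$ is an algebra map, and thus $\psi$ is a morphism of left $\oA$-comodule superalgebras; counit-preservation can be arranged as in Remark~\ref{rem:split}(1).

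For bijectivity I would use a filtration argument. Filter $\wedge(W^{\A})$ by exterior degree and $\A$ by the powers of the ideal $I=(\A_1)$, equivalently by odd length, noting $\oA=\A/I$. One checks that $\psi$ is compatible with the induced filtrations and that on the associated graded it becomes the obvious multiplication isomorphism determined by $\overline{(\ )}$ on the $\oA$-factor and by $q$ on the exterior factor. Since the $I$-adic filtration is exhaustive and separated — which I would verify by reducing to finitely generated sub-Hopf-superalgebras, where products of odd elements truncate — an isomorphism on associated graded forces $\psi$ itself to be bijective, completing the reduction.

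The main obstacle is therefore the construction of $\phi$. The difficulty is that $\oA$ is only a quotient of $\A_0$ and there is in general no multiplicative section $\oA\to\A$, so $\phi$ cannot simply be read off from a splitting of $\A_0$; on $\A_1$ it is prescribed and on $\A_1^2$ it is then forced by $\phi(xy)=q(x)\wedge q(y)$, and the real work is to extend it consistently to all of $\A_0$ and to check well-definedness of these forced values. I would construct $\phi$ as an algebra retraction onto the exterior sub-superalgebra generated by a family of odd lifts $x_i\in\A_1$ of a basis of $W^{\A}$, building the retraction inductively along the coradical (equivalently the $I$-adic) filtration and using the coaction to project away $\A_0^+$ at each stage. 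This is essentially a cleft-extension argument in which super-commutativity and $\mathrm{char}\,\k\ne 2$ guarantee that the algebra of coinvariants is exactly $\wedge(W^{\A})$ and that the relevant cocycle and $\oA$-action trivialize; it is here, rather than in the formal reduction above, that the content of the theorem lies.
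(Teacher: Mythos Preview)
This theorem is not proved in the present paper; it is quoted from \cite[Theorem~4.5]{M1}, so there is no in-paper proof to compare against. Your outline has the right overall shape---reduce to constructing an algebra map $\phi:\A\to\wedge(W^{\A})$ extending the projection, form $\psi(a)=\overline{a_{(1)}}\otimes\phi(a_{(2)})$, and argue bijectivity by passing to associated graded---but as written it is not a proof, and there is a genuine circularity in the bijectivity step.

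You assert that $\operatorname{gr}\psi$ ``becomes the obvious multiplication isomorphism'', but the statement $\operatorname{gr}_I\A\cong\oA\otimes\wedge(W^{\A})$ is itself essentially the content of the theorem: the natural multiplication map $\oA\otimes\wedge(W^{\A})\to\operatorname{gr}_I\A$ is visibly surjective, but its injectivity is exactly what must be shown. Indeed, Lemma~\ref{lem:last_applied} of this paper proves that any $\psi$ of your form is an isomorphism \emph{assuming $\A$ is already split}, and its proof invokes the identification $\operatorname{gr}\A\simeq\oA\otimes\wedge(W^{\A})$ from \cite[Proposition~4.9(2)]{M1}, which in turn rests on the splitting. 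So your filtration argument uses what it is meant to prove. Separately, the construction of $\phi$---which you correctly flag as the crux---is only gestured at: the phrase ``along the coradical (equivalently the $I$-adic) filtration'' conflates an ascending coalgebra filtration with a descending ideal filtration, and ``retraction onto the exterior sub-superalgebra generated by lifts'' presupposes that this subalgebra is free exterior, which is again part of the statement. The actual argument in \cite{M1} establishes that $\A\to\oA$ is a cleft extension and analyzes the resulting crossed product using the Hopf structure; your final paragraph points in that direction but does not supply the mechanism.
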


A Hopf superalgebra is said to be \emph{affine} if it is commutative and finitely generated.
A split commutative Hopf superalgebra $\A$ is affine if and only if $\overline{\A}$ is affine
and $W^{\A}$ is $\k$-finite (free). 

All commutative Hopf superalgebras and all Hopf superalgebra maps form a category. The affine
Hopf superalgebras form a full subcategory of the category.

\subsection{}\label{subsec:ASG_basics}
The notions of \emph{affine groups} and of \emph{algebraic groups} (see \cite[Part~I, 2.1]{J}) are 
directly generalized to the super-situation, as follows. 
A \emph{supergroup} is a functor from the category of commutative superalgebras to the category of groups. 
An \emph{affine supergroup} $\G$ is a representable supergroup. 
By Yoneda's Lemma it is represented by a uniquely determined, commutative
Hopf superalgebra, which we denote by $\O(\G)$. We call $\G$ an \emph{algebraic supergroup} if $\O(\G)$ is affine.

The category formed by all 
affine supergroups and all natural transformations of group-valued functors is anti-isomorphic
to the category of commutative Hopf superalgebras.  The full subcategory of the former category
which consists of all algebraic supergroups is anti-isomorphic to the category of affine Hopf superalgebras. 

Let $\G$ be an affine supergroup, and set $\A:=\O(\G)$. Then $\overline{\A}$ represents the supergroup
\[ R \mapsto \G(R_0), \]
where $R$ is a commutative superalgebra. This affine supergroup is denoted by $\G_{ev}$, so that
\[ \overline{\A} = \O(\G_{ev}). \]
We will often regard $\G_{ev}$ as the affine group corresponding to the commutative Hopf algebra
$\overline{\A}$. 
One sees that 
$W^{\A}$ is the odd component of the cotangent supermodule $\A^+/(\A^+)^2$ of $\G$ at $1$.

\subsection{}\label{subsec:representation}
Let $\G$ be an affine supergroup.

Given a supermodule $W$, the left (resp., right) $\G$-supermodule structures on $W$ correspond precisely to 
the right (resp., left) $\O(\G)$-super-comodule structures on $W$. 

Let 
$W \to W \otimes \O(\G)$, $w \mapsto w^{(0)}\otimes w^{(1)}$
be a right $\O(\G)$-super-comodule structure . 
The corresponding left $\G$-supermodule structure
is given by the $R$-super-linear automorphism of $W\otimes R$ which is defined by
\[
{}^{\gamma}(w \otimes 1) = w^{(0)}\otimes \gamma(w^{(1)}), \quad \gamma \in \G(R),\ w \in W,
\]
where $R$ is an arbitrary commutative superalgebra. 
For simplicity this left (resp., the analogous right) 
$\G$-supermodule structure is represented as
\begin{equation}\label{eq:convention}
{}^{\gamma}w\quad (\text{resp.,}\ w^{\gamma}),\quad \gamma\in \G,\ w \in W.
\end{equation}
Actually, this notational convention will be applied 
only when $\G$ is an affine group. 

Given a Hopf pairing $\langle \ , \ \rangle : \H \times \O(\G)\to \k$, where $\H$ is a
Hopf superalgebra, there is induced the left $\H$-supermodule structure on $W$ defined by 
\begin{equation}\label{eq:induced_supermodule}
x w := w^{(0)}\, \langle x, w^{(1)}\rangle,\quad x \in \H,\ w \in W. 
\end{equation}
Similarly, a right $\H$-supermodule structure is induced from a right $\G$-supermodule structure. 

Let $G$ be an affine group. Note that a $G$-supermodule is a supermodule $W$ given a
$G$-module structure such that each component $W_i$, $i=0,1$, is $G$-stable. We let 
\begin{equation}\label{eq:G-SMod} 
G\text{-}\mathsf{SMod}\quad (resp., \mathsf{SMod}\text{-}G)
\end{equation}
denote the category of left (resp., right) $G$-supermodules. This is naturally a tensor category, and is 
symmetric with respect to the super-symmetry.

\section{Admissible Lie superalgebras}\label{sec:Admissible_LSA}
\subsection{}\label{subsec:definition_of_admissible_LSA}
A \emph{Lie superalgebra} is a supermodule $\mathfrak{g}$, given a super-linear map
$[\ , \ ] : \mathfrak{g} \otimes \mathfrak{g} \to \mathfrak{g}$, called a \emph{super-bracket}, which satisfies 
\begin{itemize}
\item[(i)] $[u,u]=0,\ u \in \mathfrak{g}_0$,
\item[(ii)] $[[v,v],v] =0,\ v \in \mathfrak{g}_1$,
\item[(iii)]
$[\ , \ ] \circ (\mathrm{id}_{\mathfrak{g} \otimes \mathfrak{g}} + c_{\mathfrak{g},\mathfrak{g}})= 0$,\ and
\item[(iv)] 
$[[\ , \ ],\ ] \circ (\mathrm{id}_{\mathfrak{g}\otimes \mathfrak{g}\otimes \mathfrak{g}} + 
c_{\mathfrak{g}, \mathfrak{g}\otimes \mathfrak{g}} + c_{\mathfrak{g} \otimes \mathfrak{g}, \mathfrak{g}}) = 0$.
\end{itemize}
Note that (i) implies the equation (iii) restricted to $\mathfrak{g}_0^{\otimes 2}$. If 
$\mathfrak{g}_1$ is $2$-torsion free, then (ii) and (iii) imply the equation (iv) restricted to
$\mathfrak{g}_1^{\otimes 3}$. Indeed, this follows by applying (ii) to the sum $v_1+v_2+v_3$ of elements
$v_i \in \mathfrak{g}_1$. 

A \emph{Lie algebra} is a $\k$-module with a bracket which satisfies (i) and the Jacobi identity, that is,
(iv) in the purely even situation; it is, therefore, the same as a purely even Lie superalgebra. It follows
that if $\mathfrak{g}$ is a Lie superalgebra, then $\mathfrak{g}_0$ is a Lie algebra.

\begin{definition}\label{def:admissible_Lie}
A Lie superalgebra $\mathfrak{g}$ is said to be \emph{admissible} if
\begin{itemize}
\item[(A1)] $\mathfrak{g}_0$ is $\k$-flat,
\item[(A2)] $\mathfrak{g}_1$ is $\k$-free and
\item[(A3)] for every $v \in \mathfrak{g}_1$, the element $[v,v]$ in $\mathfrak{g}_0$ is $2$-\emph{divisible};
this means that there exists an element $u \in \mathfrak{g}_0$ such that $[v,v]=2u$. 
\end{itemize}
Note that (A3) is satisfied if $\mathfrak{g}_1$ has a $\k$-free basis $X$ such that for every $x \in X$,
$[x,x]$ is $2$-divisible in $\mathfrak{g}_0$. 
\end{definition}
\begin{rem}\label{rem:admissible_Lie}
For any $2$-divisible element $w$ in a $2$-torsion free $\k$-module, the element $u$ such that
$w=2u$ is unique, and it will be denoted by $\frac{1}{2}w$. By (A1) above, this can apply to 
the even component of any admissible Lie superalgebra, so that we have 
$\frac{1}{2}[v,v]$ by (A3).  
\end{rem}

\subsection{}\label{subsec:co-splitting_result}
Let $\mathfrak{g}$ be an admissible Lie superalgebra. The tensor algebra $\T(\mathfrak{g})$ on
$\mathfrak{g}$ uniquely turns into a cocommutative Hopf superalgebra in which every even (resp., odd) element
of $\mathfrak{g}$ is an even (resp., odd) primitive.
The \emph{universal envelope} $\U(\mathfrak{g})$ 
of $\mathfrak{g}$ is the quotient Hopf superalgebra of $\T(\mathfrak{g})$ by the Hopf 
super-ideal generated by the homogeneous primitives
\begin{equation}\label{eq:homog_primitives}
zw -(-1)^{|z||w|}wz-[z,w],\quad v^2 - \frac{1}{2}[v, v], 
\end{equation}
where $z$ and $w$ are homogeneous elements in $\mathfrak{g}$, and $v \in \mathfrak{g}_1$. 
We remark that if $2$ is invertible in $\k$, then the second elements $v^2 - \frac{1}{2}[v, v]$ in
\eqref{eq:homog_primitives} may be removed since they are covered by the first. 
The universal envelope
$U(\mathfrak{g}_0)$ of the Lie algebra $\mathfrak{g}_0$ 
is thus defined, as usual, to be the quotient algebra of 
the tensor algebra $T(\mathfrak{g}_0)$ by the ideal generated by
$zw-wz-[z,w]$, where $z,w\in \mathfrak{g}_0$; this is a cocommutative Hopf algebra.  
The $\k$-flatness assumption (A1) on $\mathfrak{g}_0$ ensures the following.

\begin{lemma}[see \cite{H}]\label{lem:flat_implies_injection}
The canonical map 
$\mathfrak{g}_0\to U(\mathfrak{g}_0)$ is an injection.
\end{lemma}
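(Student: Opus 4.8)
The plan is to reduce the super-statement to the classical Poincaré–Birkhoff–Witt (PBW) situation over a commutative ring, which is exactly the content of the cited reference. First I would recall that the claim is purely about the \emph{even} part: we must show that the canonical map $\mathfrak{g}_0 \to U(\mathfrak{g}_0)$ of a Lie algebra into its universal enveloping algebra is injective, where $\mathfrak{g}_0$ is merely assumed to be $\k$-flat, not free. Over a field, or over a ring when $\mathfrak{g}_0$ is free, this is the standard consequence of the PBW theorem; the only subtlety here is the flatness hypothesis (A1), which is weaker than freeness.

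The key step is a flat base-change / direct-limit argument. By Lazard's theorem, a flat module over a commutative ring is a filtered colimit of finite free modules; more usefully, I would exploit that flat modules are (by the Govorov--Lazard theorem) direct limits of finitely generated free modules. The universal envelope construction $U(-)$ commutes with filtered colimits, since $T(-)$ and the formation of the defining ideal both do; likewise the canonical map $\mathfrak{g}_0 \to U(\mathfrak{g}_0)$ is the colimit of the corresponding maps for the free pieces. Since injectivity is preserved under filtered colimits (a filtered colimit of injections is an injection), and since for a \emph{free} Lie algebra the PBW theorem directly yields that $\mathfrak{g}_0$ embeds as the degree-one part of the associated graded, the result follows. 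Concretely, one equips $U(\mathfrak{g}_0)$ with the standard ascending filtration and identifies $\mathrm{gr}\, U(\mathfrak{g}_0)$ with the symmetric algebra $S(\mathfrak{g}_0)$; the degree-one component of $S(\mathfrak{g}_0)$ is $\mathfrak{g}_0$ itself, through which the canonical map factors injectively.

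I expect the main obstacle to be establishing the PBW isomorphism $\mathrm{gr}\, U(\mathfrak{g}_0) \cong S(\mathfrak{g}_0)$ under flatness alone. Over a general commutative ring PBW can fail for non-flat (e.g. torsion) Lie algebras, so flatness is genuinely needed; the cleanest route is to verify that the surjection $S(\mathfrak{g}_0) \twoheadrightarrow \mathrm{gr}\, U(\mathfrak{g}_0)$ is an isomorphism by reducing, via the colimit presentation of $\mathfrak{g}_0$ as flat, to the free case where PBW is classical. The delicate point is checking that the defining relations of $U(\mathfrak{g}_0)$ behave well under the colimit, i.e. that taking the quotient by the ideal generated by $zw - wz - [z,w]$ commutes with the filtered colimit over the free approximations; this is routine but must be done with care so that no spurious degree-one relations are introduced. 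Since the statement is attributed to \cite{H}, I would simply invoke that reference for the PBW theorem in the flat setting and assemble the injectivity conclusion from it, rather than reproving PBW from scratch.
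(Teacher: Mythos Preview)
The paper gives no proof of this lemma at all: it simply cites Higgins \cite{H} and moves on. Your final sentence---invoke \cite{H} for PBW in the flat setting and read off injectivity---is therefore exactly what the paper does, and is the right thing to do.

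However, the Lazard/colimit sketch you offer beforehand has a genuine gap. The Govorov--Lazard theorem expresses the flat $\k$-module $\mathfrak{g}_0$ as a filtered colimit $\varinjlim F_\alpha$ of finite free $\k$-\emph{modules}, not of Lie algebras: the structure maps $F_\alpha \to \mathfrak{g}_0$ are merely $\k$-linear, and the $F_\alpha$ carry no Lie bracket compatible with that of $\mathfrak{g}_0$. Consequently the phrase ``$U(-)$ commutes with filtered colimits'' does not apply here, since $U(-)$ is a functor on Lie algebras and this particular colimit lives in $\k$-modules. Concretely, the defining ideal of $U(\mathfrak{g}_0)$ is generated by $xy-yx-[x,y]$, and the bracket $[x,y]$ does not lift to the free pieces $F_\alpha$; so your claim that ``the formation of the defining ideal'' commutes with the colimit is exactly where the argument breaks. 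The PBW theorem for flat Lie algebras (as in \cite{H}, or in Lazard's own work) is proved by different means---e.g.\ a direct combinatorial/filtration argument or a functorial construction of the symmetrization map---not by na\"{\i}ve reduction to free modules via Govorov--Lazard. Drop the sketch and keep only the citation.
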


Through the injection above we will suppose $\mathfrak{g}_0\subset U(\mathfrak{g}_0)$. 
The inclusion $\mathfrak{g}_0 \subset \mathfrak{g}$ induces
a Hopf superalgebra map $U(\mathfrak{g}_0)\to \U(\mathfrak{g})$, by which we will regard $\U(\mathfrak{g})$
as a $U(\mathfrak{g}_0)$-ring, and in particular as a left and right $U(\mathfrak{g}_0)$-module. Recall
that given an algebra $R$, an $R$-\emph{ring} \cite[p.195]{B} is an algebra given an algebra map from $R$. 

\begin{prop}\label{prop:co-split}
$\U(\mathfrak{g})$ is free as a left as well as right $U(\mathfrak{g}_0)$-module. 
In fact, if $X$ is an arbitrary $\k$-free basis of $\mathfrak{g}_1$ given a total order, then the products 
\[ 
x_1\dots x_n,\quad x_i \in X,\ x_1<\dots < x_n,\ n\ge 0 
\]
in $\U(\mathfrak{g})$ form a $U(\mathfrak{g}_0)$-free basis, where $x_i$ in the product
denotes the image of the element under the canonical map $\mathfrak{g}\to \U(\mathfrak{g})$.   
\end{prop}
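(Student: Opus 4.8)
The plan is to establish a super-version of the Poincar\'e--Birkhoff--Witt theorem relative to the even part, treating $\U(\mathfrak{g})$ as a module over $U(\mathfrak{g}_0)$ rather than over $\k$. First I would fix a $\k$-free basis $X$ of $\mathfrak{g}_1$ with a chosen total order, and consider the proposed spanning set $\{ x_1\cdots x_n : x_i \in X,\ x_1 < \dots < x_n,\ n \ge 0\}$ of strictly increasing monomials (so each $x\in X$ appears at most once, which is forced since $x^2 = \tfrac12[x,x]\in U(\mathfrak{g}_0)$ lives in the even part). The first task is to show these monomials \emph{span} $\U(\mathfrak{g})$ as a $U(\mathfrak{g}_0)$-module. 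This is a straightforward rewriting argument: starting from an arbitrary product of images of elements of $\mathfrak{g}$, one uses the defining relations \eqref{eq:homog_primitives} to move all even factors to the left (the super-commutator relation $zw - (-1)^{|z||w|}wz = [z,w]$ lets one interchange an odd and an even factor at the cost of a lower-degree even term, which is absorbed into $U(\mathfrak{g}_0)$), and to reorder and reduce the odd factors among themselves. Any repetition $xx$ of an odd generator is replaced by $\tfrac12[x,x]\in U(\mathfrak{g}_0)$, and out-of-order adjacent odd factors $x_jx_i$ with $x_j > x_i$ are rewritten via $x_jx_i = -x_ix_j + [x_j,x_i]$, the bracket again landing in the even part. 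A suitable induction on total length, with a secondary induction on the number of inversions, shows every element is a $U(\mathfrak{g}_0)$-combination of the listed monomials, acting on the \emph{left}; the right-module case is symmetric.

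The harder and essential half is \emph{linear independence} of these monomials over $U(\mathfrak{g}_0)$, equivalently freeness. Here I would invoke the co-splitting machinery already available in this section rather than attempt a direct PBW straightening with a constructed action. Concretely, the universal envelope $\U(\mathfrak{g})$ carries the $\mathbb{Z}_2$-graded Hopf structure, and $U(\mathfrak{g}_0)$ sits inside as the purely even Hopf subalgebra generated by $\mathfrak{g}_0$; by Lemma \ref{lem:flat_implies_injection} and assumption (A1) the map $\mathfrak{g}_0 \to U(\mathfrak{g}_0)$ is injective and $U(\mathfrak{g}_0)$ is the expected classical envelope. The strategy is to produce an explicit $U(\mathfrak{g}_0)$-linear isomorphism
\[
\U(\mathfrak{g}) \isomto U(\mathfrak{g}_0)\otimes \wedge(\mathfrak{g}_1),
\]
the exterior-algebra factor being free on the increasing monomials in $X$. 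One direction is the multiplication-composed-with-inclusion map sending a basis element $u \otimes (x_1\wedge\dots\wedge x_n)$ to $u\, x_1\cdots x_n$; surjectivity of this map is exactly the spanning statement above. For injectivity I would construct a $U(\mathfrak{g}_0)$-module map in the reverse direction, or equivalently exhibit $U(\mathfrak{g}_0)\otimes\wedge(\mathfrak{g}_1)$ as a $\U(\mathfrak{g})$-module (a left $\mathfrak{g}$-action) satisfying the defining relations \eqref{eq:homog_primitives}, and check the two composites are identities.

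The main obstacle is verifying that the candidate $\mathfrak{g}$-action on $U(\mathfrak{g}_0)\otimes\wedge(\mathfrak{g}_1)$ genuinely descends through the relations \eqref{eq:homog_primitives} over the ring $\k$. The delicate point is the odd-square relation $v^2 = \tfrac12[v,v]$ for \emph{arbitrary} $v\in\mathfrak{g}_1$, not merely basis elements: one must check compatibility of the $2$-divided bracket $\tfrac12[v,v]$ with the action, using the admissibility hypotheses (A2) that $\mathfrak{g}_1$ is $\k$-free and (A3)/Remark \ref{rem:admissible_Lie} that $\tfrac12[v,v]$ is the unique well-defined half, which in turn relies on $\k$ being $2$-torsion free. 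The quadratic dependence of $v\mapsto\tfrac12[v,v]$ on $v$ means one cannot reduce to basis elements by mere linearity; expanding $v=\sum a_i x_i$ produces cross terms $a_ia_j[x_i,x_j]$ that must be reconciled with the odd-commutator relations, and it is precisely the $2$-torsion freeness that makes this bookkeeping consistent. Once the action is well-defined, the Jacobi-type associativity needed for it to be a genuine $\U(\mathfrak{g})$-action follows from the Lie superalgebra axioms (i)--(iv), and freeness is then immediate from the constructed isomorphism. I expect the freeness over $U(\mathfrak{g}_0)$ to follow formally once this relative co-splitting isomorphism is in hand, with the $2$-divided square relation being the only genuinely subtle verification.
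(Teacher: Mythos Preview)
Your approach is sound in outline and follows the classical Poincar\'e--Birkhoff--Witt strategy: construct an explicit $\mathfrak{g}$-action on the candidate free module $U(\mathfrak{g}_0)\otimes\wedge(\mathfrak{g}_1)$, verify the defining relations, and use the resulting module to certify injectivity of the multiplication map. The paper takes a genuinely different route: it applies Bergman's Diamond Lemma for $R$-rings \cite[Proposition~7.1]{B}, treating $\U(\mathfrak{g})$ as a $J$-ring (with $J = U(\mathfrak{g}_0)$) generated by the ordered basis $X$ subject to the reduction system
\[
xa \to a_{(1)}(x\triangleleft a_{(2)}),\qquad xy \to -yx + [x,y]\ (x>y),\qquad x^2 \to \tfrac12[x,x],
\]
and then checks that the overlap ambiguities arising from the words $xya$ with $x\ge y$ in $X$, $a\in J$, and $xyz$ with $x\ge y\ge z$ in $X$ are all resolvable, invoking the dual Harish-Chandra pair identities of Lemma~\ref{lem:DHCP}. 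The Diamond Lemma approach packages spanning and independence into a single mechanical resolution of finitely many ambiguity patterns, and pinpoints exactly where $2$-torsion freeness enters (the cases $x=y$ in both families, where one must compare $\tfrac12[x,x]$-terms whose doubles agree). Your representation-theoretic construction would ultimately require the same computations---defining the action of an odd generator on a reduced monomial and checking associativity amounts to resolving the same overlaps---but organized as a case-by-case verification of a candidate action rather than as confluence of a rewriting system. The paper's method is more systematic and avoids having to write down the action explicitly; your method is closer to the traditional PBW argument and perhaps more transparent about why the isomorphism is one of coalgebras, which is what Corollary~\ref{cor:co-split} actually needs.
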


This is proved in \cite[Lemma~11]{M2}, in the generalized situation treating dual Harish-Chandra pairs, 
but over a field of characteristic $\ne 2$.
Our proof of the proposition will confirm the proof of the cited lemma in our present 
situation. To use the same notation as in \cite{M2} we set 
\begin{equation*}\label{eq:JV} 
J := U(\mathfrak{g}_0), \quad V := \mathfrak{g}_1.  
\end{equation*}
Then the right adjoint action 
\begin{equation}\label{eq:right_adjoint}
\mathrm{ad}_r(u)(v) = [v, u],\quad u \in \mathfrak{g}_0,\ v \in V
\end{equation}
by $\mathfrak{g}_0$ on $V$ uniquely gives rise to a right $J$-module structure on $V$,
which we denote by $v \triangleleft a$, where $v \in V$, $a \in J$. 
If $i : V \to \U(\mathfrak{g})$ denotes the canonical map, we have
\begin{equation}\label{eq:J-action} 
i(v \triangleleft a)= S(a_{(1)})\, i(v)\, a_{(2)},\quad v \in V,\ a \in J
\end{equation}
in $\U(\mathfrak{g})$. Indeed, this follows by induction on the largest length $r$, when we
express $a$ as a sum of elements $u_1\dots u_r$, where $u_i \in \mathfrak{g}_0$.  

\begin{lemma}\label{lem:DHCP}
The right $J$-module structure on $V$ and the super-bracket 
$[ \ , \ ] : V\otimes V \to \mathfrak{g}_0 \subset J$ restricted to $V$ make $(J, V)$ into
a dual Harish-Chandra pair \cite[Definition~6]{M2}, or explicitly we have
\begin{itemize}
\item[(a)] $[u \triangleleft a_{(1)}, v \triangleleft a_{(2)}] = S(a_{(1)})[u, v]a_{(2)},$
\item[(b)] $[u, v] = [v, u]$ and
\item[(c)] $v \triangleleft [v, v] = 0$
\end{itemize}
for all $u, v \in V$, $a \in J$. Properties (b), (c) implies
\begin{itemize}
\item[(d)] $u \triangleleft [v, w] + v \triangleleft [w, u] + w \triangleleft [u, v] = 0,\quad
u,v,w\in V. $
\end{itemize}
\end{lemma}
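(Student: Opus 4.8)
The plan is to verify the defining identities (a), (b), (c) of a dual Harish-Chandra pair by transporting everything into $\U(\mathfrak{g})$, where the relations are concrete products, and then to read off (d) as a formal consequence of (b) and (c). The key tool is the identity \eqref{eq:J-action}, $i(v\triangleleft a)=S(a_{(1)})\,i(v)\,a_{(2)}$, which lets me replace the abstract right $J$-action by conjugation inside the universal envelope. Throughout I would write elements of $V=\mathfrak{g}_1$ as their images $i(u),i(v)$ in $\U(\mathfrak{g})$ and use that $[u,v]\in\mathfrak{g}_0\subset J$ maps to the commutator-type relation coming from \eqref{eq:homog_primitives}.

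\emph{Property} (b). Since $u,v\in\mathfrak{g}_1$ are odd, the first family of primitives in \eqref{eq:homog_primitives} gives, inside $\U(\mathfrak{g})$, the relation $i(u)i(v)+i(v)i(u)=[u,v]$ (the sign $-(-1)^{|u||v|}=+1$ for two odd elements). This expression is manifestly symmetric in $u$ and $v$, so $[u,v]=[v,u]$ follows immediately; equivalently it is the super-antisymmetry (iii) specialized to odd arguments. \emph{Property} (c). Setting $u=v$ in the commutator relation gives $2\,i(v)^2=[v,v]$, while the second family of primitives gives $i(v)^2=\tfrac12[v,v]$; these are consistent, and the action of $[v,v]$ on $v$ is the odd-odd case of the bracket, so $v\triangleleft[v,v]=[v,[v,v]]$ up to sign, which vanishes by the Lie superalgebra axiom (ii), $[[v,v],v]=0$, together with (b). \emph{Property} (a). Here I would apply the anti-multiplicativity of $S$ and the fact that $a\mapsto S(a_{(1)})(-)a_{(2)}$ is the right adjoint action, an algebra anti-homomorphism-compatible operation that is a right $J$-module structure; conjugating the product $i(u)i(v)+i(v)i(u)=[u,v]$ by this action and matching the coproduct $\Delta(a)=a_{(1)}\otimes a_{(2)}$ yields $[u\triangleleft a_{(1)},\,v\triangleleft a_{(2)}]=S(a_{(1)})[u,v]a_{(2)}$. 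The computation reduces, by the induction on word-length already indicated after \eqref{eq:J-action}, to the case $a\in\mathfrak{g}_0$, where it is the ordinary compatibility of the adjoint action with the bracket, i.e.\ a restatement of the Jacobi-type axiom (iv) mixing one even and two odd arguments.

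\emph{Deriving} (d). The implication (b),(c)$\Rightarrow$(d) is the standard polarization trick: I would apply (c) to the sum $v_1+v_2+v_3$ with $v_i\in V$, expand $[v_1+v_2+v_3,\,v_1+v_2+v_3]$ using the symmetry (b), and extract the mixed terms, exactly paralleling the remark in \S\ref{subsec:definition_of_admissible_LSA} that (ii) and (iii) force (iv) on $\mathfrak{g}_1^{\otimes 3}$ when $\mathfrak{g}_1$ is $2$-torsion free. Collecting the cross terms of $v_i\triangleleft[v_j,v_k]$ and using $2$-torsion freeness to cancel the coefficient gives precisely $u\triangleleft[v,w]+v\triangleleft[w,u]+w\triangleleft[u,v]=0$.

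The main obstacle I anticipate is the bookkeeping of signs and of the Sweedler coproduct in (a): one must be careful that the map $v\mapsto S(a_{(1)})\,i(v)\,a_{(2)}$ genuinely furnishes a \emph{right} $J$-module action compatible with the bracket, and that the induction on the length $r$ of $a=u_1\cdots u_r$ correctly propagates the identity from the generating case $a\in\mathfrak{g}_0$ to all of $J$. Because $S$ is an anti-coalgebra and anti-algebra map in the super sense, the intermediate signs from the super-symmetry $c_{\A,\A}$ must be tracked, but since $[u,v]$ is even these signs ultimately simplify. Once (a)--(c) are in place, (d) is purely formal.
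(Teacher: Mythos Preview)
Your underlying arguments match the paper's: (b) and (c) are immediate consequences of the Lie-superalgebra axioms (iii) and (ii), (a) is verified by induction on the word-length of $a\in J$ with base case the Jacobi identity (iv) for one even and two odd arguments, and (d) follows by applying (c) to $u+v+w$ and expanding via (b). The paper's proof says exactly this in three lines.

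However, your primary framing---``transporting everything into $\U(\mathfrak{g})$'' and reading off identities there---is circular as stated. To conclude an identity in $\mathfrak{g}_0$ (or in $J$) from its image in $\U(\mathfrak{g})$, you need the canonical map $J\to\U(\mathfrak{g})$ to be injective; but that injectivity is part of what Proposition~\ref{prop:co-split} establishes, and the proof of Proposition~\ref{prop:co-split} \emph{uses} the present lemma. So your argument for (b) via ``$i(u)i(v)+i(v)i(u)$ is manifestly symmetric, hence $[u,v]=[v,u]$'' and your conjugation argument for (a) inside $\U(\mathfrak{g})$ do not stand on their own at this point in the logical development. Fortunately you also record the correct direct arguments---axiom (iii) for (b), axiom (ii) for (c), and the induction on word-length for (a) reducing to the Jacobi-type axiom (iv)---and those are exactly what the paper does. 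Drop the $\U(\mathfrak{g})$ detour and your proof is clean and coincides with the paper's.
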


We remark that (a) is an equation in $\mathfrak{g}_0$, and 
the product of the right-hand side is computed in $J$, which is
possible since $\mathfrak{g}_0 \subset J$.  

\begin{proof}[Proof of Lemma \ref{lem:DHCP}]
One verifies (a), just as proving \eqref{eq:J-action}. Properties (b), (c) are those of Lie superalgebras.  
One sees that (b), applied to $u+v+w$ and combined with (c), implies (d).
\end{proof}
   
\begin{proof}[Proof of Proposition \ref{prop:co-split}]
We will prove only the left $J$-freeness. The result with the antipode applied shows the right $J$-freeness. 

Let $X$ be a totally ordered basis of $V$. We confirm the proof of \cite[Lemma~11]{M2} as follows.
First, we introduce the same order as in the proof into all words in the letters from $X \cup \{ * \}$,
where $*$ stands for any element of $J$. Second, we see by using \eqref{eq:J-action}
that the $J$-ring $\U(\mathfrak{g})$ 
is generated by $X$, and is defined by the reduction system consisting of
\begin{itemize}
\item[(i)] $x a \to a_{(1)}(x \triangleleft a_{(2)}),\quad x \in X,\ a \in J,$ 
\item[(ii)] $xy \to -yx + [x, y],\quad x, y \in X,\ x > y,$
\item[(iii)] $x^2 \to \frac{1}{2} [x, x],\quad x \in X, $
\end{itemize}
where we suppose that in (i), $x \triangleleft a_{(2)}$ is presented as a $\Bbbk$-linear 
combination of elements in $X$. 
Third, we see that the reduction system satisfies the assumptions required by
Bergman's Diamond Lemma \cite[Proposition 7.1]{B}, indeed its opposite-sided version. 

To prove the desired result from the Diamond Lemma, it remains to verify the following
by using the properties (a)--(d) in Lemma \ref{lem:DHCP}:
the overlap ambiguities which may occur when we reduce the words
\begin{itemize}
\item[(iv)]  $xya,\quad x \ge y\ \text{in}\ X,\ a \in J$, 
\item[(v)]   $xyz,\quad x \ge y \ge z\ \text{in}\ X$
\end{itemize}
are all resolvable. The proof of \cite[Lemma~11]{M2} verifies the resolvability 
only when $x, y$ and $z$ are distinct, and the same proof works now as well.  

As for the remaining cases (omitted in the cited proof), first let $xya$ be a word from (iv) with $x = y$.  
This is reduced on the one hand as
\[ 
x x a  \to x a_{(1)}(x \triangleleft a_{(2)})
\to a_{(1)}(x \triangleleft a_{(2)})(x \triangleleft a_{(3)}), 
\]
and on the other hand as
\begin{eqnarray*}
x x a \to \Big(\frac{1}{2}[x, x]\Big)a &=& a_{(1)}S(a_{(2)})\Big(\frac{1}{2}[x,x]\Big)a_{(3)}\\
&=& a_{(1)}\Big(\frac{1}{2}[x \triangleleft a_{(2)}, x\triangleleft a_{(3)}]\Big). 
\end{eqnarray*}
Let $b \in J$. The last equality holds since 
$S(b_{(1)})(\frac{1}{2}[x,x])b_{(2)}$ and 
$\frac{1}{2}[x \triangleleft b_{(1)}, x\triangleleft b_{(2)}]$ 
coincide since their doubles do by (a). 
For the desired resolvability it suffices to see that the two polynomials
\begin{equation}\label{two_polynomials}
(x \triangleleft b_{(1)}) (x \triangleleft b_{(2)}),\quad 
\frac{1}{2}[x \triangleleft b_{(1)}, x\triangleleft b_{(2)}]
\end{equation}
are reduced to the same one. For this, suppose 
\[ (x \triangleleft b_{(1)})\otimes (x \triangleleft b_{(2)}) = \sum_{i, j = 1}^n t_{ij} \, x_i \otimes x_j\ \text{in}\ 
V \otimes V, \]
where $t_{ij} \in \Bbbk$, and $x_1 < \dots < x_n$ in $X$.  Note that $t_{ij} = t_{ji}$ since $J$ is cocommutative. 
Then the first polynomial in \eqref{two_polynomials} is reduced as
\[
\sum_{i < j}t_{i j}(x_ix_j + x_j x_i)+ \sum_i t_{i i}\, x_ix_i \to \sum_{i<j}t_{i j}[x_i, x_j] +\sum_i t_{i i} \Big(\frac{1}{2}[x_i, x_i]\Big).
\]
This last and the second polynomial in \eqref{two_polynomials} coincide since by (b), their doubles do. 
This proves the desired result.  

Next, let $xyz$ be a word from (v), and suppose $x = y > z$. Note that if
$(u,w)=([x,z],x)$ or $(\frac{1}{2} [x, x],z)$, then $u$ is primitive, and so we have the
reduction $wu \to uw + w \triangleleft u$ given by (i). 
Then it follows that $xyz = xxz$ is reduced as
\begin{eqnarray*}
x x z &\to & -x z x + x[x,z] \to z x x - [x,z]x + [x,z]x + x \triangleleft [x,z] \\
      &\to & z\Big(\frac{1}{2}[x,x]\Big) + x \triangleleft [x,z] 
\to \Big(\frac{1}{2}[x,x]\Big)z + z \triangleleft \Big(\frac{1}{2}[x,x]\Big) + x \triangleleft [x,z].  
\end{eqnarray*}
The word is alternatively reduced as
\[
x x z \to \Big( \frac{1}{2}[x,x] \Big) z. 
\] 
These two results coincide, 
since the element $z \triangleleft (\frac{1}{2}[x,x]) + x \triangleleft [x,z]$,
whose double is zero by (d), is zero. The ambiguity for the word $xyz$ is thus
resolvable when $x=y>z$. One proves similarly the resolvability in the remaining
cases, $x > y = z$ and $x = y = z$, using (d) and (c), respectively.     
\end{proof} 

The proposition just proven shows the following. 

\begin{corollary}\label{cor:co-split}
If $\mathfrak{g}$ is an admissible Lie superalgebra, then there exists a unit-preserving,
left $U(\mathfrak{g}_0)$-module super-coalgebra isomorphism
\[
U(\mathfrak{g}_0)\otimes \wedge(\mathfrak{g}_1) \overset{\simeq}{\longrightarrow}
\U(\mathfrak{g}).
\]
Here, $``$unit-preserving" means that the isomorphism sends $1\otimes 1$ to $1$. 
\end{corollary}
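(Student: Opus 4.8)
The plan is to write down the isomorphism $\psi$ explicitly from the $U(\mathfrak{g}_0)$-basis produced in Proposition \ref{prop:co-split}, and then to realize $\psi$ as a composite of coalgebra maps so that the super-coalgebra compatibility becomes almost formal. Fix, as in the proposition, a totally ordered $\k$-free basis $X$ of $\mathfrak{g}_1$ (available by (A2)). Let $\iota \colon U(\mathfrak{g}_0)\to \U(\mathfrak{g})$ be the canonical Hopf superalgebra map, and let $j \colon \wedge(\mathfrak{g}_1)\to \U(\mathfrak{g})$ be the $\k$-linear map determined on the standard basis by $x_1\wedge\dots\wedge x_n \mapsto x_1\dots x_n$ (with $x_1<\dots<x_n$ in $X$, $n\ge 0$), the right-hand product being taken in $\U(\mathfrak{g})$. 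I then set $\psi := m_{\U}\circ(\iota\otimes j)$, so that $\psi(a\otimes\omega)=\iota(a)\,j(\omega)$. This $\psi$ is at once left $U(\mathfrak{g}_0)$-linear (because $\iota$ is an algebra map) and unit-preserving, and by Proposition \ref{prop:co-split} it carries the $U(\mathfrak{g}_0)$-basis $\{\,1\otimes(x_1\wedge\dots\wedge x_n)\,\}$ of $U(\mathfrak{g}_0)\otimes\wedge(\mathfrak{g}_1)$ bijectively onto the $U(\mathfrak{g}_0)$-basis $\{\,x_1\dots x_n\,\}$ of $\U(\mathfrak{g})$. Hence $\psi$ is already an isomorphism of left $U(\mathfrak{g}_0)$-modules.

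It remains to see that $\psi$ respects the super-coalgebra structures, and I would reduce this to the single claim that $j$ is a morphism of super-coalgebras. The key point is that every $x\in X$ is an odd primitive both in $\wedge(\mathfrak{g}_1)$ and in $\U(\mathfrak{g})$. Since $\Delta$ is a superalgebra map on $\U(\mathfrak{g})$, I get $\Delta(x_1\dots x_n)=\prod_{i=1}^{n}(x_i\otimes 1+1\otimes x_i)$; expanding this product in the super-tensor square $\U(\mathfrak{g})\otimes\U(\mathfrak{g})$ produces a signed sum of terms $(\prod_{i\in S}x_i)\otimes(\prod_{k\notin S}x_k)$ in which each tensor factor is an increasing product of distinct letters, so that the relations $x^2=\frac{1}{2}[x,x]$ are never triggered, and the signs are exactly the Koszul signs of the corresponding super-shuffle. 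The same expansion computes $\Delta(x_1\wedge\dots\wedge x_n)$ in $\wedge(\mathfrak{g}_1)$, and applying $j$ in each factor returns these increasing products; this gives $\Delta\circ j=(j\otimes j)\circ\Delta$. Counitality $\varepsilon\circ j=\varepsilon$ is clear since each $x_i$ has counit $0$. The only genuinely computational step, and the one I expect to require care, is this sign bookkeeping.

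Finally I would conclude formally. As $\iota$ and $j$ are morphisms of super-coalgebras, so is their tensor product $\iota\otimes j \colon U(\mathfrak{g}_0)\otimes\wedge(\mathfrak{g}_1)\to \U(\mathfrak{g})\otimes\U(\mathfrak{g})$. Because $\U(\mathfrak{g})$ is a Hopf superalgebra, its multiplication $m_{\U}\colon \U(\mathfrak{g})\otimes\U(\mathfrak{g})\to\U(\mathfrak{g})$ is itself a morphism of super-coalgebras; this is precisely the bialgebra compatibility expressing that $\Delta$ is a superalgebra map. Therefore $\psi=m_{\U}\circ(\iota\otimes j)$, being a composite of super-coalgebra morphisms, is a super-coalgebra morphism. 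Combined with the first paragraph, $\psi$ is the desired unit-preserving left $U(\mathfrak{g}_0)$-module super-coalgebra isomorphism.
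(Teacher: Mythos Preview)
Your proof is correct and follows essentially the same approach as the paper, which simply records that Proposition~\ref{prop:co-split} immediately yields the corollary; you have spelled out the natural map $a\otimes(x_1\wedge\dots\wedge x_n)\mapsto \iota(a)\,x_1\dots x_n$ and verified the super-coalgebra compatibility that the paper leaves to the reader. The factorization $\psi=m_{\U}\circ(\iota\otimes j)$ is a clean way to make the coalgebra property formal.
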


\section{Algebraic supergroups and Harish-Chandra pairs}\label{sec:category-equivalence_theorem}

\subsection{}\label{subsec:Lie(G)}
Let $\G$ be an affine supergroup. Set $\A := \O(\G)$. Then the following is easy to see. 

\begin{lemma}\label{lem:Lie_super-coalgebra}
For homogeneous elements $a, b \in \A^+$, we have
\[ 
\Delta(ab) \equiv 1 \otimes ab + ab \otimes 1 + a \otimes b + (-1)^{|a||b|}b \otimes a 
\]
modulo $\A^+ \otimes (\A^+)^2 + (\A^+)^2 \otimes \A^+$.
\end{lemma}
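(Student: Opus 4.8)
The plan is to compute $\Delta(ab)$ directly using the fact that $\Delta$ is a superalgebra map, and then reduce modulo the stated ideal. First I would write $\Delta(a) = a_{(1)} \otimes a_{(2)}$ and $\Delta(b) = b_{(1)} \otimes b_{(2)}$, and use the key observation that for $a \in \A^+$ the coproduct satisfies $\Delta(a) \equiv 1 \otimes a + a \otimes 1 \pmod{\A^+ \otimes \A^+}$; this is the standard consequence of the counit axioms $(\varepsilon \otimes \mathrm{id})\Delta = \mathrm{id} = (\mathrm{id} \otimes \varepsilon)\Delta$ together with $\varepsilon(a) = 0$. So I would set $\Delta(a) = 1 \otimes a + a \otimes 1 + \delta(a)$ and $\Delta(b) = 1 \otimes b + b \otimes 1 + \delta(b)$, where $\delta(a), \delta(b) \in \A^+ \otimes \A^+$.

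Next I would multiply these two expressions, remembering that multiplication in $\A \otimes \A$ carries the super-symmetry sign: $(x \otimes y)(z \otimes w) = (-1)^{|y||z|}\, xz \otimes yw$. Since $\Delta$ is a morphism of superalgebras, $\Delta(ab) = \Delta(a)\Delta(b)$. Expanding the product of the two three-term sums gives nine terms. The three terms coming from $\delta(a)\cdot(\text{anything})$ and $(\text{anything})\cdot\delta(b)$ that keep a full $\delta$-factor must be checked to land in $\A^+ \otimes (\A^+)^2 + (\A^+)^2 \otimes \A^+$; the point is that $\delta(a) \in \A^+ \otimes \A^+$ multiplied by a term having an $\A^+$-factor in one tensor slot produces an $(\A^+)^2$ in that slot, hence lies in the ideal we quotient by. The surviving terms are exactly those built from the leading parts $1 \otimes a + a \otimes 1$ and $1 \otimes b + b \otimes 1$.

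Then I would collect the surviving four-by-nothing contributions. Expanding $(1 \otimes a + a \otimes 1)(1 \otimes b + b \otimes 1)$ with the sign rule yields $1 \otimes ab + (-1)^{|a||b|}\, b \otimes a + a \otimes b + ab \otimes 1$, where the crossed term $a \otimes 1)(1 \otimes b$ is sign-free while $1 \otimes a)(b \otimes 1$ produces the factor $(-1)^{|a||b|}$. This matches the claimed right-hand side $1 \otimes ab + ab \otimes 1 + a \otimes b + (-1)^{|a||b|} b \otimes a$. I would also note $ab \in (\A^+)^2 \subseteq \A^+$, so each displayed term does live in the appropriate filtration piece, making the congruence meaningful.

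I do not expect a serious obstacle here; the only delicate point is bookkeeping the super-symmetry signs correctly when multiplying in $\A \otimes \A$, and confirming that every term containing a genuine $\delta$-factor is absorbed into $\A^+ \otimes (\A^+)^2 + (\A^+)^2 \otimes \A^+$. The latter is a matter of observing that $\delta(a)$ and $\delta(b)$ each contribute an $\A^+$ in \emph{both} slots, so pairing with any term that already carries an $\A^+$ in one slot squares that slot into $(\A^+)^2$; the cross terms involving one $\delta$ and the unit $1$ in the other factor are the ones to watch, but these too acquire an $(\A^+)^2$ in one slot because the non-$\delta$ factor $a$ or $b$ multiplies into the matching $\A^+$ component of $\delta$.
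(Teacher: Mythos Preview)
Your argument is correct and is exactly the routine computation one expects; the paper itself omits the proof entirely, introducing the lemma with ``the following is easy to see.'' One cosmetic point: there are five terms involving a $\delta$-factor (three from $\delta(a)\cdot(\text{anything})$ plus two more from $(1\otimes a)\cdot\delta(b)$ and $(a\otimes 1)\cdot\delta(b)$), not three, but your verification that each lands in $\A^+\otimes(\A^+)^2 + (\A^+)^2\otimes\A^+$ is sound.
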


Set $\mathfrak{d} := \A^+/(\A^+)^2$. This is a supermodule. The \emph{Lie superalgebra}
\[
\mathfrak{g} = \mathrm{Lie}(\G)
\]
\emph{of} $\G$ is the dual supermodule $\mathfrak{d}^*$ of $\mathfrak{d}$. 
Note that $\A^*$ is the dual superalgebra of the super-coalgebra $\A$.
Regard $\mathfrak{g}$
as a super-submodule of $\A^*$  through the natural embedding
$\mathfrak{g} \subset \k \oplus \mathfrak{d}^*= (\A/(\A^+)^2)^* \subset \A^*$. By definition we have
\[
\mathfrak{g}_1=(W^{\A})^*.
\]

\begin{prop}\label{prop:Lie(G)}
The super-linear endomorphism $\mathrm{id} - c_{\A^*, \A^*}$ on $\A^* \otimes \A^*$, composed with
the product on $\A^*$, restricts to a map, $[\ , \ ] : \mathfrak{g} \otimes \mathfrak{g} \to \mathfrak{g}$, 
with which $\mathfrak{g}$ is indeed a Lie superalgebra. This satisfies (A3). 
\end{prop}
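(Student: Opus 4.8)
The plan is to work with the concrete description of $\mathfrak{g}$ as
\[
\mathfrak{g} = \{\, x \in \A^* : x(1)=0,\ x|_{(\A^+)^2}=0 \,\},
\]
which is exactly the image of the embedding $\mathfrak{g}=\mathfrak{d}^*\hookrightarrow \A^*$, since $\A/(\A^+)^2 = \k 1\oplus \mathfrak{d}$ and $\mathfrak{g}$ is cut out inside $(\A/(\A^+)^2)^*$ by the vanishing at $1$. The dual $\A^*$ of the super-coalgebra $\A$ is an associative superalgebra under convolution, $(xy)(a)=(-1)^{|y||a_{(1)}|}x(a_{(1)})y(a_{(2)})$ (standard super-signs), and the operation in the statement is precisely its super-commutator $[x,y]=xy-(-1)^{|x||y|}yx$. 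Consequently, once closure $[\mathfrak{g},\mathfrak{g}]\subseteq\mathfrak{g}$ is established, axioms (i)--(iv) are inherited from $\A^*$: they hold for the super-commutator of any associative superalgebra (for instance, for odd $v$ one has $[v,v]=2v^2$ and $[[v,v],v]=2(v^2v-vv^2)=0$), and they restrict to the closed sub-supermodule $\mathfrak{g}$. So the genuine content is closure together with (A3).

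For closure, fix homogeneous $x,y\in\mathfrak{g}$. Vanishing at $1$ is immediate from $\Delta(1)=1\otimes 1$, which gives $(xy)(1)=x(1)y(1)=0$ and likewise $(yx)(1)=0$. The substantive step is to show $[x,y]$ annihilates $(\A^+)^2$, and this is exactly what Lemma \ref{lem:Lie_super-coalgebra} is for: evaluating $x\otimes y$ against the displayed congruence for $\Delta(ab)$ ($a,b\in\A^+$ homogeneous), the terms $1\otimes ab$ and $ab\otimes 1$ vanish (by $x(1)=0$ and $y(1)=0$), and every error term in $\A^+\otimes(\A^+)^2+(\A^+)^2\otimes\A^+$ vanishes because $y$, resp.\ $x$, kills $(\A^+)^2$. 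What remains is
\[
(xy)(ab)=(-1)^{|y||a|}x(a)y(b)+(-1)^{|a||b|+|y||b|}x(b)y(a),
\]
together with the analogous formula for $(yx)(ab)$. Forming $[x,y](ab)=(xy)(ab)-(-1)^{|x||y|}(yx)(ab)$, I would use that $x(a)=0$ unless $|a|=|x|$ (and similarly for the other factors) to fix the degrees of $a,b$ in each surviving monomial; the resulting signs then match up so that the four monomials cancel in pairs, using only the commutativity of scalars in $\k$. Hence $[x,y]\in\mathfrak{g}$, and the bracket restricts as claimed.

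For (A3), take $v\in\mathfrak{g}_1$. As noted, $[v,v]=2v^2$, so it suffices to check that $u:=v^2\in\A^*$ lies in $\mathfrak{g}_0$; then $[v,v]=2u$ exhibits the required $2$-divisibility. The element $v^2$ is even, and $v^2(1)=v(1)^2=0$. That $v^2$ annihilates $(\A^+)^2$ is the $x=y=v$ case of the computation above: only monomials with $|a|=|b|=1$ survive, and there the two terms are $-v(a)v(b)$ and $+v(b)v(a)$, which cancel. Thus $u=v^2\in\mathfrak{g}_0$, giving (A3).

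I expect the only real obstacle to be the sign bookkeeping in the second paragraph: keeping track of the super-sign in the convolution product and in the canonical pairing $\A^*\otimes\A^*\to(\A\otimes\A)^*$, and then exploiting the degree constraints to collapse everything so that the required cancellations reduce to $\k$ being commutative. Everything else is either formal (the Lie axioms from the associative structure) or a direct specialization (A3) of the closure computation.
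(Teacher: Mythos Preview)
Your proof is correct, and the treatment of (A3) and of axiom (ii) is essentially identical to the paper's. The overall strategy, however, is organized a bit differently. The paper works on the coalgebra side: it uses Lemma~\ref{lem:Lie_super-coalgebra} to see that $(\mathrm{id}-c_{\A,\A})\circ\Delta$ descends to a map $\delta:\mathfrak{d}\to\mathfrak{d}\otimes\mathfrak{d}$, checks that $\delta$ satisfies the co-analogues of (iii) and (iv), and then dualizes to obtain the bracket on $\mathfrak{g}=\mathfrak{d}^*$. You instead stay entirely inside the associative superalgebra $\A^*$: you prove directly that the super-commutator preserves the sub-supermodule $\mathfrak{g}=\{x\in\A^*:x(1)=0,\ x|_{(\A^+)^2}=0\}$, and then all of (i)--(iv) are inherited for free from associativity. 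Your route is marginally more economical (no co-identities to verify, and (i) is immediate rather than implicit), at the cost of the sign bookkeeping you flag; the paper's route makes the duality with the Lie super-coalgebra structure on $\mathfrak{d}$ explicit, which is conceptually pleasant but not needed for the statement itself. Both arguments use Lemma~\ref{lem:Lie_super-coalgebra} in exactly the same way for the key step.
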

\begin{proof}
By Lemma \ref{lem:Lie_super-coalgebra} it follows that $(\mathrm{id}-c_{\A,\A})\circ \Delta$ induces a
super-linear map
\begin{equation}\label{eq:delta}
\delta : \mathfrak{d} \to \mathfrak{d}\otimes \mathfrak{d}, 
\end{equation}
which is seen to satisfy
\[
(\mathrm{id}_{\mathfrak{d} \otimes \mathfrak{d}} + c_{\mathfrak{d},\mathfrak{d}})\circ \delta = 0, \quad
(\mathrm{id}_{\mathfrak{d}\otimes \mathfrak{d}\otimes \mathfrak{d}} + 
c_{\mathfrak{d}, \mathfrak{d}\otimes \mathfrak{d}} + c_{\mathfrak{d} \otimes \mathfrak{d}, \mathfrak{d}}) 
\circ (\delta\otimes \mathrm{id}_{\mathfrak{d}})\circ \delta= 0.
\]
Therefore, $\delta$ is dualized to a map $[\ ,\ ]$ such as above, which satisfies (i), (iii) and (iv)
required to super-brackets; see Section \ref{subsec:definition_of_admissible_LSA}. 
Let $v \in \mathfrak{g}_1$. Then it follows from Lemma \ref{lem:Lie_super-coalgebra}
that given $a, b$ as in the lemma, we have
\[
v^2(ab) =  v(a)v(b) + (-1)^{|a||b|}v(b)v(a)= 0,
\]
since $v(a)v(b)=0$ unless $|a|=|b|=1$. Therefore, $v^2 \in \mathfrak{g}_0$ and $[v,v]=2v^2$. Thus 
(A3) is satisfied. The remaining (ii) is satisfied since $[[v,v],v]=2[v^2,v] = 0$. 
\end{proof}

Set $G := \G_{ev}$. Then $\overline{\A} = \O(G)$. We have the Lie algebra 
$\mathrm{Lie}(G)=(\overline{\A}^+/(\overline{\A}^+)^2)^*$ of $G$. 

\begin{lemma}\label{lem:Lie(G)g_0}
The natural embedding $\overline{\A}^*\subset \A^*$ induces an isomorphism 
$\mathrm{Lie}(G) \simeq \mathfrak{g}_0$ of Lie algebras.
\end{lemma}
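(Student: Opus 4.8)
The plan is to reduce the statement to a purely module-theoretic identification at the level of cotangent spaces, dualize it, and finally check compatibility with brackets. Recall that $\mathfrak{g}_0 = (\mathfrak{d}_0)^*$ and $\mathrm{Lie}(G) = (\oA^+/(\oA^+)^2)^*$, where $\mathfrak{d} = \A^+/(\A^+)^2$. So it suffices to produce a canonical isomorphism $\oA^+/(\oA^+)^2 \isomto \mathfrak{d}_0$ induced by the quotient map $\A \to \oA$, to verify that its dual is exactly the map $\mathrm{Lie}(G) \to \mathfrak{g}_0$ obtained by restricting the embedding $\oA^* \subset \A^*$, and that this map respects brackets.

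First I would compute the even component of $(\A^+)^2$. Writing $\A^+ = \A_0^+ \oplus \A_1$, a homogeneous product of two elements of $\A^+$ is even precisely when both factors are even or both are odd, so $((\A^+)^2)_0 = (\A_0^+)^2 + \A_1^2$ and hence
\[ \mathfrak{d}_0 = \A_0^+/\big((\A_0^+)^2 + \A_1^2\big). \]
On the other hand, since $\oA = \A_0/\A_1^2$, one has $\oA^+ = \A_0^+/\A_1^2$ and $(\oA^+)^2 = \big((\A_0^+)^2 + \A_1^2\big)/\A_1^2$, so that the quotient map $\A \to \oA$ induces a canonical isomorphism $\oA^+/(\oA^+)^2 \isomto \mathfrak{d}_0$. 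Dualizing this isomorphism of $\k$-modules yields a $\k$-linear isomorphism $\mathrm{Lie}(G) \isomto \mathfrak{g}_0$; tracing through the identifications, an even functional $f \in \mathfrak{g}_0 \subset \A^*$ vanishes on $\A_1$ and on $\A_1^2 \subset (\A^+)^2$, hence factors uniquely through $\oA$, and this factorization is precisely the embedding $\oA^* \subset \A^*$ restricted to $\mathrm{Lie}(G)$.

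It remains to check that this isomorphism respects brackets. The embedding $\oA^* \hookrightarrow \A^*$, $g \mapsto g \circ (\A \to \oA)$, is an algebra map for the convolution products because $\A \to \oA$ is a map of super-coalgebras. By Proposition \ref{prop:Lie(G)} the bracket on $\mathfrak{g}$ is induced by $(\mathrm{id} - c_{\A^*,\A^*})$ followed by the product on $\A^*$; restricted to the even part $\mathfrak{g}_0$, where $c_{\A^*,\A^*}$ acts as the trivial flip, this is just the commutator in $\A^*$, and likewise the bracket of $\mathrm{Lie}(G)$ is the commutator in $\oA^*$. Since an algebra map preserves commutators, the isomorphism $\mathrm{Lie}(G) \isomto \mathfrak{g}_0$ is one of Lie algebras.

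The only delicate point is the bookkeeping in the first step: one must correctly separate the homogeneous components of $(\A^+)^2$ and confirm that the resulting linear isomorphism is genuinely induced by the stated natural maps rather than merely existing abstractly. Once this is pinned down, dualization and the algebra-map argument are routine, and neither a finiteness nor the $2$-torsion-freeness hypothesis is needed here.
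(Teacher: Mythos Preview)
Your proof is correct and follows exactly the paper's approach: the paper's one-line proof records precisely the canonical isomorphism
\[
\A_0^+/\big((\A_0^+)^2+\A_1^2\big) \;\simeq\;
(\A_0^+/\A_1^2)\big/\big(((\A_0^+)^2+\A_1^2)/\A_1^2\big)
\]
and asserts that the stated Lie-algebra isomorphism is its dual. You have unpacked this identification in full (computing $((\A^+)^2)_0$, identifying $\oA^+$ and $(\oA^+)^2$) and added the explicit verification that the embedding $\oA^*\hookrightarrow \A^*$ is an algebra map and hence preserves commutators, a point the paper leaves implicit.
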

\begin{proof}
One sees that this is the dual of the canonical isomorphism 
\[
\A_0^+/((\A_0^+)^2+\A_1^2) \simeq
(\A_0^+/\A_1^2)/(((\A_0^+)^2+\A_1^2)/\A_1^2).
\] 
\end{proof}

\subsection{}\label{subsec:HCP}
Let $G$ be an algebraic group. 

The Lie algebra $\mathrm{Lie}(G)$ of $G$ is naturally embedded into $\O(G)^*$, and 
the embedding gives rise to an algebra map $U(\mathrm{Lie}(G))\to \O(G)^*$. The associated pairing
\begin{equation}\label{eq:U(Lie(G))O(G)_pairing}
\langle \ , \ \rangle : U(\mathrm{Lie}(G)) \times \O(G) \to \k
\end{equation}
is a Hopf pairing. Therefore, given a left $G$-module (resp., right) structure on a $\k$-module, 
there is induced a left (resp., right) $U(\mathrm{Lie}(G))$-module structure on the $\k$-module,
as was seen in \eqref{eq:induced_supermodule}.

The right adjoint action by $G$ on itself is dualized to the right co-adjoint coaction
\begin{equation}\label{eq:coad} 
\O(G) \to \O(G) \otimes \O(G),\quad a \mapsto a_{(2)}\otimes S(a_{(1)})a_{(3)}. 
\end{equation}
This induces on $\O(G)^+/(\O(G)^+)^2$ a right $\O(G)$-comodule (or left $G$-module) structure.
We assume
\begin{itemize}
\item[(B1)] $\O(G)/(\O(G)^+)^2$ is $\k$-finite projective. 
\end{itemize} 
This is necessarily satisfied if $\k$ is a field. Under the assumption, 
the left $G$-module structure on $\O(G)^+/(\O(G)^+)^2$ just obtained is transposed to 
a right $G$-module structure on $\mathrm{Lie}(G)$.  
The induced right $U(\mathrm{Lie}(G))$-module structure coincides 
with the right adjoint action $\mathrm{ad}_r(u)(v) = [v,u]$, $u,v \in \mathrm{Lie}(G)$,
as is seen by using the fact that the pairing above satisfies
\begin{equation}\label{eq:derivation}
\langle u, ab \rangle=\langle u,a \rangle \, \varepsilon(b) + \varepsilon(a) \, \langle u,b \rangle,\quad
\langle u, S(a) \rangle = - \langle u,a \rangle
\end{equation}  
for $u \in \mathrm{Lie}(G)$, $a, b\in \O(G)$. 

Let $G$ be an algebraic group which satisfies (B1), and
let $\mathfrak{g}$ be a Lie superalgebra such that $\mathfrak{g}_0 = \mathrm{Lie}(G)$. 
Note that $\mathfrak{g}_0$ is $\k$-finite projective and so $\k$-flat; it is a right $G$-module,
as was just seen. We assume in addition,
\begin{itemize}
\item[(B2)] $\mathfrak{g}_1$ is $\k$-finite free, and $\mathfrak{g}$ is admissible, and
\item[(B3)] $\O(G)$ is $\k$-flat.
\end{itemize}
Assuming (B1) we see that (B2) is equivalent to that $\mathfrak{g}_1$ is $\k$-finite free, and $\mathfrak{g}$
satisfies (A3).  

\begin{definition}[\text{cf.~\cite[Definition 3.1]{CF}}]\label{def:HCP} 
(1)~~Suppose that the pair $(G, \mathfrak{g})$ is accompanied with a right $G$-module structure on $\mathfrak{g}_1$ 
such that the induced right $U(\mathfrak{g}_0)$-module structure coincides with
the right adjoint $\mathfrak{g}_0$-action given by \eqref{eq:right_adjoint}. 
Then $(G, \mathfrak{g})$ is called a \emph{Harish-Chandra pair} 
if the super-bracket $[\ , \ ] : \mathfrak{g}_1\otimes \mathfrak{g}_1\to \mathfrak{g}_0$ 
restricted to $\mathfrak{g}_1\otimes \mathfrak{g}_1$ is right $G$-equivariant. 

(2)~~A \emph{morphism} $(G, \mathfrak{g}) \to (G', \mathfrak{g}')$ between 
Harish-Chandra pairs is a pair $(\alpha, \beta)$ of a morphism  
$\alpha: G \to G'$ of affine groups and a Lie superalgebra map 
$\beta = \beta_0\oplus \beta_1 : \mathfrak{g} \to \mathfrak{g}'$, such that 
\begin{itemize}
\item[(i)] the Lie algebra map $\mathrm{Lie}(\alpha)$ induced from $\alpha$ coincides with $\beta_0$, and  
\item[(ii)] 
$\beta_1(v^{\gamma})= \beta_1(v)^{\alpha(\gamma)},\quad  \gamma \in G,\ v\in \mathfrak{g}_1.$
\end{itemize} 

(3)~~The Harish-Chandra pairs and their morphisms form a category $\mathsf{HCP}$. 
\end{definition}

By convention (see \eqref{eq:convention}) the equation (ii)  of (2) above should read 
\[
(\beta_1\otimes \mathrm{id}_R)((v\otimes 1)^{\gamma})
= ((\beta_1\otimes \mathrm{id}_R)(v\otimes 1))^{\alpha_R(\gamma)},
\]
where $R$ is a commutative algebra, and $\gamma \in G(R)$.

\begin{rem}\label{rem:compare_definitions}
(1)\
Suppose that $\k$ is a field of characteristic $\ne 2$. In this situation
the notion of Harish-Chandra pairs was defined by \cite[Definition ~7]{M2} in purely Hopf algebraic terms.
It is remarked by \cite[Remark 9 (2)]{M2}
that if the characteristic $\mathrm{char}\, \k$ of $\k$ is zero, 
there is a natural category anti-isomorphism between 
our $\mathsf{HCP}$ defined above and the category of the Harish-Chandra pairs as defined by
\cite[Definition~7]{M2}. But this is indeed the case without the restriction on $\mathrm{char}\, \k$. 
A key fact is the following: once we are given an algebraic group $G$, a finite-dimensional 
right $G$-module $V$ and a right
$G$-equivariant linear map $[\ , \ ] : V \otimes V \to \mathrm{Lie}(G)$, then the pair $(\O(G), V^*)$,
accompanied with $[\ , \ ]$, is a Harish-Chandra pair in the sense of \cite{M2}, if and only if the direct sum
$\mathfrak{g} := \mathrm{Lie}(G) \oplus V$ is a Lie superalgebra (in our sense), with respect to the grading
$\mathfrak{g}_0= \mathrm{Lie}(G)$, $\mathfrak{g}_1=V$, and with respect to the super-bracket which uniquely
extends (a)~the bracket on $\mathrm{Lie}(G)$, (b)~the map $[\ , \ ]$, and 
(c)~the right adjoint $\mathrm{Lie}(G)$-action on $V$ which is induced from 
the right $G$-action on $V$. See \cite[Remark 2 (1)]{M2}, but note   
that in \cite{M2}, the notion of Lie superalgebras is used in a restrictive sense 
when $\mathrm{char}\, \k =3$; 
indeed, to define the notion, the article excludes Condition (ii) 
from our axioms given in the 
beginning of Section \ref{subsec:definition_of_admissible_LSA}.  

(2)\
As the referee pointed out, our definition of Harish-Chandra pairs looks different from those definitions
given in \cite[Section 7.4]{CCF} and \cite[Section 3.1]{CF} which require that the whole super-bracket 
$[\ , \ ] : \mathfrak{g} \otimes \mathfrak{g} \to \mathfrak{g}$ is $G$-equivariant. But this follows from the weaker
requirement of ours that the restricted super-bracket $[\ , \ ]|_{\mathfrak{g}_1\otimes \mathfrak{g}_1}$  
is $G$-equivariant, since $[\ , \ ]|_{\mathfrak{g}_0\otimes \mathfrak{g}_0}$ is obviously $G$-equivariant, 
and 
$[\ , \ ]|_{\mathfrak{g}_1\otimes \mathfrak{g}_0}$ is, too, 
as will be seen below. 
Let $\gamma \in G$, $u \in \mathfrak{g}_0$ and $v \in \mathfrak{g}_1$. Note that
\[ 
\langle u, a_{(1)} \rangle \, \gamma(a_{(2)}) = \gamma(a_{(1)}) \, \langle u^{\gamma}, a_{(2)} \rangle, 
\quad a \in \mathscr{O}(G). 
\]
Then the common requirement for the induced $U(\mathfrak{g}_0)$-module structure on $\mathfrak{g}_1$ shows 
that
$
[v, u]^{\gamma} = [v^{\gamma}, u^{\gamma}].
$ 
\end{rem}

\subsection{}\label{subsec:P}
We define $\mathsf{AHSA}$ to be the full subcategory of the category of affine Hopf superalgebras
which consists of the affine Hopf superalgebras $\A$ such that 
\begin{itemize}
\item[(C1)] $\A$ is split (see Definition \ref{def:split}),
\item[(C2)] $\overline{\A}$ is $\k$-flat and 
\item[(C3)] $\overline{\A}/(\overline{\A}^+)^2$ is $\k$-finite projective. 
\end{itemize}
Note that the affinity and (C1) imply that $W^{\A}$ is $\k$-finite free. If $\k$ is a field of
characteristic $\ne 2$, then $\mathsf{AHSA}$ is precisely the category of all 
affine Hopf superalgebras. 

We define $\mathsf{ASG}$ to be the full subcategory of the category of algebraic supergroups 
which consists of the algebraic supergroups $\G$ such that $\O(\G)$ is split, and $\G_{ev}$ satisfies
(B1), (B3).  This is anti-isomorphic to $\mathsf{AHSA}$, and is
precisely the category of all algebraic supergroups if $\k$ is a field of characteristic $\ne 2$. 

Let $\G \in \mathsf{ASG}$. Set 
\[ 
\A := \O(\G), \quad G:=\G_{ev}, \quad \mathfrak{g}:=\mathrm{Lie}(\G). 
\]
Then $\A \in \mathsf{AHSA}$, and $\O(G)\, (= \overline{\A})$ satisfies (B1), (B3). 
By Proposition \ref{prop:Lie(G)}, $\mathfrak{g}$ satisfies (B2). By Lemma \ref{lem:Lie(G)g_0}
we have a natural isomorphism $\mathrm{Lie}(G) \simeq \mathfrak{g}_0$, through which we will identify
the two, and suppose $\mathfrak{g}_0 = \mathrm{Lie}(G)$. Just as was seen in \eqref{eq:coad}, 
the right co-adjoint $\overline{\A}$-coaction defined by
\begin{equation}\label{eq:coad2}
\A \to \A \otimes \overline{\A},\quad a \mapsto a_{(2)}\otimes S(\overline{a}_{(1)})\overline{a}_{(3)}, 
\end{equation}
using the notation \eqref{eq:quotient_map}, induces on $\A^+/(\A^+)^2$ a 
right $\overline{\A}$-super-comodule (or left $G$-supermodule) structure; by (C3), it is 
transposed to a right $G$-supermodule structure on $\mathfrak{g}$, which is restricted to $\mathfrak{g}_1$.  

\begin{lemma}\label{lem:P}
Given the restricted right $G$-module structure on $\mathfrak{g}_1$, the pair $(G, \mathfrak{g})$
forms a Harish-Chandra pair, and so $(G, \mathfrak{g})\in \mathsf{HCP}$. 
\end{lemma}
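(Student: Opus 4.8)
The plan is to verify the two requirements of Definition \ref{def:HCP}(1): first, that the right $U(\mathfrak{g}_0)$-module structure on $\mathfrak{g}_1$ induced from the right $G$-supermodule structure coincides with the right adjoint action \eqref{eq:right_adjoint}; and second, that the restricted super-bracket $[\ ,\ ] : \mathfrak{g}_1 \otimes \mathfrak{g}_1 \to \mathfrak{g}_0$ is right $G$-equivariant. Throughout, the right $G$-supermodule structure on $\mathfrak{g} = \mathfrak{d}^*$, where $\mathfrak{d} = \A^+/(\A^+)^2$, is the transpose of the right $\overline{\A}$-comodule structure on $\mathfrak{d}$ induced by the co-adjoint coaction \eqref{eq:coad2}; this transposition is legitimate precisely because (C3) makes $\mathfrak{d}$, and hence $\mathfrak{g}$, $\k$-finite projective. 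We have already identified $\mathfrak{g}_0 = \mathrm{Lie}(G)$ by Lemma \ref{lem:Lie(G)g_0}, and $\mathfrak{g}$ is admissible by Proposition \ref{prop:Lie(G)}.

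For the first requirement I would run the super-analogue of the computation already carried out in Section \ref{subsec:HCP} for the purely even case. Concretely, I would pair a fixed $u \in \mathfrak{g}_0 = \mathrm{Lie}(G)$ against the $\overline{\A}$-leg $S(\overline{a}_{(1)})\,\overline{a}_{(3)}$ of the coaction \eqref{eq:coad2}, and use that $u$ acts on $\overline{\A}$ as a derivation in the sense of \eqref{eq:derivation}. Just as in the even case, a short computation identifies the induced action of $u$ on $v$ with the convolution commutator $vu - uv$ in $\A^*$, which by Proposition \ref{prop:Lie(G)} is the super-bracket $[v,u]$; note that the super-sign $(-1)^{|v||u|}$ is trivial since $u$ is even, so no sign bookkeeping intervenes. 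Restricting to $v \in \mathfrak{g}_1$ yields $v\cdot u = [v,u] = \mathrm{ad}_r(u)(v)$, which is exactly the first requirement.

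For the second requirement, the key observation is that the co-adjoint (conjugation) coaction \eqref{eq:coad2} makes $\A$ into a right $\overline{\A}$-comodule algebra for which the coproduct $\Delta : \A \to \A \otimes \A$ is colinear, where $\A \otimes \A$ carries the codiagonal coaction; this is checked by a direct manipulation in which an inner factor collapses via the antipode axiom. Consequently the map $\delta = (\mathrm{id} - c_{\A,\A}) \circ \Delta$ of \eqref{eq:delta} descends to a colinear map $\mathfrak{d} \to \mathfrak{d} \otimes \mathfrak{d}$ of right $\overline{\A}$-comodules, and in particular its homogeneous component $\mathfrak{d}_0 \to \mathfrak{d}_1 \otimes \mathfrak{d}_1$ is colinear. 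Dualizing this component, again using the finiteness from (C3), gives precisely the right $G$-equivariance of $[\ ,\ ] : \mathfrak{g}_1 \otimes \mathfrak{g}_1 \to \mathfrak{g}_0$. Conceptually this is nothing but the statement that the adjoint action of $G = \G_{ev}$ on $\mathfrak{g} = \mathrm{Lie}(\G)$ is by Lie superalgebra automorphisms, forced by the functoriality of conjugation on $\G$; the colinearity computation is the Hopf-algebraic incarnation of that fact. Having both requirements, $(G,\mathfrak{g})$ is a Harish-Chandra pair, so $(G,\mathfrak{g}) \in \mathsf{HCP}$.

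I expect the colinearity of $\Delta$ for the conjugation coaction, together with the correct handling of the super-symmetry $c_{\A,\A}$ when passing it through the coaction, to be the main obstacle: one must track the $\mathbb{Z}_2$-signs carefully to confirm that $c_{\A,\A}$ is itself colinear for the codiagonal coaction, so that $\delta$, and not merely $\Delta$, is a comodule map. Once that is in place, the descent to $\mathfrak{d}$ and the dualizations are routine given (C3).
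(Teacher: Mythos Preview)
Your proposal is correct and follows essentially the same approach as the paper: the paper's two-sentence proof invokes \eqref{eq:derivation} to identify the induced $\mathfrak{g}_0$-action with the adjoint action, and then observes that $\delta$ from \eqref{eq:delta} is $G$-equivariant so that its dual $[\ ,\ ]$ is as well. You have simply unpacked these two steps with more detail, including the colinearity of $\Delta$ under the conjugation coaction and the sign bookkeeping for $c_{\A,\A}$, which the paper leaves implicit.
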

\begin{proof}
The right $G$-module structure on $\mathfrak{g}_1$ induces the right adjoint $\mathfrak{g}_0$-action, 
as is seen by using \eqref{eq:derivation}. 
Since one sees that the map $\delta$ given in \eqref{eq:delta} is $G$-equivariant, so is its dual, $[\ , \ ]$.  
\end{proof}

We denote this object in $\mathsf{HCP}$ by
\[
\mathbf{P}(\G)= (G, \mathfrak{g}).
\]

\begin{prop}\label{prop:P}
$\G \mapsto \mathbf{P}(\G)$ gives a functor $\mathbf{P} : \mathsf{ASG} \to \mathsf{HCP}$.
\end{prop}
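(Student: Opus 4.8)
The plan is to extend $\mathbf{P}$ to morphisms by combining the functors $\G\mapsto \G_{ev}$ and $\G\mapsto\mathrm{Lie}(\G)$, and then to verify the two conditions of Definition \ref{def:HCP}~(2) together with the functor axioms. Let $\phi:\G\to\G'$ be a morphism in $\mathsf{ASG}$, corresponding to the Hopf superalgebra map $f:\O(\G')\to\O(\G)$. Restricting $\phi$ to purely even commutative superalgebras yields a morphism $\alpha:=\phi_{ev}:G\to G'$ of affine groups, dual to the induced map $\overline{f}:\O(G')\to\O(G)$. Since $f$ preserves the counit it carries $\O(\G')^+$ into $\O(\G)^+$, hence induces a super-linear map $\mathfrak{d}'\to\mathfrak{d}$ on cotangent supermodules; its dual is a super-linear map $\beta:=\mathrm{Lie}(\phi):\mathfrak{g}\to\mathfrak{g}'$. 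I set $\mathbf{P}(\phi):=(\alpha,\beta)$.

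First I would check that $\beta$ is a Lie superalgebra map and that Condition~(i), $\mathrm{Lie}(\alpha)=\beta_0$, holds. Because $f$ is a super-coalgebra map it is compatible with $(\mathrm{id}-c)\circ\Delta$ on $\O(\G')$ and on $\O(\G)$, hence with the induced maps $\delta$ of \eqref{eq:delta}; dualizing as in Proposition \ref{prop:Lie(G)} shows that $\beta$ respects the super-brackets, so in particular $\beta_0:\mathfrak{g}_0\to\mathfrak{g}_0'$ is a Lie algebra map. Condition~(i) then follows from the naturality of the isomorphism $\mathrm{Lie}(G)\simeq\mathfrak{g}_0$ of Lemma \ref{lem:Lie(G)g_0}: both $\mathrm{Lie}(\alpha)$ and $\beta_0$ are the dual of the map $\O(G')^+/(\O(G')^+)^2\to\O(G)^+/(\O(G)^+)^2$ induced by $\overline{f}$.

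The main point is Condition~(ii), the equivariance $\beta_1(v^{\gamma})=\beta_1(v)^{\alpha(\gamma)}$. Here I would use that the right co-adjoint coaction \eqref{eq:coad2} is natural with respect to $f$: since $f$ is a Hopf superalgebra map and the bar quotient is natural, the square formed by $a\mapsto a_{(2)}\otimes S(\overline{a}_{(1)})\overline{a}_{(3)}$ on $\O(\G')$ and on $\O(\G)$, with vertical maps $f$ and $f\otimes\overline{f}$, commutes. Passing to the odd cotangent components $W^{\O(\G')}\to W^{\O(\G)}$ and transposing by means of (C3), one obtains precisely the compatibility of $\beta_1$ with the right $G$-module structures along $\alpha$, which in the notation \eqref{eq:convention} is Condition~(ii). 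The transposition step — keeping track of the duality between the left $G$-supermodule (right $\overline{\A}$-comodule) structure on the cotangent space and the right $G$-supermodule structure on $\mathfrak{g}_1$, and confirming that the flip of direction matches $\alpha$ being dual to $\overline{f}$ — is where the care is needed, and I expect it to be the principal obstacle; everything else is the naturality of standard constructions.

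Finally, the functor axioms $\mathbf{P}(\mathrm{id})=\mathrm{id}$ and $\mathbf{P}(\psi\circ\phi)=\mathbf{P}(\psi)\circ\mathbf{P}(\phi)$ are immediate, since both $\G\mapsto\G_{ev}$ and $\G\mapsto\mathrm{Lie}(\G)$ are functorial and $\mathbf{P}(\phi)=(\phi_{ev},\mathrm{Lie}(\phi))$ is assembled componentwise from them.
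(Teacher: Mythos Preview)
Your proposal is correct and follows essentially the same approach as the paper, which proves the proposition in a single sentence by asserting that the constructions of $G=\G_{ev}$ and of $\mathfrak{g}=\mathrm{Lie}(\G)$ are functorial. You have simply unpacked that assertion, verifying explicitly the naturality of the Lie superalgebra structure (via $\delta$), of the identification $\mathrm{Lie}(G)\simeq\mathfrak{g}_0$, and of the right co-adjoint coaction \eqref{eq:coad2}, which is exactly what the paper's one-line proof leaves implicit.
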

\begin{proof}
Indeed, the constructions of $G$ and of $\mathfrak{g}$ are functorial. 
\end{proof}

\subsection{}\label{subsec:A(G,g)}
Let $(G, \mathfrak{g}) \in \mathsf{HCP}$. Modifying the construction of $A(C,W)$ given in \cite{M2},
we construct an object $\A(G,\mathfrak{g})$ in $\mathsf{AHSA}$. To be close to \cite{M2} for notation we set
\[
J := U(\mathfrak{g}_0),\quad C := \O(G),\quad W := \mathfrak{g}_1^*. 
\]
Then $W$ is $\k$-finite free. It is a right $C$-comodule, or a left $G$-module, 
with the right $G$-module structure on $\mathfrak{g}_1$ transposed to $W$. 

Let $\mathbb{N}=\{ 0,1,2,\dots \}$ denote the semigroup of non-negative integers. 
A supermodule is said to be $\mathbb{N}$-\emph{graded},  
if it is $\mathbb{N}$-graded as a $\k$-module and
if the original $\mathbb{Z}_2$-grading equals the $\mathbb{N}$-grading modulo $2$.
A Hopf superalgebra
is said to be $\mathbb{N}$-\emph{graded} \cite[Definition 1]{M2}, 
if it is $\mathbb{N}$-graded as an algebra and coalgebra and
if the original $\mathbb{Z}_2$-grading equals the $\mathbb{N}$-grading modulo $2$.

Recall from Section \ref{subsec:Hopf_pairing} that the tensor algebra 
$\T(\mathfrak{g}_1)=\bigoplus_{n=0}^{\infty}\T^n(\mathfrak{g}_1)$ 
on $\mathfrak{g}_1$ is a cocommutative Hopf
superalgebra; this is $\mathbb{N}$-graded. Recall that $\mathfrak{g}_0$ 
acts on $\mathfrak{g}_1$ by the right
adjoint; see \eqref{eq:right_adjoint}. 
This uniquely extends to a right $J$-module-algebra structure on $\T(\mathfrak{g}_1)$,
with which is associated the smash-product algebra \cite[p.155]{Sw} 
\[
\mathscr{H} := J \mltimes \T(\mathfrak{g}_1). 
\] 
Given the tensor-product coalgebra structure on $J \otimes \T(\mathfrak{g}_1)$, 
this $\mathscr{H}$ is a cocommutative Hopf superalgebra, which is $\mathbb{N}$-graded so that  
$\mathscr{H}(n) = J \otimes \T^n(\mathfrak{g}_1)$,\ $n \in \mathbb{N}$; see \cite[Section 3.2]{M2}.
Set 
\[ 
\U := \U(\mathfrak{g}). 
\]
Since we see that $\mathscr{H}$ is the quotient Hopf superalgebra of $\T(\mathfrak{g})$ divided by
the Hopf super-ideal generated by
\[
zw - wz - [z, w],\quad z \in \mathfrak{g},\ w \in \mathfrak{g}_0, 
\]
it follows that $\U= \mathscr{H}/\mathscr{I}$, where $\mathscr{I}$ is the Hopf super-ideal 
of $\mathscr{H}$ generated by the even primitives
\begin{equation}\label{eq:generators}
1 \otimes (uv+vu) - [u,v]\otimes 1, \ 1\otimes v^2- \frac{1}{2} [v,v] \otimes 1, 
\end{equation}
where $u, v \in \mathfrak{g}_1$.

Let $\T_c(W)$ denote the \emph{tensor coalgebra} on $W$, as given in \cite[Section 4.1]{M2}; this is a commutative
$\mathbb{N}$-graded Hopf superalgebra. In fact, this equals the tensor algebra 
$\T(W) = \bigoplus_{n=0}^{\infty}\T^n(W)$
as an $\mathbb{N}$-graded module, and is the \emph{graded dual} $\bigoplus_{n=0}^{\infty}\T^n(\mathfrak{g}_1)^*$
of $\T(\mathfrak{g}_1)$ (see \cite[p.231]{Sw}) as an algebra and coalgebra. 
Suppose that $\T^0(W)=\k$ is the trivial right $C$-comodule, and
$\T^n(W)$, $n > 0$, is the $n$-fold tensor product of the right $C$-comodule $W$. 
Then $\T_c(W)$ turns into a right $C$-comodule coalgebra.
The associated smash coproduct $C \cmdblackltimes \T_c(W)$,  
given the tensor-product algebra structure on $C \otimes \T_c(W)$, 
is a commutative $\mathbb{N}$-graded Hopf superalgebra. 
Explicitly, the coproduct is given by
\begin{equation}\label{eq:smash_coproduct}
\Delta(c \otimes d) = \big(c_{(1)} \otimes (d_{(1)})^{(0)}\big)\otimes 
\big((d_{(1)})^{(1)}c_{(2)}\otimes d_{(2)}\big),
\end{equation}
where $c \in C$,\ $d \in \T_c(W)$, and $d \mapsto d^{(0)} \otimes d^{(1)}$ denotes the right $C$-comodule
structure on $\T_c(W)$. 

In general, given an $\mathbb{N}$-graded supermodule $\mathscr{A}= \bigoplus_{n=0}^{\infty}\mathscr{A}(n)$, we suppose that 
it is given the linear topology defined by the the descending chains of super-ideals
\[
\bigoplus_{i>n}\mathscr{A}(n),\ n=0,1,\dots.
\]
The completion $\widehat{\mathscr{A}}$ coincide with the direct product $\prod_{n=0}^{\infty}\mathscr{A}(n)$. 
This is not $\mathbb{N}$-graded any more, but is still a supermodule. Given another 
$\mathbb{N}$-graded supermodule $\mathscr{B}$, the tensor product 
$\mathscr{A} \otimes \mathscr{B}$ is naturally an $\mathbb{N}$-graded supermodule. 
The complete tensor product $\widehat{\mathscr{A}}\, \widehat{\otimes}\, \widehat{\mathscr{B}}$
coincides with the completion of $\mathscr{A} \otimes \mathscr{B}$. We regard $\k$ as a trivially 
$\mathbb{N}$-graded supermodule, which is discrete. Suppose that $\mathscr{A}$ is an
$\mathbb{N}$-graded Hopf superalgebra. 
The structure maps on $\mathscr{A}$, being $\mathbb{N}$-graded and hence continuous, are completed to
\[
\widehat{\Delta} : \widehat{\mathscr{A}}\longrightarrow \widehat{\mathscr{A}}\mathbin{\widehat{\otimes}} \widehat{\mathscr{A}},\quad
\widehat{\varepsilon} : \widehat{\mathscr{A}}\longrightarrow \Bbbk,\quad
\widehat{S} : \widehat{\mathscr{A}}\longrightarrow \widehat{\mathscr{A}}.
\]
Satisfying the axiom of Hopf superalgebras with $\otimes$ 
replaced by $\mathbin{\widehat{\otimes}}$, this $\widehat{\mathscr{A}}$ may be called a 
\emph{complete topological Hopf superalgebra}. If $\mathscr{A}$ is commutative, then $\widehat{\mathscr{A}}$
is, too. See \cite[Section 2.3]{M2}. 

Applying the construction above to $C \cmdblackltimes \T_c(W)$, we suppose
\[
\mathscr{A}= C \cmdblackltimes \T_c(W),\quad \widehat{\mathscr{A}}=\prod_{n=0}^{\infty}C\otimes \T^n(W) 
\]
in what follows. We let 
\begin{equation}\label{eq:proj_to_C}
\pi : \widehat{\mathscr{A}}\to C\otimes \T^0(W) = C 
\end{equation}
denote the natural projection.  

We regard $C$ as a left $J$-module by
\[ 
x c := c_{(1)}\, \langle x, c_{(2)}\rangle,\quad x \in J,\ c \in C,
\]
where $\langle \ , \ \rangle : J \times C\to \k$ denotes the canonical 
Hopf pairing; see \eqref{eq:U(Lie(G))O(G)_pairing}. 

Let $\mathrm{Hom}_J$ denote the $\k$-module of left $J$-module maps. We
regard $\mathrm{Hom}_J(\mathscr{H}, C)$ as the completion of the $\mathbb{N}$-graded supermodule 
$\bigoplus_{n=0}^{\infty} \operatorname{Hom}_J(J \otimes \T^n(\mathfrak{g}_1),C)$. 
The canonical isomorphisms
\begin{equation}\label{eq:canonical_isom}
C \otimes \T^n(W)=\mathrm{Hom}(\T^n(\mathfrak{g}_1),C) \isomto 
\operatorname{Hom}_J(J \otimes \T^n(\mathfrak{g}_1),C),\quad n=0,1,\dots
\end{equation}
altogether amount to a super-linear homeomorphism
\begin{equation}\label{eq:xi1} 
\xi : \widehat{\mathscr{A}} \overset{\simeq}{\longrightarrow} \operatorname{Hom}_J(\mathscr{H}, C). 
\end{equation}

Tensoring the canonical pairings 
$J \times C\to \k$ and $\T(\mathfrak{g}_1) \times \T_c(W)\to \k$, we define 
\begin{equation}\label{eq:HA_pairing}
\langle \ , \ \rangle : \mathscr{H}\times \mathscr{A} \to \k,\quad \langle x\otimes y , c \otimes d \rangle
=\langle x, c\rangle \, \langle y,d \rangle,
\end{equation}
where $x\in J$,\ $y \in \T(\mathfrak{g}_1)$,\ $c \in C$,\ $d \in \T_c(W)$.  
This is a Hopf pairing, as was seen in \cite[Proposition 17]{M2}.

\begin{lemma}\label{lem:xi} 
$\xi$ is determined by
\begin{equation}\label{eq:xi2} 
\xi(a)(x) = \pi(a_{(1)})\, \langle x, a_{(2)}\rangle , \quad a \in \mathscr{A},\ 
x \in \mathscr{H}. 
\end{equation}    
\end{lemma}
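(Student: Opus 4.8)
The plan is to unwind $\xi$ from the graded isomorphisms \eqref{eq:canonical_isom} so as to obtain an explicit formula, and then to verify that it agrees with the right-hand side of \eqref{eq:xi2}; this amounts to a bookkeeping computation with the smash coproduct \eqref{eq:smash_coproduct} and the projection $\pi$ of \eqref{eq:proj_to_C}.

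First I would make the composite \eqref{eq:canonical_isom} explicit in each degree $n$. The duality isomorphism $C\otimes\T^n(W)=\operatorname{Hom}(\T^n(\mathfrak{g}_1),C)$ carries $c\otimes d$ (with $c\in C$, $d\in\T^n(W)$) to the map $y\mapsto\langle y,d\rangle\,c$, via the canonical pairing $\T^n(\mathfrak{g}_1)\times\T^n(W)\to\k$. Since $\mathcal{H}(n)=J\otimes\T^n(\mathfrak{g}_1)$ is $J$-free on $1\otimes\T^n(\mathfrak{g}_1)$ and $b\otimes y=(b\otimes 1)(1\otimes y)$ in $\mathcal{H}$, the second isomorphism $\operatorname{Hom}(\T^n(\mathfrak{g}_1),C)\isomto\operatorname{Hom}_J(J\otimes\T^n(\mathfrak{g}_1),C)$ extends a $\k$-linear $\phi$ to the $J$-linear map $b\otimes y\mapsto b\,\phi(y)$, where $b\in J$ acts on $C$ by $bc=c_{(1)}\langle b,c_{(2)}\rangle$. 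Composing gives
\[
\xi(c\otimes d)(b\otimes y)=b\,(\langle y,d\rangle\,c)=\langle y,d\rangle\,c_{(1)}\,\langle b,c_{(2)}\rangle,\qquad b\in J,\ y\in\T^n(\mathfrak{g}_1).
\]

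Next I would evaluate the right-hand side of \eqref{eq:xi2} on the same $a=c\otimes d$ and $x=b\otimes y$. By \eqref{eq:smash_coproduct} one has $a_{(1)}=c_{(1)}\otimes(d_{(1)})^{(0)}$ and $a_{(2)}=(d_{(1)})^{(1)}c_{(2)}\otimes d_{(2)}$. The crucial point is that $\pi$ annihilates every summand whose $\T(W)$-component has positive degree; since the coproduct of $\T_c(W)$ is deconcatenation, only the term with $d_{(1)}\in\T^0(W)=\k$ survives, and there $(d_{(1)})^{(0)}\otimes(d_{(1)})^{(1)}=1\otimes 1$ (as $\T^0(W)$ is the trivial comodule) while $d_{(2)}=d$. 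Thus $\pi(a_{(1)})=c_{(1)}$ and $a_{(2)}=c_{(2)}\otimes d$, so by \eqref{eq:HA_pairing}
\[
\pi(a_{(1)})\,\langle x,a_{(2)}\rangle=c_{(1)}\,\langle b\otimes y,\,c_{(2)}\otimes d\rangle=c_{(1)}\,\langle b,c_{(2)}\rangle\,\langle y,d\rangle,
\]
which coincides with the value of $\xi(c\otimes d)(b\otimes y)$ computed above.

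Since $\mathcal{A}=\bigoplus_n C\otimes\T^n(W)$ and $\mathcal{H}=\bigoplus_n\mathcal{H}(n)$, and both sides of \eqref{eq:xi2} are bilinear and determined degreewise, the identity on homogeneous elements extends to all $a\in\mathcal{A}$, $x\in\mathcal{H}$; because $\xi$ is continuous and $\mathcal{A}$ is dense in $\widehat{\mathcal{A}}$, this suffices to determine $\xi$ on $\widehat{\mathcal{A}}$. I do not anticipate a real obstacle here: the only care needed is to confirm that all positive-degree $d_{(1)}$ contributions are killed by $\pi$ and that the surviving degree-zero term carries exactly the factor $c_{(1)}\otimes c_{(2)}$, together with checking that no super-sign intervenes --- which it does not, precisely because the $\pi$-surviving term sits in $\T(W)$-degree $0$.
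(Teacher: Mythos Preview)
Your proof is correct and follows essentially the same route as the paper's own argument: both compute that $\pi(a_{(1)})\otimes a_{(2)}=c_{(1)}\otimes(c_{(2)}\otimes d)$ for $a=c\otimes d$, and both identify $\xi$ degreewise with the map $c\otimes d\mapsto\big(b\otimes y\mapsto bc\,\langle y,d\rangle=c_{(1)}\langle b,c_{(2)}\rangle\langle y,d\rangle\big)$. Your version is more explicit about why the positive-degree pieces of $d_{(1)}$ are killed by $\pi$ and about the absence of a super-sign, but the underlying computation is the same.
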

\begin{proof}
Note that if $a = c \otimes d$, where $c \in C$, $d \in \T_c(W)$, then 
\[
\pi(a_{(1)})\otimes a_{(2)} = c_{(1)} \otimes (c_{(2)} \otimes d). 
\]
Then the lemma follows since $\xi$ is the completion of the $\mathbb{N}$-graded linear map
\[ \mathscr{A} = C \otimes (\bigoplus_{n=0}^{\infty} \T^n(W)) \to
\mathrm{Hom}_J(J \otimes (\bigoplus_{n=0}^{\infty} \T^n(\mathfrak{g}_1)), C)
\]
given by
$ c \otimes d \mapsto \big(x \otimes y \mapsto x c \, \langle y, d\rangle \big), $
and this last element equals $c_{(1)}\, \langle x\otimes y, c_{(2)}\otimes d \rangle$.  
\end{proof}
\begin{rem}\label{rem:xi}
Recall that $\langle \mathscr{H}(n), \mathscr{A}(m)\rangle=0$ unless $n=m$. Therefore, 
the pairing \eqref{eq:HA_pairing} uniquely extends to 
\begin{equation}\label{eq:HhatA_pairing}
\langle \ , \ \rangle : \mathscr{H}\times \widehat{\mathscr{A}} \to \k 
\end{equation}
so that for each $x \in \mathscr{H}$, $\langle x, \ \rangle : \widehat{\mathscr{A}} \to \k$ is continuous.
Using this pairing one sees that the value $\xi(a)$ at $a \in \widehat{\mathscr{A}}$ is given by the same 
formula as \eqref{eq:xi2}, with $\pi(a_{(1)}) \otimes a_{(2)}$ understood to be 
$(\pi \, \widehat{\otimes} \, \mathrm{id})\circ \widehat{\Delta}(a)$. 
\end{rem}

We aim to transfer the structures on 
$\widehat{\mathscr{A}}$ to $\operatorname{Hom}_J(\mathscr{H}, C)$
through $\xi$; see Proposition \ref{prop:transferred_structures} below. 

Recall from Section \ref{subsec:P} that $\mathfrak{g}_0$ is a right $G$-module. Combined with 
the given right $G$-module structure on $\mathfrak{g}_1$, it results that $\mathfrak{g}\in 
\mathsf{SMod}\text{-}G$; see \eqref{eq:G-SMod}. Moreover, $\mathfrak{g}$ is a Lie-algebra object in
$\mathsf{SMod}\text{-}G$,
since the super-bracket $[\ , \ ] : \mathfrak{g}\otimes \mathfrak{g}\to \mathfrak{g}$ is $G$-equivariant,
as was proved in Remark \ref{rem:compare_definitions} (2).   

We regard $\mathscr{A}$ as a right $C$-super-comodule, or an object in $G\text{-}\mathsf{SMod}$, 
with respect to the right co-adjoint coaction
\begin{equation}\label{eq:coad3}
\mathscr{A}\to \mathscr{A}\otimes C,\quad a \mapsto a_{(2)}\otimes S(\pi(a_{(1)}))\, \pi(a_{(3)}). 
\end{equation}

\begin{lemma}\label{lem:extended_G-action}
We have the following.
\begin{itemize}
\item[(1)]
The right $G$-supermodule structure on $\mathfrak{g}$ uniquely extends 
to that on $\mathscr{H}$ so that $\mathscr{H}$ turns into an algebra object in $\mathsf{SMod}\text{-}G$. 
In fact, $\mathscr{H}$ turns into a Hopf-algebra object in $\mathsf{SMod}\text{-}G$.
\item[(2)]
With the structure above, $\mathscr{A}$ turns into a Hopf-algebra object in $G\text{-}\mathsf{SMod}$. 
\item[(3)]
The resulting structures are dual to each other in the sense that
\begin{equation}\label{eq:pairing_G-action}
\langle x^{\gamma}, a \rangle = \langle x, {}^{\gamma}a \rangle,\quad
\gamma \in G,\ x \in \mathscr{H},\ a \in \mathscr{A}.
\end{equation}
\end{itemize}
\end{lemma}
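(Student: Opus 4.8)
The plan is to prove the three assertions essentially by transport of structure, reducing everything to the basic building blocks and the duality of the Hopf pairing \eqref{eq:HA_pairing}. For part (1), recall that $\mathcal{H} = J \mltimes \T(\mathfrak{g}_1)$ with $J = U(\mathfrak{g}_0)$. I would first observe that the given right $G$-module structure on $\mathfrak{g}_0 = \mathrm{Lie}(G)$ (the co-adjoint action) and on $\mathfrak{g}_1$ together make $\mathfrak{g}$ an object of $\mathsf{SMod}\text{-}G$, and, as noted just before the lemma, a Lie-algebra object there. The right $G$-action on $\mathfrak{g}_0$ extends canonically to $J = U(\mathfrak{g}_0)$ as a Hopf-algebra object in $\mathsf{SMod}\text{-}G$ (the universal envelope of a Lie-algebra object in a symmetric tensor category is a Hopf-algebra object in that category), and similarly the action on $\mathfrak{g}_1$ extends to $\T(\mathfrak{g}_1)$. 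The key compatibility to check is that the right $J$-module-algebra structure on $\T(\mathfrak{g}_1)$, coming from the adjoint action \eqref{eq:right_adjoint}, is itself a morphism in $\mathsf{SMod}\text{-}G$; this is precisely the $G$-equivariance of the bracket $\mathfrak{g}_0 \otimes \mathfrak{g}_1 \to \mathfrak{g}_1$, which holds because the right $G$-module structure on $\mathfrak{g}_1$ induces the adjoint $\mathfrak{g}_0$-action (Definition \ref{def:HCP}). Granting this, the smash product $J \mltimes \T(\mathfrak{g}_1)$ inherits a $G$-action making it an algebra, and in fact a Hopf-algebra, object in $\mathsf{SMod}\text{-}G$, by functoriality of the smash-product construction.

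For part (2), the argument is formally dual. The right $C$-comodule (equivalently left $G$-module) structure on $W = \mathfrak{g}_1^*$ is the transpose of the action on $\mathfrak{g}_1$, and it extends to the tensor coalgebra $\T_c(W)$ as a $C$-comodule coalgebra. The co-adjoint coaction \eqref{eq:coad3} equips $\mathcal{A} = C \cmdblackltimes \T_c(W)$ with a left $G$-supermodule structure, and I would check that the smash-coproduct algebra and coalgebra structures \eqref{eq:smash_coproduct} are $G$-equivariant, so that $\mathcal{A}$ becomes a Hopf-algebra object in $G\text{-}\mathsf{SMod}$. The cleanest route is to deduce (2) from (1) together with (3): once \eqref{eq:pairing_G-action} is established on generators, the fact that $\mathcal{H}$ is a Hopf-algebra object in $\mathsf{SMod}\text{-}G$ transfers, via the nondegenerate graded-dual pairing, to the statement that $\mathcal{A}$ is a Hopf-algebra object in $G\text{-}\mathsf{SMod}$.

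The heart of the matter is therefore part (3), the duality formula $\langle x^{\gamma}, a \rangle = \langle x, {}^{\gamma}a \rangle$. I would verify this on the degree-$n$ pieces $\mathcal{H}(n) = J \otimes \T^n(\mathfrak{g}_1)$ and $\mathcal{A}(n) = C \otimes \T^n(W)$, where the pairing factors as \eqref{eq:HA_pairing}. On the $J$-$C$ factor the identity reduces to the defining property \eqref{eq:derivation} of the pairing \eqref{eq:U(Lie(G))O(G)_pairing} together with the fact that the right $G$-action on $\mathfrak{g}_0$ is, by construction, transpose to the left $G$-action (co-adjoint coaction) on $C^+/(C^+)^2$; on the $\T(\mathfrak{g}_1)$-$\T_c(W)$ factor it reduces to the fact that the pairing between $\mathfrak{g}_1$ and $W = \mathfrak{g}_1^*$ is the evaluation pairing, so that the right $G$-action on $\mathfrak{g}_1$ and the transposed left $G$-action on $W$ satisfy $\langle v^{\gamma}, w \rangle = \langle v, {}^{\gamma}w \rangle$ by definition of the transpose; this extends multiplicatively to all of $\mathcal{H}(n)$ and $\mathcal{A}(n)$ since both $G$-actions and the pairing are built tensorially. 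I expect the main obstacle to be bookkeeping rather than conceptual: one must confirm that the $G$-action extends \emph{consistently} across the smash product (so that the defining relations of $\mathcal{H}$, namely the images of \eqref{eq:generators}, are $G$-stable — this again uses $G$-equivariance of the bracket, proved in Remark \ref{rem:compare_definitions}~(2)), and one must track the signs from the super-symmetry through the smash-product coproduct \eqref{eq:smash_coproduct}. Once the action is checked to be well-defined on generators and relations and the pairing identity \eqref{eq:pairing_G-action} holds on generators, both sides of every structural identity agree because the pairing is nondegenerate in each graded degree, so no further computation beyond the generators is needed.
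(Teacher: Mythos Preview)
Your proposal is essentially correct and close in spirit to the paper's argument, but there are two points worth flagging.

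First, a minor slip: the elements in \eqref{eq:generators} are \emph{not} the defining relations of $\mathcal{H}$; they generate the ideal $\mathcal{I}$ with $\mathcal{H}/\mathcal{I}=\mathbf{U}$. The Hopf superalgebra $\mathcal{H}$ itself is the quotient of $\mathbf{T}(\mathfrak{g})$ by the ideal generated by $zw-wz-[z,w]$ with $z\in\mathfrak{g}$, $w\in\mathfrak{g}_0$. This is exactly how the paper proceeds in (1): extend the $G$-action to $\mathbf{T}(\mathfrak{g})$ and observe that these relations are $G$-stable because $[z,w]^{\gamma}=[z^{\gamma},w^{\gamma}]$. Your route---extend separately to $J$ and $\mathbf{T}(\mathfrak{g}_1)$ and check smash-product compatibility---also works and uses the same $G$-equivariance of the bracket, so the slip is harmless, but you should correct the reference.

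Second, your plan to deduce (2) from (1) and (3) reverses the paper's logic. The paper verifies (2) directly (the co-adjoint coaction \eqref{eq:coad3} manifestly makes $\mathcal{A}$ a Hopf-algebra object, since $\mathcal{A}$ carries the tensor-product algebra structure and the smash coproduct is built from $C$-comodule maps), and then uses both (1) and (2) to establish (3): one checks \eqref{eq:pairing_G-action} first for $x\in J$, $a\in C$ by induction on the length of $x=u_1\cdots u_r$, then for $x\in\mathbf{T}(\mathfrak{g}_1)$, $a\in\mathbf{T}_c(W)$ by noting that the $G$-action on $\mathcal{A}$ restricted to $\mathbf{T}_c(W)$ agrees with the original $C$-coaction, and finally combines via \eqref{eq:HA_pairing} using that both $G$-actions are algebra actions. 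Your alternative ordering can be made to work, but you would still need the algebra part of (2) to extend \eqref{eq:pairing_G-action} multiplicatively from generators, so in practice you end up checking (2) directly anyway.
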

\begin{proof}
(1)~~The right $G$-supermodule structure on $\mathfrak{g}$ uniquely extends to that 
on $\T(\mathfrak{g})$ so that $\T(\mathfrak{g})$ turns into an algebra object in $\mathsf{SMod}\text{-}G$. 
The extended structure factors to $\mathscr{H}$, since we have 
$[z,w]^{\gamma}=[z^{\gamma}, w^{\gamma}]$, where 
$\gamma \in G$, $z \in \mathfrak{g}$ and $w \in \mathfrak{g}_0$. 
One sees easily that the resulting structure on $\mathscr{H}$ is such as mentioned above. 

(2)~~This is easy to see.

(3)~~Let $a \in C$, and let $x=u_1\dots u_r$ be an element of $J$ with $u_i \in \mathfrak{g}_0$.  
One sees by induction on $r$ that \eqref{eq:pairing_G-action} holds for these $x$ and $a$,
using the fact that $G$-actions preserve the algebra structure on $J$ and the coalgebra structure on $C$.

We see from \eqref{eq:smash_coproduct} that the left $G$-module structure on $\mathscr{A}$, 
restricted to $\T_c(W)= \k \otimes \T_c(W)$, is
precisely what corresponds to the original right $C$-comodule structure on $\T_c(W)$. 
It follows that \eqref{eq:pairing_G-action} holds for $x \in \T(\mathfrak{g}_1)$, $a \in \T_c(W)$. 

The desired equality now follows from the definition \eqref{eq:HA_pairing} together with the fact
that the $G$-actions preserve the products on $\mathscr{H}$ and on $\mathscr{A}$.  
\end{proof}

For each $n \ge 0$ we have a natural linear isomorphism (see \eqref{eq:canonical_isom}) from 
\[ 
\bigoplus_{i+j=n} \mathrm{Hom}_J(J\otimes \T^i(\mathfrak{g}_1), C)\otimes 
\mathrm{Hom}_J(J\otimes \T^j(\mathfrak{g}_1), C)
\]
onto the $\k$-module  
\[ 
\bigoplus_{i+j=n} \mathrm{Hom}_{J\otimes J}((J\otimes \T^i(\mathfrak{g}_1))\otimes 
(J\otimes \T^j(\mathfrak{g}_1)), C\otimes C) 
\]
which consists of left $J\otimes J$-module maps. 
The direct product $\prod_{n=0}^{\infty}$ of the isomorphisms gives the super-linear homeomorphism 
\begin{equation*}\label{eq:Hom_homo} 
\mathrm{Hom}_J(\mathscr{H}, C)\, \widehat{\otimes}\, \mathrm{Hom}_J(\mathscr{H}, C)
\overset{\approx}{\longrightarrow} \mathrm{Hom}_{J\otimes J}(\mathscr{H}\otimes \mathscr{H}, C\otimes C),
\end{equation*}
which is indeed the completion of the continuous map  
$f \otimes g \mapsto (x \otimes y \mapsto f(x)\otimes g(y))$,
where $f, g \in \mathrm{Hom}_J(\mathscr{H}, C)$,\ $x, y \in \mathscr{H}$. 
This homeomorphism will be used in Part 2 below.

\begin{prop}\label{prop:transferred_structures}
Suppose that
$f, g \in \operatorname{Hom}_J(\mathscr{H}, C),\ x, y \in \mathscr{H}$ and $\gamma, \delta \in G(R)$,
where $R$ is an arbitrary commutative algebra.

\begin{itemize}
\item[(1)]
The product, the identity, the counit $\widehat{\varepsilon}$ and the antipode $\widehat{S}$ on 
$\widehat{\mathscr{A}}$ are transferred to $\operatorname{Hom}_J(\mathscr{H},C)$ through $\xi$ so that 
\begin{align*}
fg(x) &= f(x_{(1)})g(x_{(2)}), \\ 
\xi(1)(x) &= \varepsilon (x)1,\\
\widehat{\varepsilon}(f) &= \varepsilon(f(1)),\\
\langle \gamma,\ \widehat{S}(f)(x)\rangle &= \langle \gamma^{-1},\ f(S(x)^{\gamma^{-1}}) \rangle.
\end{align*}
\item[(2)]
Through $\xi$ and $\xi ~ \widehat{\otimes} ~ \xi$,  the coproduct on $\widehat{\mathscr{A}}$ 
is translated to 
\begin{align*}
\widehat{\Delta} : \operatorname{Hom}_J(\mathscr{H}, C) \to 
&\operatorname{Hom}_J(\mathscr{H}, C) ~ \widehat{\otimes} ~ 
\operatorname{Hom}_J(\mathscr{H}, C)\\
&\approx \operatorname{Hom}_{J \otimes J}(\mathscr{H}\otimes \mathscr{H}, C\otimes C)
\end{align*}
so that 
\begin{equation*}
\langle (\gamma, \delta),\ \widehat{\Delta}(f)(x \otimes y) \rangle 
= \langle \gamma \delta,\ f(x^{\delta}\,  y) \rangle. 
\end{equation*}
\end{itemize} 
Here, $\langle \gamma^{\pm1},\ \rangle$,\ $\langle \gamma \delta,\ \rangle$ and
$\langle (\gamma, \delta),\ \rangle$ denote the functor points in $G(R)$ and in $(G\times G)(R)$, 
respectively. 
\end{prop}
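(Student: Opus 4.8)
The plan is to transport each structure map of $\widehat{\mathcal{A}}$ along the homeomorphism $\xi$ and to read off its effect on $\operatorname{Hom}_J(\mathcal{H},C)$ directly from the defining formula \eqref{eq:xi2} of Lemma \ref{lem:xi}. Since $\xi$ is a super-linear homeomorphism and every structure map of $\widehat{\mathcal{A}}$ is continuous, it suffices to verify each identity for $f=\xi(a)$ with $a$ in the dense graded subspace $\mathcal{A}\subset\widehat{\mathcal{A}}$, and then to pass to arbitrary $a\in\widehat{\mathcal{A}}$ via the extended pairing of Remark \ref{rem:xi}. The three tools I would rely on are the Hopf-pairing axioms \eqref{eq:Hopf_pairing_conditions}, the fact that the projection $\pi$ of \eqref{eq:proj_to_C} is a superalgebra map annihilating all positive degrees (so that $\widehat{\varepsilon}=\varepsilon_C\circ\pi$), and the duality \eqref{eq:pairing_G-action} between the $G$-actions on $\mathcal{H}$ and $\mathcal{A}$ furnished by Lemma \ref{lem:extended_G-action}.

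The multiplicative data in (1) are routine Sweedler manipulations. For the product I would expand $\xi(ab)(x)$ using that $\Delta_{\mathcal{A}}$ is a superalgebra map, that $\pi$ is multiplicative, and that $\langle x,a_{(2)}b_{(2)}\rangle=\langle x_{(1)},a_{(2)}\rangle\,\langle x_{(2)},b_{(2)}\rangle$ by \eqref{eq:Hopf_pairing_conditions}; regrouping yields $\xi(a)(x_{(1)})\,\xi(b)(x_{(2)})$, the convolution $fg(x)=f(x_{(1)})g(x_{(2)})$. The identity is immediate from $\Delta(1)=1\otimes 1$ and $\langle x,1\rangle=\varepsilon(x)$. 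For the counit, \eqref{eq:xi2} gives $\xi(a)(1)=\pi(a_{(1)})\,\varepsilon(a_{(2)})=\pi(a)$, whence $\widehat{\varepsilon}(a)=\varepsilon_C(\pi(a))=\varepsilon(f(1))$. The only care needed here is the bookkeeping of Koszul signs coming from the super-symmetry in $\Delta_{\mathcal{A}}$, which cancel against the degree matching built into the pairing.

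The substance of the proposition lies in the antipode and the coproduct, stated via functor points precisely because their target elements, lying in $C$ and in $C\otimes C$, are pinned down by Yoneda once their images under all $\gamma\in G(R)$, respectively $(\gamma,\delta)\in(G\times G)(R)$, are known. For the coproduct I would identify $\widehat{\Delta}(f)$, under $\xi\,\widehat{\otimes}\,\xi$ followed by the homeomorphism displayed just before the proposition, with the map $x\otimes y\mapsto\xi(a_{(1)})(x)\otimes\xi(a_{(2)})(y)$, then pair with $(\gamma,\delta)$ and reorganize. Two inputs drive the reorganization: the explicit smash coproduct \eqref{eq:smash_coproduct} on $\mathcal{A}=C\cmdblackltimes\T_c(W)$, whose comodule twist $(d_{(1)})^{(1)}c_{(2)}$ is exactly what produces the right translate $x^{\delta}$, and the duality \eqref{eq:pairing_G-action}, which converts the right action of $\delta$ on $\mathcal{H}$ into the left coadjoint coaction \eqref{eq:coad3} on $\mathcal{A}$; combined with the group law $(\gamma\delta)(c)=\gamma(c_{(1)})\delta(c_{(2)})$ this should match the left side against $\langle\gamma\delta,\,f(x^{\delta}y)\rangle$. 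The antipode is handled in the same spirit: starting from $\xi(S_{\mathcal{A}}a)$ and the super relation $\Delta\circ S=c_{\mathcal{A},\mathcal{A}}\circ(S\otimes S)\circ\Delta$, I would use \eqref{eq:pairing_G-action} together with the compatibility of the $G$-actions with the antipodes (Lemma \ref{lem:extended_G-action}) to trade $\gamma$ for $\gamma^{-1}$ and to move the antipode and the $\gamma^{-1}$-translate onto the argument $S(x)^{\gamma^{-1}}$.

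I expect the coproduct identity to be the main obstacle: it is the one place where the $C$-comodule structure on $\T_c(W)$ entering through \eqref{eq:smash_coproduct} and the right $G$-module structure on $\mathcal{H}$ must be forced to coincide, and keeping the four-fold coproduct of $\mathcal{A}$, the duality \eqref{eq:pairing_G-action}, and the Koszul signs simultaneously aligned is where the genuine care is required. Once the coproduct is established, the antipode formula follows formally from the same duality together with group inversion, and the remaining items of (1) are straightforward.
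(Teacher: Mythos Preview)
Your strategy is correct and matches the paper's: reduce to $f=\xi(a)$ with $a\in\mathcal{A}$ via Lemma~\ref{lem:xi}, then compute with the Hopf pairing and the $G$-duality. The product, unit, and counit go exactly as you describe.

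Where you diverge is in the execution of the antipode and coproduct formulas. You propose to unpack the explicit smash coproduct \eqref{eq:smash_coproduct} and track the comodule twist $(d_{(1)})^{(1)}c_{(2)}$ to produce $x^{\delta}$. This would work, but the paper avoids it entirely. Writing $\overline{a}:=\pi(a)$, the paper works only with the generic Sweedler notation $a_{(1)}\otimes a_{(2)}$ for $\Delta_{\mathcal{A}}$ together with the concrete formula
\[
{}^{\gamma}a=\langle\gamma^{-1},\overline{a}_{(1)}\rangle\,a_{(2)}\,\langle\gamma,\overline{a}_{(3)}\rangle
\]
(equation~\eqref{eq:gamma_a}), which packages the coadjoint coaction \eqref{eq:coad3} and hence the duality \eqref{eq:pairing_G-action}. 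The whole computation for the coproduct is then a four-line Sweedler manipulation: expand $\langle(\gamma,\delta),\widehat{\Delta}(f)(x\otimes y)\rangle$ as $\langle\gamma,\overline{a}_{(1)}\rangle\langle x,a_{(2)}\rangle\langle\delta,\overline{a}_{(3)}\rangle\langle y,a_{(4)}\rangle$, insert the identity $\langle\delta,\overline{a}_{(2)}\rangle\langle\delta,S(\overline{a}_{(3)})\rangle=\varepsilon(\overline{a}_{(2)})$, and recognize the three middle factors as $\langle x,{}^{\delta}a_{(3)}\rangle=\langle x^{\delta},a_{(3)}\rangle$. The antipode is handled independently by the same insertion trick, not derived from the coproduct identity as you suggest.

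So your worry that the coproduct is ``the main obstacle'' requiring the smash coproduct to be opened up is misplaced; the paper's argument is shorter and more uniform than what you outline, and no Koszul sign bookkeeping is visible because everything is absorbed into the pairing and into~\eqref{eq:gamma_a}.
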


The formulas are essentially the same as those given in \cite[Proposition 18 (2), (3)]{M2}. 
One will see below that the proof here, using Lemma \ref{lem:xi}, is simpler.  

\begin{proof}
(1)~~Let $a \in \mathscr{A}$, and write as $\pi(a)= \overline{a}$. Then one has
\begin{equation}\label{eq:gamma_a}
{}^{\gamma}a = \langle \gamma^{-1}, \overline{a}_{(1)}\rangle\, a_{(2)}\, 
\langle \gamma, \overline{a}_{(3)}\rangle,\quad \gamma \in G.
\end{equation}
To prove the last formula we may suppose $f =\xi(a)$, since we evaluate $f$, $\widehat{S}(f)$
on $\mathscr{H}$. By using Lemma \ref{lem:xi} we see that
\begin{align*} 
\text{LHS} &=  \langle x , S(a_{(1)}) \rangle\, \langle \gamma, S(\overline{a}_{(2)})\rangle
= \langle S(x), a_{(1)} \rangle\, \langle \gamma^{-1}, \overline{a}_{(2)} \rangle \\
&= \langle \gamma^{-1} , \overline{a}_{(1)} \rangle\, \langle \gamma , \overline{a}_{(2)} \rangle\, 
\langle S(x), a_{(3)} \rangle\, \langle \gamma^{-1}, \overline{a}_{(4)} \rangle\\
&= \langle \gamma^{-1} , \overline{a}_{(1)}\rangle\, \langle S(x), {}^{\gamma^{-1}}a_{(2)} \rangle 
=\text{RHS}.
\end{align*}
The rest is easy to see.

(2)~~As above we may suppose $f=\xi(a)$, $a \in \mathscr{A}$. Then
\begin{align*} 
\text{LHS} 
&= \langle \gamma, \overline{a}_{(1)} \rangle\, \langle x, a_{(2)}\rangle\, 
\langle \delta, \overline{a}_{(3)} \rangle\,
\langle y, a_{(4)}\rangle\\
&= \langle \gamma, \overline{a}_{(1)} \rangle\, \langle \delta, \overline{a}_{(2)} \rangle\,
\langle \delta, S(\overline{a}_{(3)})\rangle\, 
\langle x, a_{(4)}\rangle\,
\langle \delta, \overline{a}_{(5)}\rangle\, 
\langle y, a_{(6)}\rangle\\
&= \langle \gamma, \overline{a}_{(1)} \rangle\, \langle \delta, \overline{a}_{(2)} \rangle\, 
\langle x, {}^{\delta}a_{(3)}\rangle\, \langle y, a_{(4)}\rangle 
=\text{RHS}.
\end{align*} 
\end{proof} 

Recall from \eqref{eq:generators} that $\mathscr{I}$ is the Hopf super-ideal of $\mathscr{H}$ such that
$\mathscr{H}/\mathscr{I} = \U$. Note that by the $\k$-flatness assumption (B3), the following statement
makes sense. 

\begin{lemma}\label{lem:G-stable}
$\mathscr{I}$ is $G$-stable, or in other words, it is $C$-costable. Therefore,
$\U \in \mathsf{SMod}\text{-}G$. 
\end{lemma}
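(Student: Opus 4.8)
The plan is to reduce the claim to a statement about the finite generating set of $\mathcal{I}$ and then to propagate $G$-stability from the generators to the whole ideal. Write the right $C$-coaction on $\mathcal{H}$ as $\rho(z)=z^{(0)}\otimes z^{(1)}$. By Lemma \ref{lem:extended_G-action}(1) the algebra $\mathcal{H}$ is an algebra object in $\mathsf{SMod}\text{-}G$, so $\rho$ is an algebra map; since by the flatness assumption (B3) the space $\mathcal{I}\otimes C$ is genuinely a subspace (indeed a two-sided ideal) of $\mathcal{H}\otimes C$, the two-sided ideal $\langle M\rangle$ generated by any $G$-subcomodule $M\subseteq\mathcal{H}$ will again satisfy $\rho(\langle M\rangle)\subseteq\langle M\rangle\otimes C$. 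Hence it will suffice to show that the linear span $M$ of the generators \eqref{eq:generators} is $C$-costable.

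Next I would compute $\rho$ on the two types of generators. For $u,v\in\mathfrak{g}_1$ the coaction is diagonal on $\T^2(\mathfrak{g}_1)$, and the $G$-equivariance of the restricted bracket $[\ ,\ ]\colon\mathfrak{g}_1\otimes\mathfrak{g}_1\to\mathfrak{g}_0$ yields $\rho([u,v])=[u^{(0)},v^{(0)}]\otimes u^{(1)}v^{(1)}$. Writing $g_{u,v}:=1\otimes(uv+vu)-[u,v]\otimes1$, I would then obtain $\rho(g_{u,v})=g_{u^{(0)},v^{(0)}}\otimes u^{(1)}v^{(1)}$, which plainly lies in $M\otimes C$.

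The generators $h_v:=1\otimes v^2-\tfrac12[v,v]\otimes1$ are the delicate ones. Writing $\rho(v)=\sum_i v_i\otimes c_i$, multiplicativity of $\rho$ gives $\rho(v^2)=\sum_{i,j}v_iv_j\otimes c_ic_j$, and equivariance of the bracket gives $\rho([v,v])=\sum_{i,j}[v_i,v_j]\otimes c_ic_j$. The point I would then make is that $\rho(\tfrac12[v,v])=\tfrac12\rho([v,v])$: this requires that halving commute with the coaction, which holds because $\mathfrak{g}_0\otimes C$ is flat, hence $2$-torsion free. Subtracting, and using the commutativity of $C$ to symmetrize the monomials $c_ic_j$ together with the symmetry $[v_i,v_j]=[v_j,v_i]$ of Lemma \ref{lem:DHCP}(b), the off-diagonal part reassembles into $\sum_{i<j}g_{v_i,v_j}\otimes c_ic_j$ while the diagonal part becomes $\sum_i h_{v_i}\otimes c_i^2$; thus $\rho(h_v)\in M\otimes C$.

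Together these show $\rho(M)\subseteq M\otimes C$, so $M$, and therefore $\mathcal{I}=\langle M\rangle$, is $C$-costable, i.e.\ $G$-stable; the quotient $\U=\mathcal{H}/\mathcal{I}$ then inherits a right $C$-coaction, giving $\U\in\mathsf{SMod}\text{-}G$. The main obstacle is precisely the handling of $h_v$: because the coaction spreads $v^2$ into all cross terms $v_iv_j$, one must recombine these — invoking both the commutativity of $C$ and property (b) — into honest first-type generators, and one must separately justify that $\tfrac12$ passes through $\rho$, which is where $2$-torsion-freeness is used in an essential way.
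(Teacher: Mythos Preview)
Your proof is correct and follows essentially the same route as the paper's own argument: both dispatch the first-type generators via $G$-equivariance of $[\ ,\ ]\!\mid_{\mathfrak{g}_1\otimes\mathfrak{g}_1}$, and for $h_v$ both expand $\rho(v^2)$ multiplicatively, invoke $2$-torsion-freeness of $C\otimes\mathfrak{g}_0$ (from (B3)) to push $\tfrac12$ through $\rho$, and recombine the cross terms $v_iv_j+v_jv_i-[v_i,v_j]$ into first-type generators. One cosmetic point: in the paper's conventions a right $G$-supermodule structure corresponds to a \emph{left} $C$-supercomodule structure, so the paper writes $\rho:\mathcal{H}\to C\otimes\mathcal{H}$ rather than your right-coaction notation; this does not affect the argument.
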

\begin{proof}
Since $[ \ , \ ] : \mathfrak{g}_1 \otimes \mathfrak{g}_1 \to \mathfrak{g}_0$ is $G$-equivariant, 
it follows that the elements $uv + vu -[u,v]$ from 
\eqref{eq:generators} generate in $\mathscr{H}$ a $C$-costable $\k$-submodule. 

Let $\rho : \mathscr{H} \to C\otimes \mathscr{H}$ be the left $C$-comodule structure on $\mathscr{H}$.
Let $v \in \mathfrak{g}_1$, and suppose $\rho(v) = \sum_i c_i\otimes v_i$. By (B3),
$C \otimes \mathfrak{g}_0$ is $2$-torsion free. Therefore, we can conclude that
\begin{equation}\label{eq:verify(F5)}
\rho(\frac{1}{2}[v,v]) = \sum_i c_i^2\otimes\frac{1}{2}[v_i,v_i]+\sum_{i<j} c_ic_j \otimes [v_i,v_j], 
\end{equation}
by seeing that the doubles of both sides coincide. It follows that 
\[  
\rho(v^2-\frac{1}{2}[v,v]) =\sum_ic_i^2\otimes (v_i^2-\frac{1}{2}[v_i,v_i])+ 
\sum_{i<j} c_ic_j\otimes (v_iv_j+v_jv_i-[v_i,v_j]).
\]
Since this is contained in $C \otimes\mathscr{I}$, the lemma follows. 
\end{proof}

Since $\mathfrak{g}$ is admissible, it follows by Corollary \ref{cor:co-split}
that there is a unit-preserving
left $J$-module super-coalgebra isomorphism
\begin{equation}\label{eq:phi}
\phi : J \otimes \wedge(\mathfrak{g}_1) \overset{\simeq}{\longrightarrow} \U.
\end{equation}
We fix this $\phi$ for use in what follows.

\begin{corollary}\label{cor:Hom_J(U,C)}
$\mathrm{Hom}_J(\U,C)$ is a discrete super-subalgebra of $\mathrm{Hom}_J(\mathscr{H},C)$, and is
stable under $\widehat{S}$. Moreover, the map $\widehat{\Delta}$ given in 
Proposition \ref{prop:transferred_structures}~(2) sends $\mathrm{Hom}_J(\U,C)$ into
$\mathrm{Hom}_{J\otimes J}(\U \otimes \U,C\otimes C)$.  
\end{corollary}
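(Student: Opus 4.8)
Corollary \ref{cor:Hom_J(U,C)} — proof plan.

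The plan is to realize $\mathrm{Hom}_J(\U,C)$ explicitly inside $\mathrm{Hom}_J(\mathcal{H},C)$ and then reduce every assertion to two structural facts about $\mathcal{I}$: that it is a Hopf super-ideal (hence a coideal with $\varepsilon(\mathcal{I})=0$ and $S(\mathcal{I})\subseteq\mathcal{I}$) and that it is $G$-stable (Lemma \ref{lem:G-stable}). Since $\mathcal{I}$ is a two-sided ideal, the quotient $q:\mathcal{H}\to\U=\mathcal{H}/\mathcal{I}$ is a surjection of left $J$-modules, so precomposition with $q$ identifies $\mathrm{Hom}_J(\U,C)$ with the subspace $\{f\in\mathrm{Hom}_J(\mathcal{H},C): f(\mathcal{I})=0\}$. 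Each of the four claims then becomes the statement that the structure in Proposition \ref{prop:transferred_structures} preserves the property of vanishing on $\mathcal{I}$.

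For discreteness I would invoke the isomorphism $\phi$ of \eqref{eq:phi} coming from Corollary \ref{cor:co-split}: it exhibits $\U$ as a free left $J$-module on $\wedge(\mathfrak{g}_1)$, whose top exterior power sits in degree $N:=\mathrm{rank}_\k\mathfrak{g}_1$. Hence, writing $\mathcal{H}_{\le N}=\bigoplus_{i\le N}J\otimes\T^i(\mathfrak{g}_1)$, every PBW monomial is a product of at most $N$ odd generators, so $q(\mathcal{H}_{\le N})=\U$. Consequently any $f\in\mathrm{Hom}_J(\U,C)$ whose pullback vanishes on $\mathcal{H}_{\le N}$ vanishes on $q(\mathcal{H}_{\le N})=\U$, i.e.\ $f=0$; this says precisely that $\mathrm{Hom}_J(\U,C)$ meets the $N$-th basic neighborhood of $0$ trivially, so the induced topology is discrete.

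For the subalgebra claim I would use the product formula $fg(x)=f(x_{(1)})g(x_{(2)})$ together with $\Delta(\mathcal{I})\subseteq\mathcal{I}\otimes\mathcal{H}+\mathcal{H}\otimes\mathcal{I}$: for $x\in\mathcal{I}$ each term is killed by $f$ or by $g$. The identity lies in the subspace since $\xi(1)=\varepsilon(\cdot)1$ and $\varepsilon(\mathcal{I})=0$. For $\widehat{S}$-stability I would evaluate the antipode formula on functor points: for $x\in\mathcal{I}$, stability of $\mathcal{I}$ under $S$ and under the $G$-action gives $S(x)^{\gamma^{-1}}\in\mathcal{I}$, so $f(S(x)^{\gamma^{-1}})=0$ and $\langle\gamma,\widehat{S}(f)(x)\rangle=0$ for all $\gamma$; since the universal point recovers any element of $C$, $\widehat{S}(f)(x)=0$. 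Finally, for $\widehat{\Delta}$ I would show $\widehat{\Delta}(f)$ annihilates $\ker(\mathcal{H}\otimes\mathcal{H}\to\U\otimes\U)=\mathcal{I}\otimes\mathcal{H}+\mathcal{H}\otimes\mathcal{I}$: from $\langle(\gamma,\delta),\widehat{\Delta}(f)(x\otimes y)\rangle=\langle\gamma\delta,f(x^\delta y)\rangle$, if $y\in\mathcal{I}$ then $x^\delta y\in\mathcal{I}$ because $\mathcal{I}$ is a right ideal, while if $x\in\mathcal{I}$ then $x^\delta\in\mathcal{I}$ by $G$-stability, again forcing $x^\delta y\in\mathcal{I}$; in both cases $f(x^\delta y)=0$.

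The main obstacle is making the functor-point manipulations in the $\widehat{S}$- and $\widehat{\Delta}$-steps rigorous, since $S(x)^{\gamma^{-1}}$ and $x^\delta y$ live after base change to the test algebra $R$. I would read "$\mathcal{I}$ is $G$-stable" as the statement that the co-adjoint coaction sends $\mathcal{I}$ into $C\otimes\mathcal{I}$, so that $x^\delta$ lands in $\mathcal{I}\otimes R$, on which $f\otimes\mathrm{id}_R$ vanishes, and then use that an element of $C$ killed by every functor point is zero. Once this dictionary between the coaction \eqref{eq:coad3} and the functor-point action is fixed, each verification is a short formal computation; the only input beyond Hopf-ideal bookkeeping is the PBW isomorphism $\phi$, used to bound the filtration and thereby obtain discreteness.
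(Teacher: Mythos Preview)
Your proposal is correct and follows essentially the same approach as the paper: identify $\mathrm{Hom}_J(\U,C)$ with the maps vanishing on $\mathcal{I}$, use the finite $J$-generation of $\U$ via $\phi$ for discreteness, and deduce the remaining closure properties from the fact that $\mathcal{I}$ is a $G$-stable Hopf super-ideal (Lemma \ref{lem:G-stable}). The paper's proof compresses all of the latter into ``the rest follows easily from Lemma \ref{lem:G-stable}''; your unpacking via the explicit formulas of Proposition \ref{prop:transferred_structures} and the functor-point/coaction dictionary is exactly what that phrase means.
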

\begin{proof}
Since $\U$ is finitely generated as a left $J$-module by \eqref{eq:phi}, we have 
$\mathrm{Hom}_J(\U, C)\subset \mathrm{Hom}_J(J \otimes \big(\bigoplus_{i<n}\T^i(\mathfrak{g}_1)\big), C)$
for $n$ large enough. This means that $\mathrm{Hom}_J(\U,C)$ is discrete. The rest follows easily from
Lemma \ref{lem:G-stable}. 
\end{proof} 

Given a Harish-Chandra pair $(G, \mathfrak{g})$ as above, we define 
\[
\A(G, \mathfrak{g})
\] 
to be the $\k$-submodule of $\widehat{\mathscr{A}}$ such that the homeomorphism 
$\xi$ given in \eqref{eq:xi1} restricts to a linear isomorphism
\begin{equation}\label{eq:A(G,g)Hom_J(U,C)}
\eta : \A(G, \mathfrak{g})\overset{\simeq}{\longrightarrow} \mathrm{Hom}_J(\U, C).
\end{equation} 
In what follows we set $\A :=\A(G, \mathfrak{g})$. 

\begin{lemma}\label{lem:A(G,g)}
We have the following. 
\begin{itemize}
\item[(1)] $\A$ is a discrete super-subalgebra of $\widehat{\mathscr{A}}$, which is stable under 
$\widehat{S}$.
\item[(2)] The canonical map 
$\A \otimes \A \to \widehat{\mathscr{A}}\, \widehat{\otimes}\, \widehat{\mathscr{A}}$ is an injection.
Regarding this injection as an inclusion, we have $\widehat{\Delta}(\A) \subset \A \otimes \A$.
\item[(3)] $(\A, \widehat{\Delta}|_{\A}, \widehat{\varepsilon}|_{\A}, \widehat{S}|_{\A})$ 
is a commutative Hopf superalgebra. 
\end{itemize}
\end{lemma}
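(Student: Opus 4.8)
The plan is to exploit that $\A$ is, by its very definition \eqref{eq:A(G,g)Hom_J(U,C)}, nothing but the preimage $\xi^{-1}(\mathrm{Hom}_J(\U,C))$ of the discrete super-subalgebra $\mathrm{Hom}_J(\U,C)$ under the homeomorphism $\xi$ of \eqref{eq:xi1}, and to transport all structures through $\xi$ by means of Proposition \ref{prop:transferred_structures} and Corollary \ref{cor:Hom_J(U,C)}.

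For (1), recall that Proposition \ref{prop:transferred_structures}~(1) says precisely that $\xi$ carries the product, unit and antipode $\widehat{S}$ of $\widehat{\mathcal{A}}$ to the corresponding operations on $\mathrm{Hom}_J(\mathcal{H},C)$. Since $\eta=\xi|_{\A}$ identifies $\A$ with $\mathrm{Hom}_J(\U,C)$, and the latter is by Corollary \ref{cor:Hom_J(U,C)} a discrete super-subalgebra of $\mathrm{Hom}_J(\mathcal{H},C)$ that is stable under $\widehat{S}$, these properties pull back verbatim to $\A$: it is closed under the product and the antipode of $\widehat{\mathcal{A}}$, and it is discrete because $\xi$ is a homeomorphism and $\mathrm{Hom}_J(\U,C)$ is discrete.

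The heart of the matter is (2), where I must descend from the completed tensor product to the ordinary one. The key input is that by \eqref{eq:phi}, $\U\cong J\otimes \wedge(\mathfrak{g}_1)$ is \emph{finite free} as a left $J$-module, since $\mathfrak{g}_1$, and hence $\wedge(\mathfrak{g}_1)$, is $\k$-finite free. This finiteness yields a natural isomorphism
\begin{equation*}
\mathrm{Hom}_J(\U,C)\otimes \mathrm{Hom}_J(\U,C)\isomto \mathrm{Hom}_{J\otimes J}(\U\otimes \U,\,C\otimes C),
\end{equation*}
so that the ordinary tensor square already computes the full $J\otimes J$-linear Hom-space. I would first establish injectivity by identifying, through $\xi$ and $\eta$, the canonical map $\A\otimes \A\to \widehat{\mathcal{A}}\mathbin{\widehat{\otimes}}\widehat{\mathcal{A}}$ with the composite
\begin{equation*}
\mathrm{Hom}_J(\U,C)\otimes \mathrm{Hom}_J(\U,C)\isomto \mathrm{Hom}_{J\otimes J}(\U\otimes \U,\,C\otimes C)\hookrightarrow \mathrm{Hom}_{J\otimes J}(\mathcal{H}\otimes \mathcal{H},\,C\otimes C),
\end{equation*}
followed by the homeomorphism recalled just before Proposition \ref{prop:transferred_structures}; here the injection is restriction along the surjection $\mathcal{H}\otimes \mathcal{H}\twoheadrightarrow \U\otimes \U$, hence is injective. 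Granting this inclusion $\A\otimes \A\subset \widehat{\mathcal{A}}\mathbin{\widehat{\otimes}}\widehat{\mathcal{A}}$, the containment $\widehat{\Delta}(\A)\subset \A\otimes \A$ follows at once from the last assertion of Corollary \ref{cor:Hom_J(U,C)}, which places $\widehat{\Delta}\big(\mathrm{Hom}_J(\U,C)\big)$ inside $\mathrm{Hom}_{J\otimes J}(\U\otimes \U,C\otimes C)$, that is, inside the image of $\A\otimes \A$.

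Finally, (3) is a formal consequence. By (1) and (2), $\A$ is a super-subalgebra of $\widehat{\mathcal{A}}$ that is closed under $\widehat{\varepsilon}$ and $\widehat{S}$ and satisfies $\widehat{\Delta}(\A)\subset \A\otimes \A$. Because the inclusion $\A\otimes \A\hookrightarrow \widehat{\mathcal{A}}\mathbin{\widehat{\otimes}}\widehat{\mathcal{A}}$ proved in (2) is injective (and the same argument applies to the higher tensor powers needed for coassociativity), the coassociativity, counit and antipode identities valid in the complete topological Hopf superalgebra $\widehat{\mathcal{A}}$ restrict without change to $\A$, now with the ordinary tensor product; commutativity is inherited since $\mathcal{A}=C\cmdblackltimes \T_c(W)$, and hence $\widehat{\mathcal{A}}$, is commutative. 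The main obstacle is thus concentrated in (2): both the finite freeness of $\U$ over $J$ (so that $\mathrm{Hom}$ commutes with the tensor square) and the injectivity of $\A\otimes \A\hookrightarrow \widehat{\mathcal{A}}\mathbin{\widehat{\otimes}}\widehat{\mathcal{A}}$ are indispensable for turning the a priori completed coproduct into an honest one.
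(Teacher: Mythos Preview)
Your argument is correct and follows essentially the same route as the paper's own proof: part~(1) is read off from Corollary~\ref{cor:Hom_J(U,C)} via the transferred structures; for~(2) the canonical map $\A\otimes\A\to\widehat{\mathcal{A}}\mathbin{\widehat{\otimes}}\widehat{\mathcal{A}}$ is identified, through $\eta$ and $\phi$, with the isomorphism $\mathrm{Hom}(\wedge(\mathfrak{g}_1),C)^{\otimes 2}\isomto \mathrm{Hom}(\wedge(\mathfrak{g}_1)^{\otimes 2},C^{\otimes 2})$ (which is exactly your observation that $\U$ is $J$-finite free) composed with the embedding into $\mathrm{Hom}_{J\otimes J}(\mathcal{H}\otimes\mathcal{H},C\otimes C)$; and for~(3) the paper likewise invokes the triple-tensor analogue to deduce coassociativity, with the remaining axioms inherited from $\widehat{\mathcal{A}}$.
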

\begin{proof}
(1)\ This follows from Corollary \ref{cor:Hom_J(U,C)}.

(2)\ By using $\eta$, the canonical map above is identified with the composite of the canonical map
\begin{equation}\label{equ:etacano}
\mathrm{Hom}_J(\mathbf{U}, C) \otimes \mathrm{Hom}_J(\mathbf{U}, C)
\to \mathrm{Hom}_{J\otimes J}(\mathbf{U}\otimes \mathbf{U}, C\otimes C)
\end{equation}
with the embedding
$
\mathrm{Hom}_{J\otimes J}(\mathbf{U}\otimes \mathbf{U}, C\otimes C) \subset
\mathrm{Hom}_{J\otimes J}(\mathscr{H}\otimes \mathscr{H}, C\otimes C). 
$
By using $\phi$, the map \eqref{equ:etacano} is identified with the canonical map
\[
\mathrm{Hom}(\wedge(\mathfrak{g_1}), C) \otimes \mathrm{Hom}(\wedge(\mathfrak{g_1}), C)
\to \mathrm{Hom}(\wedge(\mathfrak{g_1})\otimes \wedge(\mathfrak{g_1}), C\otimes C),
\] 
which is an isomorphism 
since $\wedge(\mathfrak{g_1})$ is $\k$-finite free. This proves the desired injectivity.
The rest follows from Corollary \ref{cor:Hom_J(U,C)}.

(3)\ Just as above the canonical map 
$\A \otimes \A \otimes \A \to 
\widehat{\mathscr{A}}\, \widehat{\otimes}\, \widehat{\mathscr{A}}\, \widehat{\otimes}\, \widehat{\mathscr{A}}$ 
is seen to be an injection. From this we see that $\widehat{\Delta}|_{\A}$ is coassociative. 
The rest is easy to see.
\end{proof}

The restriction $\pi|_{\A}$ of the projection \eqref{eq:proj_to_C} to $\A$ is 
a Hopf superalgebra map, which we denote by
\begin{equation}\label{eq:A_to_C}
\A \to C,\quad a\mapsto \overline{a}.
\end{equation} 
This notation is consistent with \eqref{eq:quotient_map}, as will be seen from Lemma
\ref{lem:psi} (2). 
We see from Remark \ref{rem:xi} that the pairing \eqref{eq:HhatA_pairing} induces 
\begin{equation}\label{eq:UA_pairing}
\langle \ , \ \rangle : \U \times \A \to \k,
\end{equation}
and the following lemma holds.

\begin{lemma}\label{lemma:eta}
$\eta$ is given by essentially the same formula as \eqref{eq:xi2} so that
\[
\eta(a)(x) = \overline{a}_{(1)}\, \langle x, a_{(2)} \rangle, \quad a \in \A,\ x \in \U.
\]
\end{lemma}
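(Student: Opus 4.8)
The plan is to read off the formula directly from Lemma \ref{lem:xi} and Remark \ref{rem:xi}, so that the only substantive point becomes the well-definedness of the induced pairing \eqref{eq:UA_pairing}. Recall that $\eta$ is by construction the corestriction of $\xi$ to $\A$, viewed as a map into $\mathrm{Hom}_J(\U,C)$, where the surjection $\mathcal{H}\to \mathcal{H}/\mathcal{I}=\U$ identifies $\mathrm{Hom}_J(\U,C)$ with the subspace of $\mathrm{Hom}_J(\mathcal{H},C)$ consisting of those $J$-linear maps that vanish on $\mathcal{I}$. Thus, given $a\in\A$ and $x\in\U$, I would choose any lift $\tilde{x}\in\mathcal{H}$ of $x$ and write $\eta(a)(x)=\xi(a)(\tilde{x})$; by Remark \ref{rem:xi} this equals $\pi(a_{(1)})\,\langle \tilde{x},a_{(2)}\rangle$, where $\widehat{\Delta}(a)=a_{(1)}\otimes a_{(2)}$ and the pairing is \eqref{eq:HhatA_pairing}.

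Next I would match the right-hand side with the claimed expression. By Lemma \ref{lem:A(G,g)}~(2) we have $\widehat{\Delta}(a)\in\A\otimes\A$, so both $a_{(1)}$ and $a_{(2)}$ lie in $\A$; hence $\pi(a_{(1)})=\overline{a}_{(1)}$ in the notation of \eqref{eq:A_to_C}, and $\langle\tilde{x},a_{(2)}\rangle$ is a value of \eqref{eq:HhatA_pairing} on $\tilde{x}\in\mathcal{H}$ and $a_{(2)}\in\A\subset\widehat{\mathcal{A}}$. It then remains to check that this value depends only on $x=\tilde{x}+\mathcal{I}$, i.e. that it coincides with $\langle x,a_{(2)}\rangle$ for the induced pairing \eqref{eq:UA_pairing}; granting this, $\eta(a)(x)=\overline{a}_{(1)}\,\langle x,a_{(2)}\rangle$ follows at once, and independence of the lift is automatic.

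The heart of the argument, and the step I expect to be the main obstacle, is therefore the descent $\langle\mathcal{I},\A\rangle=0$ that makes \eqref{eq:UA_pairing} well defined. I would establish it by expressing the pairing through $\xi$: for $a\in\A$ and $\iota\in\mathcal{I}$, apply the counit $\varepsilon$ of $C$ to the identity $\xi(a)(\iota)=\pi(a_{(1)})\,\langle\iota,a_{(2)}\rangle$. Using that $\varepsilon\circ\pi=\widehat{\varepsilon}$ (both pick out the degree-$0$ component of $\widehat{\mathcal{A}}$) together with the continuity of $\langle\iota,\ \rangle$ and the counit axiom $\widehat{\varepsilon}(a_{(1)})\,a_{(2)}=a$, one gets $\varepsilon\bigl(\xi(a)(\iota)\bigr)=\langle\iota,a\rangle$. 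Since $a\in\A$ forces $\xi(a)=\eta(a)$ to vanish on $\mathcal{I}$, the left-hand side is $0$, whence $\langle\iota,a\rangle=0$. This both legitimizes \eqref{eq:UA_pairing} and closes the computation; all remaining checks (independence of the chosen lift and consistency of the bar-notation via \eqref{eq:A_to_C}) are then purely formal.
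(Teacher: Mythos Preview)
Your proposal is correct and follows exactly the approach the paper indicates: immediately before Lemma~\ref{lemma:eta} the paper simply asserts that the pairing \eqref{eq:HhatA_pairing} induces \eqref{eq:UA_pairing} and that the lemma follows from Remark~\ref{rem:xi}, giving no further detail. You have supplied those details, in particular the verification that $\langle\mathcal{I},\A\rangle=0$ by applying $\varepsilon$ to $\xi(a)(\iota)$, which is the only non-formal step.
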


Define a map $\varrho$ to be the composite
\begin{equation}\label{eq:varrho}
\varrho : \A \overset{\eta}{\longrightarrow} \mathrm{Hom}_J(\U , C) \simeq
\mathrm{Hom}(\wedge(\mathfrak{g}_1) , C) \overset{\varepsilon_*}{\longrightarrow}
\wedge(\mathfrak{g}_1)^* = \wedge(W),
\end{equation}
where the second isomorphism is the one induced from the fixed $\phi$ (see \eqref{eq:phi}), and 
the following $\varepsilon_*$ denotes $\mathrm{Hom}(\wedge(\mathfrak{g}_1),\varepsilon)$.

\begin{lemma}\label{lem:psi}
We have the following.
\begin{itemize}
\item[(1)] The map 
\begin{equation*}\label{eq:psi}
\psi : \A \to C \otimes \wedge(W),\quad \psi(a) = \overline{a}_{(1)}\otimes \varrho(a_{(2)}) 
\end{equation*}
is a counit-preserving isomorphism of left $C$-comodule superalgebras. 
\item[(2)] We have natural isomorphisms
\begin{equation}\label{eq:two_isoms}
\overline{\A}\simeq C, \quad W^{\A} \simeq W=\mathfrak{g}_1^*
\end{equation}
of Hopf algebras and of $\k$-modules, respectively.
\end{itemize}
\end{lemma}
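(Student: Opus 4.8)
The plan is to recognise $\psi$ as a composite of isomorphisms and then check compatibility with each structure separately. First I would make $\varrho$ explicit: by Lemma \ref{lemma:eta}, the counit axiom and the factorisation $\widehat{\varepsilon}=\varepsilon_C\circ\pi$, one gets $\varrho(a)(\xi)=\varepsilon(\overline{a}_{(1)})\,\langle \phi(1\otimes\xi),a_{(2)}\rangle=\langle \phi(1\otimes\xi),a\rangle$ for $\xi\in\wedge(\mathfrak{g}_1)$, i.e. $\langle \xi,\varrho(a)\rangle=\langle \phi(1\otimes\xi),a\rangle$ under $\wedge(W)=\wedge(\mathfrak{g}_1)^*$. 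Consequently $\psi$ is exactly the composite
\begin{equation*}
\A\overset{\eta}{\longrightarrow}\operatorname{Hom}_J(\U,C)\overset{\simeq}{\longrightarrow}\operatorname{Hom}(\wedge(\mathfrak{g}_1),C)\overset{\simeq}{\longrightarrow}C\otimes\wedge(W),
\end{equation*}
where the middle map is restriction along $\xi\mapsto\phi(1\otimes\xi)$ (using that $\phi$ is a left $J$-module map) and the last uses that $\wedge(\mathfrak{g}_1)$ is $\k$-finite free together with $\wedge(W)=\wedge(\mathfrak{g}_1)^*$. Since $\eta$ is bijective by \eqref{eq:A(G,g)Hom_J(U,C)} and $\phi$ is an isomorphism by \eqref{eq:phi}, this already exhibits $\psi$ as a $\k$-linear isomorphism.

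The multiplicativity is the point I expect to be the main obstacle, and I would derive it from the fact that $\eta$ is an algebra map for the convolution product of Proposition \ref{prop:transferred_structures}~(1). Because $\phi$ is a super-coalgebra isomorphism and $1\otimes\xi$ has coproduct $\sum(1\otimes\xi_{(1)})\otimes(1\otimes\xi_{(2)})$ in $J\otimes\wedge(\mathfrak{g}_1)$, we have $\widehat{\Delta}\,\phi(1\otimes\xi)=\sum\phi(1\otimes\xi_{(1)})\otimes\phi(1\otimes\xi_{(2)})$, so the transported product on $\operatorname{Hom}(\wedge(\mathfrak{g}_1),C)$ is the convolution built from the product of $C$ and the coproduct of $\wedge(\mathfrak{g}_1)$. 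Under $\operatorname{Hom}(\wedge(\mathfrak{g}_1),C)\simeq C\otimes\wedge(\mathfrak{g}_1)^*$ this convolution is precisely the tensor-product superalgebra product on $C\otimes\wedge(W)$, since by Section \ref{subsec:Hopf_pairing} the product of $\wedge(W)=\wedge(\mathfrak{g}_1)^*$ is dual to the coproduct of $\wedge(\mathfrak{g}_1)$. The only delicate matter is the bookkeeping of super-signs in the convolution of super-linear maps, but these are exactly the signs produced when the super-symmetry is used to dualise $\widehat{\Delta}$, so $\psi$ is multiplicative.

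The remaining coalgebraic compatibilities follow from short Sweedler computations. Counit-preservation holds because $\varepsilon_{\wedge(W)}(\varrho(a))=\langle \phi(1\otimes 1),a\rangle=\langle 1_{\U},a\rangle=\widehat{\varepsilon}(a)$, whence $(\varepsilon\otimes\varepsilon)\psi(a)=\widehat{\varepsilon}(a_{(1)})\,\widehat{\varepsilon}(a_{(2)})=\widehat{\varepsilon}(a)$. Left $C$-colinearity holds because $\pi$ is a coalgebra map: applying $\mathrm{id}_C\otimes\psi$ to the coaction $\overline{a}_{(1)}\otimes a_{(2)}$ and applying $\Delta_C\otimes\mathrm{id}$ to $\psi(a)$ both expand, by coassociativity, to $\overline{a}_{(1)}\otimes\overline{a}_{(2)}\otimes\varrho(a_{(3)})$. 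This proves part (1).

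For part (2) I would apply the functors $\overline{(-)}$ and $W^{(-)}$ to the superalgebra isomorphism $\psi$ and compute them on the model $C\otimes\wedge(W)$. As $C$ is purely even, the ideal generated by the odd part of $C\otimes\wedge(W)$ is $C\otimes\wedge^{\ge 1}(W)$, so $\overline{C\otimes\wedge(W)}=C$; moreover $\pi|_{\A}=(\mathrm{id}_C\otimes\varepsilon)\circ\psi$ is exactly the quotient onto this largest purely even quotient, so the Hopf superalgebra map \eqref{eq:A_to_C} induces a Hopf algebra isomorphism $\overline{\A}\simeq C$. Finally, computing $\A_0^+\A_1$ inside $C\otimes\wedge(W)$ gives $C^+\otimes\wedge^1(W)\oplus C\otimes\wedge^{\ge 3}(W)_1$, whence $W^{\A}=\A_1/\A_0^+\A_1\simeq (C/C^+)\otimes\wedge^1(W)\simeq W=\mathfrak{g}_1^*$.
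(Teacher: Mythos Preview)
Your proof is correct and follows essentially the same route as the paper: identify $\psi$ with the composite $\A\xrightarrow{\eta}\operatorname{Hom}_J(\U,C)\simeq\operatorname{Hom}(\wedge(\mathfrak{g}_1),C)\simeq C\otimes\wedge(W)$ to get bijectivity, then check the remaining compatibilities. The only difference is one of packaging: the paper observes that $\varrho$ itself is a counit-preserving superalgebra map, from which the form $\psi(a)=\overline{a}_{(1)}\otimes\varrho(a_{(2)})$ immediately yields multiplicativity, counit-preservation, and left $C$-colinearity in one stroke; you instead verify multiplicativity by transporting the convolution product through the composite (which is exactly what is needed to justify the paper's unproved assertion that $\varrho$ is multiplicative), and check the coalgebraic compatibilities by direct Sweedler computations. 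For part (2) the paper is again terser, reading off $\overline{\A}\simeq C$ from the map \eqref{eq:A_to_C} and $W^{\A}\simeq\mathfrak{g}_1^*$ from the restricted pairing \eqref{eq:UA_pairing}, whereas you compute both invariants explicitly on the model $C\otimes\wedge(W)$; the content is the same.
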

\begin{proof}
(1)\ Compose the isomorphism 
$\mathrm{Hom}_J(\U,C)\simeq \mathrm{Hom}(\wedge(\mathfrak{g}_1),C)$ in \eqref{eq:varrho} 
with the canonical one $\mathrm{Hom}(\wedge(\mathfrak{g}_1),C)\simeq C \otimes \wedge(W)$.
Through the composite we will identify as $\mathrm{Hom}_J(\U,C)=C \otimes \wedge(W)$.
Since $\langle x, a \rangle = \varepsilon(\eta(a)(x))$, $a\in \A$, $x\in \U$, one sees that 
$\psi$ is identified with $\eta$, whence it is a bijection. 
The desired result follows since $\varrho$ is a counit-preserving superalgebra map. 

(2)\ We see from the isomorphism just obtained that the Hopf superalgebra map \eqref{eq:A_to_C} induces
$\overline{\A} \simeq C$, and the pairing \eqref{eq:UA_pairing}, 
restricted to $\mathfrak{g}_1 \times \A$, induces $W^{\A} \simeq \mathfrak{g}_1^*$.
\end{proof}

The lemma shows the following. 

\begin{prop}\label{prop:A(G,g)}
$\A(G, \mathfrak{g}) \in \mathsf{AHSA}$.
\end{prop}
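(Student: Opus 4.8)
The plan is to read off each of the defining conditions of $\mathsf{AHSA}$---affinity together with (C1)--(C3)---directly from Lemma \ref{lem:psi} and the standing hypotheses (B1)--(B3). By Lemma \ref{lem:A(G,g)} (3) we already know that $\A = \A(G,\mathfrak{g})$ is a commutative Hopf superalgebra, so the content of the proposition is that it lies in the designated full subcategory.

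First I would verify the splitting condition (C1). Lemma \ref{lem:psi} (1) furnishes a counit-preserving isomorphism $\psi : \A \to C \otimes \wedge(W)$ of left $C$-comodule superalgebras, and Lemma \ref{lem:psi} (2) identifies $\overline{\A} \simeq C$ as Hopf algebras and $W^{\A} \simeq W = \mathfrak{g}_1^*$ as $\k$-modules. Transporting $\psi$ along these identifications yields exactly a counit-preserving isomorphism $\A \simeq \overline{\A} \otimes \wedge(W^{\A})$ of left $\overline{\A}$-comodule superalgebras, as demanded by Definition \ref{def:split}. Since $\mathfrak{g}_1$ is $\k$-finite free by (B2), so is $W = \mathfrak{g}_1^*$, and hence $W^{\A}$ is $\k$-free; this is the remaining requirement of (C1).

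Next I would dispatch (C2) and (C3) through the same identification $\overline{\A} \simeq C = \O(G)$. Condition (C2), that $\overline{\A}$ is $\k$-flat, is then precisely the assumption (B3). Condition (C3), that $\overline{\A}/(\overline{\A}^+)^2 \simeq \O(G)/(\O(G)^+)^2$ is $\k$-finite projective, is precisely the assumption (B1). Finally, for affinity I would invoke the characterization recorded after Theorem \ref{thm:tensor_product_decomposition}: a split commutative Hopf superalgebra is affine exactly when $\overline{\A}$ is affine and $W^{\A}$ is $\k$-finite. Here $\overline{\A} \simeq \O(G)$ is affine because $G$ is an algebraic group, and $W^{\A} \simeq \mathfrak{g}_1^*$ is $\k$-finite free; hence $\A$ is affine.

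There is no genuine obstacle remaining: all the analytic and combinatorial work has been absorbed into the construction of $\psi$ in Lemma \ref{lem:psi}, which in turn rests on the co-splitting Corollary \ref{cor:co-split}. The only point demanding care is bookkeeping---checking that the Hopf-algebra identification $\overline{\A} \simeq C$ and the $\k$-module identification $W^{\A} \simeq W$ are compatible with the comodule-superalgebra structure appearing in $\psi$, so that the transported map genuinely satisfies Definition \ref{def:split}. This compatibility is exactly what Lemma \ref{lem:psi} asserts, so the proposition follows with no further computation.
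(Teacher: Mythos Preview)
Your proof is correct and follows exactly the paper's approach: the paper simply states that Lemma~\ref{lem:psi} shows the proposition, and you have spelled out in detail how the isomorphisms $\overline{\A}\simeq C$ and $W^{\A}\simeq W$ from that lemma, together with the standing hypotheses (B1)--(B3), yield affinity and Conditions (C1)--(C3).
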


We let
\[
\G(G, \mathfrak{g})
\]
denote the object in $\mathsf{ASG}$ which corresponds to $\A(G, \mathfrak{g})$. 

\begin{prop}\label{prop:functor_from_(G,g)}
$(G, \mathfrak{g})\mapsto \G(G, \mathfrak{g})$ gives a functor $\G : \mathsf{HCP} \to 
\mathsf{ASG}$. 
\end{prop}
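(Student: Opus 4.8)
The plan is to verify that the assignment $(G,\mathfrak{g})\mapsto \G(G,\mathfrak{g})$ respects morphisms, using the anti-isomorphism between $\mathsf{ASG}$ and $\mathsf{AHSA}$. Since $\G(G,\mathfrak{g})$ is by definition the algebraic supergroup corresponding to $\A(G,\mathfrak{g})\in\mathsf{AHSA}$, it suffices to promote the object assignment $(G,\mathfrak{g})\mapsto \A(G,\mathfrak{g})$ to a contravariant functor $\mathsf{HCP}\to\mathsf{AHSA}$; composing with the (contravariant) equivalence $\mathsf{AHSA}\to\mathsf{ASG}$ then yields the covariant functor $\G$. So the real content is to associate to a morphism $(\alpha,\beta):(G,\mathfrak{g})\to(G',\mathfrak{g}')$ in $\mathsf{HCP}$ a Hopf superalgebra map $\A(G',\mathfrak{g}')\to\A(G,\mathfrak{g})$, functorially.

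First I would unwind what the data of a morphism gives at the level of the building blocks $J=U(\mathfrak{g}_0)$, $C=\O(G)$, $W=\mathfrak{g}_1^*$ and their primed analogues. The morphism $\alpha:G\to G'$ dualizes to a Hopf algebra map $\O(\alpha):C'\to C$, while $\mathrm{Lie}(\alpha)=\beta_0:\mathfrak{g}_0\to\mathfrak{g}_0'$ extends to an algebra map $U(\beta_0):J\to J'$, and the odd part $\beta_1:\mathfrak{g}_1\to\mathfrak{g}_1'$ transposes to $\beta_1^*:W'=(\mathfrak{g}_1')^*\to W=\mathfrak{g}_1^*$. Because $\beta$ is a Lie superalgebra map, these are compatible with the smash-product structure on $\mathcal{H}=J\mltimes\T(\mathfrak{g}_1)$, so there is an induced Hopf superalgebra map $\mathcal{H}\to\mathcal{H}'$ carrying the ideal $\mathcal{I}$ into $\mathcal{I}'$ (the generators \eqref{eq:generators} are sent to generators, using that $\beta$ respects brackets and, via condition (A3), the elements $\tfrac12[v,v]$); hence a map $\U(\mathfrak{g})\to\U(\mathfrak{g}')$. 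Next I would produce the morphism on the completed objects $\widehat{\mathcal{A}}$ and then restrict. The cleanest route is through the identifications $\eta:\A(G,\mathfrak{g})\isomto\mathrm{Hom}_J(\U,C)$ of \eqref{eq:A(G,g)Hom_J(U,C)}: given the algebra maps $\U(\mathfrak{g})\to\U(\mathfrak{g}')$ and $\O(\alpha):C'\to C$ together with $U(\beta_0):J\to J'$, one defines $\mathrm{Hom}_{J'}(\U',C')\to\mathrm{Hom}_J(\U,C)$ by $g\mapsto \O(\alpha)\circ g\circ(\text{the map }\U\to\U')$, which makes sense precisely because condition (i) of Definition \ref{def:HCP}(2) says $\mathrm{Lie}(\alpha)=\beta_0$, so the left-$J$-linearity is matched correctly across $U(\beta_0)$.

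Having defined the candidate map, I would verify it is a Hopf superalgebra map by checking compatibility with the structure formulas transferred in Proposition \ref{prop:transferred_structures}. The products and counit are straightforward from $\O(\alpha)$ being a Hopf map. The compatibility with $\widehat{\Delta}$ is where condition (ii) of Definition \ref{def:HCP}(2), namely $\beta_1(v^{\gamma})=\beta_1(v)^{\alpha(\gamma)}$, enters decisively: in the formula $\langle(\gamma,\delta),\widehat{\Delta}(f)(x\otimes y)\rangle=\langle\gamma\delta,f(x^{\delta}y)\rangle$ the twisted action $x^{\delta}$ must intertwine correctly under $\alpha$, and this is exactly the $G$-equivariance encoded in (ii) together with Lemma \ref{lem:extended_G-action}(3). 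Similarly, compatibility with $\widehat{S}$ follows from the antipode formula in Proposition \ref{prop:transferred_structures}(1) and the naturality of $\gamma\mapsto\gamma^{-1}$ under $\alpha$. Finally, functoriality (preservation of identities and composites) is immediate, since each constituent assignment $\alpha\mapsto\O(\alpha)$, $\beta_0\mapsto U(\beta_0)$, $\beta_1\mapsto\beta_1^*$ is itself functorial.

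I expect the main obstacle to be the verification of compatibility with the coproduct, because the twist $x\mapsto x^{\delta}$ in the formula of Proposition \ref{prop:transferred_structures}(2) mixes the $G$-action with the algebra structure of $\U$, so one must simultaneously track how $\alpha:G\to G'$, the induced map $\U(\mathfrak{g})\to\U(\mathfrak{g}')$, and the $G$-equivariance clause (ii) interact. The key technical point is that the right $G$-supermodule structure on $\mathcal{H}$ from Lemma \ref{lem:extended_G-action}(1) is transported correctly along the morphism; once one confirms, using (ii), that the map $\U\to\U'$ is equivariant in the sense that it intertwines the $G$-action on $\U$ with the $\alpha$-pullback of the $G'$-action on $\U'$, the coproduct compatibility falls out of the transferred formula. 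Everything else reduces to routine checks that the constituent maps are Hopf/algebra morphisms, which I would state rather than belabor.
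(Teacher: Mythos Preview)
Your proposal is correct and follows essentially the same line as the paper, though with far more detail: the paper's proof is a single sentence observing that the constructions of $\widehat{\mathcal{A}}$, $\mathrm{Hom}_J(\mathcal{H},C)$, and $\mathrm{Hom}_J(\mathbf{U},C)$ are all functorial and that the homeomorphism $\xi$ is natural. The only difference in emphasis is that the paper routes the argument through the completed Hopf superalgebra $\widehat{\mathcal{A}}$ and the naturality of $\xi$ (so that the Hopf-map property of the induced morphism is inherited from that of $\widehat{\mathcal{A}}\to\widehat{\mathcal{A}}'$), whereas you work directly on $\mathrm{Hom}_J(\mathbf{U},C)$ and verify compatibility with the transferred formulas of Proposition~\ref{prop:transferred_structures}; both unpack the same functoriality, and your anticipated difficulty with the coproduct is exactly what the naturality of $\xi$ absorbs in the paper's formulation.
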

\begin{proof}
This follows since the constructions of $\widehat{\mathscr{A}}$, $\mathrm{Hom}_J(\mathscr{H}, C)$
and $\mathrm{Hom}_J(\U, C)$ are all functorial, and the homeomorphism $\xi$ is natural. 
\end{proof}

\begin{prop}\label{prop:PG=id}
The Harish-Chandra pair $\mathbf{P}(\G(G, \mathfrak{g}))$ 
associated with $\G(G, \mathfrak{g})$ is naturally isomorphic to the
original $(G, \mathfrak{g})$. 
\end{prop}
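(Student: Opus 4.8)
The plan is to exhibit an explicit isomorphism $(\alpha,\beta)\colon (G,\mathfrak g)\isomto \mathbf P(\G(G,\mathfrak g))$ in $\mathsf{HCP}$ and then to observe that it is natural. Write $\G:=\G(G,\mathfrak g)$, $\A:=\O(\G)=\A(G,\mathfrak g)$ and $\mathfrak h:=\mathrm{Lie}(\G)$, so that $\mathbf P(\G)=(\G_{ev},\mathfrak h)$. Lemma \ref{lem:psi}~(2) already supplies a Hopf-algebra isomorphism $\overline{\A}\simeq C=\O(G)$; since $\overline{\A}=\O(\G_{ev})$, this dualizes to an isomorphism $\alpha\colon G\isomto\G_{ev}$ of algebraic groups, giving the first component. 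For the second component I would use the Hopf pairing \eqref{eq:UA_pairing}: through the canonical inclusion $\mathfrak g\subset\U$, define $\beta\colon\mathfrak g\to\A^*$ by $\beta(v)=\langle v,\,\cdot\,\rangle$.

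First I would check that $\beta$ lands in $\mathfrak h=\mathrm{Lie}(\G)=(\A^+/(\A^+)^2)^*\subset\A^*$ and is a Lie superalgebra map. Each $v\in\mathfrak g$ is primitive in $\U$, so by the second line of \eqref{eq:Hopf_pairing_conditions} the functional $\beta(v)$ is an $\varepsilon$-derivation, hence vanishes on $1$ and on $(\A^+)^2$; thus $\beta(v)\in\mathfrak h$. Moreover the first line of \eqref{eq:Hopf_pairing_conditions} lets one express the product $\beta(v)\beta(w)$ in $\A^*$ through $\langle vw,\,\cdot\,\rangle$; combined with the defining relation $vw-(-1)^{|v||w|}wv=[v,w]$ of $\U$ and the bracket of Proposition \ref{prop:Lie(G)}, a direct check gives $\beta([v,w])=[\beta(v),\beta(w)]$, so that $\beta$ is a homomorphism of Lie superalgebras.

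Next I would prove that $\beta$ is bijective, treating the two parities separately and simultaneously verifying condition (i) of Definition \ref{def:HCP}~(2). For the even part, note that for $u\in\mathfrak g_0\subset J$ the element $u\otimes 1\in\mathcal H$ pairs trivially with $C\otimes\T^n(W)$ for $n>0$, so $\langle u,a\rangle=\langle u,\overline a\rangle_C$ for $a\in\A$; hence $\beta_0$ factors as $\mathfrak g_0=\mathrm{Lie}(G)\isomto\mathrm{Lie}(\G_{ev})\isomto\mathfrak h_0$, the first map being $\mathrm{Lie}(\alpha)$ and the second the identification of Lemma \ref{lem:Lie(G)g_0}. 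In particular $\beta_0=\mathrm{Lie}(\alpha)$, which is condition (i), and $\beta_0$ is an isomorphism. For the odd part, recall $\mathfrak h_1=(W^{\A})^*$, and that by Lemma \ref{lem:psi}~(2) the pairing \eqref{eq:UA_pairing} restricted to $\mathfrak g_1\times\A$ induces a perfect pairing $\mathfrak g_1\times W^{\A}\to\k$ realizing $W^{\A}\simeq\mathfrak g_1^*$; since $\mathfrak g_1$ is $\k$-finite free, its transpose is exactly $\beta_1\colon\mathfrak g_1\isomto(W^{\A})^*=\mathfrak h_1$. Thus $\beta$ is an isomorphism of supermodules, hence of Lie superalgebras.

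It remains to verify the $G$-equivariance condition (ii) of Definition \ref{def:HCP}~(2) and naturality, and this is where I expect the real work. The right $G$-action on $\mathfrak h_1$ defining $\mathbf P(\G)$ is the transpose of the co-adjoint coaction \eqref{eq:coad2}, which on $\A\subset\widehat{\mathcal A}$ is the restriction of \eqref{eq:coad3}. By Lemma \ref{lem:extended_G-action}~(3) the $G$-action on $\A$ is dual, through the pairing \eqref{eq:pairing_G-action}, to the $G$-action on $\U$; and the $G$-action on $\mathfrak g\subset\U$ is the original one attached to the pair $(G,\mathfrak g)$. Feeding this duality into the definition of $\beta$ should yield $\beta_1(v^{\gamma})=\beta_1(v)^{\alpha(\gamma)}$, which is condition (ii). Finally, all the ingredients—the pairing \eqref{eq:UA_pairing}, the isomorphism $\eta$, the identifications of Lemma \ref{lem:psi}, and hence $\alpha$ and $\beta$—are produced functorially from the data, as in the proof of Proposition \ref{prop:functor_from_(G,g)}, so $(\alpha,\beta)$ is natural in $(G,\mathfrak g)$. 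The main obstacle is precisely this last step: confirming that the action recovered on $\mathfrak h_1$ from the co-adjoint coaction \eqref{eq:coad3} agrees, under the duality \eqref{eq:pairing_G-action}, with the original action on $\mathfrak g_1$, together with the Koszul-sign bookkeeping that this identification and the bracket computation entail.
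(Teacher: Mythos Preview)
Your proposal is correct and follows essentially the same route as the paper: both build the isomorphism via the Hopf pairing \eqref{eq:UA_pairing}, use Lemma~\ref{lem:psi}(2) for the identifications $\overline{\A}\simeq C$ and $W^{\A}\simeq\mathfrak g_1^*$, and obtain $G$-equivariance from the duality $\langle x^{\gamma},a\rangle=\langle x,{}^{\gamma}a\rangle$. The paper packages the ``main obstacle'' you single out as a separate lemma (Lemma~\ref{lem:UA_pairing}), proving that \eqref{eq:UA_pairing} is a Hopf pairing satisfying \eqref{eq:UA_formula}; once that is in hand, both the Lie-superalgebra compatibility and the $G$-equivariance of $W^{\A}\simeq\mathfrak g_1^*$ follow immediately, which is exactly what your sketch anticipates.
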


To prove this we need a lemma. Set $\A :=\A(G, \mathfrak{g})$, again. Then $\A$ is an object 
(indeed, a Hopf-algebra object) in $G\text{-}\mathsf{SMod}$, being defined by the same formula 
as \eqref{eq:gamma_a}. Recall from Lemma \ref{lem:G-stable} that 
$\mathbf{U} \in \mathsf{SMod}\text{-}G$. 

\begin{lemma}\label{lem:UA_pairing}
The pairing \eqref{eq:UA_pairing} is a Hopf pairing such that
\begin{equation}\label{eq:UA_formula}
\langle x^{\gamma}, a \rangle = \langle x, {}^{\gamma}a \rangle,\quad x\in \U,\ a \in \A. 
\end{equation}
\end{lemma}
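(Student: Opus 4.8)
The plan is to establish both assertions by reducing everything to the already-constructed pairing $\langle\ ,\ \rangle:\mathcal{H}\times\widehat{\mathcal{A}}\to\k$ of \eqref{eq:HhatA_pairing}, together with Lemma \ref{lemma:eta} and the $G$-equivariance results of Lemma \ref{lem:extended_G-action}~(3). First I would verify \eqref{eq:UA_formula}. Recall that $\A$ carries the left $G$-module structure defined by \eqref{eq:gamma_a} (inherited from $\mathcal{A}$), while $\U=\mathcal{H}/\mathcal{I}$ carries the right $G$-module structure of Lemma \ref{lem:G-stable}. The pairing \eqref{eq:UA_pairing} is induced from \eqref{eq:HhatA_pairing} via the quotient $\mathcal{H}\twoheadrightarrow\U$ and the inclusion $\A\subset\widehat{\mathcal{A}}$. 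Since Lemma \ref{lem:extended_G-action}~(3) already gives $\langle x^{\gamma},a\rangle=\langle x,{}^{\gamma}a\rangle$ for $x\in\mathcal{H}$, $a\in\mathcal{A}$, the formula \eqref{eq:UA_formula} follows immediately by passing to the quotient $\U$ and restricting to $\A\subset\widehat{\mathcal{A}}$: both structures on $\U$ and $\A$ are the ones descended/restricted from $\mathcal{H}$ and $\mathcal{A}$, and the pairing is compatible with these operations by construction.

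Next I would check that \eqref{eq:UA_pairing} is a Hopf pairing, i.e.\ that it satisfies the conditions \eqref{eq:Hopf_pairing_conditions}. The grading-compatibility $\langle\U_i,\A_j\rangle=0$ for $i\ne j$ is inherited from the analogous property of \eqref{eq:HA_pairing}. For the multiplicativity conditions, the cleanest route is to use the explicit description of $\eta$ in Lemma \ref{lemma:eta}, namely $\eta(a)(x)=\overline{a}_{(1)}\langle x,a_{(2)}\rangle$, which identifies $\A$ with $\mathrm{Hom}_J(\U,C)$ and packages the pairing with the projection $\A\to C$. The condition $\langle xy,a\rangle=\langle x,a_{(1)}\rangle\langle y,a_{(2)}\rangle$ corresponds to the fact that the product on $\A$ (transferred via $\xi$ as in Proposition \ref{prop:transferred_structures}~(1), $fg(x)=f(x_{(1)})g(x_{(2)})$) is dual to the coproduct of $\U$; dually, $\langle x,ab\rangle=\langle x_{(1)},a\rangle\langle x_{(2)},b\rangle$ reflects that the coproduct $\widehat{\Delta}$ on $\A$ (Proposition \ref{prop:transferred_structures}~(2)) is dual to the product of $\U$. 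The counit conditions $\langle 1,a\rangle=\varepsilon(a)$ and $\langle x,1\rangle=\varepsilon(x)$ follow from the corresponding identities for \eqref{eq:HA_pairing} together with the fact that $1\in\U$ and $1\in\A$ are the images/restrictions of the units of $\mathcal{H}$ and $\mathcal{A}$.

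The main obstacle I anticipate is bookkeeping rather than conceptual: one must make sure that the structure maps of $\A$, which are not literally those of $\widehat{\mathcal{A}}$ but the transferred ones of Proposition \ref{prop:transferred_structures}, are correctly matched against the genuine Hopf structure of $\U$ through the pairing. In particular the coproduct formula on $\A$ is stated in the twisted form $\langle(\gamma,\delta),\widehat{\Delta}(f)(x\otimes y)\rangle=\langle\gamma\delta,f(x^{\delta}\,y)\rangle$, so verifying $\langle x,ab\rangle=\langle x_{(1)},a\rangle\langle x_{(2)},b\rangle$ requires unwinding this twist and invoking \eqref{eq:UA_formula} (which is exactly why I would prove the equivariance formula first). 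Once \eqref{eq:UA_formula} is in hand, the smash-product twist in the coproduct is absorbed by the $G$-action compatibility, and the Hopf-pairing axioms reduce to the already-verified properties of the untwisted pairing \eqref{eq:HA_pairing} on $\mathcal{H}\times\mathcal{A}$ from \cite[Proposition 17]{M2}. I would therefore present \eqref{eq:UA_formula} as the key step, deduce it from Lemma \ref{lem:extended_G-action}~(3), and then let the Hopf-pairing verification follow by restriction and descent, checking the multiplicative axioms on the generators $\mathfrak{g}_0\subset J$ and $\mathfrak{g}_1\subset\T(\mathfrak{g}_1)$ where the computations are immediate.
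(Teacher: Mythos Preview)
Your proposal is correct and follows essentially the same strategy as the paper: deduce \eqref{eq:UA_formula} from Lemma~\ref{lem:extended_G-action}\,(3) by passing to the quotient $\U=\mathcal{H}/\mathcal{I}$ and restricting to $\A$, and then obtain the Hopf-pairing axioms from those of \eqref{eq:HA_pairing}. The paper's version is slightly more streamlined for the second part: rather than unwinding the transferred coproduct of Proposition~\ref{prop:transferred_structures} and absorbing the twist via \eqref{eq:UA_formula}, it simply observes that the extended pairing \eqref{eq:HhatA_pairing} on $\mathcal{H}\times\widehat{\mathcal{A}}$ satisfies the formulas \eqref{eq:Hopf_pairing_conditions} (with $\langle x,a_{(1)}\rangle\langle y,a_{(2)}\rangle$ interpreted as $\langle x\otimes y,\widehat{\Delta}(a)\rangle$ via the completed tensor pairing), so restriction to $\U\times\A$ gives the result directly. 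Your route via the transferred structures works too, but the detour through Proposition~\ref{prop:transferred_structures}\,(2) and the $G$-twist is unnecessary once you recognize that $\A$ is discrete in $\widehat{\mathcal{A}}$ (Lemma~\ref{lem:A(G,g)}\,(1)), hence sits inside $\mathcal{A}$ with its ordinary smash-coproduct structure, where the Hopf-pairing identities for \eqref{eq:HA_pairing} apply verbatim.
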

\begin{proof}
Note that the co-adjoint coaction $\mathscr{A} \to \mathscr{A}\otimes C$ given in \eqref{eq:coad3}
is completed to 
$\widehat{\mathscr{A}} \to \widehat{\mathscr{A}} \otimes C$, by which $\widehat{\mathscr{A}}$ is a
left $G$-supermodule including $\A$ as a $G$-super-submodule. One sees that the pairing \eqref{eq:HhatA_pairing}
satisfies the same formula as \eqref{eq:pairing_G-action} for $a \in \widehat{\mathscr{A}}$. 
The resulting formula shows \eqref{eq:UA_formula}.

The rest follows since the pairing \eqref{eq:HhatA_pairing} satisfies the formulas 
\eqref{eq:Hopf_pairing_conditions}
required to Hopf pairings.
Here we understand that for $x, y \in \mathscr{H}$ and $a \in \widehat{\mathscr{A}}$, 
$\langle x, a_{(1)} \rangle \, \langle y, a_{(2)} \rangle$ 
represents $\langle x \otimes y, \widehat{\Delta}(a) \rangle$; this last denotes the pairing on 
$(\mathscr{H} \otimes \mathscr{H}) \times (\widehat{\mathscr{A}}\, \widehat{\otimes}\, \widehat{\mathscr{A}})$
which is obtained naturally from the pairing on 
$(\mathscr{H} \otimes \mathscr{H}) \times (\mathscr{A} \otimes \mathscr{A})$, just as \eqref{eq:HhatA_pairing} 
is obtained from \eqref{eq:HA_pairing}.  
\end{proof}

\begin{proof}[Proof of Proposition \ref{prop:PG=id}]
We see from the definition of $\psi$ that the pairing $\langle \ , \ \rangle : \U \times \A \to \k$ 
given in \eqref{eq:UA_pairing} satisfies  
\[
\langle \phi(x \otimes y), a \rangle = 
\langle x, \overline{a}_{(1)}\rangle \, \langle y, \varrho(a_{(2)}) \rangle,
\quad x\in J, \ y \in \wedge(\mathfrak{g}_1), \ a \in \A, 
\]
What appear on the right-hand side are the canonical pairings on $J \times C$ and on 
$\wedge(\mathfrak{g}_1) \times \wedge(W)$.  
It follows that the pairing induces 
a non-degenerate pairing $ \mathfrak{g} \times \A^+/(\A^+)^2 \to \k$. 
Lemma \ref{lem:UA_pairing} shows that the last pairing induces an isomorphism $\mathrm{Lie}(\G) \simeq \mathfrak{g}$
of Lie superalgebras, where $\G := \G(G, \mathfrak{g})$. 
In addition, the isomorphism $W^{\A} \simeq \mathfrak{g}_1^*$ 
obtained in \eqref{eq:two_isoms}
is indeed $G$-equivariant. It follows that the Lie superalgebra isomorphism together with 
$\overline{\A} \simeq C$ give 
the desired isomorphism of Harish-Chandra pairs. It is natural since the construction of \eqref{eq:UA_pairing}
is functorial. 
\end{proof}

\begin{rem}\label{rem:construction_generalized}
One sees that the construction above gives an affine (not necessarily algebraic) supergroup, 
more generally, starting with a pair $(G,\mathfrak{g})$ such that 
\begin{itemize}
\item[(i)] $G$ is an affine group with $\O(G)$ $\k$-flat,
\item[(ii)] $\mathfrak{g}$ is an admissible Lie superalgebra with $\mathfrak{g}_1$ $\k$-finite (free),
\item[(iii)] $\mathfrak{g}$ is given a right $G$-supermodule structure such that 
the super-bracket on $\mathfrak{g}$ is $G$-equivariant, 
and 
\item[(iv)] there is given a bilinear map $\langle \ , \ \rangle : \mathfrak{g}_0 \times \O(G) \to \k$ such that
\begin{itemize}
\item[] $\langle x, ab \rangle = \langle x, a \rangle \, \varepsilon(b)+ \varepsilon(a) \langle x,b \rangle$,
\item[] $\langle x^{\gamma}, a \rangle = \langle x, {}^{\gamma}a \rangle$,
\item[] $[z, x] = \langle x, z^{(-1)}\rangle \, z^{(0)}$, 
\end{itemize}
where $x \in \mathfrak{g}_0$,\ $ a,b  \in \O(G)$,\ $\gamma \in G$,\ $z \in \mathfrak{g}$,\
and $z \mapsto z^{(-1)}\otimes z^{(0)}$ denotes the left $\mathscr{O}(G)$-super-comodule structure on $\mathfrak{g}$ 
which corresponds to the given right $G$-supermodule structure. 
\end{itemize}
Here we do not assume (B1) or $G$ being algebraic. Given a super Lie group, say $\mathfrak{G}$, 
we have in mind as $G$ and $\mathfrak{g}$ above, the universal algebraic hull of the associated Lie group
$\mathfrak{G}_{red}$ and the Lie superalgebra $\mathrm{Lie}(\mathfrak{G})$ of $\mathfrak{G}$, respectively. 

See \cite[Remark 11]{M2} for a similar construction in an alternative situation. 
\end{rem}

\subsection{}\label{subsec:equivalence_theorem}
The following is our main result. 

\begin{theorem}\label{thm:equivalence}
We have a category equivalence $\mathsf{ASG}\approx \mathsf{HCP}$. In fact the functors
$\mathbf{P} : \mathsf{ASG}\to \mathsf{HCP}$ and $\G : \mathsf{HCP}\to \mathsf{ASG}$ are
quasi-inverse to each other. 
\end{theorem}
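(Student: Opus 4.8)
The plan is to prove the equivalence by showing that the two functors $\mathbf{P}$ and $\G$ are mutually quasi-inverse. Proposition \ref{prop:PG=id} has already established that $\mathbf{P}\circ \G$ is naturally isomorphic to the identity functor on $\mathsf{HCP}$. So the remaining, and main, task is to produce a natural isomorphism $\G\circ \mathbf{P} \simeq \mathrm{id}_{\mathsf{ASG}}$. Concretely, given $\G \in \mathsf{ASG}$ with $\A := \O(\G)$, $G := \G_{ev}$ and $\mathfrak{g} := \mathrm{Lie}(\G)$, I would construct a natural isomorphism of Hopf superalgebras
\[
\A \overset{\simeq}{\longrightarrow} \A(G,\mathfrak{g}),
\]
which dualizes to the desired natural isomorphism of algebraic supergroups. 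Since everything in $\mathsf{ASG}$ is represented by its coordinate Hopf superalgebra, it suffices to work entirely on the Hopf-superalgebra side and invoke the anti-equivalence between $\mathsf{ASG}$ and $\mathsf{AHSA}$ noted in Section \ref{subsec:P}.

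\textbf{Constructing the comparison map.}
To build the map $\A \to \A(G,\mathfrak{g})$, I would exploit the pairing machinery developed in Section \ref{subsec:A(G,g)}. Recall that $\A(G,\mathfrak{g})$ was defined via the isomorphism $\eta : \A(G,\mathfrak{g}) \isomto \operatorname{Hom}_J(\U,C)$, and that elements of $\operatorname{Hom}_J(\U,C)$ are governed by the Hopf pairing $\langle \ ,\ \rangle : \U \times \A(G,\mathfrak{g}) \to \k$ of \eqref{eq:UA_pairing}. On the other hand, for the given $\G$, the universal envelope $\U = \U(\mathfrak{g})$ of $\mathfrak{g} = \mathrm{Lie}(\G)$ pairs with $\A = \O(\G)$: the embedding $\mathfrak{g}\subset \A^*$ extends to an algebra map $\U \to \A^*$, yielding a Hopf pairing $\langle \ ,\ \rangle : \U \times \A \to \k$ of the kind treated in Section \ref{subsec:representation}. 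I would then define the candidate map $\Phi : \A \to \operatorname{Hom}_J(\U,C)$ by the very formula that characterizes $\eta$ in Lemma \ref{lemma:eta}, namely
\[
\Phi(a)(x) = \overline{a}_{(1)}\,\langle x, a_{(2)}\rangle,\quad a\in \A,\ x\in \U,
\]
using the projection $\A \to \overline{\A} \simeq C$ of \eqref{eq:quotient_map} and the above pairing. One checks, using the Hopf-pairing axioms \eqref{eq:Hopf_pairing_conditions} and the left $J$-module structure on $C$, that $\Phi(a)$ indeed lands in $\operatorname{Hom}_J(\U,C)$; composing with $\eta^{-1}$ gives the comparison map $\A \to \A(G,\mathfrak{g})$.

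\textbf{Verifying that the comparison map is an isomorphism.}
The key structural input is the splitting property. Since $\A \in \mathsf{AHSA}$ is split, the counit-preserving isomorphism $\A \simeq \overline{\A}\otimes \wedge(W^{\A})$ of \eqref{eq:splitting_property} presents $\A$ in exactly the shape dual to the presentation $\U \simeq J\otimes \wedge(\mathfrak{g}_1)$ of $\U$ afforded by Corollary \ref{cor:co-split} (via the fixed $\phi$ of \eqref{eq:phi}). Under the identifications $\overline{\A}\simeq C$ and $W^{\A}\simeq \mathfrak{g}_1^* = W$, both $\A$ and $\A(G,\mathfrak{g})\simeq \operatorname{Hom}(\wedge(\mathfrak{g}_1),C) \simeq C\otimes \wedge(W)$ become the \emph{same} $C$-comodule superalgebra $C\otimes\wedge(W)$, and I would check that $\Phi$ respects this identification because it is built from the canonical non-degenerate pairing $\wedge(\mathfrak{g}_1)\times \wedge(W)\to \k$ of \eqref{cano-pairing}. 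Finiteness and freeness of $\wedge(\mathfrak{g}_1)$ then force $\Phi$ to be bijective, exactly as in the injectivity argument of Lemma \ref{lem:A(G,g)}(2). That $\Phi$ is a Hopf superalgebra map (not merely a comodule-algebra isomorphism) follows from the fact that the pairing $\U\times\A\to\k$ is a Hopf pairing, so $\Phi$ intertwines $\Delta$ with the transferred coproduct $\widehat{\Delta}$ of Proposition \ref{prop:transferred_structures}(2), and likewise respects counit and antipode. Naturality in $\G$ is automatic since the pairing, the splitting-compatible data, and $\eta$ are all constructed functorially.

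\textbf{The main obstacle.}
The hard part will be confirming that the transferred coproduct on $\A(G,\mathfrak{g})$, expressed through functor points by the formula $\langle(\gamma,\delta),\ \widehat{\Delta}(f)(x\otimes y)\rangle = \langle\gamma\delta,\ f(x^{\delta}y)\rangle$ in Proposition \ref{prop:transferred_structures}(2), matches the coproduct on $\A$ under $\Phi$. This is precisely the point where the Harish-Chandra datum — the interaction of the $G$-action on $\mathfrak{g}_1$ with the coproduct of $\A$, encoded in the co-adjoint coaction \eqref{eq:coad2} — must be reconciled with the smash-coproduct structure of $\mathcal{A} = C\cmdblackltimes\T_c(W)$. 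I expect the cleanest route is to verify the coalgebra compatibility at the level of the dual pairing, i.e.\ to show directly that $\langle x, \Delta_{\A}(a)\rangle$ and the smash-coproduct expression agree for all $x\in\U$, reducing by multiplicativity and Proposition \ref{prop:co-split} to the generators $x\in\mathfrak{g}_0$ and $x\in\mathfrak{g}_1$, where the identity becomes the defining compatibility of the Harish-Chandra pair.
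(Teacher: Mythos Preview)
Your proposal follows essentially the same route as the paper: your map $\Phi$ is exactly the map the paper calls $\eta'$, and your plan to verify the Hopf-superalgebra compatibility via the transferred structure of Proposition~\ref{prop:transferred_structures} is what the paper does. The paper organizes the coproduct check slightly differently---it first isolates the $G$-equivariance identity $\langle x^{\gamma}, a\rangle = \langle x,\,{}^{\gamma}a\rangle$ for the pairing $\U\times\A\to\k$ (proved by induction on word length in $\U$), and then the coproduct compatibility is a two-line computation with functor points exactly parallel to the proof of Proposition~\ref{prop:transferred_structures}(2). Your ``reduce to generators'' sketch is the same idea in less crystallized form.

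There is, however, a genuine gap in your bijectivity argument. You assert that under a chosen splitting $\psi:\A\simeq C\otimes\wedge(W)$ and the identification $\operatorname{Hom}_J(\U,C)\simeq C\otimes\wedge(W)$ coming from $\phi$, the map $\Phi$ becomes ``the same'' identification. But the splitting $\psi$ is a noncanonical choice, and there is no a~priori reason the intrinsic pairing $\U\times\A\to\k$ (built from $\mathfrak{g}\subset\A^*$) should factor through $\psi$ and $\phi$ as the tensor product of the canonical pairings $J\times C\to\k$ and $\wedge(\mathfrak{g}_1)\times\wedge(W)\to\k$. These pairings agree on $\mathfrak{g}\times\A$, but can differ in higher degree. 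So you cannot simply conclude $\Phi$ is bijective from finiteness of $\wedge(\mathfrak{g}_1)$; your reference to Lemma~\ref{lem:A(G,g)}(2) is not apt here, since that lemma concerns injectivity of a canonical map of a different shape.

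The paper closes this gap with Lemma~\ref{lem:last_applied}: one first observes that $\Phi=\eta'$, composed with the identification via $\phi$, yields a counit-preserving left $C$-comodule superalgebra map $\psi':\A\to C\otimes\wedge(W)$ whose component $\A\to W$ is the canonical projection. The lemma then proves that \emph{any} such map is an isomorphism, by passing to the associated graded with respect to the filtration by powers of the ideal $(\A_1)$; the splitting hypothesis on $\A$ is used only to identify $\operatorname{gr}\A$ canonically with $C\otimes\wedge(W)$, after which $\operatorname{gr}\psi'$ is seen to be the identity in degrees $0$ and $1$ and hence everywhere. This associated-graded step is the missing technical ingredient in your outline.
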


Since Proposition \ref{prop:PG=id} shows that $\mathbf{P}\circ \G$ is naturally isomorphic to
the identity functor $\mathrm{id}$, it remains to prove 
$\G \circ \mathbf{P}\simeq \mathrm{id}$. 

Let $\G \in \mathsf{ASG}$. Set
\[ 
\A := \O(\G),\quad \mathfrak{g}:= \mathrm{Lie}(\G),\quad \U:= \U(\mathfrak{g}),\quad G:=\G_{ev}. 
\]

\begin{lemma}\label{lem:U(g)toA*}
The natural embedding $\mathfrak{g} \subset \A^*$ uniquely extends to a superalgebra
map $\U \to \A^*$. The associated pairing $\langle \ , \ \rangle : \U \times \A \to \k$
is a Hopf pairing. 
\end{lemma}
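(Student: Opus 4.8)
The plan is to invoke the universal property of $\U=\U(\mathfrak{g})$. Recall that $\A^*$ is the dual superalgebra of the super-coalgebra $\A$ (its product being convolution), and that $\mathfrak{g}=\mathrm{Lie}(\G)$ sits in $\A^*$ as the space of functionals that vanish on $\k\cdot 1$ and on $(\A^+)^2$. Since $\U$ is by construction the quotient of $\T(\mathfrak{g})$ by the Hopf super-ideal generated by the elements \eqref{eq:homog_primitives}, the inclusion $\mathfrak{g}\hookrightarrow\A^*$ extends first to a superalgebra map $\T(\mathfrak{g})\to\A^*$, and this factors through $\U$ --- uniquely, as $\U$ is generated by $\mathfrak{g}$ --- precisely when both families of generators in \eqref{eq:homog_primitives} are sent to $0$. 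So the first task is to verify these two relations inside $\A^*$.

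For the supercommutator relation I would simply observe that, by Proposition \ref{prop:Lie(G)}, the super-bracket on $\mathfrak{g}$ is by definition the restriction to $\mathfrak{g}\otimes\mathfrak{g}$ of $(\mathrm{id}-c_{\A^*,\A^*})$ followed by the product of $\A^*$; hence for homogeneous $z,w\in\mathfrak{g}$ the element $zw-(-1)^{|z||w|}wz$ computed in $\A^*$ is exactly $[z,w]\in\mathfrak{g}\subset\A^*$, and the first generator vanishes. For the square relation, the same proposition shows that for $v\in\mathfrak{g}_1$ the convolution square $v^2$ already lies in $\mathfrak{g}_0$ and satisfies $[v,v]=2v^2$; since $\mathfrak{g}_0$ is $\k$-flat by (A1) and $\k$ is $2$-torsion free, $\mathfrak{g}_0$ is $2$-torsion free, so $v^2$ is the unique half of $[v,v]$, i.e.\ $v^2=\tfrac{1}{2}[v,v]$, and the second generator vanishes as well. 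This yields the superalgebra map $\U\to\A^*$ and proves the first assertion.

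For the Hopf-pairing assertion, the associated pairing is $\langle x,a\rangle:=(\text{image of }x\text{ in }\A^*)(a)$. The grading condition $\langle\U_i,\A_j\rangle=0$ for $i\ne j$ holds because $\U\to\A^*$ respects the $\mathbb{Z}_2$-grading. The first identity of \eqref{eq:Hopf_pairing_conditions}, $\langle xy,a\rangle=\langle x,a_{(1)}\rangle\langle y,a_{(2)}\rangle$, is exactly the multiplicativity of $\U\to\A^*$, the product on $\A^*$ being convolution. The counit identities $\langle 1,a\rangle=\varepsilon(a)$ and $\langle x,1\rangle=\varepsilon(x)$ follow since $1\in\U$ maps to the unit $\varepsilon$ of $\A^*$, and since both $\langle\,\cdot\,,1\rangle$ and the counit of $\U$ are algebra maps $\U\to\k$ agreeing on the generating set $\mathfrak{g}\cup\{1\}$ (elements of $\mathfrak{g}$ being primitive and vanishing at $1_{\A}$).

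The remaining and most substantial condition is the comultiplicativity $\langle x,ab\rangle=\langle x_{(1)},a\rangle\langle x_{(2)},b\rangle$, which I would prove by induction on the length of $x$ as a product of elements of $\mathfrak{g}$. The base case $x\in\mathfrak{g}$ is the Leibniz rule $\langle x,ab\rangle=\langle x,a\rangle\varepsilon(b)+\varepsilon(a)\langle x,b\rangle$, which holds because $x$ vanishes on $\k\cdot 1$ and on $(\A^+)^2$: decomposing $a$ and $b$ into counit-parts and augmentation-parts, only the two cross terms survive. For the inductive step one expands $\langle xy,ab\rangle$ using the first identity together with the fact that $\Delta_{\A}$ is an algebra map, applies the inductive hypothesis to $x$ and to $y$, regroups the four scalar factors, and folds them back up using that $\Delta_{\U}$ is an algebra map. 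I expect the main obstacle to lie precisely in this regrouping: the products in $\A\otimes\A$ and $\U\otimes\U$ carry Koszul signs, so one must check --- invoking the grading condition $\langle\U_i,\A_j\rangle=0$ established above, which forces the degrees in each surviving term to match --- that these signs cancel and the identity closes up. Granting this, comultiplicativity propagates from the generators to all of $\U$, and the pairing is a Hopf pairing.
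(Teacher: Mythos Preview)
Your proof is correct and follows essentially the same approach as the paper: extend $\mathfrak{g}\hookrightarrow\A^*$ to $\T(\mathfrak{g})$, check that the generators \eqref{eq:homog_primitives} are killed, then verify the Hopf-pairing axioms, handling comultiplicativity by induction on the length of a word in $\mathfrak{g}$. The only cosmetic difference is that for the square relation the paper concludes directly from $\A^*$ being $2$-torsion free (immediate since $\k$ is), whereas you route through $\mathfrak{g}_0$ being $\k$-flat; both are valid.
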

\begin{proof}
The superalgebra map $\T(\mathfrak{g}) \to \A^*$ which extends $\mathfrak{g} \subset \A^*$
kills the first elements in \eqref{eq:homog_primitives}, by definition of the super-bracket.
For $v \in \mathfrak{g}_1$ it kills $2v^2-[v,v]$, whence it does $v^2-\frac{1}{2}[v,v]$ since
$\A^*$ is $2$-torsion free. This proves the first assertion.  

As for the second it is easy to see $\langle x, 1 \rangle = \varepsilon(x)$, $x \in \U$. 
It remains to prove
\[
\langle x, ab \rangle = \langle x_{(1)}, a \rangle \, \langle x_{(2)}, b \rangle,\quad
x\in \U, \ a, b \in \A.
\]
We may suppose that $x$ is of the form 
$x= u_1\dots u_r$, 
where $u_i$ are homogeneous elements in $\mathfrak{g}$.
Then the equation is proved by induction on the length $r$.  
\end{proof}

Recall $\A \in G\text{-}\mathsf{SMod}$,\ $\U \in \mathsf{SMod}\text{-}G$; see \eqref{eq:coad2} or 
\eqref{eq:gamma_a} as for $\A$, and see Lemma \ref{lem:G-stable} as for $\U$. 
Indeed, $\A$ and $\U$ are
Hopf-algebra objects in the respective categories. 

\begin{lemma}\label{lem:UA_paring(2)}
The Hopf pairing $\langle \ , \ \rangle : \U \times \A \to \k$ just obtained satisfies the same formula as 
\eqref{eq:UA_formula}. 
\end{lemma}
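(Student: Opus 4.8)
The plan is to establish the intertwining formula \eqref{eq:UA_formula} first on the generating subspace $\mathfrak{g}\subset\U$, and then to propagate it to all of $\U$ by induction on the length of a product $x=u_1\cdots u_r$ with $u_i\in\mathfrak{g}$ (a form to which every $x$ may be reduced, as in Lemma \ref{lem:U(g)toA*}). The two ingredients that make the induction run are that both $G$-actions are by Hopf-algebra morphisms and that $\langle\ ,\ \rangle\colon\U\times\A\to\k$ is a Hopf pairing, which is exactly the content of Lemma \ref{lem:U(g)toA*}.

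For the base case $x=v\in\mathfrak{g}$ I would simply unwind the definitions. Since $\mathfrak{g}=\mathrm{Lie}(\G)=\mathfrak{d}^{*}$ with $\mathfrak{d}=\A^{+}/(\A^{+})^{2}$ is embedded in $\A^{*}$, the value $\langle v,a\rangle=v(a)$ vanishes on $\k 1$ and on $(\A^{+})^{2}$, so it depends only on the class of $a-\varepsilon(a)1$ in $\mathfrak{d}$; that is, $\langle\ ,\ \rangle$ restricts on $\mathfrak{g}\times\A$ to the evaluation pairing $\mathfrak{g}\times\mathfrak{d}\to\k$. By the very construction in Section \ref{subsec:P}, the right $G$-action on $\mathfrak{g}$ is the transpose, with respect to this evaluation pairing, of the left $G$-action on $\mathfrak{d}$ induced by the co-adjoint coaction \eqref{eq:coad2}. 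Because $1$ is $G$-fixed and $v^{\gamma}$ again vanishes on $\k 1$ and $(\A^{+})^{2}$, both $\langle v^{\gamma},a\rangle$ and $\langle v,{}^{\gamma}a\rangle$ reduce to the corresponding pairings on $\mathfrak{g}\times\mathfrak{d}$, where they agree by the definition of the transpose action. This yields \eqref{eq:UA_formula} for $x\in\mathfrak{g}$.

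For the inductive step I would write $x=uw$ with $u=u_1\in\mathfrak{g}$ and $w=u_2\cdots u_r$ of smaller length. Since $\U$ is an algebra object in $\mathsf{SMod}\text{-}G$ (cf.\ Lemma \ref{lem:G-stable}) we have $(uw)^{\gamma}=u^{\gamma}w^{\gamma}$, so by the Hopf pairing identity \eqref{eq:Hopf_pairing_conditions}, $\langle x^{\gamma},a\rangle=\langle u^{\gamma},a_{(1)}\rangle\,\langle w^{\gamma},a_{(2)}\rangle$. Applying the base case to $u$ and the induction hypothesis to $w$ turns this into $\langle u,{}^{\gamma}a_{(1)}\rangle\,\langle w,{}^{\gamma}a_{(2)}\rangle$. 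Now $\A$ is a Hopf-algebra object in $G\text{-}\mathsf{SMod}$, so its coproduct is $G$-equivariant, giving ${}^{\gamma}a_{(1)}\otimes{}^{\gamma}a_{(2)}=({}^{\gamma}a)_{(1)}\otimes({}^{\gamma}a)_{(2)}$; feeding this back and using \eqref{eq:Hopf_pairing_conditions} once more yields $\langle u,({}^{\gamma}a)_{(1)}\rangle\,\langle w,({}^{\gamma}a)_{(2)}\rangle=\langle uw,{}^{\gamma}a\rangle=\langle x,{}^{\gamma}a\rangle$. As the identity is $R$-linear in the functor point $\gamma\in G(R)$, it holds over every commutative algebra $R$.

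The genuinely delicate point is the base case: one must verify that the evaluation pairing $\mathfrak{g}\times\mathfrak{d}\to\k$ is precisely the restriction of $\langle\ ,\ \rangle$, and that the co-adjoint coaction \eqref{eq:coad2} descends to the $G$-action on $\mathfrak{d}$ whose transpose defines the $G$-action on $\mathfrak{g}$ — in short, that the two $G$-actions entering \eqref{eq:UA_formula} are literally dual at the level of generators. Everything else is packaged in the facts, recalled just before Lemma \ref{lem:U(g)toA*}, that $\A$ and $\U$ are Hopf-algebra objects in $G\text{-}\mathsf{SMod}$ and $\mathsf{SMod}\text{-}G$ respectively, together with the Hopf pairing property of Lemma \ref{lem:U(g)toA*}; once the definitional matching in the base case is in place, the induction is purely formal.
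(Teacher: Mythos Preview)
Your proof is correct and follows essentially the same approach as the paper: establish \eqref{eq:UA_formula} for $x\in\mathfrak{g}$ by observing that the $G$-action on $\mathfrak{g}$ is the transpose of that on $\A^+/(\A^+)^2$, then extend to all of $\U$ by induction on the length $r$ of $x=u_1\cdots u_r$ using the Hopf-pairing identity and the $G$-equivariance of the product on $\U$ and the coproduct on $\A$. The paper compresses the argument into three lines, referring back to the proofs of Lemma~\ref{lem:U(g)toA*} and Lemma~\ref{lem:extended_G-action}~(3), but your expanded version fills in exactly the details those references point to.
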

\begin{proof}
The $G$-module structure on $\mathfrak{g}$ is transposed from that on $\A^+/(\A^+)^2$. Therefore, the 
formula holds for every $x \in \mathfrak{g}$ and for any $a \in \A$. The desired formula follows
by induction, as in the last proof; see also the proof of Lemma \ref{lem:extended_G-action} (3). 
\end{proof}

Set 
\[
C :=\O(G), \quad J :=U(\mathfrak{g}_0).
\]
Note $ \mathbf{P}(\G) = (G, \mathfrak{g}) $. We aim to show that the affine Hopf superalgebra
$\A(G,\mathfrak{g})$, which is constructed from this last Harish-Chandra pair as in the previous subsection, 
is naturally isomorphic to the present $\A$. 
By using the Hopf pairing above and the notation \eqref{eq:quotient_map}, we define
\[
\eta' : \A \to \mathrm{Hom}_J(\U, C), \quad \eta'(a)(x) = \overline{a}_{(1)}\, \langle x, a_{(2)}\rangle,
\]
where $a \in \A$, $x \in \U$. Note that $\mathrm{Hom}_J(\U,C)$ has the Hopf superalgebra structure
which is transferred from $\A(G,\mathfrak{g})$ thorough $\eta$ (see \eqref{eq:A(G,g)Hom_J(U,C)}), and
which is presented by the formulas given in Proposition \ref{prop:transferred_structures}
with the obvious modification.  
(To answer a question by the referee we remark here that our $\eta'$ above 
is essentially the same, up to sign, as the existing ones such as
$\eta^*$ in \cite[Page 133, lines 2--3]{CCF}. 
The authors will discuss the difference of sign somewhere else.)

\begin{prop}\label{prop:GP=id}
$\eta'$ is an isomorphism of Hopf superalgebras. 
\end{prop}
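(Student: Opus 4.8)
The plan is to check, in order, that $\eta'$ is a well-defined left $J$-linear map, that it is a morphism of Hopf superalgebras, and finally that it is bijective; the bijectivity is where the hypothesis that $\G \in \mathsf{ASG}$ (so that $\A = \O(\G)$ is split) really enters, and it is the step I expect to be the main obstacle. The starting observation is that $\eta'$ is defined by exactly the same formula $\eta'(a)(x) = \overline{a}_{(1)}\,\langle x, a_{(2)}\rangle$ as the isomorphism $\eta$ of Lemma~\ref{lemma:eta}, the only difference being that the tautological Hopf pairing $\langle\ ,\ \rangle : \U \times \A \to \k$ of Lemma~\ref{lem:U(g)toA*} is used in place of the pairing~\eqref{eq:UA_pairing}. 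Consequently the structural verifications for $\eta'$ are the computations behind Proposition~\ref{prop:transferred_structures}, read off the axioms that this pairing is known to satisfy.

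For well-definedness I would first record that, for $x \in J$ and $a \in \A$, the value $\langle x, a\rangle$ equals the canonical pairing $\langle x, \overline{a}\rangle$, because $\mathfrak{g}_0 = \mathrm{Lie}(G)$ consists of functionals factoring through $\overline{\A} = C$. Combining this with the Hopf-pairing identity $\langle jx, a\rangle = \langle j, a_{(1)}\rangle\langle x, a_{(2)}\rangle$ from~\eqref{eq:Hopf_pairing_conditions} and the $J$-action $jc = c_{(1)}\langle j, c_{(2)}\rangle$ on $C$, a direct manipulation gives $\eta'(a)(jx) = j\,\eta'(a)(x)$, so indeed $\eta'(a) \in \mathrm{Hom}_J(\U, C)$ (discreteness is ensured by the finiteness noted in Corollary~\ref{cor:Hom_J(U,C)}). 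The fact that $\eta'$ respects the transferred operations of Proposition~\ref{prop:transferred_structures} then follows operation by operation: the product and counit come straight from the axioms~\eqref{eq:Hopf_pairing_conditions} together with $\eta'(a)(1) = \overline{a}$, the unit from $\langle x, 1\rangle = \varepsilon(x)$, and the antipode and coproduct formulas from the same sign-tracking as in the proof of Proposition~\ref{prop:transferred_structures}, the only additional ingredient being the $G$-compatibility $\langle x^{\gamma}, a\rangle = \langle x, {}^{\gamma}a\rangle$ of Lemma~\ref{lem:UA_paring(2)}. This makes $\eta'$ a homomorphism of Hopf superalgebras.

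For bijectivity I would mimic Lemma~\ref{lem:psi}: composing $\eta'$ with the projection $\varepsilon_*$ as in~\eqref{eq:varrho} produces a counit map $\varrho' : \A \to \wedge(W)$ and hence a counit-preserving morphism $\psi' : \A \to C \otimes \wedge(W)$, $\psi'(a) = \overline{a}_{(1)} \otimes \varrho'(a_{(2)})$, of left $C$-comodule superalgebras which induces the identity on $\overline{\A} = C$. Since $\A$ is split (Definition~\ref{def:split}), I may identify $\A \cong C \otimes \wedge(W^{\A})$ with $W^{\A}$ being $\k$-finite free, and I filter both source and target by the decreasing, finite, separated filtrations given by the powers of the ideal generated by the odd part, which under the splittings are $C \otimes \wedge^{\ge n}(W^{\A})$ and $C \otimes \wedge^{\ge n}(W)$. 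Because $\psi'$ is an algebra map sending odd elements into odd, hence into $\wedge^{\ge 1}$, elements, it respects these filtrations even though $\varrho'$ may mix exterior degrees, and the induced map on the associated graded is $\mathrm{id}_C \otimes \wedge(\lambda)$, where $\lambda : W^{\A} \to W$ is the linear map induced by $\varrho'$ in exterior degree one. Thus $\psi'$, and therefore $\eta'$, is an isomorphism as soon as $\lambda$ is.

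It remains to identify $\lambda$, and this is exactly the point where one checks that nothing has gone wrong with the pairing. Unwinding the definitions, $\lambda$ sends the class of $a \in \A_1$ to the functional $v \mapsto \langle v, a\rangle$ in $\mathfrak{g}_1^* = W$; since $\mathfrak{g}_1 = (W^{\A})^*$ by construction (Section~\ref{subsec:Lie(G)}), this is the canonical evaluation map, which is an isomorphism because $W^{\A}$ is $\k$-finite free. This is the same identification as in Lemma~\ref{lem:psi}~(2), now carried out for the tautological pairing. The main difficulty of the whole argument is concentrated in this bijectivity step, and specifically in the passage from an isomorphism on the odd cotangent spaces to a global isomorphism, which is what forces the use of the splitting of $\A$ and the associated-graded reasoning.
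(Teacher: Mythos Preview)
Your proposal is correct and follows essentially the same route as the paper: the Hopf-map verification reduces to the formulas of Proposition~\ref{prop:transferred_structures} together with Lemma~\ref{lem:UA_paring(2)}, and the bijectivity is obtained by passing to the associated graded with respect to the filtration by powers of the odd ideal and checking that the degree-one piece of $\psi'$ is the canonical identification $W^{\A}\simeq \mathfrak{g}_1^*$. The only organizational difference is that the paper packages the associated-graded step as a separate Lemma~\ref{lem:last_applied}, whose hypothesis (that $(\varepsilon\otimes\varpi)\circ\psi'$ is the canonical projection $\A\to W^{\A}$) is exactly your computation of $\lambda$.
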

\begin{proof}
Using Lemma \ref{lem:UA_paring(2)} one computes in
the same way as proving Proposition \ref{prop:transferred_structures} (2) so that
\[
\langle (\gamma, \delta),\ (\eta'(a_{(1)})\otimes \eta'(a_{(2)}))(x \otimes y) \rangle 
= \langle \gamma \delta,\ \eta'(a)(x^{\delta}\,  y) \rangle,
\]
where $a \in \A$, $\gamma, \delta \in G$, $x, y \in \U$. The right-hand side equals 
$\langle (\gamma, \delta),\ \Delta(\eta'(a)) (x \otimes y) \rangle$, by the formula
giving the coproduct on $\mathrm{Hom}_J(\U, C)$. Therefore, $\eta'$ preserves the coproduct. It
is easy to see that $\eta'$ preserves the remaining structure maps, and is hence a Hopf superalgebra map. 

Set $W :=W^{\A}$. Choose $\phi$ such as in \eqref{eq:phi}, and 
define $\varrho': \A \to \wedge(W)$ as $\varrho$ in
\eqref{eq:varrho}, with $\eta$ replaced by $\eta'$. 
Then as was seen for $\eta$ in the proof of Lemma \ref{lem:psi} (1), $\eta'$
is identified with 
\[
\psi' : \A \to C \otimes \wedge(W), \quad \psi'(a)=\overline{a}_{(1)}\otimes \varrho'(a_{(2)}).
\]
Since one sees that this $\psi'$ satisfies the assumption of Lemma \ref{lem:last_applied} below, 
the lemma proves that $\psi'$ and so $\eta'$ are isomorphisms.
\end{proof}

\begin{lemma}\label{lem:last_applied}
In general, let $\A$ be a split affine Hopf superalgebra, and set $C :=\overline{\A}$, $W:= W^{\A}$. 
Let $\psi : \A \to C \otimes \wedge(W)$ be a counit-preserving map of left $C$-comodule superalgebras.
Assume that the composite $(\varepsilon \otimes \varpi)\circ \psi : \A \to W$, where
$\varpi : \wedge(W) \to W$ denotes the canonical projection, coincides with the canonical projection
$\A \to \A_1/\A_0^+\A_1=W$. Then $\psi$ is necessarily an isomorphism.   
\end{lemma}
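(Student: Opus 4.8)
The plan is to reduce the statement to the assertion that one endomorphism of $C\tens\wedge(W)$ is an isomorphism, and then to prove that by passing to an associated graded. First I would invoke the splitting hypothesis (Definition \ref{def:split}, Remark \ref{rem:split}~(1)) to fix a counit-preserving isomorphism $\Phi:\A\isomto C\tens\wedge(W)$ of left $C$-comodule superalgebras. Since $\Phi$ induces an automorphism $\beta$ of $W^{\A}=W$, after replacing $\Phi$ by $(\mathrm{id}_C\tens\wedge(\beta^{-1}))\comp\Phi$ I may assume that $\Phi$ induces the identity on $W$; concretely this arranges that $(\varepsilon\tens\varpi)\comp\Phi$ equals the canonical projection $\A\to W$, since both maps are $\k$-linear, vanish on $\A_0$ and on $\A_0^+\A_1$, and agree on $W^{\A}=\A_1/\A_0^+\A_1$. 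Setting $\theta:=\psi\comp\Phi^{-1}$, the hypothesis on $\psi$ then becomes $(\varepsilon\tens\varpi)\comp\theta=\varepsilon\tens\varpi$, and $\theta$ is a counit-preserving endomorphism of the left $C$-comodule superalgebra $B:=C\tens\wedge(W)$. It suffices to prove that $\theta$ is an isomorphism.

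For this I would filter $B$ by the superideals $F^nB:=C\tens\wedge^{\ge n}(W)$, which are the $n$-th powers of the ideal $(B_1)=\ker(B\to\overline{B})$, where $\overline{B}=C$; since $W$ is $\k$-finite free this filtration is finite, with $\mathrm{gr}^nB=C\tens\wedge^n(W)$. Because $\theta$ is a parity-preserving algebra map it satisfies $\theta(B_1)\subseteq B_1$, hence $\theta((B_1))\subseteq(B_1)$ and $\theta(F^nB)\subseteq F^nB$; thus $\theta$ is filtered and induces a graded algebra endomorphism $\mathrm{gr}\,\theta$. Moreover each $F^nB$ is a subcomodule (the coaction is $\Delta_C\tens\mathrm{id}_{\wedge(W)}$), so $\mathrm{gr}\,\theta$ is $C$-colinear in each degree. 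The reduction then rests on the elementary fact that a filtered endomorphism of a finitely filtered module whose induced map on the associated graded is an isomorphism is itself an isomorphism: injectivity follows by looking at the top nonzero symbol of a would-be kernel element, and surjectivity by successive approximation down the finite filtration. It therefore remains only to check that $\mathrm{gr}\,\theta=\mathrm{id}$.

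In degree $0$ the map $\mathrm{gr}^0\theta=\overline{\theta}:C\to C$ is the identity: it is a counit-preserving colinear endomorphism of $C$ with its regular coaction, so applying $\mathrm{id}\tens\varepsilon$ to $\Delta\comp\overline{\theta}=(\mathrm{id}\tens\overline{\theta})\comp\Delta$ and using $\varepsilon\comp\overline{\theta}=\varepsilon$ gives $\overline{\theta}=(\mathrm{id}\tens\varepsilon)\comp\Delta=\mathrm{id}$. In degree $1$, $\mathrm{gr}^1B=C\tens W$ is the cofree left $C$-comodule on $W$, so colinear endomorphisms of it are classified by their symbol $T\mapsto(\varepsilon\tens\mathrm{id}_W)\comp T$; since $(\varepsilon\tens\varpi)$ descends on $F^1B/F^2B=C\tens W$ to $\varepsilon\tens\mathrm{id}_W$, the normalization $(\varepsilon\tens\varpi)\comp\theta=\varepsilon\tens\varpi$ shows that the symbol of $\mathrm{gr}^1\theta$ is exactly $\varepsilon\tens\mathrm{id}_W$, the symbol of the identity, whence $\mathrm{gr}^1\theta=\mathrm{id}$. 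As $\mathrm{gr}\,\theta$ is a $C$-algebra endomorphism of $C\tens\wedge(W)$, which is generated by its degree-$0$ and degree-$1$ parts, these two computations force $\mathrm{gr}\,\theta=\mathrm{id}$, and the argument is complete.

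I expect the degree-$1$ step to be the main obstacle: one must recognize $\mathrm{gr}^1B$ as a cofree $C$-comodule and correctly translate the normalization condition into the statement that the colinear symbol of $\mathrm{gr}^1\theta$ is the identity. (Strictly, one only needs $\mathrm{gr}^1\theta$ to be an isomorphism for $\mathrm{gr}\,\theta$ to be one, but the normalization of $\Phi$ makes it the identity outright, which is cleaner.) Everything else—the finiteness of the filtration, the compatibility of $\theta$ with it, and the abstract ``$\mathrm{gr}$ isomorphism implies isomorphism'' lemma—is formal.
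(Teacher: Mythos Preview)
Your proof is correct and follows essentially the same strategy as the paper's: pass to the associated graded with respect to the filtration by powers of the odd ideal, show the induced graded endomorphism of $C\otimes\wedge(W)$ is the identity in degrees $0$ and $1$ (hence everywhere, by generation), and conclude by finiteness of the filtration. The only cosmetic difference is that the paper takes $\operatorname{gr}\psi:\operatorname{gr}\A\to\operatorname{gr}(C\otimes\wedge(W))$ directly and invokes the canonical identification $\operatorname{gr}\A\simeq C\otimes\wedge(W)$ (from \cite[Proposition~4.9(2)]{M1}) rather than first composing with an explicit splitting $\Phi^{-1}$ as you do; your normalization of $\Phi$ is a harmless extra step that makes $\operatorname{gr}^1\theta=\mathrm{id}$ rather than merely an isomorphism.
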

\begin{proof}
Let $\mathbf{B} := C\otimes \wedge(W)$. Set 
$\mathfrak{a}:= (\A_1)$ and $\mathfrak{b}:= (\mathbf{B}_1)\, (=C\otimes \wedge(W)^+)$
in $\A$ and in $\mathbf{B}$, respectively. Since $\psi(\mathfrak{a}^n) \subset \mathfrak{b}^n$ 
for every $n \ge 0$, 
there is induced
a counit-preserving, left $C$-comodule $\mathbb{N}$-graded algebra map
\[
\operatorname{gr}\psi : \operatorname{gr}\A = \bigoplus_{n=0}^{\infty} \mathfrak{a}^n/\mathfrak{a}^{n+1} \to 
\operatorname{gr}\mathbf{B} = \bigoplus_{n=0}^{\infty} \mathfrak{b}^n/\mathfrak{b}^{n+1}.
\]
One sees that $\operatorname{gr}\mathbf{B}= \mathbf{B}= C \otimes \wedge(W)$. 
Since $\A$ is split, we have as in \cite[Proposition 4.9 (2)]{M1},  
a canonical isomorphism  $\operatorname{gr}\A \simeq 
C \otimes \wedge(W)$, through which we will identify the two. 
Then $\operatorname{gr}\psi$ is a counit-preserving endomorphism of the left $C$-comodule
$\mathbb{N}$-graded algebra $C \otimes \wedge(W)$. Being a counit-preserving endomorphism of the
left $C$-comodule algebra $C$, 
$\operatorname{gr}\psi(0)$ is the identity on $C$. This together with the assumption above imply that
$\operatorname{gr}\psi(1)$ is the identity on $C\otimes W$. It follows that $\operatorname{gr}\psi$ is an
isomorphism. 
Since the affinity assumption implies $\operatorname{gr}\A(n)=0=\operatorname{gr}\mathbf{B}(n)$ for $n\gg 0$,
one sees that $\psi$ is an isomorphism.
\end{proof}

\begin{proof}[Proof of Theorem \ref{thm:equivalence}]
Since we see that $\eta$ and $\eta'$ are both natural, it follows that $\A(G, \mathfrak{g})$ and $\A$ 
are naturally isomorphic.
This proves $\G\circ \mathbf{P}\simeq \mathrm{id}$, as desired. 
\end{proof}

\begin{rem}\label{rem:compare_equivalences}
Suppose that $\k$ is a field of characteristic $\ne 2$. 
Identify $\mathsf{ASG}$ with $\mathsf{AHSA}$, 
through the obvious category anti-isomorphism.  
Identify our $\mathsf{HCP}$ defined by Definition \ref{def:HCP} with
that defined by \cite[Definition 7]{M2}, 
through the category anti-isomorphism given in 
Remark \ref{rem:compare_definitions}~(1). 
Then the category equivalences
$\mathbf{P}$ and $\G$ given by Theorem \ref{thm:equivalence} are easily identified with those
$A \mapsto (\overline{A}, W^A)$ and $(C, W) \mapsto A(C,W)$ 
given by \cite[Theorem 29]{M2}. 
\end{rem}

\section{$\G$-supermodules and $\mathrm{hy}(\G)$-supermodules}\label{sec:G-supermodules}
Throughout in this section we suppose that $\k$ is an integral domain.
Our assumption that $\k$ is $2$-torsion free is equivalent to that $2 \ne 0$ in $\k$. 

\subsection{}\label{hy(G)}
Let $\G \in \mathsf{ASG}$, and set $G:= \G_{ev}$. As before, we let  
$\A := \O(\G)$, whence $\overline{\A} = \O(G)$.  
We assume that $G$ is \emph{infinitesimally flat} \cite[Part~I, 7.4]{J}. 
This means that
\begin{itemize}
\item[(D1)] For every $n>0$,\ $\overline{\A}/(\overline{\A}^+)^n$ is $\k$-finite projective. 
\end{itemize}
By (C1), it follows that for every $n > 0$, $\A/(\A^+)^n$ is $\k$-finite projective. 

Recall that $\A^*$ is
the dual superalgebra of the super-coalgebra $\A$. 
We suppose $(\A/(\A^+)^n)^* \subset \A^*$ through the natural embedding, and set
\[
\mathrm{hy}(\G) := \bigcup_{n>0} (\A/(\A^+)^n)^*.  
\]
We call this the \emph{super-hyperalgebra} of $\G$. 
This is often denoted alternatively by $\operatorname{Dist}(\G)$, called 
the \emph{super-distribution algebra} of $\G$. 

It is easy to see that $\mathrm{hy}(\G)$ is a super-subalgebra of $\A^*$. 
By (D1), each $(\A/(\A^+)^n)^*$ is the dual coalgebra
of the algebra $\A/(\A^+)^n$. One sees that if $n < m$, then $(\A/(\A^+)^n)^*\subset (\A/(\A^+)^m)^*$ is a 
coalgebra embedding, so that all $(\A/(\A^+)^n)^*$, $n > 0$, form an inductive system of coalgebras. 

\begin{lemma}\label{lem:hy(G)}
Given the coalgebra structure of the inductive limit, 
the superalgebra $\mathrm{hy}(\G)$ forms a cocommutative Hopf superalgebra
such that the canonical pairing $\O(\G)^* \times \O(\G) \to \k$ restricts to a Hopf pairing
\begin{equation}\label{eq:hy(G)O(G)_pairing}
\langle \ , \ \rangle : \mathrm{hy}(\G)\times \O(\G) \to \k.
\end{equation}
\end{lemma}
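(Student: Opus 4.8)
The plan is to define every structure map on $\mathrm{hy}(\G)$ as the transpose of the corresponding map on $\A=\O(\G)$, and then to import the Hopf superalgebra axioms from $\A$ by duality. The coalgebra structure (coassociative, cocommutative, with counit $\varepsilon_{\mathrm{hy}}(f):=\langle f,1\rangle=f(1_\A)$) is already in hand as the inductive limit of the dual coalgebras $(\A/(\A^+)^n)^*$; the superalgebra structure is the convolution product inherited from $\A^*$, with unit $\varepsilon\in\A^*$; and as antipode I would take the transpose $S_{\mathrm{hy}}(f):=f\circ S$ of the antipode $S$ of $\A$. With these choices the canonical evaluation pairing $\langle f,a\rangle=f(a)$ satisfies, directly from the definitions,
\begin{align*}
\langle fg, a\rangle &= \langle f, a_{(1)}\rangle\,\langle g, a_{(2)}\rangle, &
\langle f, ab\rangle &= \langle f_{(1)}, a\rangle\,\langle f_{(2)}, b\rangle,\\
\langle \varepsilon, a\rangle &= \varepsilon(a), &
\langle f, 1\rangle &= \varepsilon_{\mathrm{hy}}(f),
\end{align*}
together with the parity condition $\langle f,a\rangle=0$ for $|f|\ne|a|$, which holds since $(\A^*)_i=(\A_i)^*$. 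These are exactly conditions \eqref{eq:Hopf_pairing_conditions}.

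First I would record the well-definedness points, which is where the hypotheses (C1) and (D1) enter. Each $\A/(\A^+)^n$ is $\k$-finite projective, so the natural map $(\A/(\A^+)^n)^*\otimes(\A/(\A^+)^n)^*\to((\A/(\A^+)^n)^{\otimes 2})^*$ is an isomorphism; since $(\A^+)^n$ is an ideal, the form $(a,b)\mapsto f(ab)$ attached to $f\in(\A/(\A^+)^n)^*$ factors through $(\A/(\A^+)^n)^{\otimes 2}$, so the coproduct of $f$ lies in the honest tensor product $\mathrm{hy}(\G)\otimes\mathrm{hy}(\G)$ rather than in a completion. For the antipode, $S$ preserves $\varepsilon$ and is anti-multiplicative, whence $S((\A^+)^n)\subseteq(\A^+)^n$, so $S_{\mathrm{hy}}(f)=f\circ S$ again kills $(\A^+)^n$ and $S_{\mathrm{hy}}$ maps $\mathrm{hy}(\G)$ into itself.

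Then I would transfer the remaining axioms by non-degeneracy. The inclusion $\mathrm{hy}(\G)\hookrightarrow\A^*$ separates points, and, working on each finite projective layer with a dual basis, $\mathrm{hy}(\G)^{\otimes k}$ is separated by its pairing against $\A^{\otimes k}$; hence any identity between elements of $\mathrm{hy}(\G)^{\otimes k}$ may be checked after pairing against $\A^{\otimes k}$. Pairing both sides against $\A$-tensors and using the displayed identities repeatedly, the bialgebra compatibility $\Delta(fg)=\Delta(f)\Delta(g)$ reduces to the fact that the coproduct of $\A$ is a superalgebra map — this is where the Koszul sign $(-1)^{|f_{(2)}||g_{(1)}|}$ appears and must match the one in $c_{\A,\A}$ — while the two antipode axioms for $S_{\mathrm{hy}}$ reduce to those for $S$, via $\langle S_{\mathrm{hy}}(f_{(1)})f_{(2)},a\rangle=\langle f,S(a_{(1)})a_{(2)}\rangle=\varepsilon(a)\langle f,1\rangle$. (Coassociativity, cocommutativity and the counit axiom are already supplied by the inductive-limit coalgebra structure.) This shows $\mathrm{hy}(\G)$ is a cocommutative Hopf superalgebra, and the displayed identities are then, by definition, the statement that $\langle\ ,\ \rangle$ is a Hopf pairing.

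The main obstacle is not conceptual but is the careful bookkeeping of the super-signs, so that the duality identities land on precisely the form \eqref{eq:Hopf_pairing_conditions} and the bialgebra sign matches $c_{\A,\A}$; the one genuinely structural input is finite projectivity of the layers $\A/(\A^+)^n$, which is exactly what forces the coproduct to take values in the uncompleted tensor product $\mathrm{hy}(\G)\otimes\mathrm{hy}(\G)$ and makes the non-degeneracy argument valid.
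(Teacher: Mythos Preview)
Your proposal is correct and follows essentially the same approach as the paper: define the antipode as the transpose $f\mapsto f\circ S$, check it stabilizes $\mathrm{hy}(\G)$, verify the Hopf pairing identities \eqref{eq:Hopf_pairing_conditions} directly, and then deduce the bialgebra compatibility and antipode axioms by duality. The paper's proof is considerably terser---it cites \cite[Lemma~1]{M2} for the step where you spell out the non-degeneracy argument on finite projective layers---but the underlying logic is the same.
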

\begin{proof}
Let $\H := \mathrm{hy}(\G)$.
Since each $(\A/(\A^+)^n)^*$ is cocommutative, so is $\H$.  
The dual $S^*$ of the antipode $S$ of $\A$ stabilizes $\H$. Denote $S^*|_{\H}$ by $S$. 
Then we see that the restricted pairing satisfies \eqref{eq:Hopf_pairing_conditions}, 
\eqref{eq:Hopf_pairing_antipode}. 
It follows that $\H$ satisfies the compatibility required to super-bialgebras (see \cite[Lemma~1]{M2}), 
and has $S= S^*|_{\H}$ as an antipode.  
\end{proof}

Let $\mathfrak{g}:=\operatorname{Lie}(\G)$. 
Note that the primitive elements in $\hy(\G)$ coincide precisely with $\mathfrak{g}$.
In addition, if $\k$ is a field of characteristic zero, then we have 
$\mathrm{hy}(\G) = \mathbf{U}(\mathfrak{g})$. 

The Hopf superalgebra quotient $\O(\G) \to \O(G)$ gives rise to a Hopf superalgebra
embedding of the hyperalgebra $\mathrm{hy}(G)$ of $G$ into $\mathrm{hy}(\G)$. Let 
$W :=W^{\A}\, (= \mathfrak{g}_1^*)$, 
and choose a counit-preserving isomorphism
\[ \psi : \O(\G) \overset{\simeq}{\longrightarrow} \O(G) \otimes \wedge(W) \] 
of left $\O(G)$-comodule superalgebras.

\begin{lemma}\label{lem:phi_from_psi}
There uniquely exists a unit-preserving isomorphism
\[
\phi : \mathrm{hy}(G) \otimes \wedge(\mathfrak{g}_1) \overset{\simeq}{\longrightarrow}
\mathrm{hy}(\G) 
\]
of left $\mathrm{hy}(G)$-module super-coalgebras such that 
\[
\langle \phi(z), a \rangle = \langle z, \psi(a) \rangle,  \quad a \in \O(\G),\
z \in \mathrm{hy}(G) \otimes \wedge(\mathfrak{g}_1),  
\]
where the right-hand side gives the tensor product of the canonical pairings 
\begin{equation}\label{eq:cano_pairings}
\mathrm{hy}(G) \times \O(G)\to \k,\quad \wedge(\mathfrak{g}_1) \times \wedge(W)\to \k. 
\end{equation}
\end{lemma}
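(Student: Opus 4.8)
The plan is to construct $\phi$ as the transpose of $\psi$, transported through the identification of the hyperalgebra of a tensor product with the tensor product of the hyperalgebras. Write $C := \O(G)$ and $B := C \otimes \wedge(W)$, and let $\psi^* : B^* \to \A^*$ denote the transpose of $\psi$. Since the pairing \eqref{eq:hy(G)O(G)_pairing} is the restriction of the evaluation pairing $\A^* \times \A \to \k$, it is non-degenerate in the left variable; hence any map $\phi$ satisfying $\langle \phi(z), a \rangle = \langle z, \psi(a) \rangle$ is forced to be $\phi(z) = z \circ \psi = \psi^*(z)$, which settles uniqueness at once. It then remains to verify that $\psi^*$ restricts to a bijection of the asserted form and respects the stated structures.

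First I would record the tensor decomposition $\mathrm{hy}(B) \simeq \mathrm{hy}(G) \otimes \wedge(\mathfrak{g}_1)$. Writing $B^+ = C^+ \otimes \wedge(W) + C \otimes \wedge(W)^+$, an elementary computation gives $(B^+)^n = \sum_{k=0}^{n} (C^+)^k \otimes (\wedge(W)^+)^{n-k}$; consequently the chain of ideals $\mathfrak{c}_n := (C^+)^n \otimes \wedge(W) + C \otimes (\wedge(W)^+)^n$ is cofinal with $\{(B^+)^n\}_n$, since $\mathfrak{c}_n \subseteq (B^+)^n$ (the terms $k=n$ and $k=0$) and $(B^+)^{2n} \subseteq \mathfrak{c}_n$ (each term has $k \ge n$ or $2n-k \ge n$). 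As $B/\mathfrak{c}_n \simeq (C/(C^+)^n) \otimes (\wedge(W)/(\wedge(W)^+)^n)$, and as $C/(C^+)^n$ is $\k$-finite projective by (D1) while $\wedge(W)$ is $\k$-finite free, dualizing splits as $(B/\mathfrak{c}_n)^* \simeq (C/(C^+)^n)^* \otimes (\wedge(W)/(\wedge(W)^+)^n)^*$. Passing to the colimit over $n$, and noting that $(\wedge(W)/(\wedge(W)^+)^n)^* = \wedge(W)^* = \wedge(\mathfrak{g}_1)$ once $n$ exceeds the rank of $W$ (using $\wedge(W^*) = \wedge(W)^*$ from Section \ref{subsec:Hopf_pairing} and $W = \mathfrak{g}_1^*$), yields the desired identification, under which the pairing with $B$ becomes precisely the tensor product of the canonical pairings \eqref{eq:cano_pairings}.

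Next, because $\psi$ is a counit-preserving superalgebra isomorphism it satisfies $\psi((\A^+)^n) = (B^+)^n$, and hence induces isomorphisms of augmented algebras $\A/(\A^+)^n \isomto B/(B^+)^n$; their duals, finite projective as recorded after (D1), assemble into an isomorphism $\psi^* : \mathrm{hy}(B) \isomto \mathrm{hy}(\G)$. Composing with the decomposition of the previous paragraph defines the bijection $\phi : \mathrm{hy}(G) \otimes \wedge(\mathfrak{g}_1) \to \mathrm{hy}(\G)$ obeying the pairing formula. The structural claims then follow by dualizing the corresponding properties of $\psi$. Unit-preservation holds since $\langle \phi(1 \otimes 1), a \rangle = \varepsilon(\psi(a)) = \varepsilon(a)$, forcing $\phi(1 \otimes 1) = 1$. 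That $\phi$ is a super-coalgebra map is the transpose of $\psi$ being a superalgebra map, using that the tensor-product coalgebra $\mathrm{hy}(G) \otimes \wedge(\mathfrak{g}_1)$ is dual to the superalgebra $B$. Finally, that $\phi$ is left $\mathrm{hy}(G)$-linear is the transpose of $\psi$ being a morphism of left $C$-comodules: one checks that the $C^*$-action dual to the left $C$-coaction restricts, on $\mathrm{hy}(\G)$, to left multiplication by the embedded $\mathrm{hy}(G)$ (because $\langle u, a \rangle = \langle u, \overline{a} \rangle$ for $u \in \mathrm{hy}(G)$), and on $\mathrm{hy}(G) \otimes \wedge(\mathfrak{g}_1)$ to left multiplication in the first tensorand.

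The main obstacle is the tensor decomposition together with the bijectivity of $\phi$; everything else is formal dualization. Controlling the $(B^+)$-adic filtration of $C \otimes \wedge(W)$ and passing $\k$-linear duals through the tensor product genuinely require the finiteness hypotheses, namely (D1) for $G$ being infinitesimally flat and the $\k$-finite freeness of $\wedge(W)$ supplied by (C1). I would be careful to confirm that the two filtrations $\{(B^+)^n\}$ and $\{\mathfrak{c}_n\}$ are cofinal, so that both compute the same $\mathrm{hy}(B)$, and that the colimit of tensor products recovers the tensor product of the colimits, which is harmless here because the $\wedge(W)$-factor stabilizes in finitely many steps.
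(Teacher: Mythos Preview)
Your proof is correct and follows essentially the same approach as the paper: the desired $\phi$ is the restriction of the transpose $\psi^*$, and you have filled in the details---the identification $\mathrm{hy}(G)\otimes\wedge(\mathfrak{g}_1)\simeq\mathrm{hy}(C\otimes\wedge(W))$ via cofinal filtrations, the bijection induced by $\psi$ on the adic quotients, and the dualization of the structural properties---that the paper's one-line proof leaves implicit.
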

\begin{proof}
We see that $\psi^*$ 
restricts to $\mathrm{hy}(G) \otimes \wedge(\mathfrak{g}_1) 
\overset{\simeq}{\longrightarrow} \mathrm{hy}(\G)$, and this isomorphism is such as mentioned above.  
\end{proof}

We will identify as
\begin{equation}\label{eq:identification}
\O(\G) = \O(G) \otimes \wedge(W),\quad  
\mathrm{hy}(G) \otimes \wedge(\mathfrak{g}_1) = \mathrm{hy}(\G)
\end{equation}
through $\psi$, $\phi$, respectively. 

Let $Q$  be the quotient field of $\k$, and let $G_Q$ denote the base change of $G$ to $Q$. 
In addition to (D1), we assume 
\begin{itemize}
\item[(D2)] $G_Q$ is connected, or in other words, $\mathscr{O}(G_Q)=\mathscr{O}(G)\otimes Q$ contains
no non-trivial idempotent. 
\end{itemize}

This assumption ensures the following. 

\begin{lemma}\label{lem:r-fold_injection}
For every $r >0$, the superalgebra map 
\[
\O(\G)^{\otimes r} \to (\mathrm{hy}(\G)^{\otimes r})^*
\]
which is associated with the $r$-fold tensor product of the Hopf pairing \eqref{eq:hy(G)O(G)_pairing} 
is injective. 
\end{lemma}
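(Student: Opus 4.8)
The plan is to reduce the injectivity of the $r$-fold map $\O(\G)^{\otimes r}\to(\hy(\G)^{\otimes r})^*$ to the case $r=1$, and then to handle $r=1$ by exploiting the splitting $\O(\G)=\O(G)\otimes\wedge(W)$ together with the hyperalgebra identification $\hy(\G)=\hy(G)\otimes\wedge(\mathfrak{g}_1)$ from \eqref{eq:identification}. First I would observe that under these two tensor-product decompositions the Hopf pairing \eqref{eq:hy(G)O(G)_pairing} factors, by Lemma \ref{lem:phi_from_psi}, as the tensor product of the two canonical pairings in \eqref{eq:cano_pairings}. Since $\wedge(\mathfrak{g}_1)\times\wedge(W)\to\k$ is the non-degenerate perfect pairing of finite free modules recalled in Section \ref{subsec:Hopf_pairing}, the odd exterior factor contributes an \emph{injection} $\wedge(W)\to\wedge(\mathfrak{g}_1)^*$ with no difficulty. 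Thus the whole problem is transported to the purely even factor: it suffices to prove that the superalgebra map
\[
\O(G)^{\otimes r}\to(\hy(G)^{\otimes r})^*
\]
associated with the $r$-fold tensor product of the pairing $\hy(G)\times\O(G)\to\k$ is injective. This is now a statement about the ordinary (purely even) algebraic group $G$, its hyperalgebra $\hy(G)$, and its coordinate algebra $\O(G)$.

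For the even statement I would further reduce to $r=1$. The $r$-fold pairing is the tensor product of $r$ copies of the single pairing $\hy(G)\times\O(G)\to\k$, so the associated map $\O(G)^{\otimes r}\to(\hy(G)^{\otimes r})^*$ is the $r$-fold tensor product (followed by the canonical map into the dual of a tensor product) of the single map $\iota:\O(G)\to\hy(G)^*$. Because each $\O(G)/(\O(G)^+)^n$ is $\k$-finite projective by (D1), and $\hy(G)$ is the union of the finite-projective duals $(\O(G)/(\O(G)^+)^n)^*$, the canonical map $\hy(G)^*\otimes\cdots\otimes\hy(G)^*\to(\hy(G)^{\otimes r})^*$ is compatible with these finite-level dualities; so once $\iota$ is injective and $\O(G)$ is flat (which follows from (B3)), the tensor product $\iota^{\otimes r}$ stays injective and the factorization through $(\hy(G)^{\otimes r})^*$ is faithful. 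Hence everything comes down to injectivity of the single map $\iota:\O(G)\to\hy(G)^*$, i.e.\ to the separation of points of $\O(G)$ by the hyperalgebra.

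The heart of the matter, and the step where assumption (D2) enters, is precisely this separation statement: an element $a\in\O(G)$ with $\langle z,a\rangle=0$ for all $z\in\hy(G)$ must vanish. Here I would pass to the quotient field $Q$ and the base change $G_Q$, using that $\O(G)$ is $\k$-flat (B3) so that $\O(G)\hookrightarrow\O(G)\otimes Q=\O(G_Q)$ and that the pairing base-changes to the $\hy(G_Q)$–$\O(G_Q)$ pairing. Over the field $Q$ of characteristic zero or positive characteristic, the hyperalgebra (distribution algebra) of the \emph{connected} affine algebraic group $G_Q$ separates the points of $\O(G_Q)$: an element killed by all distributions vanishes to infinite order at the identity, hence generates the zero ideal after translation, and connectedness (D2) — the absence of non-trivial idempotents in $\O(G_Q)$ — forces it to be $0$ on all of $G_Q$. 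This is the classical fact that for a connected group the hyperalgebra is dense in $\O(G_Q)^*$, equivalently that $\bigcap_n(\O(G_Q)^+)^n=0$ after accounting for translations; see \cite[Part~I, 7.9--7.10]{J}.

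The main obstacle I anticipate is this last density/separation argument in the mixed setting: over a field of positive characteristic the naive ``vanishing to infinite order implies zero'' reasoning fails because the distributions of a connected group do \emph{not} see beyond the Frobenius kernels individually, yet the \emph{union} over all $n$ of $(\O(G)/(\O(G)^+)^n)^*$ does separate points for connected $G$. The careful point is therefore to invoke connectedness correctly, via (D2), to rule out the idempotent obstructions that would otherwise allow a nonzero $a$ supported away from the identity component to be annihilated by all of $\hy(G_Q)$; everything else in the reduction is routine bookkeeping with the finite-projectivity from (D1) and the flatness from (B3).
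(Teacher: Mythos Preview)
Your overall strategy---reduce to the purely even factor via the splitting and Lemma \ref{lem:phi_from_psi}, then pass to the quotient field $Q$ and invoke connectedness (D2)---is exactly the route the paper takes. The reduction to the even case and the sketch of why connectedness of $G_Q$ forces $\O(G_Q)\to\hy(G_Q)^*$ to be injective are both fine; the paper simply cites \cite[Proposition~0.3.1(g)]{T1} for the latter fact (in its $r$-fold form).

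There is, however, a genuine gap in your order of operations. You try to reduce from $r$ to $r=1$ \emph{over $\k$} by asserting that ``once $\iota$ is injective and $\O(G)$ is flat, the tensor product $\iota^{\otimes r}$ stays injective''. This does not follow: flatness of the source $\O(G)$ gives you $\O(G)^{\otimes r}\hookrightarrow\hy(G)^*\otimes\O(G)^{\otimes(r-1)}$, but the next step $\hy(G)^*\otimes\O(G)^{\otimes(r-1)}\to(\hy(G)^*)^{\otimes r}$ requires flatness of $\hy(G)^*$, which you have not established (and which is not obvious over a general integral domain). The appeal to ``finite-level dualities'' is too vague to repair this. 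The paper sidesteps the issue entirely by base-changing to $Q$ \emph{first}: since $\O(G)^{\otimes r}$ is $\k$-flat it embeds into $\O(G)^{\otimes r}\otimes Q=\O(G_Q)^{\otimes r}$, and over the field $Q$ the $r$-fold injectivity $\O(G_Q)^{\otimes r}\hookrightarrow(\hy(G_Q)^{\otimes r})^*$ is immediate from the $r=1$ case (tensor products of injections are injections over a field), or is given directly by \cite{T1}. Reorder your argument accordingly and the gap closes.
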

\begin{proof}
By Lemma \ref{lem:phi_from_psi} it suffices to prove that the algebra map 
$\O(G)^{\otimes r} \to (\mathrm{hy}(G)^{\otimes r})^*$ similarly given is injective. 
By \cite[Proposition~0.3.1(g)]{T1}, (D2) ensures that the $Q$-algebra map  
$\O(G_Q)^{\otimes r} \to (\mathrm{hy}(G_Q)^{\otimes r})^*$ for $G_Q$ is injective. 
Since $\hy(G_Q)=\hy(G)\otimes Q$, we have the canonical map $(\hy(G)^{\otimes r})^* \otimes Q
\to (\hy(G_Q)^{\otimes r})^*$. By (B3) we have $\O(G)^{\otimes r}  \subset \O(G)^{\otimes r} \otimes Q$. 
The desired injectivity follows from the commutative diagram
\[
\begin{xy}
(0,0)   *++{\O(G)^{\otimes r} \otimes Q}  ="1",
(38,0)  *++{(\hy(G)^{\otimes r})^* \otimes Q}    ="2",
(0,-15) *++{\O(G_Q)^{\otimes r}}          ="3",
(38,-15)*++{(\hy(G_Q)^{\otimes r})^*. }    ="4",
{"1" \SelectTips{cm}{} \ar @{->} "2"},
{"1" \SelectTips{cm}{} \ar @{->}^{\simeq} "3"},
{"2" \SelectTips{cm}{} \ar @{->} "4"},
{"3" \SelectTips{cm}{} \ar @{->} "4"}
\end{xy}
\]
\end{proof}

Let $M$ be a supermodule. Given a left $\G$-supermodule (resp., $G$-module) structure on $M$, 
one defines by the formula \eqref{eq:induced_supermodule}, using the Hopf pairing \eqref{eq:hy(G)O(G)_pairing} 
(resp., the first one of \eqref{eq:cano_pairings}), a left $\hy(\G)$-supermodule 
(resp.,  $\hy(G)$-module) structure on $M$.    
We see that in the super-situation, this indeed defines a map from
\begin{itemize}
\item
the set of all left $\G$-supermodule structures on $M$ 
\end{itemize}
to
\begin{itemize}
\item
the set of those locally finite, left
$\operatorname{hy}(\G)$-supermodule structures on $M$ whose restricted (necessarily, locally finite)
$\hy(G)$-module structures arise from left $G$-module structures. 
\end{itemize}

Note that the left and the right $\G$-supermodule structures 
(resp., locally finite $\operatorname{hy}(\G)$-supermodule structures with the property as above)
on $M$ are in one-to-one correspondence, since one can switch the sides through
the inverse on $\G$ (resp., the antipode on $\operatorname{hy}(\G)$). Therefore, we may 
replace $``$left" with $``$right" in the sets above, to prove the following proposition. Indeed, 
we do so, to make the argument fit in with our results so far obtained. 

\begin{prop}\label{prop:bijection_module_structures}
If $M$ is $\Bbbk$-projective, the map above is a bijection. 
\end{prop}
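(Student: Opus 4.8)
The plan is to establish the bijection by exhibiting a two-sided inverse, constructed by integrating an $\hy(\G)$-supermodule structure back up to a $\G$-supermodule structure. Using the identifications \eqref{eq:identification}, a right $\G$-supermodule structure on $M$ is the same as a left $\O(\G)$-super-comodule structure, i.e.\ a map $\omega : M \to \O(\G) \otimes M = \O(G) \otimes \wedge(W) \otimes M$ satisfying the comodule axioms; dually a right $\hy(\G)$-supermodule structure of the required kind is a pairing-compatible action. First I would note that, since $\hy(G) \otimes \wedge(\mathfrak{g}_1) = \hy(\G)$ as a left $\hy(G)$-module super-coalgebra and $\wedge(\mathfrak{g}_1)$ is $\k$-finite free, a right $\hy(\G)$-supermodule structure on $M$ amounts to the combined data of (i) a locally finite right $\hy(G)$-module structure on $M$ arising from a right $G$-module structure, together with (ii) the action of the finitely many odd primitives from a basis of $\mathfrak{g}_1$, subject to the relations encoding the super-coalgebra structure. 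The $G$-module part integrates by hypothesis; the remaining finite odd data must be integrated to the $\wedge(W)$-coaction factor.

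The key technical step is injectivity-to-bijectivity via Lemma \ref{lem:r-fold_injection}. Given a right $\hy(\G)$-supermodule structure on the $\k$-projective $M$ of the allowed type, I would define a candidate comodule map $\omega : M \to \O(\G) \otimes M$ by using the isomorphism $\phi$ of Lemma \ref{lem:phi_from_psi} and the $\k$-finite freeness of $\wedge(W)$ to dualize the action of $\wedge(\mathfrak{g}_1)$ into a $\wedge(W)$-valued coaction, tensored with the $\O(G)$-coaction coming from the integrated $G$-module structure. The coassociativity and counitality of $\omega$ — i.e.\ that $\omega$ is genuinely a comodule structure and not merely a linear map — is exactly where Lemma \ref{lem:r-fold_injection} enters: the two composites $(\Delta \otimes \mathrm{id})\circ \omega$ and $(\mathrm{id} \otimes \omega)\circ \omega$ from $M$ into $\O(\G)^{\otimes 2} \otimes M$ agree after pairing against $\hy(\G)^{\otimes 2}$, because both encode the associativity of the $\hy(\G)$-action; since $M$ is $\k$-projective, the map $\O(\G)^{\otimes 2} \otimes M \to (\hy(\G)^{\otimes 2})^* \otimes M$ is injective by the $r=2$ case of the lemma (tensoring the injection there with $\mathrm{id}_M$ preserves injectivity as $M$ is $\k$-projective hence $\k$-flat), so the two composites coincide. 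This yields a well-defined inverse construction.

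Finally I would check that the two assignments are mutually inverse. That the composite starting from a $\G$-supermodule structure returns the same structure follows from the $r=1$ case of Lemma \ref{lem:r-fold_injection}: the $\G$-supermodule structure is determined by its induced $\hy(\G)$-action because $\O(\G) \hookrightarrow \hy(\G)^*$ is injective after tensoring with $M$. The reverse composite is immediate from how $\omega$ was built to reproduce the prescribed action under the pairing \eqref{eq:hy(G)O(G)_pairing}.

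The main obstacle I anticipate is the coassociativity verification in the middle paragraph: one must package the $\hy(\G)$-action data — which mixes the $G$-part, the odd-primitive part, and their super-commutators — into a single coaction and argue that the pairing against $\hy(\G)^{\otimes 2}$ faithfully detects coassociativity. The role of Lemma \ref{lem:r-fold_injection}, together with the connectedness hypothesis (D2) and $\k$-projectivity of $M$, is precisely to make this faithfulness rigorous; without it, the formal dualization would only give a map, not a comodule structure.
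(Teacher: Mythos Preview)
Your proposal is correct and follows essentially the same route as the paper: split $\hy(\G)=\hy(G)\otimes\wedge(\mathfrak{g}_1)$ via \eqref{eq:identification}, integrate the $\hy(G)$-part to an $\O(G)$-coaction using the hypothesis, dualize the $\k$-finite free $\wedge(\mathfrak{g}_1)$-action to a $\wedge(W)$-coaction, and then use Lemma~\ref{lem:r-fold_injection} (tensored with the $\k$-projective $M$) for $r=2$ to force coassociativity and for $r=1$ to get uniqueness. The paper phrases the construction of the candidate comodule map slightly more concretely---writing the action as $\rho:M\to\mathrm{Hom}(\hy(\G),M)$ and showing it factors through the injection $\mu^{(1)}:\O(\G)\otimes M\hookrightarrow\mathrm{Hom}(\hy(\G),M)$---but this is exactly the mechanism you describe.
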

\begin{proof}
Since $M$ is $\Bbbk$-projective, the injection given by Lemma \ref{lem:r-fold_injection}, tensored with $M$,
remains injective. In addition the canonical map 
$(\mathrm{hy}(\G)^{\otimes r})^* \otimes M \to \mathrm{Hom}(\mathrm{hy}(\G)^{\otimes r}, M)$ 
is injective. Let 
\[ \mathbf{\mu}^{(r)} : \O(\G)^{\otimes r} \otimes M \to \mathrm{Hom}(\mathrm{hy}(\G)^{\otimes r}, M) \]
denote their composite, which is an injective super-linear map. 
We will use only $\mu^{(1)}$, $\mu^{(2)}$.

Suppose that we are given a structure from the second set; it is a \emph{right} 
$\mathrm{hy}(\G)$-supermodule structure, in particular. 
We claim that the super-linear map
\[ \rho : M \to \mathrm{Hom}(\mathrm{hy}(\G) , M), \quad \rho(m)(x) = m x \]
factorizes into $\mu^{(1)}$ and a uniquely determined map, $\rho' : M \to \mathscr{O}(\G)\otimes M$.
To show this we use the identification \eqref{eq:identification}.
Then, $\rho$ decomposes as
\[ M \overset{{\rho}_1}{\longrightarrow} \mathrm{Hom}(\mathrm{hy}(G), M) 
\overset{(\rho_2)_*}{\longrightarrow} 
\mathrm{Hom}(\mathrm{hy}(G), \mathrm{Hom}( \wedge (\mathfrak{g}_1) , M) ) , \]
where the first map is defined, just as $\rho$, by $\rho_1(m)(x) = m x$, and 
the second $(\rho_2)_*$ denotes 
$\mathrm{Hom}(\mathrm{id}, \rho_2)$ induced by  
the map $\rho_2 : M \to \mathrm{Hom}(\wedge(\mathfrak{g}_1), M) $ similarly defined. 
We have the injections
\begin{align*}
&\nu_1 : \O(G) \otimes M \to \mathrm{Hom}(\mathrm{hy}(G),M), \\ 
&\nu_2 : \O(G) \otimes \mathrm{Hom}(\wedge( \mathfrak{g}_1 ), M) \to \mathrm{Hom}(\mathrm{hy}(G), 
\mathrm{Hom}(\wedge ( \mathfrak{g}_1 ), M))
\end{align*}
which are defined in the same way as $\mu^{(1)}$. Indeed, $\nu_2$ is identified with $\mu^{(1)}$. 
The condition regarding the restricted $\mathrm{hy}(G)$-module structures means that 
$\rho_1$ factorizes into $\nu_1$ and a uniquely determined map, $\rho'': M \to \mathscr{O}(G) \otimes M$.
The composite $(\mathrm{id}\otimes \rho_2)\circ \rho''$ is identified with the desired map $\rho'$,  
as is seen from the commutative diagram
\[
\begin{xy}
(0,0)   *++{\O(G) \otimes M}  ="1",
(57,0)  *++{\O(G) \otimes \mathrm{Hom}(\wedge (\mathfrak{g}_1), M)}    ="2",
(0,-17) *++{\mathrm{Hom}(\mathrm{hy}(G), M)}          ="3",
(57,-17)*++{\mathrm{Hom}(\mathrm{hy}(G), \mathrm{Hom}(\wedge(\mathfrak{g}_1), M)).}    ="4",
{"1" \SelectTips{cm}{} \ar @{->}^{\mathrm{id}\otimes\rho_2\hspace{10mm}} "2"},
{"1" \SelectTips{cm}{} \ar @{->}^{\nu_1} "3"},
{"2" \SelectTips{cm}{} \ar @{->}^{\nu_2} "4"},
{"3" \SelectTips{cm}{} \ar @{->}^{(\rho_2)_* \hspace{10mm}} "4"}
\end{xy}
\]

By using $\mu^{(2)}$, we see that the associativity of the $\mathrm{hy}(\G)$-action on $M$
implies that $\rho' : M \to \mathscr{O}(\G)\otimes M$ is coassociative. 
Similarly, the unitality of the action implies that $\rho'$ 
is counital. Thus, $\rho'$ is a left $\mathscr{O}(\G)$-super-comodule structure on $M$. It is the unique
such structure that gives rise to the originally given structure,
as is easily seen. 
\end{proof}

\subsection{}\label{subsec:Chevalley_groups}
Let $G_{\mathbb{Z}}$ be a split reductive algebraic group over $\mathbb{Z}$; see \cite[p.153]{J}. 
By saying a reductive algebraic group we assume that it is connected and smooth. 
Choose a split maximal torus $T_{\mathbb{Z}}$. The pair $(G_{\mathbb{Z}}, T_{\mathbb{Z}})$
naturally corresponds to a root datum $(\mathsf{X}, \mathsf{R}, \mathsf{X}^\vee, \mathsf{R}^\vee)$.
In particular, $\mathsf{X}$ equals the character group $\mathsf{X}(T_{\mathbb{Z}})$ of $T_{\mathbb{Z}}$. 
It is known that $\O(G_{\mathbb{Z}})$ is $\mathbb{Z}$-free, and $G_{\mathbb{Z}}$ is infinitesimally flat.
Moreover, for any field $K$, 
the base change $(G_{\mathbb{Z}})_K$ is a split reductive (in particular, connected) algebraic group over $K$,
and $(T_{\mathbb{Z}})_K$ is its split maximal torus.
Conversely, every split reductive algebraic group over $K$ and its split maximal torus are 
obtained uniquely (up to isomorphism) in this manner.  

Recall that $\Bbbk$ is supposed to be an integral domain. Let 
\[ G = (G_{\mathbb{Z}})_{\Bbbk}, \quad T = (T_{\mathbb{Z}})_{\Bbbk} \]
be the base changes to $\Bbbk$. Note that $\O(G)$ is $\k$-free.
In addition, $G$ satisfies (D1) (with $\overline{A}$ supposed to be $\O(G)$) and (D2). 

We have the inclusion $\operatorname{hy}(G)\supset \operatorname{hy}(T)$ of hyperalgebras, which coincides
with the base changes of the hyperalgebras  
$\operatorname{hy}(G_{\mathbb{Z}})\supset \operatorname{hy}(T_{\mathbb{Z}})$
over $\mathbb{Z}$.  
Since $\Bbbk$ contains no non-trivial idempotent, the character group $\mathsf{X}(T)$ 
of $T$ remains to be $\mathsf{X}$. 

Let $M$ be a left or right $\operatorname{hy}(G)$-module. We say that $M$ is a 
$\operatorname{hy}(G)$-$T$-\emph{module} \cite[p.171]{J}, 
if the restricted $\operatorname{hy}(T)$-module structure on $M$ arises from some
$T$-module structure on it. This is equivalent to saying that $M$ is a direct sum
$M = \bigoplus_{\lambda\in \mathsf{X}}\, M_{\lambda}$ of $\Bbbk$-submodules 
$M_{\lambda}$, $\lambda \in \mathsf{X}$, so that  
\begin{equation*}
xm = \lambda(x) \, m,\quad 
x \in \operatorname{hy}(T),\ m \in M_{\lambda},\ \lambda \in \mathsf{X},  
\end{equation*} 
where we have supposed that $M$ is a \emph{left} $\operatorname{hy}(T)$-module. 
One sees that the $T$-module structure above is uniquely determined if $M$ is $\Bbbk$-torsion free.  
A $\operatorname{hy}(G)$-$T$-module is said to be \emph{locally finite} if it is locally finite as 
a $\operatorname{hy}(G)$-module.

Let $M$ be a $\k$-module. Given a left $G$-module structure on $M$, 
there arises, as before, a left $\hy(G)$-module structure on $M$; it is indeed a locally finite
$\operatorname{hy}(G)$-$T$-module structure, as is easily seen.  
Thus we have a map from 
\begin{itemize}
\item
the set of all left $G$-module structures on $M$ 
\end{itemize}
to
\begin{itemize}
\item
the set of all locally finite, left $\operatorname{hy}(G)$-$T$-module structure on $M$.
\end{itemize}

The structures in each set above are in one-to-one correspondence with
the opposite-sided structures, as before. The following is known. 

\begin{theorem}[\text{\cite[Part~II, 1.20, p.171]{J}}]\label{thm:bijection_non-super_case}
If $M$ is $\Bbbk$-projective, the map above is a bijection. 
\end{theorem}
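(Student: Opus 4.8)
The plan is to produce the inverse construction explicitly and to separate the easy \emph{injectivity/uniqueness} part from the substantive \emph{surjectivity} part; the formal skeleton is identical to that of Proposition~\ref{prop:bijection_module_structures}, and the only genuinely new input is a factorization whose proof requires the structure theory of split reductive groups. For injectivity, I would use the non-super injection $\O(G)^{\otimes r}\hookrightarrow(\hy(G)^{\otimes r})^{*}$ established within the proof of Lemma~\ref{lem:r-fold_injection}: taking $r=1$, tensoring with the $\Bbbk$-projective (hence flat) module $M$, and composing with the injection $\hy(G)^{*}\otimes M\hookrightarrow\mathrm{Hom}(\hy(G),M)$ yields an injective super-linear map
\[
\mu:\O(G)\otimes M\longrightarrow\mathrm{Hom}(\hy(G),M).
\]
Since (after switching sides as in Proposition~\ref{prop:bijection_module_structures}) a $G$-module structure on $M$ amounts to an $\O(G)$-comodule map $\rho':M\to\O(G)\otimes M$, and the induced $\hy(G)$-action is exactly $\mu\circ\rho'$ viewed as $m\mapsto(x\mapsto xm)$, the action determines $\rho'$. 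Hence the map in the theorem is injective and the $G$-structure in the surjectivity part, if it exists, is unique.

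For surjectivity I would start from a locally finite $\hy(G)$-$T$-module $M$ and set $\rho(m)(x):=xm$, defining $\rho:M\to\mathrm{Hom}(\hy(G),M)$; the task is to show $\rho$ lands in the image of $\mu$, that is, factors as $\mu\circ\rho'$ for a (necessarily unique) $\rho':M\to\O(G)\otimes M$. Because $M$ is the filtered union of finite $\hy(G)$-stable, $T$-graded $\Bbbk$-submodules $N$, and $\mu$ is compatible with this filtration, it suffices to build the coaction on each such $N$ and assemble. For finite $N$ the factorization through $\O(G)\otimes N$ is equivalent to the assertion that every matrix coefficient $x\mapsto\langle m^{*},xm\rangle$ on $\hy(G)$ is a \emph{regular function}, i.e. lies in $\O(G)\subset\hy(G)^{*}$.

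This last assertion is the heart of the matter, and the step I expect to be the main obstacle: it is where the split reductive structure and the assumptions (D1), (D2) are indispensable. The approach I would follow is Jantzen's. One uses that $\hy(G)$ is generated by $\hy(T)$ together with the distribution algebras $\hy(U_{\alpha})$ of the root subgroups $U_{\alpha}\cong\mathbb{G}_{a}$ (a Kostant-type $\mathbb{Z}$-form statement, valid after base change from $\mathbb{Z}$), and reconstructs the coaction on the big cell $U^{-}TU^{+}$. On each $U_{\alpha}$ the local finiteness of $M$ forces the divided powers of the root vector to act locally nilpotently, which is precisely the condition for the $\hy(U_{\alpha})$-action to integrate to an $\O(U_{\alpha})=\Bbbk[t]$-comodule structure; the $T$-grading supplied by the $T$-module hypothesis integrates the $\hy(T)$-action to an $\O(T)=\Bbbk[\mathsf{X}]$-comodule structure. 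Multiplying these along the open cell and checking compatibility on overlaps produces $\rho'$, where connectedness (D2) guarantees that coefficients a priori regular only on the big cell extend to genuine elements of $\O(G)$ and that nothing outside the identity component is lost, while (D1) keeps the relevant modules $\Bbbk$-finite projective so the dualities used stay exact. This reconstruction is the substance of \cite[Part~II, 1.20]{J}.

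It remains to check that the $\rho'$ thus produced is a bona fide $\O(G)$-comodule structure inducing back the given action. Coassociativity and counitality of $\rho'$ follow by transporting the associativity and unitality of the $\hy(G)$-action through the injective two-fold map
\[
\mu^{(2)}:\O(G)^{\otimes 2}\otimes M\longrightarrow\mathrm{Hom}(\hy(G)^{\otimes 2},M)
\]
obtained from the $r=2$ case above — exactly the bookkeeping carried out in the proof of Proposition~\ref{prop:bijection_module_structures}. That the induced action recovers the original one is immediate from the construction, and the injectivity of the first paragraph upgrades this to a bijection, as required.
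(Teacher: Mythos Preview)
The paper does not prove this theorem at all: it is stated with a citation to \cite[Part~II, 1.20]{J} and immediately followed by a remark, with no argument given. It functions as a black-box input from Jantzen, used to feed into Proposition~\ref{prop:bijection_module_structures} and yield Theorem~\ref{thm:bijection_super_case}. So there is nothing in the paper to compare your proposal against.

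That said, your sketch is a fair outline of the argument one finds in Jantzen. The injectivity portion via the map $\mu$ and the $\Bbbk$-projectivity of $M$ is correct and matches how the paper handles the analogous step in the super case. For surjectivity, your account of integrating the $\hy(U_\alpha)$-actions to $\O(U_\alpha)$-coactions via local nilpotence, using the $T$-grading to integrate $\hy(T)$, and then assembling on the big cell is the right shape; the delicate point you flag --- extending from the big cell to all of $G$ --- is indeed where the work lies, and Jantzen handles it using that $\O(G)$ embeds into $\O(U^-)\otimes\O(T)\otimes\O(U^+)$ (over $\mathbb{Z}$, hence after base change). One small correction: local finiteness alone does not force the divided powers of a root vector to act locally nilpotently; what is actually used is that on a $T$-weight module the weights appearing in a finite $\hy(G)$-submodule are finite in number, so the root vectors shift weights and must eventually annihilate. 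Your phrasing conflates the two, but the substance is there.
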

\begin{rem}\label{rem:Kostant-Takeuchi}
Let $\Bbbk = \mathbb{Z}$, and suppose that $G_{\mathbb{Z}}$ is
semisimple, or equivalently $[\mathsf{X} : \mathbb{Z}\mathsf{R}] < \infty$; 
see \cite[Part~II, 1.6, p.158]{J}. 
Then it is known (see \cite{Kostant1, T2}) that 
\begin{equation}\label{eq:semisimple}
\mathscr{O}(G_{\mathbb{Z}})=\operatorname{hy}(G_{\mathbb{Z}})^{\circ}.
\end{equation}
It follows that every $\mathbb{Z}$-free, locally
finite $\operatorname{hy}(G_{\mathbb{Z}})$-module is necessarily a 
$\operatorname{hy}(G_{\mathbb{Z}})$-$T_{\mathbb{Z}}$-module. 

Given a Hopf algebra $H$ over $\mathbb{Z}$, 
we let $H^{\circ}$ denote, just when working over a field (see \cite[Section 6.0]{Sw}), 
the union of the $\mathbb{Z}$-submodules 
$(H/I)^*$ in $H^*$, where $I$ runs over the ideals of $H$ such that
$H/I$ is $\mathbb{Z}$-finite. Since the canonical map $(H/I)^*\otimes (H/I)^*\to (H/I \otimes H/I)^*$
is an isomorphism, each $(H/I)^*$ is a ($\mathbb{Z}$-finite free) coalgebra, whence $H^{\circ}$ is a coalgebra,
and is in fact a Hopf algebra. 
\end{rem} 

Keep $G$, $T$ as above. 
Let us consider objects $\G \in \mathsf{ASG}$ such that $\G_{ev} = G$. 

\begin{rem}\label{rem:supergroup_as_G}
(1) 
As will be seen Section \ref{subsec:super-Chevally_classical_type}, if $\k = \mathbb{Z}$, 
the \emph{Chevalley $\mathbb{Z}$-supergroups of classical type} which were constructed by
Fioresi and Gavarini \cite{FGmemo} and by Gavarini \cite{G21} 
(see also \cite{FGexp}) are examples of $\G$ as above. 
Therefore, their base changes are, as well. 

(2) 
Suppose that $\Bbbk$ is a field of characteristic $\ne 2$. Recall that every split reductive
algebraic group is of the form $G$ as above. Then it follows from 
Theorem \ref{thm:tensor_product_decomposition}
that the objects under consideration are precisely all 
algebraic supergroups $\G$ such that $\G_{ev}$ is a split reductive algebraic group. 
\end{rem}

Let $\G \in \mathsf{ASG}$ such that $\G_{ev} = G$. 

Let $M$ be a left or right $\operatorname{hy}(\G)$-supermodule.
We say that $M$ is a 
$\operatorname{hy}(\G)$-$T$-\emph{supermodule},
if the restricted $\operatorname{hy}(T)$-module structure on $M$ arises from some $T$-module structure on it;
this is equivalent to saying that $M$ is a 
$\operatorname{hy}(G)$-$T$-module, regarded as a $\operatorname{hy}(G)$-module by restriction. 
A $\operatorname{hy}(\G)$-$T$-{\emph{supermodule}} is said to be \emph{locally finite} 
if it is so as a $\operatorname{hy}(\G)$-supermodule, or equivalently, as a $\operatorname{hy}(G)$-module.

Let $M$ be a supermodule. Given a left $\G$-supermodule structure on $M$, there arises, as before, 
a left $\operatorname{hy}(\G)$-supermodule structure on $M$; it is indeed a locally finite  
$\operatorname{hy}(\G)$-$T$-supermodule structure, as is easily seen. 
Thus we have a map from
\begin{itemize}
\item
the set of all left $\G$-supermodule structures on $M$ 
\end{itemize}
to
\begin{itemize}
\item
the set of all locally finite, left
$\operatorname{hy}(\G)$-$T$-supermodule structures on $M$.
\end{itemize}

The structures in each set above are in one-to-one correspondence with
the opposite-sided structures, as before. 
Proposition \ref{prop:bijection_module_structures} and 
Theorem \ref{thm:bijection_non-super_case} prove the following. 

\begin{theorem}\label{thm:bijection_super_case}
If $M$ is $\Bbbk$-projective, the map above is a bijection. 
\end{theorem}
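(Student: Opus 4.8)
The plan is to derive the theorem by combining Proposition~\ref{prop:bijection_module_structures} with the classical Theorem~\ref{thm:bijection_non-super_case}, the only real content being that the map of the present theorem is precisely the map of the proposition, with its target set re-described. First I would note that both maps are the \emph{same} assignment: each sends a left $\G$-supermodule structure on $M$ to the left $\operatorname{hy}(\G)$-supermodule structure induced through the Hopf pairing \eqref{eq:hy(G)O(G)_pairing} by the formula \eqref{eq:induced_supermodule}. I would also record, as the text says is easily seen, that this assignment is well defined as a map into the target of the present theorem: a genuine $\G$-supermodule restricts along $T \subset G = \G_{ev}$ to a $T$-module structure, so the induced $\operatorname{hy}(T)$-structure arises from a $T$-module, and the result is a locally finite $\operatorname{hy}(\G)$-$T$-supermodule structure. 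Hence it suffices to identify the two targets.

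Next I would compare the two target sets. Write $S_1$ for the target of Proposition~\ref{prop:bijection_module_structures}, namely the locally finite left $\operatorname{hy}(\G)$-supermodule structures on $M$ whose restricted $\operatorname{hy}(G)$-module structure arises from a left $G$-module structure; and write $S_2$ for the target of the present theorem, the locally finite left $\operatorname{hy}(\G)$-$T$-supermodule structures. By the definition of a $\operatorname{hy}(\G)$-$T$-supermodule recalled just above the statement, $S_2$ consists of those locally finite $\operatorname{hy}(\G)$-supermodule structures whose restricted $\operatorname{hy}(G)$-module structure is a locally finite $\operatorname{hy}(G)$-$T$-module. The claim to verify is $S_1 = S_2$. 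For $S_1 \subseteq S_2$ I would invoke the discussion preceding Theorem~\ref{thm:bijection_non-super_case}: any $\operatorname{hy}(G)$-module structure arising from a $G$-module structure is automatically a locally finite $\operatorname{hy}(G)$-$T$-module. For the reverse inclusion $S_2 \subseteq S_1$ I would apply Theorem~\ref{thm:bijection_non-super_case} itself: since $M$ is $\k$-projective, every locally finite $\operatorname{hy}(G)$-$T$-module structure on $M$ arises from a (unique) $G$-module structure. Therefore $S_1 = S_2$, and Proposition~\ref{prop:bijection_module_structures} yields that the map is a bijection onto $S_1 = S_2$, which is exactly the assertion.

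The $\k$-projectivity hypothesis enters twice and is what makes the argument run: it is needed to apply Proposition~\ref{prop:bijection_module_structures} (whose proof uses that the injections $\mu^{(1)}, \mu^{(2)}$ remain injective after tensoring with $M$) and to invoke Theorem~\ref{thm:bijection_non-super_case} in the direction $S_2 \subseteq S_1$. Since the theorem is essentially a formal consequence of the two cited results, there is no serious technical obstacle; the one point that must be got right is the equality $S_1 = S_2$, i.e.\ that on the \emph{even} side the three notions — a $G$-module structure, a locally finite $\operatorname{hy}(G)$-$T$-module structure, and a $\operatorname{hy}(G)$-module structure arising from a $G$-module — coincide, which is precisely the content of Jantzen's Theorem~\ref{thm:bijection_non-super_case}. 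The odd directions coming from $\wedge(\mathfrak{g}_1)$ have already been absorbed into Proposition~\ref{prop:bijection_module_structures}, so no further work on them is required.
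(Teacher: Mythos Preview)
Your proposal is correct and takes essentially the same approach as the paper, which simply states that the theorem follows from Proposition~\ref{prop:bijection_module_structures} together with Theorem~\ref{thm:bijection_non-super_case}. Your write-up merely unpacks this one-line deduction by making explicit that the two target sets coincide (the inclusion $S_2\subseteq S_1$ being exactly Theorem~\ref{thm:bijection_non-super_case}), which is precisely what the paper leaves implicit.
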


\begin{rem}\label{rem:bijection_super_case}
Let $\Bbbk = \mathbb{Z}$, and suppose that $G_{\mathbb{Z}}$ is semisimple.
Then by using the same argument as proving \cite[Proposition 31]{M2}, 
we see from \eqref{eq:semisimple} that
$\mathscr{O}(\G) =\operatorname{hy}(\G)^{\circ}$.  
It follows that every $\mathbb{Z}$-free, locally
finite $\operatorname{hy}(\G)$-supermodule is necessarily a 
$\operatorname{hy}(\G)$-$T_{\mathbb{Z}}$-supermodule.
\end{rem}

Theorem \ref{thm:bijection_super_case} can be reformulated as an isomorphism between 
the category of $\Bbbk$-projective, left $\G$-supermodules and the category of $\Bbbk$-projective,
locally finite left $\operatorname{hy}(\G)$-$T$-supermodules. When
$\Bbbk$ is a field of characteristic $\ne 2$, the result is formulated as follows, in view of 
Remark \ref{rem:supergroup_as_G} (2).  

\begin{corollary}\label{cor:category_isom}
Suppose that $\Bbbk$ is a field of characteristic $\ne 2$, and let $\G$ be an algebraic supergroup
over $\Bbbk$ such that $\G_{ev}$ is a split reductive algebraic group.
Choose a split maximal torus $T$ of $\G_{ev}$.
Then there is a natural isomorphism between the category of left $\G$-supermodules and the category of 
locally finite, left $\operatorname{hy}(\G)$-$T$-supermodules. 
\end{corollary}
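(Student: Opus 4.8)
The plan is to deduce the corollary from Theorem \ref{thm:bijection_super_case} by observing that, over a field, its projectivity hypothesis is vacuous. First I would note that when $\Bbbk$ is a field every supermodule is $\Bbbk$-free, hence projective, so the bijection of Theorem \ref{thm:bijection_super_case} holds on every underlying supermodule $M$. By Remark \ref{rem:supergroup_as_G} (2), which rests on the splitting Theorem \ref{thm:tensor_product_decomposition}, the algebraic supergroups $\G$ with $\G_{ev} = G$ a split reductive group are precisely all algebraic supergroups whose even part $\G_{ev}$ is split reductive; moreover such $G$ automatically satisfies (D1) and (D2), so $\hy(\G)$ and the Hopf pairing \eqref{eq:hy(G)O(G)_pairing} are available. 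The candidate functor is the one that fixes the underlying supermodule and the underlying super-linear maps, and replaces a left $\G$-supermodule structure by the induced left $\hy(\G)$-supermodule structure defined through \eqref{eq:induced_supermodule}; Theorem \ref{thm:bijection_super_case} says this is a bijection on objects, landing exactly in the locally finite $\hy(\G)$-$T$-supermodules.

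It remains to promote this bijection of objects to an isomorphism of categories, i.e. to check that a super-linear map $f : M \to N$ between $\G$-supermodules is $\G$-equivariant if and only if it is a morphism of the associated $\hy(\G)$-$T$-supermodules. Because the weight-space decomposition of an $\hy(\G)$-$T$-supermodule is determined by the $\hy(T)$-action, an $\hy(\G)$-linear map automatically preserves weights and is therefore $T$-equivariant; so on the hyperalgebra side a morphism is simply an $\hy(\G)$-linear super-map, and the $T$-datum imposes no extra condition. The forward implication is immediate: writing the right $\O(\G)$-comodule structures as $m \mapsto m^{(0)} \otimes m^{(1)}$, a comodule map $f$ satisfies $f(xm) = f(m^{(0)})\langle x, m^{(1)}\rangle = x f(m)$ by \eqref{eq:induced_supermodule}, so $f$ is $\hy(\G)$-linear.

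The converse is the step that needs the injectivity input, and is the main (though modest) obstacle. Given an $\hy(\G)$-linear super-map $f$, I would consider the super-linear map $d := \rho_N \circ f - (f \otimes \mathrm{id})\circ \rho_M : M \to N \otimes \O(\G)$ measuring the failure of $f$ to be a comodule map, and show $d = 0$. Pairing the $\O(\G)$-factor of $d(m)$ against an arbitrary $x \in \hy(\G)$ yields $x f(m) - f(xm)$, which vanishes by $\hy(\G)$-linearity; thus $d(m)$ is killed by the map $N \otimes \O(\G) \to \mathrm{Hom}(\hy(\G), N)$ analogous to the $\mu^{(1)}$ of the proof of Proposition \ref{prop:bijection_module_structures}. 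This map is injective: the case $r=1$ of Lemma \ref{lem:r-fold_injection} makes $\O(\G) \to \hy(\G)^*$ injective, tensoring with the projective $N$ preserves injectivity, and $\hy(\G)^* \otimes N \hookrightarrow \mathrm{Hom}(\hy(\G), N)$. Hence $d = 0$ and $f$ is a comodule, that is $\G$-equivariant, map. Since both functors are the identity on underlying supermodules and super-linear maps, they are mutually inverse and manifestly natural, giving the asserted isomorphism of categories.
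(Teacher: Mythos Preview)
Your proof is correct and follows the same route as the paper: deduce the corollary from Theorem \ref{thm:bijection_super_case} together with Remark \ref{rem:supergroup_as_G}~(2), using that over a field every supermodule is projective. The paper itself simply asserts that the bijection of structures ``can be reformulated'' as a category isomorphism and does not spell out the argument on morphisms; your verification that $\hy(\G)$-linearity of a super-map implies $\G$-equivariance, via the injectivity $\mu^{(1)}$ from Lemma \ref{lem:r-fold_injection}, is a welcome elaboration of a point the paper leaves implicit.
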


This has been known only for some special algebraic supergroups with the property as above; 
see Brundan and Kleshchev \cite[Corollary 5.7]{BrKl}, Brundan and Kujawa \cite[Corollary 3.5]{BrKuj}, and
Shu and Wang \cite[Theorem 2.8]{ShuWang}.  

\section{Harish-Chandra pairs corresponding to Chevalley supergroups over $\mathbb{Z}$}\label{sec:Chevalley}

\subsection{}\label{subsec:Chevalley_intro}
Those finite-dimensional simple Lie superalgebras over the complex number field $\mathbb{C}$ which are not
purely even were classified by Kac \cite{Kac}. 
They are divided into classical type and Cartan type. A \emph{Chevalley} 
$\mathbb{C}$-\emph{supergroup of classical/Cartan type} 
is a connected algebraic supergroup $\G$ over $\mathbb{C}$ 
such that $\mathrm{Lie}(\G)$ is a simple Lie superalgebra of classical/Cartan type. 
As was mentioned in Remark \ref{rem:supergroup_as_G} (1), 
Fioresi and Gavarini \cite{FGmemo, G21} constructed natural $\mathbb{Z}$-forms of Chevalley
$\mathbb{C}$-supergroups of classical type. Gavarini \cite{G1} accomplished the same construction for Cartan type. 
The resulting $\mathbb{Z}$-forms are called 
\emph{Chevalley} $\mathbb{Z}$-\emph{supergroups of classical}/\emph{Cartan type}; 
they are indeed objects in our category $\mathsf{ASG}$ defined over $\mathbb{Z}$. 

Based on our Theorem \ref{thm:equivalence}, we will re-construct 
the Chevalley $\mathbb{Z}$-supergroups, by giving the corresponding 
Harish-Chandra pairs. Indeed, our construction depends on part of Fioresi and Gavarini's, but 
simplifies the rest; see Remarks \ref{rem:FGconstruction} and \ref{rem:Gconstruction}.  

\subsection{}\label{subsec:super-Chevally_classical_type}
Let $\mathfrak{g}$ be a finite-dimensional simple Lie superalgebra over $\mathbb{C}$
which is of classical type. Then $\mathfrak{g}_0$ is a reductive Lie algebra, and $\mathfrak{g}_1$, with
respect to the right adjoint $\mathfrak{g}_0$-action, decomposes as the direct sum of weight
spaces for a fixed Cartan subalgebra $\mathfrak{h} \subset \mathfrak{g}_0$. Let $\Delta_0$
(resp., $\Delta_1$) denote the set of the even (resp., odd) roots, that is, the weights with respect to
the adjoint $\mathfrak{h}$-action on $\mathfrak{g}_0$ (resp., on $\mathfrak{g}_1$). 

Let 
\begin{equation}\label{eq:root_datum}
(\mathsf{X}, \mathsf{R}, \mathsf{X}^\vee, \mathsf{R}^\vee),\quad 
G_{\mathbb{Z}}\supset T_{\mathbb{Z}}
\end{equation}
be a root datum and the corresponding split reductive algebraic $\mathbb{Z}$-group 
and split maximal torus.
Suppose that  
$\mathfrak{g}_0 \supset \mathfrak{h}$ coincide with the complexifications of
$\mathrm{Lie}(G_{\mathbb{Z}}) \supset \mathrm{Lie}(T_{\mathbb{Z}})$. Then one has 
\[
\mathsf{R}= \Delta_0, \quad \mathsf{X}^\vee \otimes_{\mathbb{Z}} \mathbb{C} = \mathfrak{h}, \quad 
\mathrm{hy}(G_{\mathbb{Z}})\otimes_{\mathbb{Z}} \mathbb{C} = U(\mathfrak{g}_0). 
\]
Recall that $\mathrm{hy}(G_{\mathbb{Z}})$ is called a \emph{Kostant form} of 
$U(\mathfrak{g}_0)$. 
We assume
\begin{equation}\label{eq:Delta1}
\Delta_1 \subset \mathsf{X}.
\end{equation}
  
\begin{theorem}[Fioresi, Gavarini]\label{thm:Chevalley_basis_classical}
There exists a $\mathbb{Z}$-lattice $V_{\mathbb{Z}}$ of $\mathfrak{g}_1$ such that
\begin{itemize}
\item[(i)] $\mathfrak{g}_{\mathbb{Z}}:= \mathrm{Lie}(G_{\mathbb{Z}}) \oplus V_{\mathbb{Z}}$
is a Lie-superalgebra $\mathbb{Z}$-form of $\mathfrak{g}$.
\item[(ii)] This Lie superalgebra $\mathfrak{g}_{\mathbb{Z}}$ over $\mathbb{Z}$ is admissible. 
\item[(iii)] $V_{\mathbb{Z}}$ is $\mathrm{hy}(G_{\mathbb{Z}})$-stable in the right 
$U(\mathfrak{g}_0)$-module $\mathfrak{g}_1$. 
\end{itemize}
\end{theorem}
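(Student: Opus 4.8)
The plan is to reduce everything to procedure (1) of the Introduction, namely the existence of a \emph{Chevalley basis} for $\mathfrak{g}$, and then to read off the three asserted properties one by one. Concretely, I would fix a $\mathbb{C}$-basis of $\mathfrak{g}_1$ consisting of $\mathfrak{h}$-weight vectors $X_\alpha$, with weights $\alpha \in \Delta_1 \subset \mathsf{X}$, together with the Kostant basis $\{e_\gamma, f_\gamma, \tbinom{h_i}{n}\}$ of the even part underlying both $\mathrm{Lie}(G_{\mathbb{Z}})$ and $\hy(G_{\mathbb{Z}})$. The lattice is then $V_{\mathbb{Z}} := \bigoplus_\alpha \mathbb{Z}\, X_\alpha$, a free $\mathbb{Z}$-module whose complexification is $\mathfrak{g}_1$; because it is spanned by weight vectors with weights in $\mathsf{X}$, it automatically carries the weight decomposition needed to view it later as a $\hy(T_{\mathbb{Z}})$-module.

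For (i), I would check that $\mathfrak{g}_{\mathbb{Z}} = \mathrm{Lie}(G_{\mathbb{Z}}) \oplus V_{\mathbb{Z}}$ is closed under the super-bracket with integral structure constants. The even--even brackets are integral by the defining property of the Kostant $\mathbb{Z}$-form; the even--odd brackets $[\,\cdot\,, X_\alpha]$ are integral once the $X_\alpha$ are normalized as a Chevalley basis of the $\mathfrak{g}_0$-module $\mathfrak{g}_1$; and the odd--odd brackets land in $\mathrm{Lie}(G_{\mathbb{Z}})$ because $[\,\cdot\,,\,\cdot\,]\colon \mathfrak{g}_1 \otimes \mathfrak{g}_1 \to \mathfrak{g}_0$ is $\mathfrak{g}_0$-equivariant and sends a pair $(X_\alpha, X_\beta)$ into the weight space of weight $\alpha+\beta$, i.e. into $\mathbb{Z}\, e_{\alpha+\beta}$, or into the toral part when $\alpha+\beta = 0$. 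If finitely many denominators remain, I would replace $V_{\mathbb{Z}}$ by $N V_{\mathbb{Z}}$ for a fixed integer $N$; this rescaling multiplies $[V_{\mathbb{Z}}, V_{\mathbb{Z}}]$ by $N^2$ and leaves all the other properties intact.

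For (ii), conditions (A1) and (A2) of Definition \ref{def:admissible_Lie} are immediate, since $\mathrm{Lie}(G_{\mathbb{Z}})$ and $V_{\mathbb{Z}}$ are $\mathbb{Z}$-free. For (A3) I would invoke the remark in that definition, which reduces $2$-divisibility of $[v,v]$ to the basis vectors $X_\alpha$. Now $[X_\alpha, X_\alpha]$ has weight $2\alpha$, hence vanishes unless $2\alpha \in \Delta_0$; in the isotropic case $[X_\alpha, X_\alpha] = 0 = 2\cdot 0$ is trivially $2$-divisible, and in the remaining non-isotropic case $[X_\alpha, X_\alpha] \in \mathbb{C}\, e_{2\alpha}$, where the standard Chevalley normalization can be chosen so that $\tfrac{1}{2}[X_\alpha, X_\alpha] = \pm e_{2\alpha} \in \mathrm{Lie}(G_{\mathbb{Z}})$.

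The main work — and the step I expect to be the genuine obstacle — is (iii), the $\hy(G_{\mathbb{Z}})$-stability of $V_{\mathbb{Z}}$ under the right adjoint action. Here I would view $\mathfrak{g}_1$ as a finite-dimensional $\mathfrak{g}_0$-module with all weights in $\mathsf{X}$ and apply Kostant's admissible-lattice theorem: the divided powers $e_\gamma^{(n)} = \tfrac{1}{n!}(\mathrm{ad}\, e_\gamma)^n$, together with the toral binomials $\tbinom{h_i}{n}$, generate $\hy(G_{\mathbb{Z}})$, and a Chevalley-basis lattice is stable under all of them. The delicate point is not any single condition but their simultaneous compatibility: one must produce \emph{one} lattice $V_{\mathbb{Z}}$ that is a Chevalley basis for the brackets, is admissible in Kostant's sense for the divided-power action, and satisfies the $2$-divisibility normalization at once. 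This is exactly where the root-by-root analysis of Fioresi and Gavarini is needed; granting their construction of a Chevalley basis, the three properties follow as indicated, with (iii) being the one that truly requires integrality of the divided powers rather than merely of $\mathrm{ad}$.
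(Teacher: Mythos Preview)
Your proposal is essentially correct and aligns with the paper's treatment: both reduce the theorem to the existence of a Chevalley basis for $\mathfrak{g}$ due to Fioresi and Gavarini, take $V_{\mathbb{Z}}$ to be the $\mathbb{Z}$-span of the odd basis vectors, and then read off (i)--(iii). The paper does not give an independent proof either; it simply points to \cite[Sections~4.2, 6.1]{FGmemo} and \cite[Section~3.4]{G21} for the case-by-case verification, exactly the ``root-by-root analysis'' you identify as the genuine obstacle in your last paragraph.

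There is one point the paper handles that you gloss over. Fioresi and Gavarini do not work relative to an arbitrary root datum: their Chevalley basis contains specific toral elements $H_1,\dots,H_\ell$, which need not form a $\mathbb{Z}$-basis of the given $\mathsf{X}^{\vee}$. The paper observes that one may replace these $H_i$ by any $\mathbb{Z}$-basis of $\mathsf{X}^{\vee}$ without disturbing the rest of the Chevalley basis, since the only effect is on the adjoint action $[H,X_\alpha]=\alpha(H)X_\alpha$, and the roots $\alpha$ still lie in $\mathsf{X}$. Your sketch implicitly assumes this compatibility from the outset. Also, your rescaling trick $V_{\mathbb{Z}}\mapsto N V_{\mathbb{Z}}$ is harmless but unnecessary: the whole content of the Fioresi--Gavarini construction is that the structure constants are already integral and the $2$-divisibility in (ii) already holds, so no ad hoc clearing of denominators is required.
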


Fioresi and Gavarini \cite{FGmemo} and Gavarini \cite{G21} 
introduced the notion of \emph{Chevalley bases}, gave an explicit example of such a basis for 
each $\mathfrak{g}$, and constructed 
from the basis a natural Hopf-superalgebra $\mathbb{Z}$-form, called a \emph{Kostant superalgebra},
of $\mathbf{U}(\mathfrak{g})$; the even basis elements
coincide with the classical Chevalley basis for $\mathfrak{g}_0$.  
They do not refer to root data.  But, once an explicit Chevalley basis is given
as in \cite{FGmemo, G21}, one can re-choose the basis so that it includes a $\mathbb{Z}$-free 
basis of $\mathsf{X}^{\vee}$, by replacing part of the original basis, $H_1,\dots, H_{\ell}$, 
with a desired $\mathbb{Z}$-free basis; this replacement is possible, since it effects only on 
the adjoint action on the basis elements $X_{\alpha}$, and the new basis elements still act via the roots $\alpha$. 
(The method of \cite[Remark 3.8]{FGmemo} attributed to the referee gives an alternative 
construction of the desired basis from the scratch.)
One sees that the odd elements in the Chevalley basis generate 
the desired $\mathbb{Z}$-lattice $V_{\mathbb{Z}}$ as above; see \cite[Sections 4.2, 6.1]{FGmemo} and 
\cite[Section 3.4]{G21}, to verify Condition (ii), in particular. 

Set $\mathfrak{g}_{\mathbb{Z}}:= \mathrm{Lie}(G_{\mathbb{Z}}) \oplus V_{\mathbb{Z}}$ in $\mathfrak{g}$,
as above. One sees from (iii) and \eqref{eq:Delta1} 
that $V_{\mathbb{Z}}$ is a right $\mathrm{hy}(G_{\mathbb{Z}})$-$T_{\mathbb{Z}}$-module,
whence it is a right $G_{\mathbb{Z}}$-module by Theorem \ref{thm:bijection_non-super_case}. 
The restricted super-bracket $[\ , \ ] : V_{\mathbb{Z}} \times V_{\mathbb{Z}} \to \mathrm{Lie}(G_{\mathbb{Z}})$,
being $\mathrm{hy}(G_{\mathbb{Z}})$-linear, is $G_{\mathbb{Z}}$-equivariant. This proves the following. 

\begin{prop}\label{prop:it's_HCP1}
$(G_{\mathbb{Z}}, \mathfrak{g}_{\mathbb{Z}})$ is a Harish-Chandra pair. 
\end{prop}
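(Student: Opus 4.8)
The plan is to verify, in turn, the three ingredients demanded by Definition \ref{def:HCP}: that $(G_{\mathbb{Z}}, \mathfrak{g}_{\mathbb{Z}})$ meets the standing assumptions (B1)--(B3); that the odd part $V_{\mathbb{Z}}$ carries a right $G_{\mathbb{Z}}$-module structure whose induced $U(\mathrm{Lie}(G_{\mathbb{Z}}))$-action is the right adjoint action \eqref{eq:right_adjoint}; and that the restricted super-bracket is right $G_{\mathbb{Z}}$-equivariant. The first ingredient is immediate from the hypotheses recalled above. Since $G_{\mathbb{Z}}$ is split reductive, $\O(G_{\mathbb{Z}})$ is $\mathbb{Z}$-free, which gives (B3), and $G_{\mathbb{Z}}$ is infinitesimally flat, which gives (B1). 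By Theorem \ref{thm:Chevalley_basis_classical}(ii) the Lie superalgebra $\mathfrak{g}_{\mathbb{Z}}$ is admissible, and $V_{\mathbb{Z}}$, being a $\mathbb{Z}$-lattice, is $\mathbb{Z}$-finite free; hence (B2) holds. Note that by construction the even part of $\mathfrak{g}_{\mathbb{Z}}$ is $\mathrm{Lie}(G_{\mathbb{Z}})$, so the framework of Section \ref{subsec:HCP} applies.

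To produce the right $G_{\mathbb{Z}}$-module structure, I would first invoke Theorem \ref{thm:Chevalley_basis_classical}(iii): the lattice $V_{\mathbb{Z}}$ is stable under the right adjoint $\hy(G_{\mathbb{Z}})$-action on $\mathfrak{g}_1$. Because the weights of $\mathfrak{g}_1$ lie in $\Delta_1 \subseteq \mathsf{X} = \mathsf{X}(T_{\mathbb{Z}})$ by \eqref{eq:Delta1}, the restricted $\hy(T_{\mathbb{Z}})$-action decomposes $V_{\mathbb{Z}}$ into integral weight spaces and therefore arises from a $T_{\mathbb{Z}}$-module structure; thus $V_{\mathbb{Z}}$ is a locally finite right $\hy(G_{\mathbb{Z}})$-$T_{\mathbb{Z}}$-module. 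As $V_{\mathbb{Z}}$ is $\mathbb{Z}$-free, Theorem \ref{thm:bijection_non-super_case} supplies a unique right $G_{\mathbb{Z}}$-module structure on $V_{\mathbb{Z}}$ inducing it. Re-inducing the $\hy(G_{\mathbb{Z}})$-action and restricting along the canonical map $U(\mathrm{Lie}(G_{\mathbb{Z}})) \to \hy(G_{\mathbb{Z}})$ then returns the right adjoint $\mathrm{Lie}(G_{\mathbb{Z}})$-action on $V_{\mathbb{Z}}$, exactly as Definition \ref{def:HCP}(1) requires.

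For the equivariance of the bracket I would argue that $[\ ,\ ]\colon V_{\mathbb{Z}}\otimes V_{\mathbb{Z}}\to \mathrm{Lie}(G_{\mathbb{Z}})$ is first $\hy(G_{\mathbb{Z}})$-equivariant, and then transfer this to $G_{\mathbb{Z}}$-equivariance through the category isomorphism underlying Theorem \ref{thm:bijection_non-super_case}, which, being monoidal, identifies $\mathbb{Z}$-linear maps of $\mathbb{Z}$-free modules that are $\hy(G_{\mathbb{Z}})$-equivariant with those that are $G_{\mathbb{Z}}$-equivariant. The real content lies in the $\hy(G_{\mathbb{Z}})$-equivariance: since $\hy(G_{\mathbb{Z}})$ carries divided powers and is strictly larger than the subalgebra generated by $\mathrm{Lie}(G_{\mathbb{Z}})$, this does not follow formally from the characteristic-zero Lie-bracket identity. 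The resolution is to work inside the Kostant superalgebra $\mathbb{Z}$-form of $\mathbf{U}(\mathfrak{g})$ provided by Theorem \ref{thm:Chevalley_basis_classical}: there the right adjoint action of $\hy(G_{\mathbb{Z}})$ is a module-algebra action, both $V_{\mathbb{Z}}$ and $\mathrm{Lie}(G_{\mathbb{Z}})$ are stable, and for odd $v,w$ the bracket $[v,w]=vw+wv$ is built from the product, so the Hopf-theoretic compatibility of the adjoint action with multiplication yields equivariance for the diagonal action. I expect this last step to be the principal obstacle, as it genuinely requires the integral module-algebra structure rather than the Lie bracket alone; once it is in hand, all the conditions of Definition \ref{def:HCP} are met and the proposition follows.
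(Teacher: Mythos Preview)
Your proposal is correct and follows essentially the same route as the paper: use Theorem~\ref{thm:Chevalley_basis_classical}(iii) together with \eqref{eq:Delta1} to see that $V_{\mathbb{Z}}$ is a $\hy(G_{\mathbb{Z}})$-$T_{\mathbb{Z}}$-module, apply Theorem~\ref{thm:bijection_non-super_case} to obtain the right $G_{\mathbb{Z}}$-module structure, and then deduce $G_{\mathbb{Z}}$-equivariance of the restricted super-bracket from its $\hy(G_{\mathbb{Z}})$-equivariance. The paper's argument is exactly this, stated in two sentences immediately before the proposition.

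The only point worth flagging is your final paragraph, where you treat $\hy(G_{\mathbb{Z}})$-equivariance of $[\ ,\ ]$ as ``the principal obstacle'' requiring the Kostant superalgebra's module-algebra structure. This works, but it is more than you need. Recall that $\hy(G_{\mathbb{Z}})\otimes_{\mathbb{Z}}\mathbb{C}=U(\mathfrak{g}_0)$, so $\hy(G_{\mathbb{Z}})$ sits inside $U(\mathfrak{g}_0)$ as a $\mathbb{Z}$-subalgebra, and its action on $V_{\mathbb{Z}}$, $\mathrm{Lie}(G_{\mathbb{Z}})$ is simply the restriction of the adjoint $U(\mathfrak{g}_0)$-action on $\mathfrak{g}_1$, $\mathfrak{g}_0$. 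Over $\mathbb{C}$, the super-bracket $\mathfrak{g}_1\otimes\mathfrak{g}_1\to\mathfrak{g}_0$ is $U(\mathfrak{g}_0)$-equivariant by the Jacobi identity (since $U(\mathfrak{g}_0)$ is generated as an algebra by $\mathfrak{g}_0$); restricting to the $\mathbb{Z}$-forms gives $\hy(G_{\mathbb{Z}})$-equivariance immediately. The paper simply says the bracket is ``$\hy(G_{\mathbb{Z}})$-linear'' for this reason. Your concern about divided powers would be genuine if one tried to argue intrinsically over $\mathbb{Z}$, but here everything is embedded in the characteristic-zero picture from the start.
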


We let 
\[
\G_{\mathbb{Z}} = \G(G_{\mathbb{Z}}, \mathfrak{g}_{\mathbb{Z}})
\]
denote the algebraic $\mathbb{Z}$-supergroup in $\mathsf{ASG}$ 
which is associated with the Harish-Chandra pair just obtained. Since one sees that the category 
equivalences in Theorem \ref{thm:equivalence} are compatible with base extensions, 
it follows that $\G_{\mathbb{Z}}$ 
is a $\mathbb{Z}$-form of the algebraic $\mathbb{C}$-supergroup associated with the
Harish-Chandra pair $(G,\mathfrak{g})$, where $G$ denotes the base change of $G_{\mathbb{Z}}$ to $\mathbb{C}$. 
Recall from Section \ref{subsec:Chevalley_intro} the definition of Chevalley $\mathbb{C}$-supergroups
of classical type, and note that every such $\mathbb{C}$-supergroup is associated with 
some Harish-Chandra pair of the
last form. We have thus constructed 
a natural $\mathbb{Z}$-form of every Chevalley $\mathbb{C}$-supergroups of classical type. 

\begin{rem}\label{rem:FGconstruction}
(1)\
After constructing Kostant superalgebras, Fioresi and Gavarini's construction, which is parallel to
the classical construction of Chevalley $\mathbb{Z}$-groups, 
continues as follows; (a)~Choose a faithful rational representation
$\mathfrak{g} \to \mathfrak{gl}_{\mathbb{C}}(M)$ on a finite-dimensional super-vector space $M$ over $\mathbb{C}$, 
(b)~choose a $\mathbb{Z}$-lattice $M_{\mathbb{Z}}$ 
in $M$ which is stable under the action of the Kostant superalgebra,
(c)~construct a natural group-valued functor which is realized as subgroups of
$\mathbf{GL}_R(M_{\mathbb{Z}}\otimes_{\mathbb{Z}} R)$, 
where $R$ runs over the commutative superalgebras over $\mathbb{Z}$, and (d)~prove that the sheafification, 
say $\G^{FG}_{\mathbb{Z}}$, of the constructed
group-valued functor is representable, and has desired properties, which include the property that 
$\O(\G^{FG}_{\mathbb{Z}})$ is split; see \cite[Corollary 5.20]{FGmemo} and \cite[Corollary 4.22]{G21}
for the last property. 

Our method of construction dispenses with these procedures. 

(2)\
The algebraic group $(\G_{\mathbb{Z}}^{FG})_{ev}$ associated with Fioresi and Gavarini's $\G_{\mathbb{Z}}^{FG}$
is a split reductive algebraic $\mathbb{Z}$-group. 
As was noted in an earlier version of the present paper, it was not clear for the authors whether 
the split reductive algebraic $\mathbb{Z}$-groups which correspond to all \emph{possible} root data
(namely, all relevant root data satisfying \eqref{eq:Delta1})
can be realized as $(\G_{\mathbb{Z}}^{FG})_{ev}$; note that by definition, those algebraic $\mathbb{Z}$-groups 
are realized as our $(\mathbf{G}_{\mathbb{Z}})_{ev}=G_{\mathbb{\mathbb{Z}}}$. 
Later, Gavarini kindly showed to the first-named author that they are indeed realized; essentially
the same argument of his proof is contained in Erratum added to a new version of \cite{G1}. 
\end{rem}

\subsection{}\label{subsec:super-Chevally_Cartan_type}
Let $\mathfrak{g}$ be a finite-dimensional simple Lie superalgebra over $\mathbb{C}$
which is of Cartan type. Then $\mathfrak{g}_0$ is a direct sum 
$\mathfrak{g}_0^r\ltimes \mathfrak{g}_0^n$ of a reductive Lie algebra $\mathfrak{g}_0^r$ with a nilpotent
Lie algebra $\mathfrak{g}_0^n$. With respect to the right adjoint $\mathfrak{g}_0^r$-action,
$\mathfrak{g}_0^n$ and $\mathfrak{g}_1$ decompose as direct sums of
weight spaces for a fixed Cartan subalgebra
$\mathfrak{h}\subset \mathfrak{g}_0^r$; we let $\Delta_0^r$, $\Delta_0^n$ and $\Delta_1$ denote the
sets of the roots for $\mathfrak{g}_0^r$, $\mathfrak{g}_0^n$ and $\mathfrak{g}_1$, respectively. 
The nilpotent Lie algebra $\mathfrak{g}_0^n$ acts on $\mathfrak{g}_1$ nilpotently. 

This time we assume that the root datum and the corresponding algebraic $\mathbb{Z}$-groups 
given in \eqref{eq:root_datum}  
are as follows: $\mathfrak{g}_0^r \supset \mathfrak{h}$ coincide with the complexifications of
$\mathrm{Lie}(G_{\mathbb{Z}}) \supset \mathrm{Lie}(T_{\mathbb{Z}})$, and $\Delta_0^n \subset \mathsf{X}
\supset \Delta_1$.

\begin{theorem}[Gavarini]\label{thm:Chevalley_basis_Cartan}
There exist $\mathbb{Z}$-lattices $N_{\mathbb{Z}}$ and $V_{\mathbb{Z}}$ of $\mathfrak{g}_0^n$
and $\mathfrak{g}_1$, respectively, such that
\begin{itemize}
\item[(i)] $\mathfrak{g}_{\mathbb{Z}}:= \mathrm{Lie}(G_{\mathbb{Z}}) \oplus N_{\mathbb{Z}}\oplus V_{\mathbb{Z}}$
is a Lie-superalgebra $\mathbb{Z}$-form of $\mathfrak{g}$.
\item[(ii)] This Lie superalgebra $\mathfrak{g}_{\mathbb{Z}}$ over $\mathbb{Z}$ is admissible. 
\item[(iii)] $V_{\mathbb{Z}}$ is $\mathrm{hy}(G_{\mathbb{Z}})$-stable in the right 
$U(\mathfrak{g}_0^r)$-module $\mathfrak{g}_1$. 
\item[(iv)] $N_{\mathbb{Z}}$ contains a $\mathbb{Z}$-free basis $x_1,\dots, x_s$ such that
\begin{itemize}
\item[(iv-1)] the $\mathbb{Z}$-submodule $H_{\mathbb{Z}}$ of $U(\mathfrak{g}_0^n)$ which is (freely) generated by
\[
\frac{x_1^{n_1}}{n_1!}\dots \frac{x_s^{n_s}}{n_s!},\quad n_1\ge 0,\dots, n_s\ge 0
\]
is a $\mathbb{Z}$-subalgebra,
\item[(iv-2)] $V_{\mathbb{Z}}$ is $H_{\mathbb{Z}}$-stable in the right $U(\mathfrak{g}_0^n)$-module 
$\mathfrak{g}_1$, and
\item[(iv-3)] $H_{\mathbb{Z}}$ is $\hy(G_{\mathbb{Z}})$-stable in the right $U(\mathfrak{g}_0^r)$-module
$U(\mathfrak{g}_0^n)$. 
\end{itemize} 
\end{itemize}
\end{theorem}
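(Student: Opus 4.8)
The plan is to proceed exactly as for Theorem \ref{thm:Chevalley_basis_classical}, now invoking Gavarini's construction of a Chevalley basis and the associated Kostant superalgebra for the Cartan-type $\mathfrak{g}$ in \cite{G1}. That basis splits along the decomposition $\mathfrak{g} = \mathfrak{g}_0^r \oplus \mathfrak{g}_0^n \oplus \mathfrak{g}_1$: a classical Chevalley basis for the reductive part $\mathfrak{g}_0^r$ (Cartan elements $H_1,\dots,H_\ell$ together with root vectors $X_\alpha$, $\alpha \in \Delta_0^r=\mathsf{R}$), a family of root vectors spanning the nilpotent part $\mathfrak{g}_0^n$ indexed by $\Delta_0^n$, and odd root vectors spanning $\mathfrak{g}_1$ indexed by $\Delta_1$. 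As in the classical situation I would first re-choose the $H_i$ so that they form a $\mathbb{Z}$-free basis of $\mathsf{X}^\vee$; this is harmless because it affects only the adjoint action on the remaining basis vectors, which continue to act through their weights. With this normalization $\mathrm{Lie}(G_{\mathbb{Z}})$ is identified with the $\mathbb{Z}$-span of the reductive part of the basis.

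I would then define $N_{\mathbb{Z}}$ to be the $\mathbb{Z}$-span of the root vectors indexed by $\Delta_0^n$ and $V_{\mathbb{Z}}$ the $\mathbb{Z}$-span of the odd root vectors indexed by $\Delta_1$. Properties (i) and (ii) are then precisely the statements that the $\mathbb{Z}$-span of the whole Chevalley basis is a Lie-superalgebra $\mathbb{Z}$-form of $\mathfrak{g}$ and that it is admissible in the sense of Definition \ref{def:admissible_Lie}; both are established in \cite{G1}, admissibility amounting to the integrality of the structure constants together with the $2$-divisibility of $[v,v]$ for odd basis vectors $v$. Condition (iii) follows because each odd root vector is a weight vector for $T_{\mathbb{Z}}$ (using $\Delta_1 \subset \mathsf{X}$) and the divided-power generators $X_\alpha^{(n)}$ of $\mathrm{hy}(G_{\mathbb{Z}})$ act on $\mathfrak{g}_1$ through the integral Chevalley-type formulas, so $V_{\mathbb{Z}}$ is preserved; this is the stability to be fed, together with Theorem \ref{thm:bijection_non-super_case}, into the subsequent construction of the Harish-Chandra pair.

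The main obstacle is condition (iv), concerning the nilpotent even part and its divided-power $\mathbb{Z}$-form $H_{\mathbb{Z}}$. Since $\mathfrak{g}_0^n$ acts nilpotently on $\mathfrak{g}_1$ and on itself, the monomials $\frac{x_1^{n_1}}{n_1!}\cdots\frac{x_s^{n_s}}{n_s!}$ are well-defined operators, but one must verify three integrality assertions: that $H_{\mathbb{Z}}$ is closed under multiplication (iv-1), that these divided powers carry $V_{\mathbb{Z}}$ into itself (iv-2), and that $H_{\mathbb{Z}}$ is stable under the adjoint action of $\mathrm{hy}(G_{\mathbb{Z}})$ (iv-3). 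The genuinely delicate point is (iv-1): while same-letter products re-expand via binomial coefficients with integer coefficients, products of divided powers of distinct non-commuting $x_i$ must be reordered through their commutators and stay integral, which forces the $x_i$ to be chosen as a suitably filtered, integrally-structured generating system of $\mathfrak{g}_0^n$. I would settle (iv-1) by extracting such a system from Gavarini's construction, and reduce (iv-2) and (iv-3) to the integrality already built into the nilpotent part of the Kostant superalgebra of \cite{G1}, reading them off from its defining commutation and coproduct formulas; the bulk of the work lies in re-expressing those formulas in the precise module-theoretic form demanded here.
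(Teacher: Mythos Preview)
Your proposal is correct and follows essentially the same route as the paper: the theorem is attributed to Gavarini and is not re-proved, but rather extracted from the Chevalley basis and Kostant superalgebra constructed in \cite{G1}, with $N_{\mathbb{Z}}$ and $V_{\mathbb{Z}}$ defined as the $\mathbb{Z}$-spans of the $\mathfrak{g}_0^n$- and odd-basis elements respectively (after re-choosing the Cartan elements to span $\mathsf{X}^\vee$). The paper simply refers to \cite[Section~3.1]{G1} for (ii) and \cite[Section~3.3]{G1} for (iii), (iv), whereas you spell out in more detail what must be verified for (iv-1)--(iv-3); but the underlying argument is the same.
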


Gavarini's construction in \cite{G1} is parallel to those in \cite{FGmemo, G21}. 
One sees that among Gavarini's Chevalley basis elements, the elements contained in $\mathfrak{g}_0^n$ and 
the odd elements generate the desired $\mathbb{Z}$-lattices $N_{\mathbb{Z}}$ and $V_{\mathbb{Z}}$, respectively;
the former are precisely the desired elements for (iv). 
See \cite[Section 3.1]{G1} for (ii), and see \cite[Section 3.3]{G1} for (iii), (iv).
Note that the $\mathbb{Z}$-algebra $H_{\mathbb{Z}}$ given in (iv-1)
is indeed a Hopf-algebra $\mathbb{Z}$-form of $U(\mathfrak{g}_0^n)$. 

Recall from \cite[IV, Sect. 2, 4.5]{DG}
there uniquely exists a unipotent algebraic group $F$ over $\mathbb{C}$ such that
$\mathrm{Lie}(F) = \mathfrak{g}_0^n$. The corresponding Hopf algebra $\O(F)$ is the
polynomial algebra $\mathbb{C}[t_1,\dots, t_s]$ such that 
\begin{equation}\label{eq:UO_pairing}
\langle \ , \ \rangle 
: U(\mathfrak{g}_0^n) \times \O(F) \to \mathbb{C}, \quad
\langle \frac{x_1^{n_1}}{n_1!}\dots \frac{x_s^{n_s}}{n_s!},\ t_1^{m_1}\dots t_s^{m_s} \rangle
= \delta_{n_1,m_1}\dots \delta_{n_s, m_s} 
\end{equation}
is a Hopf pairing. This induces a Hopf algebra isomorphism
\begin{equation}\label{eq:isom_from_O(F)}
\O(F) \overset{\simeq}{\longrightarrow} U(\mathfrak{g}_0^n)'.
\end{equation}
Here and in what follows, given a finitely generated Hopf algebra $B$ over a field or $\mathbb{Z}$,
we define 
\[ 
B' := \bigcup_{n>0} (B/(B^+)^n)^*,
\]
as in \cite[Section 9.2]{Mon}. 
This is a Hopf subalgebra of $B^{\circ}$. If $B$ is the commutative Hopf algebra corresponding
to an algebraic group, then $B'$ is the hyperalgebra of the algebraic group. 

\begin{lemma}\label{lem:unipotent_group}
$\mathbb{Z}[t_1,\dots, t_s]$ is a Hopf-algebra $\mathbb{Z}$-form of $\O(F) = \mathbb{C}[t_1,\dots, t_s]$. 
The Hopf pairing \eqref{eq:UO_pairing} over $\mathbb{C}$ restricts to a Hopf pairing 
$
\langle \ , \ \rangle : H_{\mathbb{Z}}
\times \mathbb{Z}[t_1,\dots, t_s] \to \mathbb{Z}
$
over $\mathbb{Z}$, and it induces an isomorphism 
\[ 
\mathbb{Z}[t_1,\dots, t_s] \overset{\simeq}{\longrightarrow} H_{\mathbb{Z}}'
\]
of $\mathbb{Z}$-Hopf algebras. 
\end{lemma}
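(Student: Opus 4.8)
The plan is to derive all three assertions from one structural fact: the pairing \eqref{eq:UO_pairing} makes the monomials $t^{\mathbf m}:=t_1^{m_1}\cdots t_s^{m_s}$ the $\mathbb C$-basis of $\O(F)$ dual to the divided-power PBW basis $\gamma_{\mathbf n}:=\frac{x_1^{n_1}}{n_1!}\cdots\frac{x_s^{n_s}}{n_s!}$ of $U(\mathfrak{g}_0^n)$, indexed by multi-indices $\mathbf m,\mathbf n\in\mathbb N^s$ with $\langle\gamma_{\mathbf n},t^{\mathbf m}\rangle=\delta_{\mathbf n,\mathbf m}$. Recall from the remark following Theorem \ref{thm:Chevalley_basis_Cartan} that $H_{\mathbb Z}=\bigoplus_{\mathbf n}\mathbb Z\,\gamma_{\mathbf n}$ is a Hopf-algebra $\mathbb Z$-form of $U(\mathfrak{g}_0^n)$, so in particular $\Delta(H_{\mathbb Z})\subseteq H_{\mathbb Z}\otimes H_{\mathbb Z}$ and $S(H_{\mathbb Z})\subseteq H_{\mathbb Z}$. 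The very first observation is that \eqref{eq:UO_pairing} restricted to $H_{\mathbb Z}\times\mathbb Z[t_1,\dots,t_s]$ is $\mathbb Z$-valued, since on basis elements its values $\delta_{\mathbf n,\mathbf m}$ lie in $\{0,1\}$ and one extends $\mathbb Z$-bilinearly.

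For (1) I would show that $\Delta,\varepsilon,S$ of $\O(F)$ preserve $\mathbb Z[t_1,\dots,t_s]$ by reading off coefficients through the pairing. As $\{t^{\mathbf p}\otimes t^{\mathbf q}\}$ is dual to $\{\gamma_{\mathbf p}\otimes\gamma_{\mathbf q}\}$, the coefficient of $t^{\mathbf p}\otimes t^{\mathbf q}$ in $\Delta(t^{\mathbf m})$ is $\langle\gamma_{\mathbf p}\otimes\gamma_{\mathbf q},\Delta(t^{\mathbf m})\rangle=\langle\gamma_{\mathbf p}\gamma_{\mathbf q},t^{\mathbf m}\rangle$ by \eqref{eq:Hopf_pairing_conditions}; since $\gamma_{\mathbf p}\gamma_{\mathbf q}\in H_{\mathbb Z}$ by (iv-1), this is an integer, so $\Delta(t^{\mathbf m})\in\mathbb Z[t_1,\dots,t_s]^{\otimes 2}$. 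Likewise $\varepsilon(t^{\mathbf m})=\langle 1,t^{\mathbf m}\rangle=\delta_{\mathbf m,0}$, and by \eqref{eq:Hopf_pairing_antipode} the coefficient of $t^{\mathbf p}$ in $S(t^{\mathbf m})$ is $\langle\gamma_{\mathbf p},S(t^{\mathbf m})\rangle=\langle S(\gamma_{\mathbf p}),t^{\mathbf m}\rangle\in\mathbb Z$. Thus $\mathbb Z[t_1,\dots,t_s]$ is a Hopf $\mathbb Z$-subalgebra with complexification $\O(F)$, i.e. a $\mathbb Z$-form, which is (1); and (2) then follows at once, because the Hopf-pairing identities \eqref{eq:Hopf_pairing_conditions}--\eqref{eq:Hopf_pairing_antipode} hold over $\mathbb C$ and, by (1) and $\Delta(H_{\mathbb Z})\subseteq H_{\mathbb Z}\otimes H_{\mathbb Z}$, involve only elements that remain integral.

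For (3) the pairing yields a map $\theta:\mathbb Z[t_1,\dots,t_s]\to H_{\mathbb Z}'$, $f\mapsto\langle-,f\rangle$, which is a Hopf-algebra map (a routine encoding of (2)), and I must check it is well defined, injective, and surjective. It is well defined because $t^{\mathbf m}\in\O(F)\simeq U(\mathfrak{g}_0^n)'$ (see \eqref{eq:isom_from_O(F)}) kills $(U(\mathfrak{g}_0^n)^+)^n$ for some $n$, whence $\langle-,t^{\mathbf m}\rangle$ kills $(H_{\mathbb Z}^+)^n\subseteq(U(\mathfrak{g}_0^n)^+)^n$; injectivity follows from nondegeneracy of \eqref{eq:UO_pairing} after $-\otimes_{\mathbb Z}\mathbb C$, using $H_{\mathbb Z}\otimes\mathbb C=U(\mathfrak{g}_0^n)$.

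The crux is surjectivity, and this is where the main obstacle lies. Given $\lambda\in(H_{\mathbb Z}/(H_{\mathbb Z}^+)^n)^*$, I would prove $\lambda(\gamma_{\mathbf m})=0$ whenever $|\mathbf m|:=m_1+\cdots+m_s\ge n$, so that $f:=\sum_{|\mathbf m|<n}\lambda(\gamma_{\mathbf m})\,t^{\mathbf m}$ is a genuine polynomial with $\theta(f)=\lambda$. Here $\gamma_{\mathbf m}$ is a scalar multiple of $x_1^{m_1}\cdots x_s^{m_s}$, a product of $|\mathbf m|$ elements of $\mathfrak{g}_0^n\subseteq U(\mathfrak{g}_0^n)^+$, so $\gamma_{\mathbf m}\in(U(\mathfrak{g}_0^n)^+)^{|\mathbf m|}$; since $H_{\mathbb Z}^+\otimes\mathbb C=U(\mathfrak{g}_0^n)^+$ and $\mathbb C$ is flat over $\mathbb Z$, one has $(H_{\mathbb Z}^+)^{|\mathbf m|}\otimes\mathbb C=(U(\mathfrak{g}_0^n)^+)^{|\mathbf m|}$, so $\gamma_{\mathbf m}\in(H_{\mathbb Z}^+)^{|\mathbf m|}\otimes\mathbb Q$ and hence $c\,\gamma_{\mathbf m}\in(H_{\mathbb Z}^+)^{|\mathbf m|}\subseteq(H_{\mathbb Z}^+)^n$ for some positive integer $c$; as $\lambda$ annihilates $(H_{\mathbb Z}^+)^n$, this forces $c\,\lambda(\gamma_{\mathbf m})=0$, i.e. $\lambda(\gamma_{\mathbf m})=0$. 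The delicacy is precisely this saturation step: the quotients $H_{\mathbb Z}/(H_{\mathbb Z}^+)^n$ carry infinite torsion (the high divided powers $\gamma_{\mathbf m}$ survive only up to torsion), and the argument above is exactly what guarantees that every $\mathbb Z$-valued functional kills those torsion contributions, so that $H_{\mathbb Z}'$ is spanned by the $t^{\mathbf m}$ and $\theta$ is an isomorphism.
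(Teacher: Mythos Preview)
Your proof is correct, but it is organized quite differently from the paper's.

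The paper argues in the reverse order: it first proves the isomorphism $\mathbb{Z}[t_1,\dots,t_s]\simeq H_{\mathbb{Z}}'$ and \emph{then} deduces that $\mathbb{Z}[t_1,\dots,t_s]$ is a Hopf $\mathbb{Z}$-form (since $H_{\mathbb{Z}}'$ is automatically a Hopf subalgebra of $H_{\mathbb{Z}}^{\circ}$). For the isomorphism, the paper uses a pull-back diagram: one identifies $H_{\mathbb{Z}}^{*}\simeq\mathbb{Z}[\![t_1,\dots,t_s]\!]$ and $U(\mathfrak{g}_0^n)^{*}\simeq\mathbb{C}[\![t_1,\dots,t_s]\!]$, observes that the outer square $\mathbb{Z}[t]\hookrightarrow\mathbb{C}[t]$, $\mathbb{Z}[\![t]\!]\hookrightarrow\mathbb{C}[\![t]\!]$ is a pull-back, and that the lower square $H_{\mathbb{Z}}'\hookrightarrow U(\mathfrak{g}_0^n)'$, $H_{\mathbb{Z}}^{*}\hookrightarrow U(\mathfrak{g}_0^n)^{*}$ is a pull-back (this is where $(H_{\mathbb{Z}}^{+})^{n}\otimes\mathbb{C}=(U(\mathfrak{g}_0^n)^{+})^{n}$ enters, exactly the fact you use). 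A two-out-of-three for pull-backs then forces the upper square to be a pull-back, giving the isomorphism.

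Your approach is more hands-on: you establish integrality of the structure constants of $\Delta,\varepsilon,S$ on $\mathbb{Z}[t_1,\dots,t_s]$ directly via the dual basis, and for surjectivity you argue element by element that high divided powers become torsion modulo $(H_{\mathbb{Z}}^{+})^{n}$. (Your detour through flatness over $\mathbb{C}$ to find $c$ can be shortened: simply take $c=m_1!\cdots m_s!$, since then $c\,\gamma_{\mathbf m}=x_1^{m_1}\cdots x_s^{m_s}\in (H_{\mathbb{Z}}^{+})^{|\mathbf m|}$ on the nose.) The paper's diagram chase is cleaner and avoids checking the Hopf structure maps separately, while your argument is more self-contained and does not require recognizing $H_{\mathbb{Z}}^{*}$ as a formal power series ring.
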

\begin{proof} It is easy to see that the Hopf algebra isomorphism \eqref{eq:isom_from_O(F)}
restricts to a $\mathbb{Z}$-algebra map $\mathbb{Z}[t_1,\dots, t_n] \to H_{\mathbb{Z}}'$. 
We have the following commutative diagram which contains the isomorphism and the restricted algebra map. 
\[
\begin{xy}
(0,0)   *++{\mathbb{Z}[t_1,\dots, t_s]}  ="1",
(40,0)  *++{\O(F)=\mathbb{C}[t_1,\dots, t_s]} ="2",
(0,-15) *++{H_{\mathbb{Z}}'}          ="3",
(40,-15)*++{U(\mathfrak{g}_0^n)'}    ="4",
(0,-30) *++{H_{\mathbb{Z}}^*}          ="5",
(40,-30)*++{U(\mathfrak{g}_0^n)^*}    ="6",
{"1" \SelectTips{cm}{} \ar @{^(->} "2"},
{"1" \SelectTips{cm}{} \ar @{^(->} "3"},
{"2" \SelectTips{cm}{} \ar @{->}^{\simeq} "4"},
{"3" \SelectTips{cm}{} \ar @{^(->} "4"},
{"5" \SelectTips{cm}{} \ar @{^(->} "6"},
{"3" \SelectTips{cm}{} \ar @{^(->} "5"},
{"4" \SelectTips{cm}{} \ar @{^(->} "6"}
\end{xy}
\]
Since $H_{\mathbb{Z}}^* \simeq \mathbb{Z}[\hspace{-1.5pt}[t_1,\dots, t_n]\hspace{-1.5pt}]$,\ 
$U(\mathfrak{g}_0^n)^* 
\simeq \mathbb{C}[\hspace{-1.5pt}[t_1,\dots, t_n]\hspace{-1.5pt}]$, 
we see that the outer big square is a pull-back. The
lower square is a pull-back, too, as is easily seen. It follows that the upper square is a pull-back,
whence $\mathbb{Z}[t_1,\dots, t_n] \to H_{\mathbb{Z}}'$ is an isomorphism. This implies that 
$\mathbb{Z}[t_1,\dots, t_n]$ is a Hopf-algebra $\mathbb{Z}$-form of $\O(F)$. The rest is now easy to see. 
\end{proof}

Let $F_{\mathbb{Z}}$ denote the algebraic $\mathbb{Z}$-group corresponding to the $\mathbb{Z}$-Hopf algebra 
$\mathbb{Z}[t_1,\dots, t_s]$. Then
\[
\O(F_{\mathbb{Z}}) = \mathbb{Z}[t_1,\dots, t_s], 
\quad \mathrm{hy}(F_{\mathbb{Z}}) = H_{\mathbb{Z}},
\quad \mathrm{Lie}(F_{\mathbb{Z}})=N_{\mathbb{Z}}.
\]
Note from (i) of Theorem \ref{thm:Chevalley_basis_Cartan} that $N_{\mathbb{Z}}$ is a Lie-algebra
$\mathbb{Z}$-form of $\mathfrak{g}_0^n$. From the first two equalities above or from Gavarini's original
construction one sees that the construction of $H_{\mathbb{Z}}$
does not depend on the order of the basis elements.

Let $G \supset T$ denote the base changes of $G_{\mathbb{Z}}\supset T_{\mathbb{Z}}$ to $\mathbb{C}$. 
The right $U(\mathfrak{g}_0^r)$-module structure on $\mathfrak{g}_0^n$, which arises from the right
adjoint action, is indeed a 
$U(\mathfrak{g}_0^r)$-$T$-module structure. Hence it gives rise to a right $G$-module structure,
by which $\mathfrak{g}_0^n$ is a Lie-algebra object in the symmetric tensor category $\mathsf{Mod}$-$G$
of right $G$-modules. The structure uniquely extends to $U(\mathfrak{g}_0^n)$ so that 
$U(\mathfrak{g}_0^n)$ turns into a Hopf-algebra object in $\mathsf{Mod}$-$G$. One sees that the structure just
obtained is transposed through \eqref{eq:UO_pairing} to $\O(F)$, so that $\O(F)$ is a Hopf-algebra object in the
symmetric category $G$-$\mathsf{Mod}$ of left $G$-modules. Thus, $F$ turns into a right $G$-equivariant
algebraic group. The associated semi-direct product $G \ltimes F$ of algebraic groups has 
$\mathfrak{g}_0 = \mathfrak{g}_0^r \ltimes \mathfrak{g}_0^n$ as its Lie algebra, as is easily seen.
Note that $\mathfrak{g}_1$ is a right $U(\mathfrak{g}_0^r)$-$T$-module, and is such a right 
$U(\mathfrak{g}_0^n)$-module that is annihilated by $(U(\mathfrak{g}_0^n)^+)^m$ for some $m$. Then 
it follows that
$\mathfrak{g}_1$ turns into a right $G$-module and $F$-module. Moreover, it
is a right $G \ltimes F$-module, as is seen by using 
(1)~$\mathrm{Lie}(G \ltimes F)=\mathfrak{g}_0^r \ltimes \mathfrak{g}_0^n$, (2)~$G \ltimes F$ is connected, and  
(3)~$\mathfrak{g}_1$ is a right $U(\mathfrak{g}_0)$-module. 

What were constructed in the last paragraph are all defined over $\mathbb{Z}$, 
as is seen from the following Lemma. 
  
\begin{lemma}\label{lem:defined_over_Z}
Keep the notation as above.
\begin{itemize}
\item[(1)] The right $\O(G)$-comodule structure $\O(F) \to \O(F) \otimes_{\mathbb{C}} \O(G)$ on $\O(F)$ 
restricts to $\O(F_{\mathbb{Z}}) \to \O(F_{\mathbb{Z}})\otimes_{\mathbb{Z}} \O(G_{\mathbb{Z}})$, by which
$F_{\mathbb{Z}}$ turns into a right $G_{\mathbb{Z}}$-equivariant algebraic group. Therefore, we have the
associated semi-direct product $G_{\mathbb{Z}}\ltimes F_{\mathbb{Z}}$ of algebraic groups. 
\item[(2)] $V_{\mathbb{Z}}$ is naturally a right $G_{\mathbb{Z}}\ltimes F_{\mathbb{Z}}$-module.
\end{itemize}
\end{lemma}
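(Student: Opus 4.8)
The plan is to build each structure over $\mathbb{Z}$ out of the weight and nilpotency data furnished by Theorem~\ref{thm:Chevalley_basis_Cartan}, and then to verify that what we obtain is exactly the restriction of the already-constructed $\mathbb{C}$-structure. Once this matching is in place, all compatibility identities (comodule axioms, $G$-equivariance, and the semi-direct-product relations) transfer from $\mathbb{C}$ to $\mathbb{Z}$ for free: every relevant $\mathbb{Z}$-module occurring here is $\mathbb{Z}$-free, hence embeds into its complexification, so an equality of $\mathbb{Z}$-linear maps holds over $\mathbb{Z}$ as soon as it holds after $\otimes\,\mathbb{C}$.

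For (1), first I would show that $H_{\mathbb{Z}} = \hy(F_{\mathbb{Z}})$ is a right $G_{\mathbb{Z}}$-module. By (iv-3) the $\mathbb{Z}$-free subalgebra $H_{\mathbb{Z}}$ is stable under the right $\hy(G_{\mathbb{Z}})$-action on $U(\mathfrak{g}_0^n)$; since $\Delta_0^n \subset \mathsf{X}$ and the divided-power monomials generating $H_{\mathbb{Z}}$ are $\hy(T_{\mathbb{Z}})$-weight vectors with weights in $\mathbb{Z}_{\ge 0}\Delta_0^n \subset \mathsf{X}$, the module $H_{\mathbb{Z}}$ is $\mathsf{X}$-graded, i.e. a locally finite right $\hy(G_{\mathbb{Z}})$-$T_{\mathbb{Z}}$-module. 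Applying Theorem~\ref{thm:bijection_non-super_case} in its right-handed form to the $\mathbb{Z}$-free, hence $\mathbb{Z}$-projective, module $H_{\mathbb{Z}}$ produces a right $G_{\mathbb{Z}}$-module structure which, by the uniqueness in that bijection, base-changes to the given $G$-action on $U(\mathfrak{g}_0^n)$. The Hopf-algebra structure maps of $H_{\mathbb{Z}}$ are restrictions of those of $U(\mathfrak{g}_0^n)$ and so are $G_{\mathbb{Z}}$-equivariant, making $H_{\mathbb{Z}}$ a Hopf-algebra object in $\mathsf{Mod}$-$G_{\mathbb{Z}}$. Transposing this structure through the Hopf pairing $H_{\mathbb{Z}} \times \mathbb{Z}[t_1,\dots,t_s] \to \mathbb{Z}$ and the identification $\O(F_{\mathbb{Z}}) = \mathbb{Z}[t_1,\dots,t_s] \cong H_{\mathbb{Z}}'$ of Lemma~\ref{lem:unipotent_group} yields a right $\O(G_{\mathbb{Z}})$-comodule structure on $\O(F_{\mathbb{Z}})$; because the pairing and the $H_{\mathbb{Z}}$-structure both restrict their $\mathbb{C}$-counterparts, this comodule map is precisely the restriction of $\O(F)\to\O(F)\otimes_{\mathbb{C}}\O(G)$. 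This makes $F_{\mathbb{Z}}$ a right $G_{\mathbb{Z}}$-equivariant algebraic group and furnishes $G_{\mathbb{Z}}\ltimes F_{\mathbb{Z}}$.

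For (2), I would produce the $G_{\mathbb{Z}}$- and $F_{\mathbb{Z}}$-module structures on $V_{\mathbb{Z}}$ separately and then combine them. The $G_{\mathbb{Z}}$-action is obtained exactly as above: by (iii) together with $\Delta_1 \subset \mathsf{X}$, the module $V_{\mathbb{Z}}$ is a $\mathbb{Z}$-finite free right $\hy(G_{\mathbb{Z}})$-$T_{\mathbb{Z}}$-module, so Theorem~\ref{thm:bijection_non-super_case} makes it a right $G_{\mathbb{Z}}$-module restricting the $G$-action on $\mathfrak{g}_1$. For the $F_{\mathbb{Z}}$-action, (iv-2) gives a right $H_{\mathbb{Z}} = \hy(F_{\mathbb{Z}})$-action on $V_{\mathbb{Z}}$ which is nilpotent (annihilated by $(U(\mathfrak{g}_0^n)^+)^m$, hence by $(H_{\mathbb{Z}}^+)^m$, for some $m$); it therefore factors through the $\mathbb{Z}$-finite free quotient $H_{\mathbb{Z}}/(H_{\mathbb{Z}}^+)^m$, and dualizing through $\O(F_{\mathbb{Z}}) = H_{\mathbb{Z}}'$ converts it into a right $\O(F_{\mathbb{Z}})$-comodule structure, again the restriction of the $\mathbb{C}$-one on $\mathfrak{g}_1$. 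Finally, using that a right $G_{\mathbb{Z}}\ltimes F_{\mathbb{Z}}$-module is exactly a right $G_{\mathbb{Z}}$-module together with a $G_{\mathbb{Z}}$-equivariant right $F_{\mathbb{Z}}$-module structure, I would verify this single equivariance identity and the comodule axioms of the combined coaction $V_{\mathbb{Z}} \to V_{\mathbb{Z}} \otimes \O(G_{\mathbb{Z}}\ltimes F_{\mathbb{Z}})$ by restriction from the known $G\ltimes F$-module $\mathfrak{g}_1$.

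The main obstacle I anticipate is the bookkeeping that identifies each integrally-constructed (co)module map with the restriction of its complex counterpart, since this is precisely what licenses transferring the compatibility axioms down to $\mathbb{Z}$ by $\mathbb{Z}$-freeness. The $F$-direction is the delicate one: there is no direct $\hy$-$T$ reconstruction theorem available there, so one must instead exploit unipotence — the nilpotency of the $H_{\mathbb{Z}}$-action combined with $\O(F_{\mathbb{Z}}) = \hy(F_{\mathbb{Z}})'$ — to pass from an $H_{\mathbb{Z}}$-action to a genuine $F_{\mathbb{Z}}$-comodule structure, and to check that this dualization is compatible with the bialgebra structure of $\O(F_{\mathbb{Z}})$ and agrees with the complex comodule structure.
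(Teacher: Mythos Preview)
Your proposal is correct and follows essentially the same route as the paper: build the $G_{\mathbb{Z}}$-structures on $H_{\mathbb{Z}}$ and $V_{\mathbb{Z}}$ via Theorem~\ref{thm:bijection_non-super_case} (using (iv-3), (iii) and the $T_{\mathbb{Z}}$-grading), transpose through Lemma~\ref{lem:unipotent_group} to get the comodule structure on $\O(F_{\mathbb{Z}})$, obtain the $F_{\mathbb{Z}}$-action on $V_{\mathbb{Z}}$ from (iv-2) via nilpotence, and then descend the semi-direct-product compatibility from $\mathbb{C}$ by $\mathbb{Z}$-freeness. The only cosmetic difference is that the paper checks the final identity $(vf)g=(vg)f^g$ via functor-of-points at the universal (hence $\mathbb{Z}$-flat) ring $R=\O(F_{\mathbb{Z}})\otimes_{\mathbb{Z}}\O(G_{\mathbb{Z}})$, whereas you phrase it as an equality of $\mathbb{Z}$-linear comodule maps; these are equivalent.
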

\begin{proof}
(1)\
One sees that the right $\hy(G_{\mathbb{Z}})$-module structure on $H_{\mathbb{Z}}$ which is given by (iv-3) 
of Theorem \ref{thm:Chevalley_basis_Cartan}
is indeed a $\hy(G_{\mathbb{Z}})$-$T_{\mathbb{Z}}$-module structure. Hence
it gives rise to a right $G_{\mathbb{Z}}$-module structure on $H_{\mathbb{Z}}$, by which $H_{\mathbb{Z}}$
turns into a Hopf-algebra object in $\mathsf{Mod}$-$G_{\mathbb{Z}}$. 
Since the isomorphism given in 
Lemma \ref{lem:unipotent_group} is compatible with base extension, it follows that the last structure 
is transposed to a left $G_{\mathbb{Z}}$-module structure on $\O(F_{\mathbb{Z}})$, so that
$\O(F_{\mathbb{Z}})$ is a Hopf-algebra object in $G_{\mathbb{Z}}$-$\mathsf{Mod}$. By construction
the corresponding right $\O(G_{\mathbb{Z}})$-comodule structure on $\O(F_{\mathbb{Z}})$
is the restriction of the right $\O(G)$-comodule structure on $\O(F)$. This proves 
the first assertion. The rest is easy to see.

(2)\
Just as for $H_{\mathbb{Z}}$, we see from (iii) of the theorem that
$V_{\mathbb{Z}}$ is a right $\hy(G_{\mathbb{Z}})$-$T_{\mathbb{Z}}$-module, whence it is a right 
$G_{\mathbb{Z}}$-module. We see from (iv-2) that $V_{\mathbb{Z}}$ is a right $H_{\mathbb{Z}}$-module,
and it is indeed a right $H_{\mathbb{Z}}/(H_{\mathbb{Z}}^+)^m$-module for the same $m$ as before. It follows by 
Lemma \ref{lem:unipotent_group} that $V_{\mathbb{Z}}$ is a right $F_{\mathbb{Z}}$-module.

It remains to prove that
\[
(vf)g = (vg)f^g,\quad 
v \in V_{\mathbb{Z}},\ f \in F_{\mathbb{Z}},\ g \in G_{\mathbb{Z}}.
\] 
Let $R$ be a commutative
ring. The equality in $R\otimes_{\mathbb{Z}}\mathbb{C}$-points follows from the analogous equality for
$\mathfrak{g}_1$, since 
$V_{\mathbb{Z}}\otimes_{\mathbb{Z}} R \otimes_{\mathbb{Z}} \mathbb{C}= 
\mathfrak{g}_1\otimes_{\mathbb{C}} (R \otimes_{\mathbb{Z}} \mathbb{C})$. 
To prove the equality in $R$-points, we may suppose $R = \O(F_{\mathbb{Z}})\otimes_{\mathbb{Z}}\O(G_{\mathbb{Z}})$,
and so that $R$ is $\mathbb{Z}$-flat. In this case the equality follows from the previous result 
since we then have
$V_{\mathbb{Z}}\otimes_{\mathbb{Z}} R \subset V_{\mathbb{Z}}\otimes_{\mathbb{Z}} R \otimes_{\mathbb{Z}} \mathbb{C}$. 
\end{proof}

Recall that $\mathfrak{g}_{\mathbb{Z}}$ is a Lie-superalgebra $\mathbb{Z}$-form as given in
(i) of Theorem \ref{thm:Chevalley_basis_Cartan}. Its odd component $V_{\mathbb{Z}}$ is a 
right $G_{\mathbb{Z}}\ltimes F_{\mathbb{Z}}$-module by Lemma \ref{lem:defined_over_Z}. 

\begin{prop}\label{prop:it's_HCP2}
$(G_{\mathbb{Z}}\ltimes F_{\mathbb{Z}}, \mathfrak{g}_{\mathbb{Z}})$ is a Harish-Chandra pair. 
\end{prop}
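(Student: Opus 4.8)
The plan is to verify the defining conditions of a Harish-Chandra pair (Definition \ref{def:HCP}) for the pair $(G_{\mathbb{Z}}\ltimes F_{\mathbb{Z}}, \mathfrak{g}_{\mathbb{Z}})$. Set $H_{\mathbb{Z}} := G_{\mathbb{Z}}\ltimes F_{\mathbb{Z}}$ for brevity. First I would confirm the structural prerequisites: by Theorem \ref{thm:Chevalley_basis_Cartan}(ii) the Lie superalgebra $\mathfrak{g}_{\mathbb{Z}}$ is admissible, and one needs $(\mathfrak{g}_{\mathbb{Z}})_0 = \mathrm{Lie}(H_{\mathbb{Z}})$. This last identification follows from the observation, already noted in the text preceding the proposition, that $\mathrm{Lie}(G_{\mathbb{Z}}\ltimes F_{\mathbb{Z}}) = \mathrm{Lie}(G_{\mathbb{Z}})\ltimes \mathrm{Lie}(F_{\mathbb{Z}}) = \mathrm{Lie}(G_{\mathbb{Z}})\ltimes N_{\mathbb{Z}}$, which is exactly $(\mathfrak{g}_{\mathbb{Z}})_0$ by part (i). I would also record that $\O(H_{\mathbb{Z}})$ is $\mathbb{Z}$-flat (indeed $\mathbb{Z}$-free, being $\O(G_{\mathbb{Z}})\otimes \mathbb{Z}[t_1,\dots,t_s]$) so that the relevant assumptions (B1), (B3) hold.

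Next I would supply the right $H_{\mathbb{Z}}$-module structure on the odd part $V_{\mathbb{Z}} = (\mathfrak{g}_{\mathbb{Z}})_1$ that Definition \ref{def:HCP}(1) requires: this is precisely the module structure produced in Lemma \ref{lem:defined_over_Z}(2). The crucial compatibility to check is that the right $U((\mathfrak{g}_{\mathbb{Z}})_0)$-module structure induced from this $H_{\mathbb{Z}}$-action coincides with the right adjoint action of $(\mathfrak{g}_{\mathbb{Z}})_0$ on $V_{\mathbb{Z}}$. Since $(\mathfrak{g}_{\mathbb{Z}})_0 = \mathrm{Lie}(G_{\mathbb{Z}})\oplus N_{\mathbb{Z}}$, I would verify this on each summand separately: on $\mathrm{Lie}(G_{\mathbb{Z}})$ the induced action agrees with the adjoint action because the $G_{\mathbb{Z}}$-module structure on $V_{\mathbb{Z}}$ came (via Theorem \ref{thm:bijection_non-super_case}) from the $\hy(G_{\mathbb{Z}})$-$T_{\mathbb{Z}}$-structure arising from the adjoint action, by condition (iii); on $N_{\mathbb{Z}} = \mathrm{Lie}(F_{\mathbb{Z}})$ the induced action agrees with the adjoint action because the $F_{\mathbb{Z}}$-module structure was transported through Lemma \ref{lem:unipotent_group} from the $H_{\mathbb{Z}}$-action of condition (iv-2), which in turn restricts the adjoint $U(\mathfrak{g}_0^n)$-action. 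The semidirect-product action then matches the adjoint action on all of $(\mathfrak{g}_{\mathbb{Z}})_0$ since $V_{\mathbb{Z}}$ is a genuine right $U((\mathfrak{g}_{\mathbb{Z}})_0)$-module.

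Finally I would establish the remaining condition: the restricted super-bracket $[\ ,\ ] : V_{\mathbb{Z}}\otimes V_{\mathbb{Z}}\to (\mathfrak{g}_{\mathbb{Z}})_0$ is right $H_{\mathbb{Z}}$-equivariant. Because $(\mathfrak{g}_{\mathbb{Z}})_0$ and $V_{\mathbb{Z}}$ are all modules obtained by base change from, or compatible with, the complex simple Lie superalgebra $\mathfrak{g}$, the cleanest route is to reduce equivariance to a statement over $\mathbb{C}$ and over flat base rings, in the spirit of the final paragraph of the proof of Lemma \ref{lem:defined_over_Z}(2). Concretely, for a commutative ring $R$ one checks $[v,w]^{\gamma} = [v^{\gamma}, w^{\gamma}]$ for $\gamma \in H_{\mathbb{Z}}(R)$, $v,w\in V_{\mathbb{Z}}\otimes R$; it suffices to treat $R = \O(F_{\mathbb{Z}})\otimes_{\mathbb{Z}}\O(G_{\mathbb{Z}})$ (the universal, $\mathbb{Z}$-flat case), whence $V_{\mathbb{Z}}\otimes R \subset V_{\mathbb{Z}}\otimes R\otimes_{\mathbb{Z}}\mathbb{C} = \mathfrak{g}_1\otimes_{\mathbb{C}}(R\otimes_{\mathbb{Z}}\mathbb{C})$, and the identity follows from the corresponding $G\ltimes F$-equivariance of the $\mathbb{C}$-bracket. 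That equivariance over $\mathbb{C}$ holds because the bracket is $U(\mathfrak{g}_0)$-linear and $G\ltimes F$ is connected with $\mathrm{Lie}(G\ltimes F)=\mathfrak{g}_0$. The main obstacle I anticipate is bookkeeping the interaction of the two module structures under the semidirect product—ensuring that equivariance for the $G_{\mathbb{Z}}$-factor and for the $F_{\mathbb{Z}}$-factor assemble correctly into $H_{\mathbb{Z}}$-equivariance—but this is exactly what the connectedness-plus-Lie-algebra argument, already used for $V_{\mathbb{Z}}$ itself, is designed to handle.
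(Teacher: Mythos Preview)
Your proposal is correct and in fact covers the compatibility check for the induced $U((\mathfrak{g}_{\mathbb{Z}})_0)$-action more explicitly than the paper does. One notational warning: you set $H_{\mathbb{Z}}:=G_{\mathbb{Z}}\ltimes F_{\mathbb{Z}}$, but in the paper $H_{\mathbb{Z}}$ already denotes the $\mathbb{Z}$-form of $U(\mathfrak{g}_0^n)$ from Theorem~\ref{thm:Chevalley_basis_Cartan}(iv-1); avoid that clash.

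The one substantive difference is how you establish $G_{\mathbb{Z}}\ltimes F_{\mathbb{Z}}$-equivariance of the restricted bracket $[\ ,\ ]:V_{\mathbb{Z}}\otimes V_{\mathbb{Z}}\to (\mathfrak{g}_{\mathbb{Z}})_0$. The paper stays entirely over $\mathbb{Z}$: it observes that this bracket is both $\hy(G_{\mathbb{Z}})$-linear and $H_{\mathbb{Z}}$-linear (with $H_{\mathbb{Z}}$ as in (iv-1)), and then uses Theorem~\ref{thm:bijection_non-super_case} and the duality of Lemma~\ref{lem:unipotent_group} to promote these linearities to $G_{\mathbb{Z}}$- and $F_{\mathbb{Z}}$-equivariance, which together yield equivariance for the semidirect product. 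You instead reduce to the complex picture by the $\mathbb{Z}$-flatness argument of Lemma~\ref{lem:defined_over_Z}(2) and invoke connectedness of $G\ltimes F$ over $\mathbb{C}$. Both arguments are valid; the paper's is shorter and intrinsic to the integral structure, while yours reuses the base-change mechanism already set up and makes the dependence on the complex Lie superalgebra transparent.
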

\begin{proof}
As is easily seen, $\mathrm{Lie}(G_{\mathbb{Z}}\ltimes F_{\mathbb{Z}})$ coincides with the
even component $\mathrm{Lie}(G_{\mathbb{Z}})\ltimes N_{\mathbb{Z}}$ of $\mathfrak{g}_{\mathbb{Z}}$.
The restricted super-bracket $[ \ , \ ] : V_{\mathbb{Z}}\times V_{\mathbb{Z}}\to 
\mathrm{Lie}(G_{\mathbb{Z}}\ltimes F_{\mathbb{Z}})$, being $\hy(G_{\mathbb{Z}})$- and $H_{\mathbb{Z}}$-linear, 
is $G_{\mathbb{Z}}$- and $F_{\mathbb{Z}}$-equivariant. It is necessarily 
$G_{\mathbb{Z}}\ltimes F_{\mathbb{Z}}$-equivariant.  
\end{proof}

We have thus the algebraic $\mathbb{Z}$-supergroup 
$\G(G_{\mathbb{Z}}\ltimes F_{\mathbb{Z}}, \mathfrak{g}_{\mathbb{Z}})$ in $\mathsf{ASG}$
which is associated with the Harish-Chandra pair just obtained. It is a $\mathbb{Z}$-form of the algebraic 
$\mathbb{C}$-supergroup which is associated with the Harish-Chandra pair $(G \ltimes F, \mathfrak{g})$. Since
every Chevalley $\mathbb{C}$-supergroup of Cartan type (see Section \ref{subsec:Chevalley_intro})
is associated with some Harish-Chandra pair of the last form, we have constructed 
a natural $\mathbb{Z}$-form of every such $\mathbb{C}$-supergroup. 

\begin{rem}\label{rem:Gconstruction}
Just as in the classical-type case (see Remark \ref{rem:FGconstruction} (1)), 
Gavarini's construction requires 
faithful representations of $\mathfrak{g}$, which, however, must satisfy more involved conditions
as given in \cite[Definition 3.14]{G1}; Proposition 3.16 of \cite{G1} proves that part of the conditions
is satisfied if the representation is completely reducible. 
The required representations look thus rather restrictive. On the other hand, 
Theorem 4.42 of \cite{G1} implies that the required representations are many enough to ensure
that our $\mathbb{Z}$-forms all are realized by Gavarini's construction. 
But the proof of the theorem is wrong, as was pointed out in an earlier version of this paper. 
After the publication of \cite{G1}, a corrected proof 
of the theorem, which uses the category equivalence \cite[Theorem 4.3.14]{G2}\, 
(= Theorem \ref{thma:Gavarini's_equivalence} below), 
was given in Erratum added to a new version of \cite{G1}. As far as the authors see, the proof is correct
if the same argument as proving our Lemma \ref{lem:defined_over_Z} is added. 
\end{rem}

\appendix
\section{Generalization using $2$-operations}\label{seca:generalization}

In this appendix we work over an arbitrary non-zero commutative ring $\Bbbk$. As was announced
at the last paragraph of the Introduction we will refine Gavarini's category
equivalence; see Theorem \ref{thma:Gavarini's_equivalence}. 

\subsection{}\label{subseca:preliminaries_on_2-operations}
Let $\mathfrak{g}$ be a \emph{Lie superalgebra}; see Section \ref{subsec:definition_of_admissible_LSA}. 

\begin{definition}[\text{\cite[Definition 2.2.1]{G2}}]\label{defa:2-operation}
A $2$-\emph{operation} on $\mathfrak{g}$ is a map  
$(\ )^{\langle 2 \rangle} : \mathfrak{g}_1\to \mathfrak{g}_0$ such that
\begin{itemize}
\item[(i)] $(cv)^{\langle 2 \rangle} = c^2v^{\langle 2 \rangle}$,
\item[(ii)] $(v+w)^{\langle 2 \rangle} = v^{\langle 2 \rangle} +[v,w]+ w^{\langle 2 \rangle}$ and
\item[(iii)] $[v^{\langle 2 \rangle}, z] = [v,[v,z]]$,
\end{itemize}
where $c \in \k$, $v, w \in \mathfrak{g}_1$, $z\in \mathfrak{g}$. 
\end{definition}

This is related with the admissibility defined by Definition \ref{def:admissible_Lie} as follows. 

\begin{lemma}\label{lema:2-operation}
Assume that $\Bbbk$ is $2$-torsion free. If $\mathfrak{g}$ is admissible, then
\[ v^{ \langle 2 \rangle} := \frac{1}{2}[v,v],\ v \in \mathfrak{g}_1 \]
gives the unique $2$-operation on $\mathfrak{g}$, and this is indeed the unique 
map $\mathfrak{g}_1\to \mathfrak{g}_0$ that satisfies (i), (ii) above. 
\end{lemma}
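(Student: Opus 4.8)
The plan is to verify that the proposed map $v^{\langle 2\rangle}:=\frac{1}{2}[v,v]$ is well-defined and satisfies axioms (i)--(iii), then prove uniqueness even among maps satisfying only (i)--(ii). The well-definedness is immediate: by admissibility (A3) together with the $2$-torsion freeness of $\mathfrak{g}_0$ (which follows from (A1) since $\Bbbk$ is $2$-torsion free), the element $\frac{1}{2}[v,v]$ exists and is unique, as already recorded in Remark \ref{rem:admissible_Lie}.

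For the verification, since $\mathfrak{g}_0$ is $2$-torsion free, it suffices in each case to check that the doubles of the two sides agree, which converts every identity into a statement about the bracket alone. For (i): the double of $(cv)^{\langle 2\rangle}$ is $[cv,cv]=c^2[v,v]$, which is the double of $c^2 v^{\langle 2\rangle}$. For (ii): the double of $(v+w)^{\langle 2\rangle}$ is $[v+w,v+w]=[v,v]+[v,w]+[w,v]+[w,w]$, and using the (graded) symmetry $[v,w]=[w,v]$ for odd elements (axiom (iii) of Lie superalgebras, which gives $[v,w]+(-1)^{|v||w|}[w,v]=0$, hence $[v,w]=[w,v]$ on odd elements) this equals $[v,v]+2[v,w]+[w,w]$, whose half is $v^{\langle 2\rangle}+[v,w]+w^{\langle 2\rangle}$. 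For (iii): I would compute $[[v,v],z]$ and show it equals $2[v,[v,z]]$; indeed the Jacobi-type identity (axiom (iv) for Lie superalgebras) applied to the triple $(v,v,z)$ yields this, so $[v^{\langle 2\rangle},z]=\frac{1}{2}[[v,v],z]=[v,[v,z]]$.

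For uniqueness, suppose $(\ )^{\natural}:\mathfrak{g}_1\to\mathfrak{g}_0$ is any map satisfying (i) and (ii). Setting $w=v$ in (ii) gives $(2v)^{\natural}=2v^{\natural}+[v,v]$, while (i) with $c=2$ gives $(2v)^{\natural}=4v^{\natural}$; subtracting yields $2v^{\natural}=[v,v]$. Since $\mathfrak{g}_0$ is $2$-torsion free, this forces $v^{\natural}=\frac{1}{2}[v,v]$, so the map is uniquely determined. This also shows that any $2$-operation (which in particular satisfies (i) and (ii)) must coincide with the constructed one, giving uniqueness of the $2$-operation as well.

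The main obstacle, such as it is, is simply bookkeeping with the sign conventions and the factor of $2$: the whole argument hinges on being able to cancel $2$'s, so the essential content is to invoke $2$-torsion freeness of $\mathfrak{g}_0$ at each step to reduce every claimed equality to its double. I do not anticipate any genuine difficulty beyond confirming that the symmetry $[v,w]=[w,v]$ and the Jacobi identity for the triple $(v,v,z)$ hold in the stated form for odd elements, both of which are part of the definition of a Lie superalgebra in Section \ref{subsec:definition_of_admissible_LSA}.
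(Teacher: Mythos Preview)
Your proposal is correct and follows essentially the same approach as the paper: verify (i)--(iii) by checking that the doubles of both sides agree (invoking $2$-torsion freeness of $\mathfrak{g}_0$), and prove uniqueness by combining (i) with $c=2$ and (ii) with $w=v$ to obtain $2v^{\natural}=[v,v]$. The paper's proof is much terser but the underlying computations are identical to yours.
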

\begin{proof}
The left and the right-hand sides of (i)--(iii) coincide since their doubles are seen to coincide.  
The uniqueness follows, since we see from (i), (ii) that
$4v^{\langle 2 \rangle}=(2v)^{\langle 2 \rangle}= 2v^{\langle 2 \rangle}+ [v,v]$, 
and so $2v^{\langle 2 \rangle}=[v,v]$. 
\end{proof}

If $\Bbbk$ is $2$-torsion free, an admissible Lie superalgebra is thus the same as a Lie superalgebra
$\mathfrak{g}$ given a (unique) $2$-operation, such that $\mathfrak{g}_0$ is $\Bbbk$-flat and $\mathfrak{g}_1$
is $\Bbbk$-free.  

Let us return to the situation that $\Bbbk$ is arbitrary. Let $\mathfrak{g}$ be a Lie superalgebra
given a $2$-operation. 
One directly verifies the following.  

\begin{prop}\label{propa:base_extension_of_2-operation}
Suppose that the odd component $\mathfrak{g}_1$ is $\Bbbk$-free, and choose a totally ordered
basis $X$ arbitrarily. Given a commutative algebra $R$, define a map
\[
(\ )_R^{\langle 2 \rangle} : \mathfrak{g}_1 \otimes R \to \mathfrak{g}_0 \otimes R
\]
by 
\[
\big(\sum_{i=1}^n x_i \otimes c_i\big)_R^{\langle 2 \rangle}:= 
\sum_{i=1}^nx_i^{\langle 2 \rangle}\otimes c_i^2+ \sum_{i<j}[x_i,x_j]\otimes c_ic_j,
\]
where $x_1 <\dots < x_n$ in $X$, and $c_i \in R$. This definition is independent of choice
of ordered bases, and the map gives a $2$-operation on the $R$-Lie superalgebra $\mathfrak{g} \otimes R$. 
For arbitrary elements $v_i \in \mathfrak{g}_1$, $c_i \in R$, $1\le i \le m$, we have
\[
\big(\sum_{i=1}^m v_i \otimes c_i\big)_R^{\langle 2 \rangle}= 
\sum_{i=1}^m v_i^{\langle 2 \rangle}\otimes c_i^2+ \sum_{i<j}[v_i,v_j]\otimes c_i c_j.
\]
\end{prop}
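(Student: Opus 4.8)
The plan is to prove Proposition \ref{propa:base_extension_of_2-operation} in three stages: first establish the basis-independence of the defining formula, then verify that $(\ )_R^{\langle 2\rangle}$ is a genuine $2$-operation on $\mathfrak{g}\otimes R$, and finally derive the general formula for arbitrary (not-necessarily-basis) elements. The guiding principle throughout is that $\mathfrak{g}\otimes R$ need \emph{not} be $2$-torsion free, so, unlike in Lemma \ref{lema:2-operation}, we cannot verify identities merely by checking that doubles agree. Every identity must be established directly from axioms (i)--(iii) of Definition \ref{defa:2-operation} and the super-bracket axioms of Section \ref{subsec:definition_of_admissible_LSA}.

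First I would prove the general formula (the last displayed equation) as a consequence of the basis-formula, taking basis-independence for granted momentarily, since once the map is well-defined the general formula becomes a self-contained algebraic computation that will also be reused to \emph{establish} basis-independence. The natural approach is induction on $m$. For $m=1$, the claim $(v\otimes c)_R^{\langle 2\rangle} = v^{\langle 2\rangle}\otimes c^2$ follows by writing $v=\sum_i t_i x_i$ with $t_i\in\Bbbk$ and expanding; here I would use axioms (i), (ii) together with $[x_i,x_j]+[x_j,x_i]=0$ (from super-bracket axiom (iii) restricted to the odd part) to collect the cross terms correctly. For the inductive step, I would set $v=\sum_{i<m}v_i\otimes c_i$ and $w=v_m\otimes c_m$ and apply axiom (ii) in the form $(v+w)_R^{\langle 2\rangle}=v_R^{\langle 2\rangle}+[v,w]+w_R^{\langle 2\rangle}$, where the bracket $[v,w]$ is the $R$-bilinear extension of the super-bracket; expanding $[v,w]=\sum_{i<m}[v_i,v_m]\otimes c_ic_m$ then matches the missing cross terms.

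For basis-independence, the cleanest route is to observe that if $Y$ is a second totally ordered basis, then the map $(\ )_R^{\langle 2\rangle}$ defined via $Y$ also satisfies axioms (i) and (ii) on $\mathfrak{g}_1\otimes R$ — axiom (i) is immediate from the quadratic scaling $c\mapsto c^2$, and axiom (ii) is a direct bilinearity-plus-bracket bookkeeping check. But any two maps satisfying (i) and (ii) and agreeing on the $\Bbbk$-spanning elements $x\otimes 1$ ($x\in X$) must coincide: indeed (i), (ii) force the value on any $R$-combination to be determined by the values $x^{\langle 2\rangle}$ and the brackets $[x,x']$, which are intrinsic to $\mathfrak{g}$. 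Thus both definitions agree with the intrinsic data and hence with each other. Equivalently, and perhaps more transparently, I would simply note that the general formula derived above expresses $(\sum v_i\otimes c_i)_R^{\langle 2\rangle}$ purely in terms of $v_i^{\langle 2\rangle}$, brackets $[v_i,v_j]$, and the $c_i$, with no reference to the chosen basis; applying this with the $v_i$ ranging over a second basis shows the two constructions coincide.

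Finally I would verify axioms (i)--(iii) for $(\ )_R^{\langle 2\rangle}$. Axiom (i) is immediate. Axiom (ii), for two arbitrary elements of $\mathfrak{g}_1\otimes R$, follows by applying the general formula to the concatenated list and separating the diagonal from the cross terms, using $[v,w]_R=\sum_{i,j}[v_i,w_j]\otimes c_id_j$ and the skew-symmetry of the odd bracket to fold the $i>j$ contributions. The main obstacle, as I anticipate it, is axiom (iii): $[(\sum v_i\otimes c_i)_R^{\langle 2\rangle},\, z\otimes 1]=[\,\sum v_i\otimes c_i,\,[\sum v_i\otimes c_i,\,z\otimes 1]\,]$. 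Expanding the right side produces a double sum $\sum_{i,j}[v_i,[v_j,z]]\otimes c_ic_j$, while the left side gives $\sum_i[v_i^{\langle 2\rangle},z]\otimes c_i^2+\sum_{i<j}[[v_i,v_j],z]\otimes c_ic_j$. Matching these requires the diagonal identity $[v_i^{\langle 2\rangle},z]=[v_i,[v_i,z]]$ (which is axiom (iii) over $\Bbbk$) for the $i=j$ terms, and for $i\ne j$ the relation $[[v_i,v_j],z]=[v_i,[v_j,z]]+[v_j,[v_i,z]]$, which is precisely the super-Jacobi identity (axiom (iv) of Section \ref{subsec:definition_of_admissible_LSA}) applied to the odd elements $v_i,v_j$ and $z$. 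Since these hold over $\Bbbk$, they tensor up to $R$, and the two sides agree term by term; this is the step where care with signs in the super-bracket is essential.
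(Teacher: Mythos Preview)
The paper gives no proof at all for this proposition; it simply records ``One directly verifies the following.'' Your outline therefore supplies what the paper omits, and the overall strategy---verify the axioms of Definition~\ref{defa:2-operation} directly from the basis formula, deduce the general formula, and then read off basis-independence from the intrinsic shape of that formula---is exactly the right one.

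Two points need attention, however.

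\textbf{A circularity in the ordering.} Your inductive step for the general formula invokes axiom~(ii) for $(\ )_R^{\langle 2\rangle}$, but you later deduce axiom~(ii) \emph{from} the general formula. As written this is circular. The clean fix is to verify axiom~(ii) first, directly from the basis definition: if $v=\sum_k x_k\otimes a_k$ and $w=\sum_k x_k\otimes b_k$ in the chosen basis $X$, then expanding $(v+w)_R^{\langle 2\rangle}$ and using $[x_k,x_k]=2x_k^{\langle 2\rangle}$ (which follows from (i),(ii) over $\Bbbk$) gives $v_R^{\langle 2\rangle}+[v,w]_R+w_R^{\langle 2\rangle}$ by a one-line computation. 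With (ii) in hand your induction for the general formula goes through, and then basis-independence and axioms (i),(iii) follow as you describe.

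\textbf{A sign slip on the odd bracket.} You twice invoke ``skew-symmetry'' of the odd bracket, and in the $m=1$ step you write $[x_i,x_j]+[x_j,x_i]=0$. This is wrong: for odd $v,w$ the super-symmetry gives $c_{\mathfrak{g},\mathfrak{g}}(v\otimes w)=-w\otimes v$, so axiom~(iii) of Section~\ref{subsec:definition_of_admissible_LSA} reads $[v,w]-[w,v]=0$, i.e.\ the restriction of $[\ ,\ ]$ to $\mathfrak{g}_1\otimes\mathfrak{g}_1$ is \emph{symmetric}. Fortunately your $m=1$ step does not actually need the identity you wrote (the cross terms match directly), but the correct symmetry $[v_i,v_j]=[v_j,v_i]$ is precisely what makes the general formula independent of the ordering of the list $(v_1,\dots,v_m)$, and this is what underlies both basis-independence and your derivation of axiom~(ii). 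Your verification of axiom~(iii) via the super-Jacobi identity is correct once the signs are tracked with the symmetric convention.
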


In this appendix we let $\mathbf{U}(\mathfrak{g})$ denote the cocommutative
Hopf superalgebra which is defined as in \cite[Section 4.3.4]{G2}. This is the quotient 
Hopf superalgebra of the tensor algebra $\mathbf{T}(\mathfrak{g})$ divided by the super-ideal
generated by the homogeneous primitives
\begin{equation*}
zw -(-1)^{|z||w|}wz-[z,w],\quad v^2 - v^{\langle 2 \rangle}, 
\end{equation*}
where $z$ and $w$ are homogeneous elements in $\mathfrak{g}$, and $v \in \mathfrak{g}_1$.
The only difference from the definition given in Section \ref{subsec:co-splitting_result} is 
that the second generators $v^2 - \frac{1}{2}[v,v]$ 
in \eqref{eq:homog_primitives} are here replaced (indeed, generalized) by $v^2-v^{\langle 2 \rangle}$.

\begin{lemma}\label{lema:monomial_basis}
Suppose that the homogeneous components $\mathfrak{g}_0$ and $\mathfrak{g}_1$ are both $\Bbbk$-free,
and choose their totally ordered bases $X_0$ and $X_1$. Then $\mathbf{U}(\mathfrak{g})$
has the following monomials as a $\Bbbk$-free basis,
\[
a_1^{r_1}\dots a_m^{r_m}x_1\dots x_n,
\]
where $a_1< \dots <a_m$ in $X_0$, $r_i > 0$, $m \ge 0$, and $x_1<\dots <x_n$ in $X_1$, $n \ge 0$. 
\end{lemma}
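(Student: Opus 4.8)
The plan is to present $\mathbf{U}(\mathfrak{g})$ by generators and relations and apply Bergman's Diamond Lemma \cite[Proposition 7.1]{B}, just as in the proof of Proposition \ref{prop:co-split}, but now taking the set $X_0 \sqcup X_1$ itself as the generating set, so that the resulting normal forms are exactly the claimed monomials. Regarding $\mathbf{T}(\mathfrak{g})$ as the free $\Bbbk$-algebra on $X_0 \sqcup X_1$, I would first check that, modulo the defining relations, the ideal cutting out $\mathbf{U}(\mathfrak{g})$ is generated by the four reduction families: (R1) $a'a \to aa' + [a',a]$ for $a' > a$ in $X_0$; (R2) $xa \to ax + [x,a]$ for $x \in X_1$, $a \in X_0$; (R3) $x'x \to -xx' + [x',x]$ for $x' > x$ in $X_1$; and (R4) $x^2 \to x^{\langle 2 \rangle}$ for $x \in X_1$, where on each right-hand side the bracket (resp. $x^{\langle 2 \rangle}$) is expanded in the basis $X_0$ or $X_1$. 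The only point requiring care here is that imposing $v^2 = v^{\langle 2 \rangle}$ for the basis vectors $x \in X_1$ alone recovers it for every $v \in \mathfrak{g}_1$; this is precisely the expansion formula of Proposition \ref{propa:base_extension_of_2-operation} (taken with $R = \Bbbk$) together with (R3), and it uses axioms (i), (ii) of Definition \ref{defa:2-operation}.

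Next I would establish termination. I order the free monoid first by length, and then, among words of equal length and equal multiset of letters, by the number of inversions, where a pair counts as an inversion when it is out of target order (a descent within $X_0$, an odd letter preceding an even one, or a descent within $X_1$). This is a partial order compatible with multiplication and satisfying the descending chain condition, and each of (R1)--(R4) replaces its leading word by strictly smaller words: the leading term of (R1)--(R3) preserves the multiset of letters while deleting one inversion, and every correction term (in particular all of (R4)) has strictly smaller length. Hence the system is reduction finite, and its irreducible words are exactly the monomials $a_1^{r_1}\dots a_m^{r_m}x_1\dots x_n$ with $a_1 < \dots < a_m$ in $X_0$ and $x_1 < \dots < x_n$ in $X_1$ appearing in the statement.

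It then remains to show every overlap ambiguity is resolvable; by the Diamond Lemma this yields that the irreducible words form a $\Bbbk$-basis, which is the assertion. All leading words have length two and none is a subword of another, so only length-three overlap ambiguities occur, and inspecting the parities of the three letters leaves four shapes: $cba$ with $c>b>a$ in $X_0$; $xba$ with $x$ odd and $b>a$ in $X_0$; $x'xa$ or $xxa$ with $x',x$ odd and $a$ even; and $xyz$ with $x,y,z$ odd in non-increasing order. The first shape is the classical even PBW overlap, resolved by the Jacobi identity in $\mathfrak{g}_0$; the mixed shape with $x$ and distinct $b,a$, and $xyz$ with three distinct odd letters, are resolved exactly by the computations already carried out in the proof of Proposition \ref{prop:co-split}, using the module-compatibility identity Lemma \ref{lem:DHCP}(a) and the super-Jacobi identity Lemma \ref{lem:DHCP}(d).

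The genuinely new and delicate cases --- which I expect to be the main obstacle --- are those in which an odd generator is repeated, namely $xxa$ inside the third shape and $xxz$, $xzz$, $xxx$ inside the fourth. In the $2$-torsion-free setting of Proposition \ref{prop:co-split} these were handled by checking that the relevant ``doubles coincide,'' a device unavailable over an arbitrary $\Bbbk$. Instead I would resolve them straight from Definition \ref{defa:2-operation}: axiom (iii), $[x^{\langle 2 \rangle}, z] = [x,[x,z]]$, is exactly what equates the two reductions of $xxa$ (via (R4) then (R2), versus (R2) on the inner pair followed by further reduction, where the double bracket $[x,[x,a]]$ surfaces), and it likewise governs $xxz$ and $xzz$; for $xxx$ one combines axiom (iii) at $z=x$ with the Lie-superalgebra relation $[[x,x],x]=0$ to obtain $[x^{\langle 2 \rangle},x]=0$, while the additive law in axiom (ii) reconciles the remaining correction terms. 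Verifying that these identities, with correct signs, close up all repeated-letter ambiguities is the technical heart of the proof, after which the Diamond Lemma delivers the stated basis. Alternatively, one may split the argument in two steps: ordinary PBW for the free Lie algebra $\mathfrak{g}_0$ gives the $\Bbbk$-basis $a_1^{r_1}\dots a_m^{r_m}$ of $U(\mathfrak{g}_0)$ (using Lemma \ref{lem:flat_implies_injection}), and an analogue of Corollary \ref{cor:co-split}, proved by the same Diamond-Lemma computation restricted to the odd rules, exhibits $\mathbf{U}(\mathfrak{g})$ as free over $U(\mathfrak{g}_0)$ on the products $x_1\dots x_n$.
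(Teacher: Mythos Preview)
Your main approach is correct and is essentially the one the paper takes: apply the Diamond Lemma with the full set $X_0\cup X_1$ as generators and resolve the repeated-odd-letter ambiguities directly from axiom~(iii) of Definition~\ref{defa:2-operation}. Two small points deserve mention. First, the relevant version of Bergman's Diamond Lemma here is \cite[Theorem~1.2]{B} for $\Bbbk$-algebras, not \cite[Proposition~7.1]{B} for $R$-rings; the paper makes this distinction explicitly. Second, your closing ``alternative'' route via $U(\mathfrak{g}_0)$-freeness is precisely what the paper warns against in the Remark following the lemma: treating $\mathbf{U}(\mathfrak{g})$ as a $J$-ring, the overlap $xxa$ with $a\in J$ cannot be resolved without decomposing $a$ into products of elements of $\mathfrak{g}_0$, which increases word length and breaks the termination order, so that alternative should be dropped.
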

\begin{proof}
To prove Proposition \ref{prop:co-split} we used the Diamond Lemma \cite[Proposition 7.1]{B} for $R$-rings.
But we use here the Diamond Lemma \cite[Theorem 1.2]{B} for $\Bbbk$-algebras. We suppose that $X_0 \cup X_1$ is the
set of generators, and extend
the total orders on $X_i$, $i=0,1$, to the set so that $a < x$ whenever $a \in X_0$,
$x \in X_1$. The reduction system consists of the obvious reductions arising from the super-bracket, and
\[ x^2 \to x^{\langle 2 \rangle},\quad x \in X_1, \]
where the last $x^{\langle 2 \rangle}$ is supposed to be
presented as a linear combination of elements in $X_0$. It is essential to prove that the overlap
ambiguities which may occur when we reduce the words
\begin{itemize}
\item $xxa$,\quad $x\in X_1$, $a \in X_0$,
\item $xyz$, \quad $x= y\ge z$ or $x\ge y= z$ in $X_1$ 
\end{itemize}
are resolvable. 
This is easily proved (indeed, more easily than was in the proof of Proposition \ref{prop:co-split}),
by using Condition (iii) in Definition \ref{defa:2-operation}. For example, the word $xxa$ is reduced
on the one hand as
\[ xxa \to x[x,a]+ xax \to x[x,a] + [x,a]x+ ax^{\langle 2 \rangle}\to[x,[x,a]]+ax^{\langle 2 \rangle}, \]
and on the other hand as
\[ xxa \to x^{\langle 2 \rangle}a . \]
The two results coincide by (iii). 
\end{proof}

\begin{rem}
To use Condition (iii) as above, we cannot treat $\mathbf{U}(\mathfrak{g})$ as a $J=U(\mathfrak{g}_0)$-ring 
as in the the proof of Proposition \ref{prop:co-split}. 
Indeed, to reduce the word $xxa$ with $a \in J$ in the proof, we are not allowed to present 
$a$ as (a linear combination of) $bc$ with $b \in \mathfrak{g}_0$, $c \in J$, and to reduce as
\[
xxa \to xxbc \to x[x,b]c + xbxc,  
\]
because by the first step, the lengths of words increase, 
$\mathrm{length}(xx*) < \mathrm{length}(xx**)$;
see the proof of \cite[Lemma 11]{M2}.  
\end{rem}

\begin{corollary}[\text{cf.~\cite[(4.7)]{G2}}]\label{cora:co-split}
If $\mathfrak{g}_0$ is $\Bbbk$-finite projective and $\mathfrak{g}_1$ is $\Bbbk$-free, then the
same result as Corollary \ref{cor:co-split} holds, that is, there exists a unit-preserving,
left $U(\mathfrak{g}_0)$-module super-coalgebra isomorphism
$U(\mathfrak{g}_0)\otimes \wedge(\mathfrak{g}_1) \overset{\simeq}{\longrightarrow}
\U(\mathfrak{g})$.
\end{corollary}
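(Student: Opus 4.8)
The plan is to reduce the statement to a question about $\Bbbk$-free $\mathfrak{g}_0$ and then globalize, using Lemma \ref{lema:monomial_basis} as the engine for the free case. First I would prove the isomorphism when $\mathfrak{g}_0$ is $\Bbbk$-free (not merely projective). In that case both $\mathfrak{g}_0$ and $\mathfrak{g}_1$ admit totally ordered $\Bbbk$-free bases $X_0$, $X_1$, and Lemma \ref{lema:monomial_basis} gives the explicit monomial basis $a_1^{r_1}\cdots a_m^{r_m}x_1\cdots x_n$ of $\U(\mathfrak{g})$. Separately, the PBW-type monomials $a_1^{r_1}\cdots a_m^{r_m}$ form a $\Bbbk$-free basis of $U(\mathfrak{g}_0)$ (this is the classical Poincar\'e--Birkhoff--Witt theorem over $\Bbbk$ for the free module $\mathfrak{g}_0$, which applies since $\mathfrak{g}_0$ is $\Bbbk$-free hence flat, cf.\ Lemma \ref{lem:flat_implies_injection}), and the monomials $x_1\cdots x_n$ with $x_1<\dots<x_n$ form a $\Bbbk$-free basis of $\wedge(\mathfrak{g}_1)$. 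I would then define
\[
\phi : U(\mathfrak{g}_0)\otimes \wedge(\mathfrak{g}_1)\to \U(\mathfrak{g}),\quad
\phi\big((a_1^{r_1}\cdots a_m^{r_m})\otimes (x_1\wedge\cdots\wedge x_n)\big)
= a_1^{r_1}\cdots a_m^{r_m}x_1\cdots x_n,
\]
i.e.\ $\phi$ is induced by the multiplication map $U(\mathfrak{g}_0)\otimes \wedge(\mathfrak{g}_1)\to\U(\mathfrak{g})$ composed with the canonical maps. Comparing the two bases basis-element-by-basis-element shows $\phi$ is a $\Bbbk$-linear isomorphism. That it is left $U(\mathfrak{g}_0)$-linear is immediate from the definition; that it is unit-preserving is clear; and that it is a super-coalgebra map follows because both $U(\mathfrak{g}_0)$ and $\wedge(\mathfrak{g}_1)$ are sub-super-coalgebras of $\U(\mathfrak{g})$ whose images are generated by primitives, exactly as in the proof of Corollary \ref{cor:co-split}.

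Next I would remove the freeness hypothesis on $\mathfrak{g}_0$, replacing it by $\Bbbk$-finite projectivity, by a standard localization argument. A $\Bbbk$-finite projective module $\mathfrak{g}_0$ becomes free after localizing at each prime (indeed Zariski-locally on $\operatorname{Spec}\Bbbk$): there exist $f_1,\dots,f_t\in\Bbbk$ generating the unit ideal such that each localization $(\mathfrak{g}_0)_{f_i}$ is $\Bbbk_{f_i}$-free of finite rank. The formation of $\U(\mathfrak{g})$, of $U(\mathfrak{g}_0)$, and of $\wedge(\mathfrak{g}_1)$ all commute with the flat base change $\Bbbk\to\Bbbk_{f_i}$ (the defining generators and relations are preserved, and $\wedge(\mathfrak{g}_1)_{f_i}=\wedge((\mathfrak{g}_1)_{f_i})$ since $\mathfrak{g}_1$ is $\Bbbk$-free). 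The base-change of the $2$-operation is governed by Proposition \ref{propa:base_extension_of_2-operation}, which guarantees that over each $\Bbbk_{f_i}$ the Lie superalgebra $\mathfrak{g}\otimes\Bbbk_{f_i}$ carries the correctly base-changed $2$-operation, so the free-case construction applies there and yields isomorphisms $\phi_i$ over each $\Bbbk_{f_i}$.

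The final step is to glue, or rather to build $\phi$ globally and check it is an isomorphism locally. The cleanest route is to define $\phi$ globally over $\Bbbk$ by the multiplication map $U(\mathfrak{g}_0)\otimes\wedge(\mathfrak{g}_1)\to\U(\mathfrak{g})$, which is manifestly unit-preserving, left $U(\mathfrak{g}_0)$-linear, and (by the primitive-generation argument above) a super-coalgebra map, all without any freeness assumption. Whether $\phi$ is bijective can then be tested after the faithfully flat base change $\Bbbk\to\prod_i\Bbbk_{f_i}$; since each localized map agrees with the free-case isomorphism $\phi_i$ constructed above, $\phi$ becomes an isomorphism after this faithfully flat base change, hence is itself an isomorphism. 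The main obstacle I anticipate is bookkeeping rather than conceptual: one must confirm that the globally defined $\phi$ really restricts, after localization, to the monomial-basis isomorphism produced by Lemma \ref{lema:monomial_basis}, and that each relevant construction genuinely commutes with the localizations $\Bbbk\to\Bbbk_{f_i}$ --- in particular that the $2$-operation base-changes compatibly, which is precisely what Proposition \ref{propa:base_extension_of_2-operation} is designed to supply.
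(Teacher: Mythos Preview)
Your approach is essentially the paper's: define $\phi$ globally using the ordered $\Bbbk$-free basis of $\mathfrak{g}_1$, then verify bijectivity after localization (the paper localizes at maximal ideals $\mathfrak{m}$ rather than at a Zariski cover $\{f_i\}$, but this is immaterial), invoking Lemma~\ref{lema:monomial_basis} once $(\mathfrak{g}_0)_{\mathfrak{m}}$ is free and Proposition~\ref{propa:base_extension_of_2-operation} for the base change of the $2$-operation.

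One point to correct: you twice describe $\phi$ as ``the multiplication map'' $U(\mathfrak{g}_0)\otimes\wedge(\mathfrak{g}_1)\to\U(\mathfrak{g})$, but there is no canonical algebra (or even coalgebra) map $\wedge(\mathfrak{g}_1)\to\U(\mathfrak{g})$, since in $\U(\mathfrak{g})$ one has $v^2=v^{\langle 2\rangle}$, not $v^2=0$. The global map $\phi$ must be defined via the chosen ordered basis of $\mathfrak{g}_1$, sending $1\otimes(x_1\wedge\cdots\wedge x_n)\mapsto x_1\cdots x_n$ and extending $U(\mathfrak{g}_0)$-linearly; this is precisely your explicit formula, and since $\mathfrak{g}_1$ is globally $\Bbbk$-free by hypothesis, it is available over $\Bbbk$ itself, not just after localization. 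That this basis-dependent $\phi$ is a super-coalgebra map is checked directly from the shuffle coproduct on ordered monomials, not from any multiplicativity. With this correction, your argument is complete and matches the paper's.
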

\begin{proof}
Choose a totally ordered basis $X$ of $\mathfrak{g}_1$, and define a left $U(\mathfrak{g}_0)$-module
(super-coalgebra) map $\phi : U(\mathfrak{g}_0)\otimes \wedge(\mathfrak{g}_1)\to \U(\mathfrak{g})$ by
\[ \phi(1 \otimes (x_1\wedge\dots \wedge x_n)) = x_1\dots x_n, \]
where $x_1\ <\dots < x_n$ in $X$, $n \ge 0$. To prove that this is bijective, it suffices to prove the
localization $\phi_{\mathfrak{m}}$ at each maximal ideal $\mathfrak{m}$ of $\Bbbk$ is bijective. Note that
$\mathfrak{g}_{\mathfrak{m}}$ is a
$\Bbbk_{\mathfrak{m}}$-Lie superalgebra given a $2$-operation by 
Proposition \ref{propa:base_extension_of_2-operation}, and
\[
U(\mathfrak{g}_0)_{\mathfrak{m}} = U((\mathfrak{g}_0)_{\mathfrak{m}}),\quad 
(\wedge(\mathfrak{g}_1))_{\mathfrak{m}} = \wedge((\mathfrak{g}_1)_{\mathfrak{m}}),\quad 
\U(\mathfrak{g})_{\mathfrak{m}} = \U(\mathfrak{g}_{\mathfrak{m}}).
\] 
Since $(\mathfrak{g}_0)_{\mathfrak{m}}$
is $\Bbbk_{\mathfrak{m}}$-free under the assumption above, Lemma \ref{lema:monomial_basis} 
shows that $\phi_{\mathfrak{m}}$ is bijective. 
\end{proof}

Let $\G$ be an \emph{affine supergroup}; see Section \ref{subsec:ASG_basics}. Recall from 
Section \ref{subsec:Lie(G)}
\[ 
\mathrm{Lie}(\G) := (\O(\G)^+/(\O(\G)^+)^2)^* .
\]
Note that the proof of Proposition \ref{prop:Lie(G)} does not use the assumption that 
$\Bbbk$ is $2$-torsion free. From the proposition and the proof one sees the following.

\begin{prop}\label{propa:Lie(G)}
Let $\mathfrak{g}:= \mathrm{Lie}(\G)$. 
\begin{itemize}
\item[(1)] $\mathfrak{g}$ is naturally a Lie superalgebra. 
\item[(2)] Given $v \in \mathfrak{g}_1$, the square $v^2$ in $\O(\G)^*$ is contained in $\mathfrak{g}_0$.
Moreover, the square map $(\ ) ^2 : \mathfrak{g}_1 \to \mathfrak{g}_0$ gives a $2$-operation on $\mathfrak{g}$.
\end{itemize}
\end{prop}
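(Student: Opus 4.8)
The plan is to read off almost everything from Proposition \ref{prop:Lie(G)} and its proof, after observing that neither uses the $2$-torsion-freeness of $\Bbbk$. Recall that $\mathfrak{g}$ sits inside the dual superalgebra $\A^*$, where $\A := \O(\G)$, as the functionals vanishing on $\Bbbk 1$ and on $(\A^+)^2$, and that the bracket produced in Proposition \ref{prop:Lie(G)} is precisely the restriction to $\mathfrak{g}$ of the supercommutator $[x,y] = xy - (-1)^{|x||y|}yx$ computed in $\A^*$. Thus part (1) is literally the assertion of Proposition \ref{prop:Lie(G)}. Moreover, the computation there already shows the first claim of part (2): for odd $v$ one uses Lemma \ref{lem:Lie_super-coalgebra} to get $v^2(ab) = v(a)v(b) + (-1)^{|a||b|}v(b)v(a)$ for $a,b \in \A^+$, which vanishes since every term with a factor outside $\A_1$ is zero while the odd--odd contribution cancels by commutativity of scalars; together with $v^2(1)=0$ and the fact that $v^2$ annihilates $\A_1$, this places $v^2 \in \mathfrak{g}_0$. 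Hence the square map $(\ )^2 : \mathfrak{g}_1 \to \mathfrak{g}_0$ is well defined.

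Next I would verify the three axioms of Definition \ref{defa:2-operation} for $v^{\langle 2\rangle} := v^2$. Axiom (i), $(cv)^2 = c^2 v^2$, is immediate because $c \in \Bbbk$ is central in $\A^*$. Axiom (ii) amounts to $(v+w)^2 = v^2 + [v,w] + w^2$; expanding in $\A^*$ gives $(v+w)^2 = v^2 + vw + wv + w^2$, and for odd $v,w$ the supercommutator is $[v,w] = vw + wv$, so the two sides agree. The substantive point is axiom (iii), $[v^2, z] = [v,[v,z]]$ for homogeneous $z \in \mathfrak{g}$; I would treat this as an identity in the associative superalgebra $\A^*$ and then restrict to $\mathfrak{g}$, which is legitimate precisely because $\mathfrak{g}$ is closed under the supercommutator and $v^2 \in \mathfrak{g}_0$, so that every term lives in $\mathfrak{g}$.

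The only genuine computation, and hence the sole place needing care, is axiom (iii). Writing $[v,z] = vz - (-1)^{|z|}zv$ and expanding
\[
[v,[v,z]] = v\,[v,z] - (-1)^{1+|z|}\,[v,z]\,v,
\]
the two $vzv$-terms cancel and one is left with $v^2z - zv^2 = [v^2,z]$, using only associativity and $|v^2|=0$; crucially, no division by $2$ intervenes, which is why the argument survives over an arbitrary $\Bbbk$. I expect no real obstacle beyond bookkeeping of the Koszul signs: the key structural input is the already-established identification of the bracket on $\mathfrak{g}$ with the supercommutator on $\A^*$, which reduces all three axioms to purely associative-algebra manipulations rather than arguments on the coalgebra side.
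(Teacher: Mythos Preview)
Your proposal is correct and follows the paper's own approach: the paper simply remarks that the proof of Proposition~\ref{prop:Lie(G)} nowhere uses $2$-torsion-freeness and that the proposition follows from it, leaving the verification of the $2$-operation axioms implicit. You have made explicit what the paper leaves to the reader---in particular the associative-algebra identity $[v^2,z]=[v,[v,z]]$ via cancellation of the $vzv$ terms---and your sign bookkeeping is clean; there is nothing to correct.
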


We will suppose that $\mathrm{Lie}(\G)$ is given this specific $2$-operation. 

\subsection{}\label{subseca:Gavarini's_categories}
Recall from \cite[Definitions 3.2.6 and 4.1.2]{G2} the following definitions of two categories,
$(\mathrm{gss}\text{-}\mathrm{fsgroups})_{\k}$, $(\mathrm{sHCP})_{\k}$. 

Let $(\mathrm{gss}\text{-}\mathrm{fsgroups})_{\k}$ denote 
the category of the affine supergroups $\G$ such that
when we set $\A := \mathscr{O}(\G)$, 
\begin{itemize}
\item[(E1)] $\A$ is \emph{split} (Definition \ref{def:split}), 
\item[(E2)] $\overline{\A}/(\overline{\A}^+)^2$ is $\Bbbk$-finite projective, and
\item[(E3)] $W^{\A}= \A_1/\A_0^+ \A_1$ is $\Bbbk$-finite (free). 
\end{itemize}
The morphisms in $(\mathrm{gss}\text{-}\mathrm{fsgroups})_{\k}$ are the natural transformations of 
group-valued functors. 

Let $(G, \mathfrak{g})$ be a pair of an affine group $G$ and a Lie superalgebra $\mathfrak{g}$ 
given a $2$-operation, such that $\mathfrak{g}_1$ is $\Bbbk$-finite free and is given a right $G$-module
structure. Suppose that this pair satisfies 
\begin{itemize}
\item[(F1)] $\mathfrak{g}_0 = \mathrm{Lie}(G)$,
\item[(F2)] $\mathscr{O}(G)/(\mathscr{O}(G)^+)^2$ is $\Bbbk$-finite projective, so that   
$\mathfrak{g}_0 = \mathrm{Lie}(G)$ is necessarily 
$\Bbbk$-finite projective, and it is naturally a right $G$-module (recall from Section \ref{subsec:HCP}
that the corresponding left $\O(G)$-comodule structure on $\mathrm{Lie}(G)$ is transposed from
the right co-adjoint $\O(G)$-coaction on 
$\O(G)^+/(\O(G)^+)^2$),  
\item[(F3)] the right $U(\mathfrak{g}_0)$-module structure on $\mathfrak{g}_1$ induced from the
given right $G$-module structure coincides with the right adjoint $\mathfrak{g}_0$-action on $\mathfrak{g}_1$,
\item[(F4)] the restricted super-bracket $[\ , \ ] : \mathfrak{g}_1 \otimes \mathfrak{g}_1\to \mathfrak{g}_0$ 
is $G$-equivariant, and
\item[(F5)] the diagram 
\[
\begin{xy}
(0,0)   *++{\mathfrak{g}_1}  ="1",
(35,0)  *++{\mathfrak{g}_0}    ="2",
(0,-15) *++{\O(G)\otimes \mathfrak{g}_1}          ="3",
(35,-15)*++{\O(G)\otimes \mathfrak{g}_0}    ="4",
{"1" \SelectTips{cm}{} \ar @{->}^{(\ )^{\langle 2 \rangle}} "2"},
{"1" \SelectTips{cm}{} \ar @{->} "3"},
{"2" \SelectTips{cm}{} \ar @{->} "4"},
{"3" \SelectTips{cm}{} \ar @{->}^{(\ )_{\mathscr{O}(G)}^{\langle 2 \rangle}} "4"}
\end{xy}
\] 
commutes, where the vertical arrows are the left $\O(G)$-comodule structures. 
\end{itemize}
One sees that under (F4), Condition (F5) is equivalent to 
\[
(v_R^{\langle 2 \rangle})^{\gamma} =(v^{\gamma})_R^{\langle 2 \rangle}, \quad 
v \in \mathfrak{g}_1\otimes R,\ \gamma \in G(R),
\]
where $R$ is an arbitrary commutative algebra.

Let $(\mathrm{sHCP})_{\k}$ denote the category of all those pairs $(G, \mathfrak{g})$ 
which satisfy Conditions (F1)--(F5) above. 
A morphism $(G, \mathfrak{g}) \to (G', \mathfrak{g}')$ in $(\mathrm{sHCP})_{\k}$
is a pair $(\alpha, \beta)$ of a morphism  
$\alpha: G \to G'$ of affine groups and a Lie superalgebra map 
$\beta = \beta_0 \oplus \beta_1 : \mathfrak{g} \to \mathfrak{g}'$, which satisfies Conditions (i), (ii) in 
Definition \ref{def:HCP}, and
\begin{itemize}
\item[(iii)] $\beta_0(v^{\langle 2 \rangle}) = \beta_1(v)^{\langle 2 \rangle}$,\quad $v \in \mathfrak{g}_1$.
\end{itemize} 

\begin{rem}\label{rema:compare_definitions}
One sees from Lemma \ref{lema:2-operation} that 
if $\Bbbk$ is $2$-torsion free, then our $\mathsf{HCP}$ and $\mathsf{ASG}$ 
(see Definition \ref{def:HCP} and Section \ref{subsec:P}), roughly speaking, coincide
with $(\mathrm{sHCP})_{\k}$ and $(\mathrm{gss}\text{-}\mathrm{fsgroups})_{\k}$, respectively.
To be precise, ours are more restrictive in that for objects $(G, \mathfrak{g})\in 
\mathsf{HCP}$, $\G \in \mathsf{ASG}$, the commutative Hopf algebras $\O(G)$ and $\O(\G_{ev})$
are assumed to be affine and $\k$-flat. 

We may remove the affinity assumption so long as 
(B1) and (C3) are assumed. 
But the assumption seems natural, since if $\Bbbk$ is a field of characteristic $\ne 2$, it ensures 
that (B1) and (C3) are satisfied, so that our Theorem \ref{thm:equivalence}
then coincides with the known category equivalence
between all algebraic supergroups and the Harish-Chandra pairs; see Remark \ref{rem:compare_equivalences}.

Note from \eqref{eq:verify(F5)} that under the $\Bbbk$-flatness assumption above, 
$\O(G) \otimes \mathfrak{g}_1$ is $2$-torsion free, 
and Condition (F5) for $v^{\langle 2 \rangle} = \frac{1}{2}[v,v]$ is necessarily satisfied.
Recall that the condition is not contained in the axioms for objects in $\mathsf{HCP}$.   
\end{rem}

\subsection{}\label{subseca:Gavarini's_equivalence}
Our category equivalences between $(\mathrm{gss}\text{-}\mathrm{fsgroups})_{\k}$ and $(\mathrm{sHCP})_{\k}$
will be presented differently from 
Gavarini's $\Phi_g$, $\Psi_g$; see Remark \ref{rema:compare_functors}. 
So, we will use different symbols, $\mathbf{P}'$, $\G'$, to denote them.   

Let us construct a functor $\mathbf{P}' : (\mathrm{gss}\text{-}\mathrm{fsgroups})_{\k} \to (\mathrm{sHCP})_{\k}$.
Given $\G \in (\mathrm{gss}\text{-}\mathrm{fsgroups})_{\k}$, 
set $G :=\G_{ev}$, $\mathfrak{g}:= \mathrm{Lie}(\G)$. Recall from Proposition 
\ref{propa:Lie(G)} and the following remark that
$\mathfrak{g}$ is a Lie superalgebra given the square map as a $2$-operation. 
As in Lemma \ref{lem:Lie(G)g_0} 
one has $\mathfrak{g}_0 \simeq \mathrm{Lie}(G)$, through which we will identify the two, and suppose
$\mathfrak{g}_0 = \mathrm{Lie}(G)$. Since $\mathfrak{g}$ is $\Bbbk$-finite projective by (E2), (E3), 
the co-adjoint $\O(G)$-coaction on $\O(\G)^+/(\O(\G)^+)^2$ (see \eqref{eq:coad2}) is transposed to $\mathfrak{g}$,
so that $\mathfrak{g}$ is a right $G$-supermodule. The restricted right $G$-module structure on $\mathfrak{g}_1$
satisfies (F3), (F4), as was seen in the proof of Lemma \ref{lem:P}. 
To conclude $(G, \mathfrak{g}) \in (\mathrm{sHCP})_{\k}$, it remains to prove the following.

\begin{lemma}\label{lema:verifyF5}
(F5) is satisfied. 
\end{lemma}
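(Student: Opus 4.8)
The goal is to verify condition (F5) for the Lie superalgebra $\mathfrak{g} = \mathrm{Lie}(\G)$ equipped with its canonical $2$-operation $v \mapsto v^{\langle 2 \rangle} = v^2$ (the square taken in $\O(\G)^*$), where the right $G$-module structure on $\mathfrak{g}_1$ comes from the co-adjoint coaction. By the remark following the definition of $(\mathrm{sHCP})_{\k}$, condition (F5) is equivalent, under (F4), to the identity $(v_R^{\langle 2 \rangle})^{\gamma} = (v^{\gamma})_R^{\langle 2 \rangle}$ for all $v \in \mathfrak{g}_1 \otimes R$ and $\gamma \in G(R)$, where $R$ is an arbitrary commutative algebra. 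So the plan is to prove this functorial form directly.

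First I would record how $\mathfrak{g}$ sits inside $\O(\G)^*$ and how both the $G$-action and the $2$-operation are expressed there. The right $G$-supermodule structure on $\mathfrak{g} \subset \O(\G)^*$ is transposed from the left co-adjoint coaction on $\O(\G)^+/(\O(\G)^+)^2$ dual to \eqref{eq:coad2}; concretely, for $\gamma \in G(R)$ the action $v \mapsto v^{\gamma}$ is given, just as in \eqref{eq:gamma_a}, by a conjugation formula involving the functor point $\langle \gamma, \ \rangle$ paired against $\overline{a}_{(1)}$ and $\overline{a}_{(3)}$. The $2$-operation is the square $v^2$ computed using the product on $\O(\G)^*$, which is dual to the coproduct $\Delta$ of $\O(\G)$. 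The key structural fact I would extract is that the $G$-action on $\mathfrak{g} \subset \O(\G)^*$, being induced from the genuine right adjoint action of $G$ on itself, is by algebra automorphisms of (the relevant quotient of) $\O(\G)^*$ — or more precisely is compatible with the convolution product in the sense that $(xy)^{\gamma} = x^{\gamma} y^{\gamma}$ for $x, y \in \mathfrak{g}_0 \cup \mathfrak{g}_1$ lying in $\O(\G)^*$. This is exactly the multiplicativity used repeatedly in the proof of Lemma~\ref{lem:extended_G-action}(3) and in Remark~\ref{rem:compare_definitions}(2).

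Granting that multiplicativity, the verification becomes short. Working over $R$, I would take a general element $v = \sum_i v_i \otimes c_i \in \mathfrak{g}_1 \otimes R$ with $v_i \in \mathfrak{g}_1$, $c_i \in R$, and compute both sides using the explicit base-change formula from Proposition~\ref{propa:base_extension_of_2-operation}, namely $v_R^{\langle 2 \rangle} = \sum_i v_i^{\langle 2 \rangle} \otimes c_i^2 + \sum_{i<j} [v_i, v_j] \otimes c_i c_j$. Applying $\gamma$ to this and using that the $G$-action is $R$-linear and commutes with the structure (it acts only on the $\mathfrak{g}$-factor), the left side becomes $\sum_i (v_i^{\langle 2 \rangle})^{\gamma} \otimes c_i^2 + \sum_{i<j} [v_i, v_j]^{\gamma} \otimes c_i c_j$. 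Since $v_i^{\langle 2 \rangle} = v_i^2$ is a square in $\O(\G)^*$ and the action is multiplicative, $(v_i^2)^{\gamma} = (v_i^{\gamma})^2 = (v_i^{\gamma})^{\langle 2 \rangle}$; and $[v_i, v_j]^{\gamma} = [v_i^{\gamma}, v_j^{\gamma}]$ by the $G$-equivariance of the bracket already established in (F4) (Remark~\ref{rem:compare_definitions}(2)). Feeding these into the base-change formula for $(v^{\gamma})_R^{\langle 2 \rangle}$ gives exactly the same expression. Hence the two sides agree.

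The main obstacle is making rigorous the multiplicativity $(v^2)^{\gamma} = (v^{\gamma})^2$ in $\O(\G)^*$, since $\mathfrak{g}$ is only a sub-supermodule of $\O(\G)^*$ and $v^2$ a priori lives in a larger quotient than $\mathfrak{g}$ itself — one must check $v^2 \in \mathfrak{g}_0$ (which is Proposition~\ref{propa:Lie(G)}(2)) and that the $G$-action genuinely respects the convolution product on the finite quotient $(\O(\G)/(\O(\G)^+)^3)^*$ or wherever the computation takes place. I would handle this by observing that the co-adjoint coaction \eqref{eq:coad2} is an algebra coaction (it is the transpose of the adjoint action of the group on itself, which is by group automorphisms), so the induced $G$-action on $\O(\G)^*$ preserves products; this is precisely the sign- and grade-careful bookkeeping done in Lemma~\ref{lem:extended_G-action}(3), and I would cite that computation rather than redo it. Once multiplicativity on squares is in hand, everything else is a direct substitution into the formula of Proposition~\ref{propa:base_extension_of_2-operation}, with no further subtlety.
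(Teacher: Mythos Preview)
Your approach is correct but packaged differently from the paper. You pass to the equivalent functorial condition $(v_R^{\langle 2\rangle})^{\gamma}=(v^{\gamma})_R^{\langle 2\rangle}$ and reduce it to the single fact that the transposed $G$-action on $\O(\G)^*$ is multiplicative, i.e.\ $(v^2)^{\gamma}=(v^{\gamma})^2$; the rest is the base-change formula of Proposition~\ref{propa:base_extension_of_2-operation} together with (F4). The paper instead verifies the comodule diagram (F5) directly: writing the left $\O(G)$-coaction on $\mathfrak{g}_1$ as $v\mapsto\sum_i c_i\otimes v_i$ and using the injection $\O(G)\otimes\mathfrak{g}\hookrightarrow\mathrm{Hom}(\O(\G),\O(G))$ (valid by finite projectivity), it checks the single identity
\[
\langle v^2, a^{(0)}\rangle\,a^{(1)}=\sum_i c_i^2\,\langle v_i^2,a\rangle+\sum_{i<j}c_ic_j\,\langle[v_i,v_j],a\rangle
\]
by a short computation with the pairing. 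At bottom both arguments rest on the same structural fact---that the co-adjoint coaction \eqref{eq:coad2} is a super-coalgebra map, so dually the induced action on $\O(\G)^*$ respects products---and the paper's calculation is essentially your multiplicativity step written on the comodule side. The paper's route is a bit shorter because it stays on the coaction side and avoids the base-change bookkeeping; your route makes the group-theoretic meaning (conjugation acts by automorphisms) more transparent. One small notational wrinkle in your write-up: for $\gamma\in G(R)$ the element $(v_i^{\langle 2\rangle})^{\gamma}$ already lies in $\mathfrak{g}_0\otimes R$, so the expressions $\sum_i (v_i^{\langle 2\rangle})^{\gamma}\otimes c_i^2$ should be read as $R$-linear combinations $\sum_i(v_i^{\langle 2\rangle}\otimes 1)^{\gamma}\cdot c_i^2$; you indicate this, but it would be cleaner to say so explicitly.
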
 
\begin{proof}
Let $v \mapsto \sum_i c_i\otimes v_i$ denote the left $\O(G)$-comodule structure 
$\mathfrak{g}_1 \to \O(G)\otimes \mathfrak{g}_1$ on $\mathfrak{g}_1$. 
Let $a \mapsto a^{(0)}\otimes a^{(1)}$ denote the right co-adjoint $\O(G)$-coaction
$\O(\G) \to \O(\G) \otimes \O(G)$ on $\O(\G)$. Since $\mathfrak{g}$ is
$\Bbbk$-finite projective, we have the canonical injection  
$\O(G)\otimes \mathfrak{g} = \mathrm{Hom}(\mathfrak{g}^*, \O(G))\to \mathrm{Hom}(\O(\G), \O(G))$.
Therefore, it suffices to prove
\[
\langle v^2, a^{(0)} \rangle \, a^{(1)} = \sum_i c_i^2 \, \langle v_i^2, a \rangle
+ \sum_{i<j}c_ic_j \, \langle [v_i,v_j], a \rangle
\]
for $v \in \mathfrak{g}_1$, $a \in \O(\G)$, where $\langle \ , \ \rangle$  
denotes the canonical pairing $\O(\G)^* \times \O(\G) \to \Bbbk$. This is proved as follows.
\begin{eqnarray*}
\text{LHS} &= &\langle v, (a_{(1)})^{(0)}\rangle \,
\langle v, (a_{(2)})^{(0)}\rangle \, (a_{(1)})^{(1)} (a_{(2)})^{(1)} \\
&= & \sum_{i,j} c_ic_j\, \langle v_i, a_{(1)}\rangle \, \langle v_j, a_{(2)} \rangle 
= \sum_{i,j} c_ic_j\, \langle v_iv_j, a \rangle = \text{RHS}.  
\end{eqnarray*}
\end{proof}

Let $\mathbf{P}'(\G)$ denote the thus obtained object $(G, \mathfrak{g})$ in $(\mathrm{sHCP})_{\k}$.
As in Proposition \ref{prop:P}, we see that 
$\mathbf{P}': (\mathrm{gss}\text{-}\mathrm{fsgroups})_{\k} \to (\mathrm{sHCP})_{\k}$ gives the desired
functor, since the Lie superalgebra map induced from a morphism of affine supergroups 
obviously preserves the $2$-operation.   

Let us construct a functor $\G' : (\mathrm{sHCP})_{\k}\to (\mathrm{gss}\text{-}\mathrm{fsgroups})_{\k}$.
Let $(G, \mathfrak{g}) \in (\mathrm{sHCP})_{\k}$. Then the natural right $G$-module structure on
$\mathfrak{g}_0=\mathrm{Lie}(G)$ and the given right $G$-module structure on $\mathfrak{g}_1$ amount
to a right $G$-supermodule structure on $\mathfrak{g}$, by which the super-bracket on $\mathfrak{g}$
is $G$-equivariant, as is seen as in Remark \ref{rem:compare_definitions} (2) by using (F3), (F4). 
(According to the original definition \cite[Definition 4.1.2]{G2}, the proved $G$-equivariance is
assumed as an axiom for objects in $(\mathrm{sHCP})_{\k}$. But it can be weakened to (F4), as was 
just seen.)
Using (F5), one sees as in Lemma \ref{lem:G-stable}
(indeed, more easily) that the right $G$-supermodule structure on $\mathfrak{g}$ uniquely
extends to $\mathbf{U}(\mathfrak{g})$, so that $\mathbf{U}(\mathfrak{g})$ turns into a 
Hopf-algebra object in $\mathsf{SMod}\text{-}G$. By using an isomorphism 
$U(\mathfrak{g}_0)\otimes \wedge(\mathfrak{g}_1) \simeq \U(\mathfrak{g})$ such as given 
by Corollary \ref{cora:co-split}, we can trace the argument in Section \ref{subsec:A(G,g)},
to construct a split commutative Hopf superalgebra, $\A=\A(G,\mathfrak{g})$, such that
\[
\A \simeq \mathrm{Hom}_{U(\mathfrak{g}_0)}(\U(\mathfrak{g}),\O(G)),\quad 
\overline{\A}\simeq \O(G),\quad W^{\A} \simeq \mathfrak{g}_1^*.
\] 
It follows that this $\A$ satisfies (E1)--(E3). We let $\G'(G, \mathfrak{g})$ denote the
affine supergroup corresponding to $\A$. Then one sees that 
$\G'(G, \mathfrak{g})\in (\mathrm{gss}\text{-}\mathrm{fsgroups})_{\k}$, and 
$(G, \mathfrak{g}) \mapsto \G'(G, \mathfrak{g})$ gives the desired functor. As for the fuctoriality,
note that Condition (iii) given just above Remark \ref{rema:compare_definitions}
is used to see that a morphism $(\alpha,\beta)$ in  
$(\mathrm{sHCP})_{\k}$ induces, in particular, a Hopf superalgebra map $\mathbf{U}(\mathfrak{g})
\to \mathbf{U}(\mathfrak{g}')$; see the proof of Proposition \ref{prop:functor_from_(G,g)}. 

\begin{theorem}[\text{\cite[Theorem~4.3.14]{G2}}]\label{thma:Gavarini's_equivalence}
We have a category equivalence 
\[
(\mathrm{gss}\text{-}\mathrm{fsgroups})_{\k} \approx (\mathrm{sHCP})_{\k}.
\]
In fact the functors
$\mathbf{P}'$ and $\G'$ constructed above are quasi-inverse to each other. 
\end{theorem}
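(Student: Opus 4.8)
The plan is to follow the proof of Theorem \ref{thm:equivalence} almost line by line, the point being that the two functors $\mathbf{P}'$ and $\G'$ have already been constructed in the text above, and the co-splitting isomorphism $U(\mathfrak{g}_0)\otimes \wedge(\mathfrak{g}_1) \simeq \U(\mathfrak{g})$ used in building $\G'$ is now supplied by Corollary \ref{cora:co-split} in place of Corollary \ref{cor:co-split}. Thus it remains only to produce the two natural isomorphisms $\mathbf{P}' \circ \G' \simeq \mathrm{id}$ and $\G' \circ \mathbf{P}' \simeq \mathrm{id}$. The single genuinely new feature, compared with the $2$-torsion-free situation, is that the $2$-operation $(\ )^{\langle 2\rangle}$ is now independent data which must be tracked on its own, rather than being recovered from the super-bracket via $v \mapsto \tfrac{1}{2}[v,v]$; everything else transcribes verbatim.

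For $\mathbf{P}'\circ \G' \simeq \mathrm{id}$ I would mirror Proposition \ref{prop:PG=id}. Starting from $(G,\mathfrak{g}) \in (\mathrm{sHCP})_{\k}$ and setting $\A := \A(G,\mathfrak{g})$, the pairing $\langle \ , \ \rangle : \U(\mathfrak{g}) \times \A \to \k$ built exactly as in Section \ref{subsec:A(G,g)} restricts to a non-degenerate pairing $\mathfrak{g} \times \A^+/(\A^+)^2 \to \k$, which yields a Lie superalgebra isomorphism $\mathrm{Lie}(\G'(G,\mathfrak{g})) \simeq \mathfrak{g}$ together with $\overline{\A} \simeq \O(G)$ and a $G$-equivariant isomorphism $W^{\A} \simeq \mathfrak{g}_1^*$. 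The additional check is that this identification carries the given $2$-operation on $\mathfrak{g}$ to the square-map $2$-operation on $\mathrm{Lie}(\G'(G,\mathfrak{g}))$ furnished by Proposition \ref{propa:Lie(G)}; this follows because the defining relation $v^2 = v^{\langle 2\rangle}$ in $\U(\mathfrak{g})$ is a Hopf-pairing identity, and under the associated algebra map $\U(\mathfrak{g}) \to \A^*$ it transposes precisely to the statement that the square of $v \in \mathfrak{g}_1$ in $\A^*$ equals the image of $v^{\langle 2\rangle}$.

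For $\G'\circ \mathbf{P}' \simeq \mathrm{id}$ I would mirror Lemmas \ref{lem:U(g)toA*} and \ref{lem:UA_paring(2)} and Proposition \ref{prop:GP=id}. Given $\G \in (\mathrm{gss}\text{-}\mathrm{fsgroups})_{\k}$ with $\A := \O(\G)$ and $\mathfrak{g} := \mathrm{Lie}(\G)$, the embedding $\mathfrak{g} \subset \A^*$ extends to a superalgebra map $\U(\mathfrak{g}) \to \A^*$; here the generators $v^2 - v^{\langle 2\rangle}$ are killed automatically, since by Proposition \ref{propa:Lie(G)}~(2) the $2$-operation on $\mathrm{Lie}(\G)$ is by definition the square in $\A^*$, so $v^2 - v^{\langle 2\rangle} = 0$ there. (This is in fact cleaner than Lemma \ref{lem:U(g)toA*}, which needed $2$-torsion freeness of $\A^*$.) The resulting Hopf pairing $\U(\mathfrak{g}) \times \A \to \k$ satisfies the $G$-equivariance formula \eqref{eq:UA_formula} as in Lemma \ref{lem:UA_paring(2)}, and the map $\eta'$ defined as in Proposition \ref{prop:GP=id} is shown to be a Hopf superalgebra isomorphism by the same computation, using the co-splitting of Corollary \ref{cora:co-split} and invoking Lemma \ref{lem:last_applied}, whose statement concerns split affine Hopf superalgebras and holds verbatim over an arbitrary $\k$.

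The step I expect to be the main obstacle is the compatibility with the $2$-operation in the first direction $\mathbf{P}'\circ \G' \simeq \mathrm{id}$: over an arbitrary ring the $2$-operation is not determined by the bracket, so its preservation does not come for free from preservation of the Lie structure, and one must unwind the pairing $\U(\mathfrak{g}) \times \A \to \k$ carefully to confirm that the relation $v^2 = v^{\langle 2\rangle}$ is transported correctly. Once this is settled, naturality of both isomorphisms follows from functoriality of the constructions of $\U(\mathfrak{g})$, $\mathrm{Hom}_{U(\mathfrak{g}_0)}(\U(\mathfrak{g}),\O(G))$ and the pairing, exactly as in the proofs of Propositions \ref{prop:functor_from_(G,g)} and \ref{prop:GP=id} and Theorem \ref{thm:equivalence}, whence $\mathbf{P}'$ and $\G'$ are quasi-inverse.
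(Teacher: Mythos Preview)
Your approach is correct and essentially coincides with the paper's own proof: trace Section~\ref{subsec:equivalence_theorem} verbatim, with the one genuinely new check being that the Lie superalgebra isomorphism $\mathrm{Lie}(\G'(G,\mathfrak{g}))\simeq\mathfrak{g}$ preserves the $2$-operation, which you handle exactly as the paper does via the relation $v^2=v^{\langle 2\rangle}$ in $\U(\mathfrak{g})$ and the factoring of $\mathfrak{g}\to\O(\G)^*$ through $\U(\mathfrak{g})$.

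There is one small misidentification. When you invoke Lemma~\ref{lem:last_applied} in the direction $\G'\circ\mathbf{P}'\simeq\mathrm{id}$, you say it ``holds verbatim over an arbitrary $\k$''. The base ring was never the issue; the lemma makes no use of $2$-torsion freeness. The actual wrinkle, which the paper flags explicitly, is that objects of $(\mathrm{gss}\text{-}\mathrm{fsgroups})_{\k}$ are merely \emph{affine} supergroups, and $\O(\G)$ is not assumed to be finitely generated, whereas Lemma~\ref{lem:last_applied} is stated for split \emph{affine} (i.e.\ finitely generated) Hopf superalgebras. The paper therefore remarks that the conclusion of that lemma still holds as long as $W^{\A}$ is $\Bbbk$-finite, even without finite generation of $\A$: the only place affinity was used in its proof was to ensure $\operatorname{gr}\A(n)=0$ for $n\gg 0$, and this already follows from the splitting together with $W^{\A}$ being $\Bbbk$-finite free. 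Once you replace your parenthetical by this observation, your proof matches the paper's.
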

\begin{proof}
To prove $\mathbf{P}'\circ \G' \simeq \mathrm{id}$, $\G' \circ \mathbf{P}'\simeq \mathrm{id}$, we can trace 
the argument of Section \ref{subsec:equivalence_theorem}
proving $\mathbf{P} \circ \G \simeq \mathrm{id}$, $\G \circ \mathbf{P} \simeq \mathrm{id}$, except in two
points. 

First, to prove $\mathbf{P}'\circ \G' \simeq \mathrm{id}$, we have to show that if 
$(G,\mathfrak{g})\in (\mathrm{sHCP})_{\k}$, and we set $\G:= \G'(G,\mathfrak{g})$, then
the natural Lie superalgebra isomorphism $\mathrm{Lie}(\G) \simeq \mathfrak{g}$ 
as given in the proof of Proposition \ref{prop:PG=id} 
preserves the $2$-operation. Note that we have a Hopf pairing  
$\U(\mathfrak{g})\times \O(\G) \to \Bbbk$ as given in \eqref{eq:UA_pairing}, and it
restricts to a non-degenerate pairing $\mathfrak{g} \times \O(\G)^+/(\O(\G)^+)^2 \to \Bbbk$,
which induces the isomorphism above. Therefore, we have the commutative 
diagram
\[
\begin{xy}
(0,0)   *++{\mathfrak{g}}  ="1",
(25,0)  *++{\mathrm{Lie}(\G)}    ="2",
(0,-15) *++{\U(\mathfrak{g})}          ="3",
(25,-15)*++{\O(\G)^*,}    ="4",
{"1" \SelectTips{cm}{} \ar @{->}^{\simeq} "2"},
{"1" \SelectTips{cm}{} \ar @{^(->} "3"},
{"2" \SelectTips{cm}{} \ar @{^(->} "4"},
{"3" \SelectTips{cm}{} \ar @{->} "4"}
\end{xy}
\]
where the arrow in the bottom is the map
induced from the Hopf pairing above. Given $v \in \mathfrak{g}_1$,
the composite $\mathfrak{g}\overset{\simeq}{\longrightarrow}\mathrm{Lie}(\G) \hookrightarrow \O(\G)^*$, 
which factors
through $\U(\mathfrak{g})$ as above, sends $v^{\langle 2 \rangle}$ to $v^2$. This proves the desired result. 
  
Second, to prove $\G' \circ \mathbf{P}'\simeq \mathrm{id}$, 
we should remark that Lemma \ref{lem:last_applied} can apply, 
since the conclusion of the lemma holds so long as
$W^{\A}$ is $\Bbbk$-finite, even if
the split commutative Hopf superalgebra $\A$ is not finitely generated.
\end{proof}

\begin{rem}\label{rema:compare_functors}
In \cite{G2}, details are not given for the following two.

(1)\ $2$-\emph{operations}.   
Condition (F5) is not explicitly given in \cite{G2}. The functor
$\Phi_g : (\mathrm{gss}\text{-}\mathrm{fsgroups})_{\k} \to (\mathrm{sHCP})_{\k}$ in \cite{G2} is
almost the same as our $\mathbf{P}'$, but it does not specify the associated
$2$-operation; see~\cite[Proposition 4.1.3]{G2}. Accordingly, it is not proved that 
$\Phi_g(G_{\mathscr{P}})\overset{\simeq}{\longrightarrow} \mathscr{P}$ preserves the $2$-operation on the 
associated Lie superalgebras; see the
first paragraph of the proof of \cite[Theorem 4.3.14]{G2}. 

(2)\ \emph{Proof of} 
$U(\mathfrak{g}_0)\otimes \wedge(\mathfrak{g}_1) \simeq \U(\mathfrak{g})$.  
This isomorphism is what was proved by our Corollary~\ref{cora:co-split}. The proof of \cite{G2}
given in the three lines above Eq.~(4.7) 
is rather sketchy, and it might overlook the localization argument used in our proof. Note that the argument
uses Proposition \ref{propa:base_extension_of_2-operation}; this last result or any equivalent one
is not given in \cite{G2}. 
\end{rem}

\section*{Acknowledgments}

The first-named author was supported by JSPS Grant-in-Aid for Scientific Research (C)~23540039. 
He thanks Fabio Gavarini who kindly answered his questions about results in \cite{G1, G2}, sending 
Errata. 
The second-named author was supported by Grant-in-Aid for JSPS Fellows 26E2022.
\vspace{8mm}

\noindent
{\bf Note.}\quad
After the present paper was accepted for publication 
the authors proved in Theorem 5.7 of the preprint
``On functor  points of affine supergroups," arXiv:~1505. 06558v2, the following:
in the situation of Section A.2 above, Conditions (E2) and (E3) implies (E1),
provided $\overline{\mathbf{A}}$ is $\Bbbk$-flat. As is remarked by Remark 5.8 (2) of the
preprint, it follows that to define the category $\mathsf{AHSA}$ in Section 4.3 above,
we may weaken Condition (C1) to (C1')~$W^{\mathbf{A}}$ is $\Bbbk$-free. For
this weakened condition (C1') together with (C2)  and (C3) turn out to ensure 
the existence of an isomorphism
$\mathbf{A} \overset{\simeq}{\longrightarrow} \overline{\mathbf{A}} \otimes \wedge(W^{\mathbf{A}})$
of left $\overline{\mathbf{A}}$-comodule superalgebras; see Definition 2.1 above.

\end{document}